\definecolor{indigo}{RGB}{51,0,102}
\definecolor{brightpurple}{RGB}{102,0,153}
\definecolor{fuchsia}{RGB}{180,51,180}
\definecolor{jolightpurple}{RGB}{188,171,240}
\newcommand{\C}{\mathbb{C}}
\newcommand{\bbC}{\mathbb{C}}
\newcommand{\co}{\mathbb{C}^2  \setminus \{ \bf{0}\}}
\newcommand{\Z}{\mathbb{Z}}
\newcommand{\R}{\mathbb{R}}
\newcommand{\N}{\mathbb{N}}
\newcommand{\Q}{\mathbb{Q}}
\newcommand{\g}{\Gamma}
\newcommand{\A}{\lambda}
\newcommand{\we}{\wedge}
\newcommand{\bo}{\mathbf{0}}
\newcommand{\pa}{\partial}
\newcommand{\om}{\omega}
\newcommand{\M}{{\mathcal{M}}}
\newcommand{\ds}{\dot{\Sigma}}
\newcommand{\ind}{\mbox{ind}}
\newcommand{\cala}{\mathcal{A}}
\newcommand{\calb}{\mathfrak{B}}
\newcommand{\mhat}{{\mathcal{M}}^{\Jt}}
\newcommand{\calm}{\widehat{\mathcal{M}}^{\Jt}}
\newcommand{\calp}{\mathscr{P}}
\newcommand{\calpa}{\mathscr{P}({\A;\Gamma})}
\newcommand{\baar}{\Gamma}
\newcommand{\calc}{u}
\newcommand{\deebar}{\bar{\pa}_{\Jt}}
\newcommand{\lep}{\lambda_{\varepsilon}}
\newcommand{\czm}{\mu_{CZ}}
\newcommand{\rs}{\mu_{RS}}
\newcommand{\Sp}{\mbox{Sp}}
\newcommand{\id}{\mathds{1}}
\newcommand{\gax}{x}
\newcommand{\up}{y}
\newcommand{\mult}{\mbox{m}}
\newcommand{\ga}{\gamma}
\newcommand{\gpk}{\gamma_p^k}
\newcommand{\glp}{\gamma^\ell_+}
\newcommand{\gdm}{\gamma^d_-}
\newcommand{\gp}{{\gamma_+}}
\newcommand{\gm}{{\gamma_-}}
\newcommand{\lo}{\lambda}
\newcommand{\vepsilon}{\varepsilon}
\newcommand{\veps}{\varepsilon}
\newcommand{\lens}{L(n+1,n)}
\newcommand{\hopf}{S^1 \hookrightarrow S^3  \overset{h}{\longrightarrow} S^2}
\newcommand{\J}{\mathcal{J}}
\newcommand{\Jt}{{J}}
\newcommand{\sign}{\mbox{sign}}
\newcommand{\indx}{\mbox{index}}
\newcommand{\hind}{{\mbox{ind}}}
\newtheorem*{rep@theorem}{\rep@title}
\newcommand{\newreptheorem}[2]{%
\newenvironment{rep#1}[1]{%
 \def\rep@title{#2 \ref{##1}}%
 \begin{rep@theorem}}%
 {\end{rep@theorem}}}
\newtheorem{theorem}{Theorem}
\newtheorem{lemma}[theorem]{Lemma}
\newtheorem{corollary}[theorem]{Corollary}
\newtheorem{proposition}[theorem]{Proposition}
\newtheorem{example}[theorem]{Example}
\newtheorem{remark}[theorem]{Remark}
\newtheorem{definition}[theorem]{Definition}
\numberwithin{theorem}{section}
\numberwithin{equation}{subsection}
\numberwithin{figure}{section}
\title{Automatic transversality in contact homology II: filtrations and computations}
\author{Jo Nelson\footnote{Partially supported by NSF grants DMS-1303903 and DMS-184072.}}
\date{}
\begin{document}
\maketitle 
 \begin{abstract}
This paper is the sequel to the previous paper \cite{jo1}, which showed that sufficient regularity exists to define cylindrical contact homology in dimension three for nondegenerate dynamically separated contact forms, a subclass of dynamically convex contact forms.   The Reeb orbits of these so-called dynamically separated contact forms satisfy a uniform growth condition on their Conley-Zehnder indices with respect to a free homotopy class; see Definition \ref{taut}.  {Given a contact form which is dynamically separated up to large action, we demonstrate a filtration by action on the chain complex and show how to obtain the desired cylindrical contact homology by taking direct limits.}  We give a direct proof of invariance of cylindrical contact homology within the class of dynamically separated contact forms, {and elucidate the independence of the filtered cylindrical contact homology with respect to the choice of the dynamically separated contact form and almost complex structure.} We also show that these regularity results are compatible with geometric methods of computing cylindrical contact homology of prequantization bundles, proving a conjecture of Eliashberg \cite{yashaICM} in dimension three.  \\

   \end{abstract}

\tableofcontents

 \section{Motivation and results}\label{intro}
Cylindrical contact homology, as introduced by Eliashberg-Givental-Hofer \cite{EGH}, is in principle an invariant of contact manifolds that admit a nondegenerate contact form $\lambda$ without Reeb orbits of certain gradings.  The cylindrical contact homology of $(M,\xi)$ is defined by choosing a nondegenerate such contact form and taking the homology of a chain complex over $\Q$ which is generated by ``good'' Reeb orbits, and whose differential $\partial^{EGH} $ counts $J$-holomorphic cylinders in $\R \times M$ for a suitable almost complex structure $J. $ Unfortunately, in many cases there is no way to choose $J$ so as to obtain the transversality for holomorphic cylinders needed to define $\partial^{EGH} $, to show that $\left(\partial^{EGH} \right)^2 = 0$, and to prove that the homology is an invariant of the contact structure $\xi$.

In \cite{jo1}, we gave a rigorous construction of cylindrical contact homology for contact forms in dimension three whose Reeb orbits satisfy a uniform growth condition on their Conley-Zehnder indices with respect to a fixed free homotopy class.  Such contact forms are said to be \emph{dynamically separated}; a precise definition is given in Definition \ref{taut}.  {Given a dynamically separated contact form up to large action, we investigate action (and SFT-grading) filtered cylindrical contact homology.}  Our invariance results are obtained more directly than those which appeared in \cite{HN2} for the hypertight case and those to appear in \cite{HN3} for the class of dynamically convex contact forms.  We also provide computational methods for the class of dynamically separated contact forms associated to prequantization spaces and Seifert fiber spaces.

\begin{remark} [Relationship to the dynamically convex case] \em
In \cite{HN1}, we showed that for ``dynamically convex'' contact forms $\lambda$ in three dimensions, and for generic almost complex structures $J$, one can in fact define the differential $\partial^{EGH} $ by counting $J$-holomorphic cylinders without any abstract perturbation. We also showed that $\left(\partial^{EGH} \right)^2=0$ using a generic  almost complex structure, without breaking the $S^1$ symmetry.     However, this relied on certain technical assumptions, which hold when $\pi_1(M)$ is not torsion;  see (*) in Theorem 1.3 and Remark 1.4 of \cite{HN1}.  We expect these assumptions to be removable.  In the meantime, the dynamically separated case allows us to consider some dynamically convex contact forms which do not satisfy (*).   Obtaining invariance in the dynamically convex case is currently in preparation \cite{HN3}, which involves extending the machinery of \cite{HN2} with obstruction bundle gluing from \cite{HT2}.
\end{remark}

To define cylindrical contact homology in general, some kind of abstract perturbation  of the $J$-holomorphic curve equation is needed, for example using polyfolds or Kuranishi structures.   Hofer, Wysocki, and Zehnder have developed the abstract analytic framework \cite{HWZpoly1}-\cite{HWZgw}, collectively known as \emph{polyfolds}, to systematically resolve issues of regularizing moduli spaces.  Contact homology awaits foundations via polyfolds and the use of abstract perturbations can make computations difficult. 

Pardon \cite{p} has defined full contact homology via virtual fundamental cycles but this approach is not applicable to defining cylindrical contact homology in the presence of contractible Reeb orbits.    In dimension three, in the absence of contractible Reeb orbits, and when paired with the action filtered versions of \cite[Thm 1.6, 1.9]{HN2}, the definition provided by Bao-Honda in \cite{bh} can be shown to be isomorphic to the cylindrical contact homology.  Using virtual techniques, Bao-Honda \cite{bh2} give a definition of the full contact homology differential graded algebra for any closed contact manifold in any dimension.  The approaches of Pardon and the latter of Bao-Honda make use of Kuranishi structures to construct contact and symplectic invariants and while they hold more generally, they are more difficult to work with in computations and applications.

\bigskip 

\noindent \textbf{Organization of the article}.  
The rest of Section \ref{intro} gives an overview of cylindrical contact homology, a discussion  of dynamically separated contact forms, a geometric means of computing cylindrical contact homology for prequantization bundles, and some examples.  We also discuss applications to dynamics.  Regularity results are proven in Section \ref{background}.  Index calculations and related arguments ruling out non cylindrical holomorphic buildings in cobordisms between dynamically separated contact manifolds are given in Section \ref{invariance}, yielding the desired invariance results for filtered cylindrical contact homology.  Conley-Zehnder index calculations associated to perturbations of prequantization bundles are given in Section \ref{cz-section}.  Finally, the proof of the Morse-Bott computational result is given in Section \ref{filtration}. 

\bigskip 

\noindent \textbf{Acknowledgements}. I thank Mohammed Abouzaid, Michael Hutchings, and Dusa McDuff for their interest in my work and insightful discussions. {I also thank the referee for their helpful comments and suggestions. } 

\subsection{Contact forms, Reeb vector fields, and gradings}\label{gradingsec}
 Let $(M^{2n-1}, \xi)$ be a co-oriented closed contact manifold of let $\lambda$ be a contact form such that ker $\lambda = \xi$.  The contact form $\lambda$ uniquely determines the \textbf{Reeb vector field} $R_\lambda$ by 
\[
\iota(R_\lambda)d\lambda=0, \ \ \ \lambda(R_\lambda)=1.   
\]
A (closed) \textbf{Reeb orbit} $\ga$ of period $T$ with $T>0$, associated to $R_\lambda$ is defined to be a map
\[
\ga: \R / T\Z \to M
\]
satisfying
\[
\dot{\ga}(t)= R_\lambda(\ga(t)), \ \ \ \ga(0)= \ga(T).
\]
Two Reeb orbits are considered equivalent if they differ by reparametrization, i.e. precomposition with any translation of $\R / T \Z$ corresponding to the choice of a starting point for the orbit.  

A Reeb orbit is said to be \textbf{simple} or equivalently, \textbf{embedded}, whenever the map $\ga: \R / T\Z \to M$ is injective.   If $\ga\colon \R/T\Z \to M$ is a simple Reeb orbit of period $T$ and $k$ a positive integer, then we denote $\ga^k$ to be the \textbf{$k$-fold cover} or iterate of $\ga$, meaning $\ga^k$ is the composition of $\ga$ with $\R/kT\Z \to \R / T\Z$ and has period $kT$.   We denote the the Reeb flow by $\varphi_t$, i.e. $ \dot{\varphi}_t = R_\lambda(\varphi_t).$

 A Reeb orbit is said to be \textbf{nondegenerate} whenever the linearized return map of the flow along $\gamma$,
\[ 
d\varphi_T: (\xi_{\ga(0)}, d\lambda) \to (\xi_{\ga(T)=\ga(0)},d\lambda)
\]
has no eigenvalue equal to 1.  If all the Reeb orbits associated to $\lambda$ are nondegenerate then $\lambda$ is said to be a \textbf{nondegenerate} contact form. 

The linearized flow of a $T$-periodic Reeb orbit $\ga$ yields a path of symplectic matrices given by
\[
d\varphi_t:\xi_{\gamma(0)} \to \xi_{\gamma(t)}, \ t\in[0,T].
\]
One can compute the Conley-Zehnder index of $d\varphi_t, \ t\in[0,T],$ however this index is typically dependent on the choice of trivialization $\Phi$ of $\xi$ along $\ga$ used in linearizing the Reeb flow. There is, however, always a canonical $\Z_2$-grading due to the axiomatic properties of the Conley-Zehnder index \cite{RS1,SZ}.  For $(M^{2n-1},\xi)$ this grading is obtained via 
\begin{equation}\label{Z2grading}
(-1)^{\czm(\gamma)}=(-1)^{n-1}\sign \det (\id - \Psi(T)),
\end{equation}
where $\Psi(t)_{t \in [0,T]} \in \mbox{Sp}(2n-2)$ is the linearized flow restricted to $\xi$ along a $T$-periodic Reeb orbit $\gamma$ with respect to the choice of symplectic trivialization $\Phi$ of $\xi$.  

In dimension three, one can classify a nondegenerate Reeb orbit $\gamma$ as being one of three types, depending on the eigenvalues $\Lambda$, $\Lambda^{-1}$ of the linearized flow return map $d\varphi_T|_\xi$:
\begin{enumerate}
\item[] $\gamma$ is \textbf{elliptic} if $\Lambda, \Lambda^{-1} := e^{\pm2\pi i \theta}$;
\item[] $\gamma$ is \textbf{positive hyperbolic} if  $\Lambda, \Lambda^{-1} > 0$; 
\item[] $\gamma$ is \textbf{negative hyperbolic} if $\Lambda, \Lambda^{-1} < 0$.
\end{enumerate}
 The parity of the Conley-Zehnder index does not depend on the choice of trivialization and is even when $\gamma$ is positive hyperbolic and odd otherwise, yielding the canonical $\Z_2$ grading in dimension 3. 

 
We will further need to classify Reeb orbits whose Conley-Zehnder index changes parity under iteration, a phenomenon which is always independent of the choice of trivialization.  

\begin{definition}\em
The $m$-fold closed Reeb orbit $\ga^m$ is \textbf{bad} if it is the $m$-fold iterate of a simple Reeb orbit $\ga$ such that the difference $\czm(\ga^{m}) - \czm(\ga)$ of their Conley-Zehnder indices is odd.   If a Reeb orbit is not bad then it is deemed to be a \textbf{good Reeb orbit}.  
 \end{definition}
 
 In dimension three, the set of bad orbits consists solely of the even iterates of simple negative hyperbolic orbits.  In higher dimensions, bad orbits can only arise from even multiple covers of nondegenerate simple orbits whose linearized return flow has an odd number of pairs of negative real eigenvalues $(\lambda, \lambda^{-1})$. 
  The set of all Reeb orbits in the free homotopy class $\Gamma$  is denoted by $\calpa$, and the set of  good Reeb orbits in a free homotopy class $\Gamma$ is denoted by $\mathscr{P}_{\mbox{\tiny good}}(\lambda;\Gamma)$.

In certain cases, one can upgrade the canonical $\Z_2$-grading.  For any $\lambda$-compatible $J$, the symplectic vector bundle $(\xi, d\lambda, J)$ has a natural $U(n-1)$ structure.  Since this bundle is a (almost) complex bundle, we can take its highest exterior power, which is the \textbf{anticanonical bundle} of $M$ and denoted by $\mathcal{K}^*$. The dual of $\mathcal{K}^*$ is the \textbf{canonical bundle}.   If $c_1(\xi; \Z)=0\in H^2(M;\Z)$  then one can trivialize the anticanonical bundle  $\mathcal{K}^*$.  Let 
 \[
 \widetilde{\Phi}: \mathcal{K}^* \to TM \times \mathbb{C}
 \]
 be a choice of such a trivialization.   This amounts to specifying a global complex volume form on $\R \times M$. If $H^1(M;\Q) =0$ then $\widetilde{\Phi}$ (as well as any complex volume form) is unique up to homotopy.  Now we can insist than any local trivialization $\Phi$ of $\xi$, which can be used to linearize the Reeb flow along $\gamma$ must agree with our ``canonically" determined trivialization $\widetilde{\Phi}$.  This gives rise to an absolute $\Z$-grading on the Reeb orbits.  

In this case one can sensibly refer to the Conley-Zehnder index of a Reeb orbit $\ga$, obtaining a $\Z$-grading on the Reeb orbits given by
 \begin{equation}\label{sftgrading}
 |\gamma|=\czm^\Phi(\gamma)+n-3.
 \end{equation}
 Here $\czm^\Phi(\gamma):=\czm(d\varphi_t)\arrowvert_{t\in[0,T]}$ is the Conley-Zehnder index of the path of symplectic matrices obtained from the linearization of the flow along $\ga$, restricted to $\xi$.  If $c_1(\xi; \Z)=0$ and  $H^1(M;\Q)\neq0$ then there is more than one homotopy class of trivializations associated to the complex line bundle that is the canonical representation of $-c_1(\xi)$, resulting in different choices of complex volume forms on  $(\R \times M, d(e^\tau \lambda), \Jt)$.  If $c_1(\xi; \Q)=0$ one can obtain a fractional $\Z$-grading, see  \cite[\S 3-4]{mclean} \cite{sgrade, biased}.  
 
{If we have that $c_1(\xi;\Z)$ vanishes on $\pi_2(M)$ then for each contractible Reeb orbit $\gamma$ we can define the Conley-Zehnder index $\gamma$ by $\czm(\gamma) = \czm^\Phi(\gamma)$, where $\Phi$ is a trivialization of $\xi_\gamma$ which extends to a trivialization of $\xi$ over a disk bounded by $\gamma$ of contractible loops there is a $\Z$-grading.  Should $c_1(\xi;\Q)=0$ and $H^1(M;\Q)=0$ then the trivialization of $\xi$ along a contractible closed Reeb orbit that extends to a capping disk will coincide with the homotopy class of the trivialization induced by a global complex volume form \cite[Lemma 4.3]{mclean}.}

 
 
 It is important to note that our trivializations are fixed up to homotopy; that is trivializations over iterated orbits must be homotopic to the iterated trivializations.  When the trivialization $\widetilde{\Phi}$ is available globally as when $c_1(\xi; \Z)=0$ this is straightforward, otherwise care must be taken in specifying local trivializations.

{We now give the definition of a \textbf{dynamically convex} contact form, a notion due to Hofer, Wysocki, and Zehnder.} This definition necessitates that the Conley-Zehnder index of contractible periodic orbits of the Reeb vector field be well-defined without any reference to a specific homotopy class of discs spanned by the orbits.  
This necessitates for every map $v: S^2 \to M$ that the integer $c_1(v^*\xi)([S^2])$ vanishes.  The stipulation that $c_1(v^*\xi)([S^2])\equiv 0$ is equivalent to $\psi_\xi \equiv 0$, where $\psi_\xi$ is the natural homomorphism defined by
\begin{equation}\label{naturalhomo}
\begin{array}{crcl}
\psi_\xi: & \pi_2(M) & \to & \Z, \\
& [\sigma] & \mapsto& c_1(v^*\xi). \\
\end{array}
\end{equation}

\begin{definition}\label{dyncon}\em
Let $\A$ be a nondegenerate contact form on a closed 3-manifold $M$.  We say that $\lambda$ is \textbf{dynamically convex} whenever
\begin{itemize}
\item $\lambda$ admits no contractible Reeb orbits, or
\item The map from (\ref{naturalhomo}) satisfies $\psi_\xi =0$  and every contractible Reeb orbit $\ga$ satisfies  $ \czm(\ga) \geq 3$.
\end{itemize}
\end{definition}
{ If $M$ is a compact star-shaped hypersurface in $\R^{4}$ then 
\[
\lambda = \frac{1}{2}\sum_{j=1}^2 \left( x_jdy_j - y_jdx_j \right)
\]
restricts to a contact form on $M$.  In \cite{HWZdyn} it is shown that if $M$ convex, and if $\lambda$ is nondegenerate then $\lambda$ is dynamically convex.  This property was used to give a remarkable characterization of the tight 3-sphere \cite[Theorem 1.5]{HWZdyn}.}

We are also interested in contact forms which do not admit contractible Reeb orbits.  A contact form $\lambda$ on $M^{2n-1}$ is said to be \textbf{hypertight} whenever the Reeb vector field associated to $\lambda$ admits no contractible Reeb orbits.  While historically inaccurate, we take the class of dynamically convex contact forms to include the set of hypertight contact forms.

\subsection{Dynamically separated contact forms}\label{dynsep}

The differentials (when well-defined) on the chain complex defining cylindrical contact homology preserve the free homotopy classes of Reeb orbits since they count cylinders which project to homotopies in $M$ between the Reeb orbits.  Furthermore, the chain maps (when well-defined)  also preserve the free homotopy classes of Reeb orbits.  The dynamically separated condition gives control on the Conley-Zehnder index of iterates of Reeb orbits in a specified non-primitive free homotopy class.  This permits us to achieve transversality for certain multiply covered cylinders in cobordisms and esnure that no noncylindrical levels are present in compactifications of curves to pseudoholomorphic buildings.

First we recall some preliminary notions with regard to free homotopy classes of loops.  
Fix a closed contact three manifold $(M,\xi)$. A \textbf{primitive} homotopy class of loops $\Gamma \in \pi_0(\Omega M)$ means that $\Gamma$ is not equal to $k\Gamma'$ for any $\Gamma' \in \pi_0(\Omega M)$ and an integer $k >1$.  As explained in \cite[\S 10]{wendl-sft}, all pseudoholomorphic cylinders interpolating between closed primitive Reeb orbits are somewhere injective, and hence regular provided $J$ is generic.  

One can define and obtain topological invariance of cylindrical contact homology with ``classical" methods for the following important subclass of hypertight contact forms in any dimension.

\begin{definition}[Def. 10.16 \cite{wendl-sft}] \em
Given a contact manifold $(M^{2n-1}, \xi)$ and a primitive homotopy class $\Gamma \in \pi_0(\Omega M)$ we say that a contact form $\lambda$ for $\xi$ is \textbf{$\Gamma$-admissible} if all the Reeb orbits homotopic to $\Gamma$ are nondegenerate and there are no contractible Reeb orbits.
\end{definition}

\begin{remark}\em
Standard SFT compactness \cite{BEHWZ} does not apply for sequences of pseudoholomorphic cylinders in the symplectization of a $\Gamma$-admissible contact manifold.  However, Wendl gives a direct proof of the desired result in \cite[Prop. 10.19]{wendl-sft}.
\end{remark}

In dimension three, this paper provides a means of defining cylindrical contact homology for non-primitive homotopy classes subject to the dynamically separated condition.  The definition of dynamically separated necessitates that $c_1(\ker \lambda)=0$ if there is more than one Reeb orbit in each free non-primitive homotopy class so that a $\Z$-grading is available.  
We first give the definition of dynamically separated when all Reeb orbits are contractible. 

\begin{definition} \em
Let $(M,\lambda)$ be a contact 3-manifold with $c_1(\ker \lambda)=0$ such that all the Reeb orbits of $R_\lambda$ are contractible. Then $\lambda$ is said to be \textbf{dynamically separated} whenever the following conditions hold.
\begin{enumerate}
\item[{(I)  }]  If $\ga$ is a closed simple Reeb orbit then $ 3 \leq  \czm({\ga}) \leq 5$;
\item[{(II)}] If $\ga^k$ is the $k$-fold cover of a simple orbit $\ga$ then $\czm(\ga^k) = \czm(\ga^{k-1})+4.$
\end{enumerate}
\end{definition}

The presence of noncontractible non-primitive Reeb orbits necessitates that we must keep track of the free homotopy class of a non-primitive Reeb orbit after each iteration of the underlying simple orbit.  {This is particularly important if the simple orbit is a torsion element of  $\pi_0(\Omega M)$, and some of if its iterates are contractible, as is the case for lens spaces, see Example \ref{lensex}.  This bookkeeping is important when ruling out breaking phenomena in Section \ref{invariance} and is used to define the following}  analogue of Condition II with respect to a free homotopy class $\baar \in \pi_0(\Omega M)$.

\begin{definition}\label{taut} \em
Let $(M,\lambda)$ be a contact 3-manifold with $c_1(\ker \lambda)=0$. Let $\gamma$ be a simple Reeb orbit. For each free homotopy class $\baar$, let 
\[
1 \leq k_1(\baar, \ga) < k_2(\baar, \ga) < ... <k_i(\baar, \ga) <...
\]
 be the (possibly empty or infinite) list of all integers such that all the $k_i(\baar,\ga)$-fold covers of $\ga$ lie in the same free homotopy class $\baar$.  We will use $\baar=0$ to represent the class of contractible orbits. A contact form $\lambda$ is said to be \textbf{dynamically separated} whenever the following conditions are satisfied. 
\begin{enumerate}
\item[{(I.i)  }]   For the class of contractible orbits, $\baar=0$, we have $ 3 \leq  \czm({\ga^{k_1(0, \ga)}}) \leq 5$;
\item[{(I.ii)}]   For each {non-primitive} $\baar \neq 0$ there exists  $m(\baar, \ga) \in \Z_{>0}$ such that  $2m-1 \leq \czm(\ga^{k_1(\baar, \ga)}) \leq 2m+1$;
\item[{(II)  }]  For each  {non-primitive} free homotopy class $\baar$ we have $\czm(\ga^{k_{i+1}(\baar, \ga)})=\czm(\ga^{k_{i}(\baar, \ga)})+4.$
\end{enumerate}
\end{definition}
{We note that (I.ii) is equivalent to requiring that $\czm(\gamma^{k_1(c,\gamma)})$ is a positive integer for each {non-primitive}  $c \neq 0$.  We have expressed this condition more pedantically to stress that the first iterates of a simple Reeb orbit representing different  {non-primitive} free homotopy classes need not have their Conley-Zehnder index agree. }

For computational methods it is often practical to consider contact forms which will be \textbf{dynamically separated up to (large) action}, which is proportional to the index.  This modification is explained in the following definition and we note that many Morse-Bott contact forms can be made dynamically separated up to large action by a small perturbation.

\begin{definition}\em
A contact form $\lambda$ is said to be $L$-\textbf{dynamically separated} whenever the following conditions are satisfied. 
\begin{enumerate}
\item[{(I.i)  }]   For the class of contractible orbits, $\baar=0$, we have $ 3 \leq  \czm({\ga^{k_1(0, \ga)}}) \leq 5$ and 
\[
 \mathcal{A}(\gamma^{k_1(0, \ga)};\lambda):=\int_{\gamma^{k_1(0, \ga)}} \lambda < {L};
\]
\item[{(I.ii)}]   For each {non-primitive} $\baar \neq 0$ there exists  $m(\baar, \ga) \in \Z_{>0}$ such that  $2m-1 \leq \czm(\ga^{k_1(\baar, \ga)}) \leq 2m+1$ and
\[
\mathcal{A}(\gamma^{k_1(\baar, \ga)};\lambda):=\int_{\gamma^{k_1(\baar, \ga)}} \lambda < {L};
\]
\item[{(II)  }]  For each {non-primitive} free homotopy class $\baar$ we have $\czm(\ga^{k_{i+1}(\baar, \ga)})=\czm(\ga^{k_{i}(\baar, \ga)})+4,$ whenever 
\[
\mathcal{A}(\gamma^{k_{i+1}(\baar, \ga)};\lambda):=\int_{\gamma^{k_{i+1}(\baar, \ga)}} \lambda < {L}.
\]
\end{enumerate}

\end{definition}

Examples of $L$-nondegenerate dynamically separated contact forms arise naturally from prequantization bundles; see Section \ref{examples-intro}.

 \subsection{Cylindrical contact homology}\label{attempt}

We say that an almost complex structure $J$ on $\R\times Y$ is \textbf{$\lambda$-compatible\/} if $J(\xi)=\xi$; $d\lambda(v,Jv)>0$ for nonzero $v\in\xi$; $J$ is invariant under translation of the $\R$ factor; and $J(\frac{\partial}{\partial \tau})=R$, where $\tau$ denotes the $\R$ coordinate.  In the following it should be assumed that we have chosen such a $J$ generically.

If $\gamma_+$ and $\gamma_-$ are Reeb orbits, we consider $J$-holomorphic cylinders between them, namely maps 
\[
u(s,t):=(a(s,t),f(s,t)): (\R \times S^1, j_0) \to (\R \times M, \Jt)
\]
satisfying the Cauchy-Riemann equation, 
\[
\bar{\pa}_{j,J}u:= du + J \circ du \circ j \equiv 0,
\]
such that $\lim_{s\to\pm_\infty}\pi_\R(u(s,t))=\pm\infty$, and $\lim_{s\to\pm\infty}\pi_Y(u(s,\cdot))$ is a parametrization of $\gamma_\pm$. Here $\pi_\R$ and $\pi_Y$ denote the projections from $\R\times Y$ to $\R$ and $Y$ respectively.


We declare two maps to be equivalent if they differ by translation and rotation of the domain $\R \times S^1$ and denote the set of equivalence classes by $\widehat{\mathcal{M}}^J(\gp;\gm)$.  Note that $\R$ acts on $\widehat{\mathcal{M}}^J(\gp;\gm)$ by translation of the $\R$ factor in $\R\times Y$.  We denote
\[
\M^J(\gamma_+,\gamma_-):=\widehat{\mathcal{M}}^J(\gp;\gm)/\R.
\]

Given $u$ as above, with respect to a suitable trivialization $\Phi$ of $\xi$ over $\gamma_+$ and $\gamma_-$, we define the {\bf Fredholm index\/} of $u$ by
\[
\mbox{ind}(u) = \czm^\Phi(\gamma_+) - \czm^\Phi(\gamma_-) + 2c_1^\Phi(u^*\xi).
\]

The significance of the Fredholm index is that if $J$ is generic and $u$ is somewhere injective, then $\widehat{\mathcal{M}}^J(\gamma_+,\gamma_-)$ is naturally a manifold near $u$ of dimension $\mbox{ind}(u)$. Let $\widehat{\mathcal{M}}^J_k(\gamma_+,\gamma_-)$ denote the set of $u\in\widehat{\mathcal{M}}^J(\gamma_+,\gamma_-)$ with $\mbox{ind}(u)=k$.

The cylindrical contact homology \textbf{chain complex} $C^{EGH}_*(M,\lambda, J)$ is generated by all nondegenerate closed \textbf{good} Reeb orbits of  $R_\lambda$ over $\Q$-coefficients, with grading determined by (\ref{sftgrading}).    Bad Reeb orbits must be excluded from the chain group because of issues involving orientations and invariance.   For a more detailed discussion on other choices of coefficients see Remark \ref{coefficients}.     

The chain complex splits over the free homotopy classes $\baar \in \pi_0(\Omega M)$ of Reeb orbits because the \textbf{differentials} are defined via a weighted count of rigid pseudoholomorphic cylinders interpolating between two closed Reeb orbits.  We denote the subcomplex involving Reeb orbits in the class $\Gamma$ by $C^{EGH}_*(M,\lambda, J, \Gamma)$.

The \textbf{differential} is given in terms of a weighted count of the elements of the moduli space of  \textbf{rigid cylinders} $\widehat{\mathcal{M}}^J_1(\gamma_+,\gamma_-)/\R$.  The weights arise because $\gp$ and $\gm$ may be multiply covered Reeb orbits, which means that $\widehat{\mathcal{M}}^J_1(\gamma_+,\gamma_-)/\R$ may consist of multiply covered curves

\begin{definition}[Multiplicities of orbits and curves]\label{multiplicities}\em
If $\widetilde{\ga}$ is a closed Reeb orbit, which is a $k$-fold cover of a simple orbit $\ga$, then the \textbf{multiplicity of the Reeb orbit} $\widetilde{\ga}$ is defined to be
$\mult(\widetilde{\ga})=k$
and $\mult(\ga)=1$.  The {multiplicity of a pseudoholomorphic curve} is 1 if it is somewhere injective.  If the pseudoholomorphic curve $u$ is multiply covered then it factors through a somewhere injective curve $v$ and a holomorphic covering $\varphi: (\R \times S^1,j_0) \to (\R \times S^1, j_0),$ e.g. $u = v \circ \varphi$. The \textbf{multiplicity of $u$} is defined to be $\mult(u):=\mbox{deg}(\varphi).$   If $u \in \mhat(\gp;\gm)$ then $m(\calc)$ divides both $\mult(\gp)$ and $\mult(\gm)$.  
\end{definition}


We define the operators 
\[
\begin{array}{ccll}
\kappa: & C^{EGH}_*(M,\lambda, J) &\to& C^{EGH}_{*}(M,\lambda, J)\\
&x& \mapsto &\mult(x)x \\
\end{array}
\]
and
\begin{equation}\label{defdelta}
\begin{array}{ccll}
\delta: & C^{EGH}_*(M,\lambda, J) &\to& C^{EGH}_{*-1}(M,\lambda, J)\\
&x& \mapsto &\displaystyle \sum_{\substack{ y \in \mathscr{P}_{\mbox{\tiny good}}(\lambda) \\ u \in \calm_1(x; y)/\R}}\dfrac{\epsilon(u)}{\mult(u)}y. \\
\end{array}
\end{equation}
The differentials are defined by
\begin{equation}\label{pa1}
\begin{array}{ccll}
\pa^{EGH}_{-}:=\kappa \circ \delta \ \colon & C^{EGH}_*(M,\lambda, J) &\to& C^{EGH}_{*-1}(M,\lambda, J) \\
&\gax &\mapsto& \displaystyle \sum_{\substack{y \in \mathscr{P}_{\mbox{\tiny good}}(\lambda) \\ u \in\calm_1(x; y)/\R}}\left( \epsilon(u)\frac{\mult({y})}{\mult(u)}\right) \up \\
\end{array}
\end{equation}
and
\begin{equation}\label{pa2}
\begin{array}{ccll}
\pa^{EGH}_{+}:= \delta \circ \kappa \ \colon & C^{EGH}_*(M,\lambda, J) &\to& C^{EGH}_{*-1}(M,\lambda, J) \\
&\gax &\mapsto& \displaystyle \sum_{\substack{y \in \mathscr{P}_{\mbox{\tiny good}}(\lambda) \\ u \in \calm_1(x; y)/\R}} \left( \epsilon(\calc)\frac{\mult({x})}{\mult(u)}\right) \up, \\
\end{array}
\end{equation}
where $\epsilon(\calc) = \pm1$ depends on a choice of \textbf{coherent orientations.}    Coherent orientations for symplectic field theory can be found in \cite{BM}, with additional exposition in \cite[\S A]{HN2}. A different choice of coherent orientations will lead to different signs in the differential, but the chain complexes will be canonically isomorphic.   

\begin{remark}[Well-definedness of the differentials]\em
In order to ensure that both of the expressions (\ref{pa1}) and (\ref{pa2}) are meaningful, i.e. that the counts of curves are finite, one must have proven that all moduli spaces of relevance can be cut out transversely.  
\end{remark}

\begin{remark}[Existence of Orientations]\label{goodbad1}\em
When the moduli space $\calm(x;y)/\R$  is a manifold, it can only be oriented by a choice of coherent orientations as in \cite{BM}, provided both $x$ and $y$ are good orbits.  
\end{remark}

\begin{remark}[Choices of coefficients]\label{coefficients}\em
The homologies, $H_*(C^{EGH}_*(M, \lambda, J), \pa^{EGH}_\pm)$ are equivalent over $\Q$-coefficients, provided sufficient transversality holds to define the chain complexes and obtain invariance.  The isomorphism between these two chain complexes is then given by $\kappa$ because $(\kappa \delta) \kappa = \kappa (\delta \kappa).$  As a result we denote
\[
CH_*^{EGH}(M, \lambda, J): = H_*(C^{EGH}_*(M, \lambda, J), \pa^{EGH}_\pm)
\]

While one can always define either differential for cylindrical contact homology over $\Z_2$ or $\Z$-coefficients because the weighted expression is always integral, one needs to work over $\Q$ in order to define the chain maps between the respective complexes $(C^{EGH}_*(M, \lambda, J), \pa^{EGH}_\pm)$.  In the case of dynamically separated contact forms $\lambda$ we have $\pa_-^{EGH} \equiv \pa_+^{EGH}$ because for any $u \in \calm_1(x;y)/\R$, $\mult(x)=\mult(y)$.  In this case the contact homologies are trivially isomorphic over $\Z_2$ and $\Z$-coefficients.
\end{remark}

\begin{remark}[Exclusion of bad Reeb orbits] \em
One must exclude bad Reeb orbits from the chain complex as their inclusion obstructs the proof of invariance, assuming sufficient transversality existed in the first place; see the period doubling example explained in \cite[\S 6.3]{HN2}.
\end{remark}


Cylindrical contact homology is well-defined for any primitive homotopy class $\Gamma \in \pi_0(\Omega M)$ and closed contact manifold $(M^{2n-1}, \xi)$ which is $\Gamma$-admissible.  It is also invariant under contactomorphisms in the following sense.  Here $CH_*^{EGH}(M,\lambda,J, \Gamma)$ represents the homology of subcomplex generated by the Reeb orbits in the free homotopy class $\Gamma$.

\begin{theorem}[Prop. 10.21, 10.24 \cite{wendl-sft}]
Let $M^{2n-1}$ be a closed manifold and $\Gamma \in \pi_0(\Omega M)$ be a primitive homotopy class of loops.  Then for a $\Gamma$-admissible contact form $\lambda$ and generic $\lambda$-compatible almost complex structure the operator $\delta$ in \eqref{defdelta} is well-defined and satisfies $\delta\kappa\delta=0$.  Suppose $\varphi: (M_0, \xi_0) \to (M_1,\xi_1)$ is a contactomorphism with $\varphi_*\Gamma_0 = \Gamma_1$, where $\Gamma_0$ is a primitive homotopy class of loops and $(M_1,\xi_1)$ is $\Gamma_1$-admissible.  Then $(M_0,\xi_0)$ is $\Gamma_0$-admissible, and 
 $CH_*^{EGH}(M_0, \xi_0, \Gamma_0) \cong  CH_*^{EGH}(M_1,\xi_1, \Gamma_1)$.

\end{theorem}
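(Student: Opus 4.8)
The plan is to follow Wendl \cite[\S 10]{wendl-sft}, the essential point being that a \emph{primitive} class $\Gamma$ cannot be written nontrivially as a multiple of another class in $\pi_0(\Omega M)$, so that every $J$-holomorphic cylinder whose positive and negative asymptotics are freely homotopic to $\Gamma$ is somewhere injective; for generic $\lambda$-compatible $J$ such curves are then Fredholm regular, the $\R$-action on the non-trivial ones is free, and $\widehat{\mathcal{M}}^J_k(\gamma_+,\gamma_-)/\R$ is a smooth manifold of dimension $k-1$.

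\textbf{Well-definedness of $\delta$ and $\delta\kappa\delta=0$.} Fix a $\Gamma$-admissible $\lambda$ and generic $\lambda$-compatible $J$. To see that $\delta$ in \eqref{defdelta} is well-defined we must check that $\widehat{\mathcal{M}}^J_1(\gamma_+,\gamma_-)/\R$ is finite for each pair of good orbits homotopic to $\Gamma$; being a $0$-manifold it suffices to prove it is compact. Apply SFT compactness \cite{BEHWZ}: a sequence subconverges to a holomorphic building in $\R\times M$. Because $\lambda$ has no contractible Reeb orbits, no level of the building can contain a closed component or a component with a puncture capped by a disk; and because $\Gamma$ is primitive, the asymptotic constraints force every nontrivial component of every level to be a cylinder between orbits homotopic to $\Gamma$, which is exactly \cite[Prop. 10.19]{wendl-sft}. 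Each such component is somewhere injective, hence regular, hence of Fredholm index $\geq 1$; since $\ind$ is additive over the building and equals $1$, the building has a single nontrivial level, so no breaking occurs and $\widehat{\mathcal{M}}^J_1(\gamma_+,\gamma_-)/\R$ is compact. The same analysis applied to the index-$2$ moduli space shows that the $1$-manifold $\widehat{\mathcal{M}}^J_2(\gamma_+,\gamma_-)/\R$ is compact with boundary, its boundary consisting precisely of two-level configurations $(u_1,u_2)$ of index-$1$ cylinders joined along an intermediate orbit; a gluing theorem identifies each such configuration with a unique end, with the relevant count weighted by the multiplicity $\mult(\,\cdot\,)$ of the intermediate orbit. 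Summing the signed count of boundary points over all $\gamma_\pm$, with the coherent orientations of \cite{BM}, and matching the multiplicity bookkeeping against the definitions of $\kappa$ and $\delta$, yields $\delta\kappa\delta = 0$.

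\textbf{Invariance under contactomorphism.} Given $\varphi\colon(M_0,\xi_0)\to(M_1,\xi_1)$ with $\varphi_*\Gamma_0=\Gamma_1$ and a $\Gamma_1$-admissible contact form $\lambda_1$ with generic $\lambda_1$-compatible $J_1$, set $\lambda_0 := \varphi^*\lambda_1$, a contact form for $\xi_0$. The Reeb flows of $\lambda_0$ and $\lambda_1$ are conjugate by $\varphi$, so closed Reeb orbits, their periods, and their nondegeneracy correspond, and free homotopy classes transform by $\varphi_*$; hence $\lambda_0$ has no contractible Reeb orbits and every orbit homotopic to $\Gamma_0$ is nondegenerate, i.e.\ $(M_0,\xi_0)$ is $\Gamma_0$-admissible. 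Pulling $J_1$ back by $\mathrm{id}_\R\times\varphi$ gives a generic $\lambda_0$-compatible $J_0$ together with an index- and orientation-preserving bijection of all relevant moduli spaces, hence a canonical chain isomorphism $C^{EGH}_*(M_0,\lambda_0,J_0,\Gamma_0)\cong C^{EGH}_*(M_1,\lambda_1,J_1,\Gamma_1)$. It remains to see that $CH_*^{EGH}(M_0,\xi_0,\Gamma_0)$ is independent of the auxiliary choices: for two $\Gamma_0$-admissible pairs $(\lambda,J)$, $(\lambda',J')$ one interpolates $\lambda$ to $\lambda'$, builds an exact symplectic cobordism with a generic compatible almost complex structure, and counts index-$0$ cylinders to define a continuation chain map; its construction, the composition of forward and backward maps, and the chain-homotopy via a one-parameter family of cobordisms all run exactly as above, once one knows — by the identical compactness and transversality input applied now to buildings with a single cobordism level and cylindrical symplectization levels — that only broken cylinders in class $\Gamma_0$ occur. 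The resulting homology isomorphism, composed with the chain isomorphism induced by $\varphi$, gives $CH_*^{EGH}(M_0,\xi_0,\Gamma_0)\cong CH_*^{EGH}(M_1,\xi_1,\Gamma_1)$.

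\textbf{Main obstacle.} The crux is the compactness statement: ruling out non-cylindrical levels — pair-of-pants pieces, branched covers of trivial cylinders with extra capped punctures, closed nodal components — in the SFT-compactness limits, both in the symplectization and in the interpolating cobordism, without any abstract perturbation. This is precisely where primitivity of $\Gamma$ and the absence of contractible Reeb orbits enter, and it is the content of \cite[Prop. 10.19]{wendl-sft} together with its straightforward cobordism analogue; once it is in hand, additivity of the Fredholm index over a building forces the expected number of levels, and the remainder is the standard Floer-theoretic package, with transversality guaranteed throughout by somewhere-injectivity in a primitive class.
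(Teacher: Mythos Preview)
The paper does not prove this theorem; it is quoted as a result of Wendl, cited as \cite[Prop.~10.21, 10.24]{wendl-sft}, and no argument is given in the paper beyond the surrounding remarks. Your sketch is faithful to Wendl's approach as the paper references it: primitivity of $\Gamma$ forces somewhere-injectivity of all relevant cylinders, generic $J$ gives regularity, the compactness argument (Wendl's Prop.~10.19) restricts limiting buildings to broken cylinders in class $\Gamma$, and invariance runs through the standard continuation-map package in an exact cobordism.

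One point worth sharpening, already flagged in the paper's remark immediately preceding the theorem: you invoke ``SFT compactness \cite{BEHWZ}'' directly, but $\Gamma$-admissibility only demands nondegeneracy for orbits in the class $\Gamma$, so the hypotheses of \cite{BEHWZ} are not met globally. You do cite \cite[Prop.~10.19]{wendl-sft}, which is the correct substitute, but your phrasing ``Apply SFT compactness \cite{BEHWZ}'' followed by an appeal to Prop.~10.19 slightly obscures that the latter is a replacement for, not a consequence of, the former. It would be cleaner to lead with Wendl's direct compactness argument rather than \cite{BEHWZ}.
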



In \cite[Theorem 1.3]{HN1} we proved the following.

\begin{theorem}
Let $\lambda$ be a nondegenerate dynamically convex contact form on a closed 3-manifold $M$.   Suppose further that:
\begin{description}
\item{(*)}
A contractible Reeb orbit $\gamma$ has $\czm(\gamma)=3$ only if $\gamma$ is embedded.
\end{description}
Then for generic $\lambda$-compatible almost complex structures $J$ on $\R\times M$, the operator $\delta$ in \eqref{defdelta} is well-defined and satisfies $\delta\kappa\delta=0$, so that $(C^{EGH}_*(M,\lambda,J),\partial^{EGH}_\pm)$ is a well-defined chain complex.
\end{theorem}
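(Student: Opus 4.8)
The plan is to establish the two assertions separately: that the weighted count defining $\delta$ is finite, so $\delta$ is well-defined, and that $\delta\kappa\delta=0$. Both reduce to the same two inputs---\emph{automatic transversality} for punctured holomorphic curves in the four-dimensional symplectization $\R\times M$, and SFT compactness \cite{BEHWZ}. First I would set up transversality. For a somewhere injective $J$-holomorphic cylinder $v$ in $\R\times M$ asymptotic to Reeb orbits $\gamma_\pm$, I would bound the asymptotic winding numbers $\windpi$ of $v$ at its two punctures in terms of $\czm^\Phi(\gamma_\pm)$ using the asymptotic analysis of Hofer--Wysocki--Zehnder and Siefring. Dynamical convexity---$\czm(\gamma)\ge 3$ for every contractible orbit, with hypothesis $(*)$ handling the equality case---then pins these winding numbers down sharply enough that Wendl's automatic transversality inequality, relating $\mbox{ind}(v)$, the normal Chern number $c_N(v)$, the genus, and the parities of the asymptotic orbits, is satisfied; hence $v$ is unobstructed for generic $J$. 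Since a multiply covered cylinder $u=v\circ\varphi$ inherits regularity from $v$, and since the index inequality is preserved under the covering because the domain remains a cylinder and the trivializations are taken compatibly with iteration, this yields transversality for \emph{all} cylinders of Fredholm index $\le 2$, not merely the somewhere injective ones.

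For well-definedness of $\delta$, I would take a sequence in $\widehat{\mathcal{M}}^J_1(\gp;\gm)$ with no convergent subsequence and apply SFT compactness to obtain a limiting holomorphic building; the task is to show the building is a single cylinder. This is an index count: by the transversality above every nontrivial level is a disjoint union of curves of index $\ge 1$ (trivial cylinders contribute $0$), the indices sum to $1$ across the levels after accounting for the $\R$-actions, and the asymptotics force a single chain of cylinders from $\gamma_+$ to $\gamma_-$; any additional plane would be positively asymptotic to a contractible orbit and hence have index $\ge 1$ by $\czm\ge 3$ and \eqref{sftgrading}, which the bookkeeping forbids. Therefore $\widehat{\mathcal{M}}^J_1(\gp;\gm)/\R$ is compact, and being a transversely cut out $0$-manifold it is finite; coherent orientations \cite{BM} together with Remark \ref{goodbad1} (both $\gamma_\pm$ good) make $\delta$ well-defined.

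For $\delta\kappa\delta=0$ I would study $\widehat{\mathcal{M}}^J_2(\gp;\gm)/\R$, which by transversality is a smooth $1$-manifold whose signed count of ends vanishes. The same index-and-compactness analysis shows its ends are precisely two-level buildings, each level a rigid cylinder (possibly multiply covered)---this is the step that genuinely uses $(*)$, which is what rules out the breaking off of an index-$3$ plane at a non-embedded contractible orbit. Obstruction-free gluing identifies a collar neighborhood of each such broken configuration in the compactified moduli space, and the weights $\mult(\cdot)/\mult(u)$ built into $\delta$ and $\kappa$ are exactly those making the gluing count agree with the product of the two contributions to $\delta$ at the intermediate orbit; the signs match by the coherence property of the orientations \cite{BM}. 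Summing over all intermediate good orbits yields $(\delta\kappa\delta)(\gamma_+)=0$.

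The main obstacle is the transversality for multiply covered cylinders: such curves are essentially never regular in the classical Sard--Smale sense, so the entire argument rests on verifying Wendl's inequality for the underlying simple curve and confirming that it is inherited by covers. The delicate point is making the winding-number estimates at contractible orbits of Conley--Zehnder index exactly $3$ sharp enough to close the argument, and this is exactly where hypothesis $(*)$ is indispensable.
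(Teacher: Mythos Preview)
This theorem is quoted from \cite{HN1} rather than proved here, so the comparison is against that paper's argument, whose main ingredients reappear in Sections~\ref{background}--\ref{invariance}. Your architecture is right, but the claim that ``a multiply covered cylinder $u=v\circ\varphi$ inherits regularity from $v$'' is not how this works: regularity is not inherited by covers in general. Rather, automatic transversality (Proposition~\ref{aut-agree}) applies \emph{directly} to any cylinder of index $\ge 1$, simple or not, since for generic $J$ such a cylinder is immersed and satisfies $\ind(u)+2-h_+(u)>0$ automatically (a cylinder has two ends, so $h_+\le 2$). This step uses neither dynamical convexity nor $(*)$; you have misattributed their role.

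The genuine gap is in compactness. You assert that ``every nontrivial level is a disjoint union of curves of index $\ge 1$'', but you have only established transversality for \emph{cylinders}. A limiting building of a sequence of cylinders can contain non-cylindrical components---branched covers of planes, pairs of pants, curves with many negative ends---which may be multiply covered with negative Fredholm index and for which no transversality holds. This is where dynamical convexity actually enters: one combines Riemann--Hurwitz with the almost-linear growth of $\czm$ under iteration (Proposition~\ref{almostlinear}) to obtain lower bounds on the index of such covers (cf.\ Propositions~\ref{Hurwitztentacles} and~\ref{trivcyl}), and then runs an induction on building height as in Lemma~\ref{lemma0} and its sequels to conclude that no bad breaking occurs. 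Hypothesis~$(*)$ is invoked in \cite{HN1} at a specific point of this analysis, to control the boundary of the index-$2$ cylinder moduli space when a plane at a contractible orbit with $\czm=3$ threatens to appear; if that orbit were not embedded, the index bookkeeping for the underlying simple curve could fail. Without these estimates your compactification of $\widehat{\mathcal{M}}^J_2(\gp;\gm)/\R$ is not shown to consist only of once-broken cylinders, and $\delta\kappa\delta=0$ does not follow.
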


In \cite{HN2} we establish invariance of cylindrical contact homology in the hypertight case.  This is achieved this by breaking the $S^1$-symmetry and using domain dependent almost complex structures, which necessitates the construction of nonequivariant contact homology $NCH_*(M,\xi; \Z)$ and a family Floer $S^1$-equivariant version of the nonequivaraint theory $CH_*^{S^1}(M,\xi; \Z)$.  We show that these theories do not depend on the choice of contact form or choice of $S^1$-dependent (resp. $S^1$-equivariant $S^1 \times ES^1$-dependent) family of almost complex structures.  More precisely we show the following.
\begin{theorem}[Theorem 1.6 \cite{HN2}]
Let $Y^{2n-1}$ be a closed manifold, and $\lambda$ and $\lambda'$ be nondegenerate hypertight contact forms on $Y$ with $\mbox{\em ker}(\lambda)=\mbox{\em ker}(\lambda')$. Let $\mathfrak{J}$ be a generic $S^1$-equivariant $S^1\times ES^1$-family of $\lambda$-compatible almost complex structures, and let $\mathfrak{J}'$ be a generic $S^1$-equivariant $S^1\times ES^1$-family of $\lambda'$-compatible almost complex structures. Then there is a canonical isomorphism
\[
CH_*^{S^1}(Y,\lambda,\mathfrak{J};\Z) = CH_*^{S^1}(Y,\lambda',\mathfrak{J}';\Z).
\]
\end{theorem}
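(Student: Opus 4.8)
The plan is to prove invariance by the Floer-theoretic continuation-map argument, carried out at the level of the $S^1\times ES^1$-parametrized (Borel) complexes whose homology is $CH_*^{S^1}$. When the relevant free homotopy class of Reeb orbits is primitive the statement already follows from the $\Gamma$-admissible case of \cite{wendl-sft} quoted above (apply it to the identity contactomorphism), since then every interpolating cylinder is somewhere injective and hence regular for generic data; so the real content is in the non-primitive classes, which is precisely why one passes through the nonequivariant theory $NCH_*$ and its equivariant Borel construction. Writing $\mathcal D_0=(\lambda,\mathfrak J)$ and $\mathcal D_1=(\lambda',\mathfrak J')$, and using $\ker\lambda=\ker\lambda'=\xi$, I would first choose a homotopy of contact forms $\{\lambda_s\}_{s\in\R}$ for $\xi$ equal to $\lambda$ for $s\ll 0$ and to $\lambda'$ for $s\gg 0$ (so that $\R\times Y$ becomes an exact symplectic cobordism) together with a generic interpolating $S^1$-equivariant $S^1\times ES^1$-family $\{\mathfrak J_s\}$ of cobordism-compatible almost complex structures agreeing with $\mathfrak J$ and $\mathfrak J'$ at the ends. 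Counting, with coherent signs and the usual multiplicity weights, the rigid solutions of the resulting parametrized Cauchy--Riemann equation defines a degree-zero map $\Phi_{01}$ on chain complexes; since the count uses only genuine (unbroken) cylinders while its failure to be closed is measured by boundary configurations consisting of an interpolation cylinder glued to a symplectization cylinder at one end, $\Phi_{01}$ is a chain map and descends to $CH_*^{S^1}(Y,\lambda,\mathfrak J;\Z)\to CH_*^{S^1}(Y,\lambda',\mathfrak J';\Z)$.

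Next I would show these maps are isomorphisms and that the induced isomorphism is canonical, via the three standard steps. First, stacking an interpolation $\mathcal D_0\rightsquigarrow\mathcal D_1$ onto one $\mathcal D_1\rightsquigarrow\mathcal D_2$ and neck-stretching across the middle produces a one-parameter family of cobordism data connecting the concatenation to a direct interpolation $\mathcal D_0\rightsquigarrow\mathcal D_2$; counting rigid solutions over this family yields $\Phi_{12}\circ\Phi_{01}\simeq\Phi_{02}$ up to chain homotopy. Second, the constant interpolation from $\mathcal D_0$ to itself has only the $\R$-families of trivial cylinders as rigid solutions, each counted once, so $\Phi_{00}\simeq\mathrm{id}$; taking $\mathcal D_2=\mathcal D_0$ (and symmetrically) gives $\Phi_{10}\circ\Phi_{01}\simeq\mathrm{id}$, so $\Phi_{01}$ is an isomorphism on homology. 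Third, any two choices of interpolating data from $\mathcal D_0$ to $\mathcal D_1$ are joined by a path in the connected space of such data, and the associated one-parameter count furnishes a chain homotopy between the two continuation maps; hence the isomorphism on homology is independent of all auxiliary choices, that is, canonical, and --- being built compatibly with the finite approximations $S^1\times S^{2N+1}$ of $S^1\times ES^1$ --- it passes to the direct limit defining $CH_*^{S^1}$.

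The main obstacle will be transversality. One cannot break the $S^1$-symmetry of an $\R$-invariant or cobordism-compatible almost complex structure and still cut out the moduli of multiply covered cylinders transversely; this is exactly why the complexes are built from $S^1\times ES^1$-families, with the perturbations living in the family and the Borel construction reassembling an $S^1$-equivariant theory. I would establish regularity of all moduli spaces that occur --- in the two symplectizations, in the cobordism, and in the one-parameter families used for the chain homotopies --- by an induction on symplectic action (equivalently on the SFT-filtration), choosing the relevant family generic on each successive action window while keeping it compatible with the data already fixed at lower action and with the stratification of $ES^1=\varinjlim S^{2N+1}$ by skeleta, as in \cite{HN2}.

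Finally I would handle compactness. Applying the SFT compactness theorem of \cite{BEHWZ}, a sequence of rigid cylinders (in a symplectization or in the cobordism) converges to a holomorphic building; because neither $\lambda$, $\lambda'$, nor any $\lambda_s$ admits a contractible Reeb orbit and the cobordism form is exact, no level can carry a closed component or a plane, so every level is a genus-zero curve with punctures asymptotic to Reeb orbits. The remaining and subtlest point is to rule out non-cylindrical levels (pair-of-pants and ``banana'' configurations) and branched covers of trivial cylinders; this follows from the free-homotopy-class bookkeeping together with the additivity and parity of the Fredholm index, exactly as in the hypertight analysis of \cite{HN2}, so that limit buildings of index-$1$ and index-$2$ rigid cylinders are, respectively, single cylinders and two-level cylinders. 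This gives the well-definedness of the differentials, of the continuation maps $\Phi_{ij}$, and of the chain homotopies above, completing the argument.
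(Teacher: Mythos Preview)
This theorem is not proven in the present paper: it is quoted verbatim from \cite{HN2} as ``Theorem 1.6 \cite{HN2}'', and the surrounding text only summarizes the strategy (``breaking the $S^1$-symmetry and using domain dependent almost complex structures, which necessitates the construction of nonequivariant contact homology $NCH_*(M,\xi;\Z)$ and a family Floer $S^1$-equivariant version of the nonequivariant theory''). There is therefore no proof in this paper to compare your proposal against.

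That said, your outline is broadly consistent with the summary the paper gives of the \cite{HN2} approach: the Borel construction with $S^1\times ES^1$-parametrized domain-dependent almost complex structures is precisely the mechanism for achieving transversality for multiply covered cylinders without abstract perturbations, and the continuation-map/chain-homotopy package you describe is the standard invariance argument. One point to flag: your compactness paragraph asserts that non-cylindrical levels and branched covers of trivial cylinders are ruled out ``from the free-homotopy-class bookkeeping together with the additivity and parity of the Fredholm index, exactly as in the hypertight analysis of \cite{HN2}''. In the hypertight (non-primitive) setting this is the genuinely delicate step---it is not an index-parity argument alone, and in \cite{HN2} it requires the full domain-dependent perturbation scheme together with obstruction-bundle gluing inputs (cf.\ the paper's remark that \cite{HN3} will need ``obstruction bundle gluing from \cite{HT2}'' even in the dynamically convex case). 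Your sketch gestures at the right ingredients but does not supply the mechanism; since the paper itself defers entirely to \cite{HN2} here, a detailed comparison is not possible from this source.
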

This will be upgraded to allow for dynamically convex contact forms in dimension three in \cite{HN3}.

Next, suppose that $J$ is a $\lambda$-compatible almost complex structure on $\R\times Y$ which satisfies the transversality conditions needed to define cylindrical contact homology, see \cite[Def. 1.1]{HN2}. We show how to then compute the $S^1$-equivariant contact homology using  an automonomous family of almost complex structures. (In general, a slight perturbation of the autonomous family might be needed to obtain the transversality necessary to define the $S^1$-equivariant differential. See \cite[\S 5.2]{HN2} for details.) We then show that the $S^1$-equivariant theory, when tensored with $\Q$, is isomorphic to the cylindrical contact homology proposed by Eliashberg-Givental-Hofer, when the latter can be defined.

\begin{theorem}[Theorem 1.9 \cite{HN2}]
\label{thm:cchinv}
Let $Y$ be a closed manifold, let $\lambda$ be a nondegenerate hypertight contact form on $Y$, and write $\xi=\mbox{\em ker}(\lambda)$. Let $J$ be an almost complex structure on $\R\times Y$ which is admissible (see \emph{\cite[Def. 5.2]{HN2}}). Then there is a canonical isomorphism
\[
CH_*^{S^1}(Y,\xi;\Z)\otimes \Q = CH_*^{EGH}(Y,\lambda,J).
\]
\end{theorem}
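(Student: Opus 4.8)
The plan is to compute $CH^{S^1}_*(Y,\xi;\Z)$ using an $S^1$-equivariant family of almost complex structures which is a small perturbation of the given autonomous admissible $J$, and to exploit the fact that on the moduli spaces of parametrized cylinders the residual $S^1$-symmetry (rotation of the cylindrical domain) acts freely up to the finite stabilizer $\Z/\mult(u)$; over $\Q$ the Borel construction then recovers the ordinary $S^1$-quotient, which is precisely the moduli space of cylinders counted by $\partial^{EGH}_\pm$. Recall that $CH^{S^1}_*$ is a direct limit over $N$ of the homologies of finite-dimensional approximations built from an $S^1$-equivariant, $S^1\times S^{2N+1}$-dependent family of admissible almost complex structures, with $S^{2N+1}$ modelling $ES^1$ and $\CP^N=S^{2N+1}/S^1$. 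Since $\lambda$ is hypertight there are no contractible Reeb orbits, hence no bubbling of planes, so the SFT compactness needed for the cascade moduli spaces degenerates to unions of cylinders as in the $\Gamma$-admissible case.

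First I would invoke the invariance statement (Theorem 1.6 of \cite{HN2}), which says $CH^{S^1}_*$ is independent of the chosen equivariant family: this lets me take the family $\mathfrak{J}_N$ to be a small generic perturbation of the constant family equal to $J$, with the perturbation supported near the pullback to $S^{2N+1}$ of a perfect Morse function $f_N$ on $\CP^N$ (critical points $p_0,\dots,p_N$, $\mathrm{ind}(p_i)=2i$, and isolated gradient segments between consecutive critical points). For this near-autonomous family I would analyze the parametrized (cascade) moduli spaces and organize the equivariant differential by $\CP^N$-degree. The $\CP^N$-degree-preserving part counts honest $J$-holomorphic cylinders with the weight $\epsilon(u)/\mult(u)$, hence agrees with $\delta$ and, over $\Q$, with $\partial^{EGH}_\pm$ via $\kappa$. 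The part that lowers $\CP^N$-degree by one counts a cylinder joined to a single gradient segment of $f_N$ between adjacent critical points of $\CP^N$; the key computation is that on the subcomplex of trivial cylinders this count is $\pm 1$, i.e.\ the operator that lowers $\CP^N$-degree is there the identity up to sign -- this is the chain-level Gysin / Euler-class contribution for the $S^1$-bundle $S^{2N+1}\to\CP^N$.

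Granting this, the remainder is algebraic: the identity term just identified lets one cancel, by iterated Gaussian elimination (homological perturbation), every generator built from $p_i$ with $i\geq 1$ against the corresponding generator built from $p_{i-1}$, in degrees below $2N$; this exhibits a quasi-isomorphism from the $N$-th approximation complex to $(C^{EGH}_*(Y,\lambda,J),\partial^{EGH}_\pm)$ in that range of degrees. Taking the direct limit $N\to\infty$ gives the isomorphism $CH^{S^1}_*(Y,\xi;\Z)\otimes\Q\cong CH^{EGH}_*(Y,\lambda,J)$, and following the elimination shows it is induced by a genuine chain map, hence canonical and compatible with the invariance isomorphisms of \cite{HN2}.

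The main obstacle is the moduli-theoretic input behind the second paragraph. One must show that for the near-autonomous family $\mathfrak{J}_N$ the parametrized cascade moduli spaces are cut out transversely in the expected dimension -- using the transversality built into admissibility (\cite[Def.~5.2]{HN2}) together with hypertightness -- and one must rule out exotic cascade configurations (several nontrivial cylinders, or breaking at intermediate orbits) that would contribute terms not captured by homological perturbation, all while correctly tracking the weights $\epsilon(u)/\mult(u)$ and the coherent orientations. This is also where $\Q$-coefficients are essential: the stabilizers $\Z/\mult(u)$ make the geometric quotients into orbifolds, so only rationally does the Borel construction collapse to the naive count. A secondary point is arranging $\mathfrak{J}_N$ so that it is simultaneously admissible, generic enough for the above transversality, and compatible with SFT compactness for the cascades.
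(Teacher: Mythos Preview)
This theorem is not proven in the present paper: it is quoted verbatim as Theorem~1.9 of \cite{HN2}, and the surrounding text in Section~\ref{attempt} merely summarizes the results of \cite{HN2} before moving on to the dynamically separated setting that is the paper's actual subject. There is therefore no proof here to compare your sketch against.

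That said, your outline is broadly in the spirit of how such a comparison theorem is established in \cite{HN2}: one computes the $S^1$-equivariant theory using a near-autonomous family over a perfect Morse model of $BS^1$, identifies the degree-preserving part of the equivariant differential with the EGH differential (weighted by $1/\mult(u)$), and then uses a filtration/cancellation argument to kill the higher $\CP^N$-degree generators over $\Q$. Your identification of the main analytic obstacle---transversality for the parametrized cascade moduli spaces near the autonomous $J$, together with ruling out unwanted cascade configurations and tracking orientations and multiplicities---is accurate, and indeed this is what the notion of ``admissible'' $J$ in \cite[Def.~5.2]{HN2} is designed to address. If you want a genuine comparison you should consult \cite[\S5]{HN2} directly; the present paper offers no independent argument.
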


\begin{corollary}
\label{cor:eghinv}
$CH_*^{EGH}$ is an invariant of closed contact manifolds $(Y,\xi)$ for which there exists a pair $(\lambda,J)$ where $\lambda$ is a nondegenerate hypertight contact form with $\mbox{\em ker}(\lambda)=\xi$, and $J$ is an admissible $\lambda$-compatible almost complex structure.
\end{corollary}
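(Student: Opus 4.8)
The plan is to deduce the corollary formally from Theorem~\ref{thm:cchinv} together with the choice-independence of $S^1$-equivariant contact homology established in Theorem 1.6 of \cite{HN2}; no new analysis is required. Suppose $(\lambda_0,J_0)$ and $(\lambda_1,J_1)$ are two pairs satisfying the hypotheses, so that $\ker\lambda_i=\xi$, each $\lambda_i$ is nondegenerate and hypertight, and each $J_i$ is admissible in the sense of \cite[Def. 5.2]{HN2}. First I would apply Theorem~\ref{thm:cchinv} to each pair, obtaining canonical isomorphisms
\[
CH_*^{EGH}(Y,\lambda_0,J_0)\;=\;CH_*^{S^1}(Y,\xi;\Z)\otimes\Q\;=\;CH_*^{EGH}(Y,\lambda_1,J_1),
\]
and then compose them to get the desired canonical isomorphism $CH_*^{EGH}(Y,\lambda_0,J_0)\cong CH_*^{EGH}(Y,\lambda_1,J_1)$. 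Since $(\lambda,J)$ was arbitrary, this exhibits $CH_*^{EGH}$ as depending only on $(Y,\xi)$.

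The one point that needs care is that the middle term $CH_*^{S^1}(Y,\xi;\Z)$ genuinely depends only on $(Y,\xi)$ and not on auxiliary data. This is precisely the content of Theorem 1.6 of \cite{HN2}: for any two nondegenerate hypertight contact forms $\lambda,\lambda'$ with $\ker\lambda=\ker\lambda'=\xi$ and generic $S^1$-equivariant $S^1\times ES^1$-families $\mathfrak{J},\mathfrak{J}'$ of compatible almost complex structures, there is a canonical isomorphism $CH_*^{S^1}(Y,\lambda,\mathfrak{J};\Z)=CH_*^{S^1}(Y,\lambda',\mathfrak{J}';\Z)$. An admissible $J$ can be used, after the possibly necessary small perturbation described in \cite[\S 5.2]{HN2}, to build such an autonomous family, so the $S^1$-equivariant homology appearing in Theorem~\ref{thm:cchinv} is identified with $CH_*^{S^1}(Y,\xi;\Z)$ independently of the chosen pair. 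Tracking canonicity through both the perturbation of the autonomous family and through the isomorphisms of Theorems 1.6 and 1.9 — so that the composite isomorphism above is well-defined — is the only real bookkeeping obstacle, and it is mild.

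Finally, for invariance under a contactomorphism $\varphi\colon(Y_0,\xi_0)\to(Y_1,\xi_1)$: if $(\lambda_1,J_1)$ is a valid pair for $(Y_1,\xi_1)$, then $(\varphi^*\lambda_1,\varphi^*J_1)$ is a valid pair for $(Y_0,\xi_0)$, since pullback preserves nondegeneracy, hypertightness (as $\varphi$ conjugates the Reeb flows), $\lambda$-compatibility, and admissibility (as $\varphi$ induces an $\R$-equivariant identification of the relevant moduli spaces). That same identification carries $J_1$-holomorphic cylinders to $\varphi^*J_1$-holomorphic cylinders, inducing a chain isomorphism and hence $CH_*^{EGH}(Y_0,\varphi^*\lambda_1,\varphi^*J_1)\cong CH_*^{EGH}(Y_1,\lambda_1,J_1)$ (the dependence on coherent orientations only changes signs and yields canonically isomorphic complexes, as noted after \eqref{pa2}). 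Composing with the choice-independence established above gives $CH_*^{EGH}(Y_0,\xi_0)\cong CH_*^{EGH}(Y_1,\xi_1)$, completing the proof.
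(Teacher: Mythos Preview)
Your proposal is correct and is precisely the intended argument: the paper states this as an immediate corollary of Theorems~1.6 and~1.9 of \cite{HN2} without giving a separate proof, and the deduction you spell out---passing through $CH_*^{S^1}(Y,\xi;\Z)\otimes\Q$ via Theorem~\ref{thm:cchinv} and invoking the choice-independence of $CH_*^{S^1}$---is exactly what is meant. Your additional paragraph on contactomorphism invariance is a welcome elaboration but not strictly necessary for the statement as phrased.
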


Again, we will upgrade these results to hold for dynamically convex contact forms in dimension three in \cite{HN3}.  In contrast to \cite{HN2, HN3}, this paper is concerned with the more restricted class of dynamically separated contact forms which allows us to directly obtain regularity for $S^1$-independent pseudoholomorphic cylinders in cobordisms.

\subsection{Filtered cylindrical contact homology}


The \textbf{action} of a Reeb orbit $\gamma$ is given by $ \mathcal{A}(\gamma):= \int_\gamma \lambda.$. Since $J$ is a $\lambda$-compatible almost complex structure on the symplectization it follows \cite[Lem. 2.18]{jo1} that the cylindrical contact homology differential(s) decreases the action, e.g. if $\langle \partial_\pm \gamma_+, \gamma_- \rangle \neq 0$ then $\mathcal{A}(\gamma_+) > \mathcal{A}(\gamma_-)$.

Thus, given any real number $L$ it makes sense to define the filtered cylindrical contact homology, denoted by $CH_*^{EGH, L}(M,\lambda,J)$, to be the homology of the subcomplex $C_*^{EGH, L}(M,\lambda,J)$ of the chain complex spanned by generators of action less than $L$.  The invariance of these filtered cylindrical contact homology groups is more subtle than in the unfiltered case, as they typically depend on the choice of contact form, cf. \cite[Thm 1.3]{HTarnold}.  We elucidate this point further.  

There are various natural maps defined on filtered cylindrical contact homology, which we will also explore from a computational perspective in  Section \ref{filteredchsec}.  First, if $L < L'$ there is a map
\begin{equation}\label{filteredinclusion}
\iota_J^{L,L'}: CH_*^{EGH, L}(M,\lambda,J) \to CH_*^{EGH, L}(M,\lambda,J)
\end{equation}
induced by the inclusion of chain complexes.   Given sufficient regularity, the cylindrical contact homology can be recovered from the filtered contact homology by taking the direct limit over $L$, 
 \begin{equation}\label{directlimiteq}
 CH_*^{EGH}(M,  \lambda, J) := \lim_{L \to \infty} CH_*^{{EGH, L}}(M,  \lambda, J).
 \end{equation}
In addition, if $c$ is a positive constant, then there is a canonical ``scaling" isomorphism
\begin{equation}\label{scalingeq}
s_J: CH_*^{{EGH, L}}(M,  \lambda, J) \overset{\simeq}\longrightarrow CH_*^{{EGH, cL}}(M,  c \lambda, J^c),
\end{equation}
where $J^c$ is defined to agree with $J$ when restricted to the contact planes $\xi$.  This is because the chain complexes on both sides have the same generators and the self-diffeomorphism of $\R \times M$ sending $(s,y) \mapsto (cs, y) $ induces a bijection between the $J$-holomorphic curves and $J^c$-holomorphic curves.  

To define $CH^{EGH, L}_*(M,\lambda,J)$ one does not need the full assumption that $\lambda$ is nondegenerate; the below weaker notion in conjunction with the $L$-dynamically separated or $L$-dynamically convex assumption will suffice.

\begin{definition}\em
The contact form $\lambda$ is \textbf{$L$-nondegenerate} if all Reeb orbits of action less than $L$ are nondegenerate and there is no Reeb orbit of action exactly $L$. An \textbf{$L$-nondegenerate dynamically separated} contact form is one which is both $L$-nondegenerate and $L$-dynamically separated. 
\end{definition}

If $\lambda$ is $L$-hypertight, but possibly degenerate, and if $\lambda$ does not have any Reeb orbit of action equal to $L$, then one can still define the filtered cylindrical contact homology, nonequivariant, or $S^1$-equivariant contact homology by letting $\lambda'$  be a small $L$-nondegenerate and $L$-hypertight perturbation of $\lambda$, see \cite[\S 1.6]{HN2}.  This does not depend on the choice of  $\lambda'$ if the perturbation is sufficiently small. With this definition, if $\lambda$ is hypertight but possibly degenerate, then we still have the direct limit \eqref{directlimiteq}.  We will mimic a similar construction for prequantization bundles in this paper. 

We obtain the following theorem, which asserts that under the dynamically separated assumption, filtered cylindrical contact homology and the various maps on it do not depend on $J$.   The proof is completed in Section \ref{invariance}. 

\begin{theorem}\label{invariance-thm1}
Let $M$ be a closed oriented connected $3$-manifold.
\begin{description}
\item[(a)] If $\lambda$ is an $L$-nondegenerate dynamically separated contact form on $M$ then $CH_*^{EGH, L}(M,\lambda,J)$ is well-defined and does not depend on the choice of generic $\lambda$-compatible almost complex structure, so we denote it by $CH_*^{EGH, L}(M,\lambda)$.  
\item[(b)] If $L<L'$ and if $\lambda$ is an $L'$-nondegenerate dynamically separated contact form on $M$, then the maps $\iota_J^{L,L'}$ in \eqref{filteredinclusion} induce a well-defined map
\[
\iota_J^{L,L'}: CH_*^{EGH, L}(M,\lambda) \to CH_*^{{EGH, L'}}(M,\lambda).
\]
\item[(c)] If $c>0$, then the scaling isomorphisms $s_J$ in \eqref{scalingeq} induce a well-defined isomorphism
\[
s_J: CH_*^{EGH, L}(M,  \lambda) \overset{\simeq}\longrightarrow CH_*^{EGH, cL}(M,  c \lambda)
\]
\end{description}

\end{theorem}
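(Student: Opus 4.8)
The plan is to prove all three parts by the standard continuation-map-and-chain-homotopy formalism of Floer-type theories, the only real work being that the $L$-dynamically separated hypothesis supplies the transversality and compactness that make these maps defined. For part (a), given generic $\lambda$-compatible $J_0,J_1$, I would choose a generic homotopy $\{J_s\}_{s\in\R}$ through $\lambda$-compatible almost complex structures on $\R\times M$ with $J_s\equiv J_0$ for $s\ll 0$ and $J_s\equiv J_1$ for $s\gg 0$. Since each $J_s$ is $\lambda$-compatible, the energy estimate of \cite[Lem. 2.18]{jo1} still applies, so $J_s$-holomorphic cylinders decrease action and every moduli space relevant below action $L$ involves only the finitely many Reeb orbits of action less than $L$. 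Counting the rigid (Fredholm index $0$) such cylinders with the weights appearing in \eqref{pa1} defines $\Phi_{\{J_s\}}\colon C_*^{EGH,L}(M,\lambda,J_0)\to C_*^{EGH,L}(M,\lambda,J_1)$.

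The heart of the matter is to show this count is finite and that $\Phi_{\{J_s\}}$ is a chain map. For cylinders between primitive Reeb orbit classes somewhere injectivity gives regularity for generic $\{J_s\}$, as in Wendl's treatment of the $\Gamma$-admissible case; the new input, carried out in Section \ref{invariance} using the Conley--Zehnder growth forced by conditions (I.i), (I.ii), (II) of Definition \ref{taut} (made quantitative in Section \ref{cz-section}) and the automatic-transversality criterion extended in Section \ref{background}, is that in the SFT compactification \cite{BEHWZ} of the index $0$ and index $1$ moduli spaces below action $L$ every level must be a cylinder, its index is pinned down, and the possibly multiply covered such cylinders are regular. Granting this, $\partial_1\Phi_{\{J_s\}}=\Phi_{\{J_s\}}\partial_0$ follows by inspecting the boundary of the one-dimensional index $1$ moduli space; because $\mult(x)=\mult(y)$ along every cylinder for dynamically separated forms (Remark \ref{coefficients}), the weights behave as in the ordinary chain-map computation.

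To upgrade $\Phi_{\{J_s\}}$ to a filtered chain homotopy equivalence whose homotopy class is independent of $\{J_s\}$, I would run the usual homotopy-of-homotopies argument: a generic path $\{J_{s,r}\}_{r\in[0,1]}$ between two admissible families gives, via the index $0$ parametrized moduli spaces, a chain homotopy between the two continuation maps, and the composite of the continuation map from $J_0$ to $J_1$ with one going back is chain homotopic to the continuation map of the constant family $J_0$, which is the identity (its only rigid cylinders being the trivial ones). The same compactness and transversality package of Sections \ref{background}--\ref{invariance} governs all these parametrized moduli spaces. This gives (a). For (b), the action-decreasing property of $J_s$-holomorphic cylinders shows that the continuation chain map built for the $L'$-complex restricts to the one for the $L$-complex on the subcomplex of generators of action less than $L$, so it commutes strictly with the inclusions of \eqref{filteredinclusion}; passing to homology and invoking (a) yields a map $\iota_J^{L,L'}\colon CH_*^{EGH,L}(M,\lambda)\to CH_*^{EGH,L'}(M,\lambda)$ independent of $J$.

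Part (c) is essentially formal: the diffeomorphism $(s,y)\mapsto(cs,y)$ of $\R\times M$ carries $\lambda$-compatible $J$ to $c\lambda$-compatible $J^c$, takes $J$-holomorphic cylinders bijectively to $J^c$-holomorphic cylinders preserving Fredholm index and all multiplicities, and multiplies every Reeb orbit's action by $c$; hence it induces an isomorphism of filtered chain complexes $C_*^{EGH,L}(M,\lambda,J)\cong C_*^{EGH,cL}(M,c\lambda,J^c)$ intertwining $\partial^{EGH}_\pm$, so an isomorphism $s_J$ on homology, and since $J\mapsto J^c$ is a bijection between the generic $\lambda$-compatible and generic $c\lambda$-compatible almost complex structures, part (a) forces $s_J$ to be independent of $J$. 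The main obstacle throughout is the single compactness-plus-transversality step for the cobordism moduli spaces --- verifying via the index estimates that no holomorphic building with a non-cylindrical level can appear below action $L$, and that the multiply covered cylinders that do appear are automatically regular --- with everything else a routine transcription of the Floer-theoretic formalism.
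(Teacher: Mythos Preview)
Your outline is correct and the Floer-theoretic formalism you invoke is exactly the right skeleton, but the technical implementation differs from the paper's. You propose a \emph{domain-dependent} family $\{J_s\}_{s\in\R}$ of $\lambda$-compatible almost complex structures on the symplectization, in the style of Hamiltonian Floer homology or \cite{HN2}. The paper instead works with \emph{target-dependent} cobordism-compatible almost complex structures on a completed exact symplectic cobordism $(W,J)$: the chain map $\Phi^{+-}_J$ of \S\ref{chainmap-sec} counts index~$0$ cylinders in $(W,J)$, and the chain homotopy of \S\ref{chainhtpy-sec} counts index~$-1$ cylinders appearing at isolated values of a one-parameter family $\{J_\tau\}_{\tau\in[0,1]}$ of such structures. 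The paper proves the unfiltered Theorem~\ref{chainthm} first and then observes in \S\ref{filtered-continuation} that, since $\lambda$ is fixed in Theorem~\ref{invariance-thm1}, the filtered statement follows by the identical argument with an action cutoff.

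The practical consequence is that the regularity results you cite from Section~\ref{background} --- in particular Propositions~\ref{cylcoverssep} and~\ref{cylcoverssep-par}, which handle the crucial multiply covered index~$0$ and index~$-1$ cylinders via the ECH index and partition conditions in the normal bundle $N_u$ --- are stated and proved for cobordism-compatible $J$, not for domain-dependent families. Your approach would require re-deriving these in the domain-dependent setting; this is certainly feasible (the normal-bundle argument is essentially local), but it is not what the paper does. The cobordism framework also has the advantage that it simultaneously handles varying $\lambda$, which the paper needs for Theorem~\ref{invariance-thm2}. Your treatment of parts~(b) and~(c) matches the paper's.
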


In Section \ref{filtered-continuation} we show that the filtered cylindrical contact homology and various maps on it do not depend on the choice of ``nearby" $L$-nondegenerate dynamically separated contact forms.

\subsection{Methods and applications for prequantization bundles}  
A motivating example of dynamically separated condition comes from the following perturbation of the canonical contact form on a prequantization bundle.

\begin{definition}[Prequantization] 
\em
Let $(\Sigma^{2n-2}, \omega)$ be a closed symplectic manifold such that the cohomology class $-[\om]/(2\pi) \in H^2(\Sigma;\R)$ is the image of an integral class $e\in H^2(\Sigma; \Z)$.  The principle $S^1$ bundle $\pi:V^{2n-1} \to \Sigma$ with first Chern class $e$ is the prequantization space.  The prequantization space $V$ admits a contact form which is the  real-valued connection 1-form $\lambda$  on V whose curvature is $\om$.
\end{definition}

\begin{remark} \em
In the above definition, $S^1$ acts freely on $V$ with quotient $\Sigma$ and the primary obstruction to finding a section $\Sigma \to V$ is $e\in H^2(\Sigma; \Z)$.  The derivative of the $S^1$ action, denoted $R$, is the vector field on $V$ tangent to the fibers.  Moreover $\lambda$ is invariant under the $S^1$ action, $\lambda(R)=1$, and $d\lambda =\pi^*\om$.  Thus $R$ is the Reeb vector field associated to $(V, \lambda)$ and the Reeb orbits are comprised of the fibers of this bundle, by design of period $2\pi$, and their iterates. 
\end{remark}

One can perturb the contact form $\lambda$ on $V$ via a lift of a Morse-Smale\footnote{We make a slight abuse terminology here, saying that $H$ Morse-Smale instead of saying that the pair $(H, g=\omega(\cdot, J\cdot) )$ is Morse-Smale. } function $H$ which is $C^2$ close to  1 on the base $\Sigma$,
\begin{equation}
\label{perturbedform1}
\lep=(1+\vepsilon \pi^*H)\lambda.
\end{equation}
The cylindrical contact homology can then be expressed in terms of the Morse homology of the base.  Details of similar constructions have previously appeared in work of Bourgeois \cite{B02} and Vaugon \cite[\S 6]{annethesis}. We define the contact form


\begin{lemma}
The Reeb vector field of $\lep$ is given by
\begin{equation}
\label{perturbedreeb1}
R_{\vepsilon}=\frac{R}{1+\vepsilon \pi^*H} + \frac{\veps \widetilde{X}_H}{{(1+\vepsilon \pi^*H)}^{2}},
\end{equation}
where $X_{H}$ is the Hamiltonian vector field\footnote{We use the convention $\omega(X_H, \cdot) = dH.$} on $\Sigma$ and $\widetilde{X}_{ H}$ is its horizontal lift.
\end{lemma}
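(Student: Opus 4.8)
The plan is to compute the Reeb vector field of $\lep = (1+\vepsilon\pi^*H)\lambda$ directly from the defining equations $\iota(R_\vepsilon)d\lep = 0$ and $\lep(R_\vepsilon) = 1$, using the structure equations for the prequantization bundle, namely $\lambda(R)=1$, $\iota(R)d\lambda = 0$, and $d\lambda = \pi^*\omega$. First I would expand $d\lep = d(1+\vepsilon\pi^*H)\wedge\lambda + (1+\vepsilon\pi^*H)\,d\lambda = \vepsilon\,\pi^*dH\wedge\lambda + (1+\vepsilon\pi^*H)\pi^*\omega$. Then I would make the ansatz that $R_\vepsilon$ is a combination of the fiber direction $R$ and a horizontal vector field, and solve for the coefficients. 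Writing $R_\vepsilon = aR + W$ with $W$ horizontal (i.e.\ $\lambda(W) = 0$), the normalization $\lep(R_\vepsilon) = 1$ gives $a(1+\vepsilon\pi^*H) = 1$, so $a = (1+\vepsilon\pi^*H)^{-1}$.

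Next I would impose $\iota(R_\vepsilon)d\lep = 0$. Contracting: $\iota(R_\vepsilon)\big(\vepsilon\pi^*dH\wedge\lambda\big) = \vepsilon\big((\pi^*dH)(R_\vepsilon)\lambda - \lep\text{-component}\cdots\big)$; since $dH$ is pulled back from the base and $R$ is vertical, $(\pi^*dH)(R) = 0$, while $(\pi^*dH)(W) = dH(\pi_*W)$. Also $\lambda(R_\vepsilon) = a$. So this term contributes $\vepsilon\big(dH(\pi_*W)\lambda - a\,\pi^*dH\big)$. For the second term, $\iota(R_\vepsilon)\big((1+\vepsilon\pi^*H)\pi^*\omega\big) = (1+\vepsilon\pi^*H)\pi^*(\iota_{\pi_*W}\omega)$ because $\iota_R\pi^*\omega = \pi^*(\iota_{\pi_*R}\omega) = 0$ as $R$ is vertical. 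Setting the sum to zero and separating the part proportional to $\lambda$ from the horizontal part, the horizontal part forces $(1+\vepsilon\pi^*H)\iota_{\pi_*W}\omega = \vepsilon a\, dH = \vepsilon(1+\vepsilon\pi^*H)^{-1}dH$, i.e.\ $\iota_{\pi_*W}\omega = \vepsilon(1+\vepsilon\pi^*H)^{-2}dH$. By the convention $\omega(X_H,\cdot) = dH$ this says $\pi_*W = \vepsilon(1+\vepsilon\pi^*H)^{-2}X_H$, so $W = \vepsilon(1+\vepsilon\pi^*H)^{-2}\widetilde{X}_H$, the horizontal lift. The $\lambda$-component then requires $\vepsilon\,dH(\pi_*W)\lambda = 0$, which holds automatically since $dH(\pi_*W) = dH\big(\vepsilon(1+\vepsilon\pi^*H)^{-2}X_H\big) = \vepsilon(1+\vepsilon\pi^*H)^{-2}dH(X_H) = \vepsilon(1+\vepsilon\pi^*H)^{-2}\omega(X_H,X_H) = 0$. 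Assembling $R_\vepsilon = aR + W$ gives exactly \eqref{perturbedreeb1}.

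The only mild subtlety — and what I would treat as the main point requiring care rather than a genuine obstacle — is the bookkeeping of which contractions vanish: one must consistently use that $R$ is vertical (so any pullback form from $\Sigma$ kills it) and that $\widetilde{X}_H$ is horizontal (so $\lambda$ kills it), and keep track of the interior-product sign conventions in $\iota_X(\alpha\wedge\beta) = (\iota_X\alpha)\wedge\beta - \alpha\wedge(\iota_X\beta)$ for the one-form wedge one-form term $\pi^*dH\wedge\lambda$. Once these are handled the computation is forced, and uniqueness of the Reeb vector field (guaranteed since $\lep$ is a contact form, as $(1+\vepsilon\pi^*H)$ is positive for $\vepsilon$ small) shows the vector field we found is \emph{the} Reeb vector field, completing the proof.
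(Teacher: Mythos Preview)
Your proof is correct and follows essentially the same approach as the paper's: make the ansatz $R_\veps = aR + W$ with $W$ horizontal, determine $a$ from $\lep(R_\veps)=1$, expand $d\lep$ and contract with $R_\veps$, then use $d\lambda=\pi^*\omega$ together with $\omega(X_H,\cdot)=dH$ to identify $W$ as the scaled horizontal lift $\widetilde{X}_H$. Your presentation is if anything slightly cleaner, since you separate the $\lambda$-component from the horizontal component of the resulting $1$-form and verify the former vanishes automatically via $dH(X_H)=\omega(X_H,X_H)=0$, whereas the paper carries all terms through a longer expansion before reading off the same conclusion.
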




We have the following formula for the Conley-Zehnder indices of iterates of orbits which project to critical points $p$ of $H$.  We denote the $k$-fold iterate of an orbit which projects to $p \in \mbox{Crit}(H)$ by $\gamma_p^k$.

\begin{lemma}\label{lempre}
Fix $L>0$ and $H$ a Morse-Smale function on $\Sigma$ which is $C^2$ close to 1.  Then there exists $\vepsilon >0$ such that all periodic orbits $\gamma$ of $R_\vepsilon$ with action $\mathcal{A}(\gamma) <L$ are nondegenerate and project to critical points of $H$.  The Conley-Zehnder index such a Reeb orbit over $p \in \mbox{\em Crit}(H)$ is given by
\[
\czm^\Phi(\gamma_p^k) = \mu^\Phi_{RS}(\gamma^k) -n + \mbox{\em index}_pH,
\]
where $\mu^\Phi_{RS}(\gamma^k)$ is the Robbin-Salamon index of the $k$-fold iterate of the fiber $\gamma = \pi^{-1}(p)$.  
\end{lemma}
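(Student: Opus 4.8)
The plan is to first isolate the two independent assertions in Lemma~\ref{lempre}: (i) for $L$ fixed and $H$ sufficiently $C^2$-close to $1$, there exists $\vepsilon>0$ such that all Reeb orbits of $R_\vepsilon$ with action less than $L$ are nondegenerate and project to critical points of $H$; and (ii) the Conley--Zehnder index formula $\czm^\Phi(\gamma_p^k) = \mu^\Phi_{RS}(\gamma^k) - n + \mathrm{index}_p H$. These are genuinely different in character: (i) is a perturbation/ODE argument about the dynamics of $R_\vepsilon$, whereas (ii) is a linear-algebra computation about concatenating paths of symplectic matrices and their Robbin--Salamon indices.

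For (i), I would start from the explicit formula for $R_\vepsilon$ in \eqref{perturbedreeb1}. As $\vepsilon \to 0$, $R_\vepsilon \to R$ in $C^\infty$, and the orbits of $R$ of action $< L$ are exactly the fibers and their iterates up to multiplicity $\lfloor L/2\pi\rfloor$. A closed orbit of $R_\vepsilon$ of period $\le L$ must remain $C^0$-close to a fiber (by continuous dependence of flows on parameters on the compact time interval $[0,L]$), so it lies in a tubular neighborhood of some fiber $\pi^{-1}(q)$. Projecting the flow to $\Sigma$: the $\Sigma$-component of $R_\vepsilon$ is $\tfrac{\vepsilon \widetilde X_H}{(1+\vepsilon\pi^*H)^2}$, whose horizontal projection is (a positive reparametrization of) $X_H$. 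Hence after time $2\pi k$ the base point has moved by the time-$(\text{const}\cdot\vepsilon)$ flow of $X_H$; for this to close up, $q$ must be a critical point of $H$ (here is where Morse-ness of $H$ and $C^2$-closeness to $1$ enter, guaranteeing these are the only recurrent points of $X_H$ near which short orbits can close). For nondegeneracy, linearize $R_\vepsilon$ along $\gamma_p^k$ and observe that the linearized return map is a small perturbation of the identity whose $\Sigma$-block is governed by the Hessian of $H$ at $p$ (nondegenerate since $H$ is Morse) composed with the $\vepsilon$-scaled Hamiltonian flow; the eigenvalue-$1$ condition fails precisely because $\mathrm{Hess}_p H$ is invertible. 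One must also take $\vepsilon$ small enough (depending on $L$) that no \emph{new} short orbits appear away from the fibers and that the finitely many relevant iterates all stay nondegenerate uniformly.

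For (ii), the key is that the contact structure $\xi = \ker\lambda$ splits (up to the perturbation, symplectically and $S^1$-equivariantly) as the horizontal lift of $T\Sigma$ together with... actually in this setting $\xi$ is just the horizontal subbundle, and along $\gamma_p^k$ the linearized Reeb flow on $\xi$ decomposes, to leading order in $\vepsilon$, as a product of the linearized fiberwise flow (contributing the Robbin--Salamon index $\mu^\Phi_{RS}(\gamma^k)$ of the unperturbed iterated fiber, which is degenerate and hence has a half-integer RS index) and a small near-identity symplectic path in the $\Sigma$-directions coming from the time-$\vepsilon$ linearized Hamiltonian flow near the critical point $p$. The latter is a short path staying near $\id$ whose endpoint is hyperbolic-or-elliptic according to $\mathrm{Hess}_p H$; its Conley--Zehnder contribution is computed by the standard "Morse" normalization to be $\mathrm{index}_p H - (n-1)$, i.e.\ $-n+\mathrm{index}_p H$ after accounting for the appropriate dimensional shift. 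Summing the two contributions via the concatenation/direct-sum properties and the Robbin--Salamon-to-Conley--Zehnder comparison (the perturbation resolves the degenerate RS index of the fiber into a genuine CZ index, shifting by the crossing contribution at the endpoint) gives the stated formula. I would organize this as: write the linearized flow as a block matrix, identify the two blocks, cite the relevant index axioms (loop property, product property, signature formula for short near-identity paths) from \cite{RS1} or the analogous treatment, and assemble.

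The main obstacle I expect is the bookkeeping in step (ii): making the "to leading order in $\vepsilon$" splitting of the linearized flow precise and verifying that the lower-order terms do not affect the (integer) Conley--Zehnder index, together with getting the half-integer Robbin--Salamon index of the degenerate fiber to combine correctly with the $\mathrm{index}_p H$ term and the dimensional shift $-n$. This requires carefully tracking the trivialization $\Phi$ (which must be the one extending over the base, compatible with the $c_1(\xi)=0$ hypothesis and the fibration structure) and invoking the Robbin--Salamon index's behavior under small perturbations of a degenerate path — essentially, that perturbing the $k$-fold fiber by a Morse function at a critical point of index $\mathrm{index}_p H$ shifts the index by $\mathrm{index}_p H - (n-1)$ relative to the "clean" Robbin--Salamon value. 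Step (i) is more routine (a standard implicit-function-theorem-plus-compactness argument), though one must be slightly careful that $\vepsilon$ can be chosen uniformly for all iterates with action below the fixed cutoff $L$.
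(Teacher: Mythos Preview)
Your proposal is correct and follows essentially the same approach as the paper. For part (i), both you and the paper use the explicit formula \eqref{perturbedreeb1} to argue that orbits of bounded action must project to critical points of $H$ (orbits of $X_H$ would require time $\sim 1/\vepsilon$), with nondegeneracy coming from the Morse hypothesis. For part (ii), the paper likewise decomposes the linearized flow into a fiber contribution $\rs(\Phi)$ and a Morse contribution $\rs(\Phi(T_k)\Psi_\veps) = \mathrm{index}_pH - \tfrac{1}{2}\dim\Sigma$ (Lemma~\ref{RSHam}), exactly as you outline.

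One point worth noting: the paper does not realize the splitting as a literal direct-sum or block-matrix decomposition of the linearized flow (which would be what the product axiom addresses). Instead, following \cite{CFHW}, it constructs two explicit homotopies $K_0$ and $K_1$ with fixed endpoints to show first that $\rs(\Phi_\veps) = \rs(\Phi\Psi_\veps)$ and then that $\rs(\Phi\Psi_\veps) = \rs(\Phi) + \rs(\Phi(T_k)\Psi_\veps)$ via concatenation. You correctly anticipate this as the main bookkeeping obstacle, and your mention of concatenation properties points in the right direction; the precise mechanism is homotopy invariance rather than a product decomposition, since the perturbed flow $\Phi_\veps$ is not literally of the form $\Phi \oplus \Psi_\veps$.
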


There is a well known relation between the Maslov index of the fiber $\gamma$ and the Chern number of the base $(\Sigma, \omega)$, for example in \cite{vknotes}.  If $(\Sigma,\omega)$ is the standard $(S^2, \omega_0)$ where $\int_{S^2}\omega_0 =4 \pi$ we have the following result.


\begin{proposition}\label{spheremaslov}
Let $(V,\lambda)$ be the prequantization bundle over the closed symplectic manifold $(S^2, k \omega_0)$ for $k \in \Z_{>0}$.  Then $(V,\xi) = (L(k,1),\xi_{std}) $ and the $k$-fold cover of every simple orbit $\gamma$ is contractible and $\mu_{RS}^\Phi(\gamma^k) = 4$.  
\end{proposition}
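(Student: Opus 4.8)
The plan is to verify the three assertions of Proposition~\ref{spheremaslov} in turn, all of which are essentially standard facts about prequantization bundles over $S^2$ once the relevant normalizations are pinned down. First I would identify the total space. The prequantization bundle over $(S^2,\omega)$ with $-[\omega]/(2\pi)$ equal to an integral class $e \in H^2(S^2;\Z)\cong\Z$ is, by construction, the principal $S^1$-bundle with Euler number $e$. With the stated normalization $\int_{S^2}\omega_0 = 4\pi$, the class $k\omega_0$ satisfies $-[k\omega_0]/(2\pi) = -2k$ (or $2k$ up to the orientation convention fixing the sign of $e$), so the bundle has Euler number $\pm k$ after accounting for the factor of $2$ coming from $\int\omega_0 = 4\pi$ rather than $2\pi$; the resulting lens space is $L(k,1)$ with its standard tight contact structure, since the unit cotangent-type circle bundle of Euler number $-k$ over $S^2$ is diffeomorphic to $L(k,1)$ and the connection contact structure is the standard one (this is classical, e.g.\ as in the references already cited for the Maslov--Chern relation). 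I would state this identification with a one-line computation of $e$ and a citation for $(V,\xi)\cong(L(k,1),\xi_{std})$.

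Next I would establish that the $k$-fold cover of every simple Reeb orbit is contractible. The simple Reeb orbits are exactly the $S^1$-fibers of $\pi:V\to S^2$, each of period $2\pi$ (by the Remark following the Prequantization definition). In the principal $S^1$-bundle of Euler number $\pm k$ over $S^2$, the homotopy exact sequence $\pi_2(S^2)\to\pi_1(S^1)\to\pi_1(V)\to\pi_1(S^2)=0$ identifies $\pi_1(V)$ with $\Z/k\Z$, with the fiber class generating; hence the fiber $\gamma$ has order exactly $k$ in $\pi_1(V)$, so $\gamma^k$ is contractible and no smaller iterate is. (Equivalently, $\gamma^k$ bounds: it is homotopic to the boundary of a disk, namely $k$ times the connecting map applied to the generator of $\pi_2(S^2)$.) This is the cleanest of the three points.

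The third assertion, $\mu_{RS}^\Phi(\gamma^k) = 4$, is where the normalization bookkeeping matters, and I expect this to be the main obstacle — not because it is deep, but because getting the trivialization $\Phi$ and the factor-of-two conventions exactly right is delicate. The approach is to use the well-known relation (as in \cite{vknotes}) between the Robbin--Salamon/Maslov index of the $k$-th iterate of the fiber and the first Chern number of $(\Sigma,\omega)$: for a prequantization bundle, $\xi$ along a fiber is canonically the pullback $\pi^*T\Sigma$, and the linearized Reeb flow over the fiber is (a reparametrization of) the path generated by the curvature, so that in the trivialization $\Phi$ extending over a capping disk for $\gamma^k$ one gets $\mu_{RS}^\Phi(\gamma^k) = 2\,c_1(T S^2)[\text{disk}]$, where the disk is the capping disk of $\gamma^k$. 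Since $\gamma^k$ caps off a generator sphere of $S^2$ exactly once (the connecting map sends the generator of $\pi_2(S^2)$ to $\gamma^k$ up to sign), and $c_1(TS^2)[S^2] = 2$, this gives $\mu_{RS}^\Phi(\gamma^k) = 2\cdot 2 = 4$. I would spell out: the fiber has Maslov index per period equal to $2/k$ (in the bundle of Euler number $k$), so its $k$-fold iterate has Maslov index $2$ relative to the constant trivialization, and then the correction $2c_1^\Phi$ from switching to the capping-disk trivialization contributes the remaining $2$; alternatively, invoke directly the formula $\mu_{RS}(\gamma^k_{\text{fiber}}) = 2\langle c_1(T\Sigma),[\text{capped orbit}]\rangle$ from the cited notes. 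The only real care needed is to confirm that the hypothesis $\int_{S^2}\omega_0 = 4\pi$ is precisely what makes the capping multiplicity equal to $1$ (rather than $k$ against a fractional class), and that the sign/orientation conventions make the index $+4$ rather than $-4$; I would resolve this by comparing with the known computation of the Conley--Zehnder indices for $L(k,1) = S^3/\Z_k$ with its standard contact form, which must be consistent with Lemma~\ref{lempre}.
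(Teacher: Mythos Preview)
Your three-step outline (Euler class for the bundle identification, the long exact sequence of the fibration for contractibility of $\gamma^k$, and the Maslov--Chern relation for the index) is correct, but it is a genuinely different route from the paper's. The paper first proves, as a standalone lemma (Lemma~\ref{rssphere}), that for the degenerate Hopf flow on $S^3$ one has $\rs(\gamma^k)=4k$: this is an explicit crossing-form computation in $\C^2\setminus\{\mathbf{0}\}$, using the global splitting $T\C^2\cong \xi\oplus\xi^{\omega_0}$ and the fact that the linearized flow is $e^{it}\oplus e^{it}$ in standard coordinates. Proposition~\ref{spheremaslov} is then deduced as a one-line corollary: the $k$-fold cover of $V=L(k,1)$ is $S^3$, and the $k$-th iterate of a simple fiber in $L(k,1)$ lifts to a single Hopf fiber upstairs, giving $\rs=4\cdot 1=4$. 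Your argument via $\mu_{RS}^\Phi(\gamma^k)=2\langle c_1(T\Sigma),[\mbox{cap}]\rangle$ is more conceptual and generalizes immediately to other monotone bases, but---as you correctly anticipate---it forces you to track the normalization constants carefully (the factor of $2$ between $\int_{S^2}\omega_0=4\pi$ and the Euler number, and the degree of the capping disk), whereas the paper's explicit $S^3$ computation sidesteps that bookkeeping entirely at the cost of being specific to this example. The paper in fact alludes to your approach just before the proposition (the ``well known relation'' with a citation to \cite{vknotes}) but elects not to use it for the proof.
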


These results are proven in Section \ref{cz-section}, permitting us to conclude that the contact form $\lep$ associated to any prequantization bundle over $(S^2, k\omega_0)$ is dynamically separated up to large action. We obtain a natural filtration on both the action and the SFT-grading of Reeb orbits associated to $R_\vepsilon$.  We investigate this double filtration in Section \ref{filtration}, yielding the following Morse-Bott computational result.

\begin{proposition}
Under the assumptions of Lemma \ref{lempre}, for generic $\lambda_\vepsilon$-compatible $J_\vepsilon$ and with respect to each free homotopy class $\Gamma$, the filtered cylindrical contact homology $CH^{EGH,L_\vepsilon}_*(V,\lambda_\vepsilon, J_\vepsilon, \Gamma)$ consists of copies of  $H_*^{\mbox{\tiny Morse}}(\Sigma, H; \Q)$ with $\partial_\pm^{EGH} = \partial_H^{\mbox{\tiny Morse}}$ on each copy.  
\end{proposition}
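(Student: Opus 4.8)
The plan is to realize the filtered cylindrical contact homology of $\lambda_\vepsilon$ as a Morse-Bott computation, following the strategy of Bourgeois and Vaugon, and to feed in the regularity results of Section \ref{background} together with the index computations of Section \ref{cz-section} so that no abstract perturbation is required. First I would recall from Lemma \ref{lempre} that there exists $\vepsilon > 0$ small enough that every Reeb orbit of $R_\vepsilon$ of action less than $L_\vepsilon$ is nondegenerate and projects to a critical point of $H$; these generators are therefore in bijection with pairs $(p, k)$ where $p \in \mbox{Crit}(H)$ and $\gamma_p^k$ is the $k$-fold iterate of the fiber over $p$, with action roughly $2\pi k$ (up to the $O(\vepsilon)$ correction). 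Using Lemma \ref{lempre} and Proposition \ref{spheremaslov} I would record that $\czm^\Phi(\gamma_p^k) = \mu_{RS}^\Phi(\gamma^k) - n + \mbox{index}_p H$, and in particular that the SFT-grading $|\gamma_p^k| = \czm^\Phi(\gamma_p^k) + n - 3 = \mu_{RS}^\Phi(\gamma^k) - 3 + \mbox{index}_p H$ shifts by a fixed amount (depending only on $k$ and the free homotopy class, not on $p$) plus $\mbox{index}_p H$. Splitting over free homotopy classes $\Gamma$, the generators of $C_*^{EGH, L_\vepsilon}(V, \lambda_\vepsilon, J_\vepsilon, \Gamma)$ of a given ``level'' $k$ are then graded exactly as the Morse complex $C_*^{\mbox{\tiny Morse}}(\Sigma, H; \Q)$, shifted by a constant; I would also need to check that bad orbits do not interfere, which follows since the orbits are elliptic (the $\mu_{RS}$ computation gives rotation numbers, not negative hyperbolic behavior) so all generators are good.

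Next I would analyze the differential. The key geometric input is that, for $\vepsilon$ small, the index-$1$ $J_\vepsilon$-holomorphic cylinders in $\R \times V$ between orbits $\gamma_p^k$ and $\gamma_q^{k'}$ with action below $L_\vepsilon$ must have $k = k'$ (the action filtration and the $\vepsilon$-closeness of the action to $2\pi k$ force this, as is spelled out in the filtration arguments of Section \ref{filtration}), must stay in the same free homotopy class, and in the Morse-Bott limit $\vepsilon \to 0$ converge to ``fiber cylinders'' over Morse gradient trajectories of $H$ on $\Sigma$ connecting $p$ to $q$ with $\mbox{index}_p H - \mbox{index}_q H = 1$. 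Conversely, a gluing/implicit function theorem argument (as in Bourgeois) produces, for each rigid gradient trajectory, a unique such cylinder for small $\vepsilon$, and the regularity results of Section \ref{background} guarantee these cylinders are cut out transversally and counted with the correct signs. Hence $\partial^{EGH}_\pm$ on the level-$k$ subcomplex is identified, up to sign conventions that match via the coherent orientation discussion, with $\partial_H^{\mbox{\tiny Morse}}$; and since $\lambda_\vepsilon$ is dynamically separated up to action $L_\vepsilon$, Remark \ref{coefficients} gives $\partial_-^{EGH} \equiv \partial_+^{EGH}$ on these generators because $\mult(\gamma_p^k) = \mult(\gamma_q^k) = k$ are equal, so the distinction between the two differentials is immaterial here. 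Taking homology level by level and summing over $k$ with the appropriate grading shifts then yields the claimed identification of $CH^{EGH, L_\vepsilon}_*(V, \lambda_\vepsilon, J_\vepsilon, \Gamma)$ with copies of $H_*^{\mbox{\tiny Morse}}(\Sigma, H; \Q)$.

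The main obstacle I expect is the correspondence between low-action index-$1$ holomorphic cylinders for $\lambda_\vepsilon$ and Morse gradient trajectories on the base: one must (i) rule out that a sequence of such cylinders as $\vepsilon_i \to 0$ breaks into a non-cylindrical pseudoholomorphic building or a configuration with multiple Morse-flow ``cascades'' (the cascade count collapsing to a single flow line is what gives $\partial_H^{\mbox{\tiny Morse}}$ on the nose rather than a higher-order perturbation of it), and (ii) show the gluing is surjective, so that every Morse trajectory is accounted for exactly once. For (i), the uniform index and action bounds coming from the $L$-dynamically separated condition, combined with the SFT-compactness exclusion arguments of Section \ref{invariance}, should force all buildings to be cylindrical and single-level; for (ii) one invokes the transversality established in Section \ref{background} so that the standard Morse-Bott gluing analysis of Bourgeois applies without abstract perturbations. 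A secondary subtlety is keeping the bookkeeping of free homotopy classes under iteration straight — exactly the point stressed around Definition \ref{taut} and in Example \ref{lensex} — so that the ``copies'' are indexed correctly by $k$ together with $\Gamma$, with the contractible class $\Gamma = 0$ receiving the copies from those $k$ for which $\gamma_p^k$ is contractible.
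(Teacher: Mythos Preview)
Your proposal is correct and follows essentially the same route as the paper's argument in Section~\ref{filtration}: both identify the generators with $\mbox{Crit}(H)\times\{k\}$, establish the bijection between index-$1$ cylinders and Morse trajectories (the paper does this via Proposition~\ref{cyltofloer} for existence and Theorem~\ref{noescape} for uniqueness, exactly the Bourgeois-type correspondence you invoke), and then use the proportionality between action and SFT-grading in a fixed free homotopy class to rule out differentials between distinct levels~$k$. The only packaging difference is that the paper organizes the last step as a filtration spectral sequence (Lemma~\ref{page0diff} shows the $E^0$-differential is $\partial_H^{\mbox{\tiny Morse}}$, and the grading proportionality forces degeneration at $E^2$), whereas you argue directly that index-$1$ cylinders must have $k=k'$; these are equivalent formulations of the same observation.
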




The use of direct limits in conjunction with the above geometric perturbation allows us avoid the analytic difficulties of directly degenerating moduli spaces of pseudoholomorphic cylinders.    

\begin{theorem}\label{prequantch}
Let $(V,\ker \lambda)$ be a prequantization bundle over an integral closed symplectic surface $(\Sigma^2, \omega)$.  Then with respect to each free homotopy class $\Gamma$, $CH^{EGH}_*(V,\ker \lambda, \Gamma)$ consists of an infinite number of appropriately SFT-grading shifted copies of the singular homology of the base.   
\end{theorem}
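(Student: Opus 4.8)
The plan is to combine the Morse--Bott computational result (the proposition just stated, giving $CH^{EGH,L_\vepsilon}_*(V,\lambda_\vepsilon,J_\vepsilon,\Gamma)$ in terms of $H_*^{\mathrm{Morse}}(\Sigma,H;\Q)$) with the direct limit description \eqref{directlimiteq} and the scaling isomorphism \eqref{scalingeq}. First I would set up the perturbed contact forms $\lambda_{\vepsilon}=(1+\vepsilon\pi^*H)\lambda$ from \eqref{perturbedform1} and note, via Lemma \ref{lempre} and Proposition \ref{spheremaslov} (together with the Robbin--Salamon/Maslov relation for the fiber), that for each fixed $L$ there is an $\vepsilon(L)>0$ so that $\lambda_{\vepsilon(L)}$ is $L$-nondegenerate and $L$-dynamically separated, and all its Reeb orbits of action less than $L$ project to critical points of $H$. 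Theorem \ref{invariance-thm1}(a) then gives $CH^{EGH,L}_*(V,\lambda_{\vepsilon(L)})$ independent of the generic $J$, and the filtered computational proposition identifies this group, in each free homotopy class $\Gamma$, with a finite number of copies of $H_*^{\mathrm{Morse}}(\Sigma,H;\Q)\cong H_*^{\mathrm{sing}}(\Sigma;\Q)$, one copy for each iterate of the fiber whose $k$-fold cover lies in $\Gamma$ and whose action is below $L$, with grading shifts dictated by the Conley--Zehnder index formula $\czm^\Phi(\gamma_p^k)=\mu_{RS}^\Phi(\gamma^k)-n+\mathrm{index}_p H$.

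Next I would take the direct limit. Because the contact structure $\ker\lambda$ on $V$ is fixed and the unfiltered $CH^{EGH}_*(V,\ker\lambda,\Gamma)$ is defined as $\varinjlim_L CH^{EGH,L}_*$, I need to check that the inclusion-induced maps $\iota^{L,L'}$ of Theorem \ref{invariance-thm1}(b) are compatible with the Morse-theoretic identifications: under increasing the action window one simply adjoins the copies of $H_*^{\mathrm{sing}}(\Sigma;\Q)$ coming from the higher iterates that have entered the window, and the differential remains the Morse differential on each copy with no mixing between copies (this is where the dynamically separated index gap of $4$ in Condition (II) does its work, keeping the complex split and the cylinders from connecting distinct fiber-iterates). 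One must also account for the change of contact form $\lambda_{\vepsilon(L)}\rightsquigarrow\lambda_{\vepsilon(L')}$ as $L$ grows: here I would invoke the invariance under nearby $L$-nondegenerate dynamically separated contact forms promised for Section \ref{filtered-continuation} (or, more elementarily, note that all the $\lambda_\vepsilon$ share the same Reeb orbits below the relevant action up to reparametrization and the same index data) so that the tower stabilizes coherently. Passing to the limit yields, in each $\Gamma$, infinitely many grading-shifted copies of $H_*^{\mathrm{sing}}(\Sigma;\Q)$, one for each positive integer $k$ with $\gamma^k\in\Gamma$, which is exactly the assertion. The scaling isomorphism \eqref{scalingeq} is invoked only to normalize the period of the fibers (designed to be $2\pi$), so that the action windows $L_\vepsilon$ may be taken to exhaust $\R_{>0}$.

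I expect the main obstacle to be the bookkeeping of \emph{which} iterates of the fiber contribute to a given free homotopy class $\Gamma$, together with the corresponding grading shifts, when $\pi_1(V)$ is nontrivial. For a nontrivial $S^1$-bundle over $\Sigma$, the fiber class generates a cyclic (possibly finite, e.g. for lens spaces as in Example \ref{lensex}) subgroup of $\pi_0(\Omega V)$, so the list $k_1(\Gamma,\gamma)<k_2(\Gamma,\gamma)<\cdots$ from Definition \ref{taut} is an arithmetic-progression-like sequence whose structure must be pinned down from the Gysin sequence / the Euler class $e$; simultaneously one must verify Condition (II), i.e. that consecutive contributing iterates differ in Conley--Zehnder index by exactly $4$, which is precisely Proposition \ref{spheremaslov}'s $\mu_{RS}^\Phi(\gamma^k)=4$ statement in the $S^2$ case and its analogue for general $(\Sigma,\omega)$ via the Maslov/Chern-number relation. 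Once this index arithmetic is in hand, the remaining steps --- stability of the direct system, vanishing of cross-terms in the differential, and identification of $H_*^{\mathrm{Morse}}$ with $H_*^{\mathrm{sing}}$ --- are formal consequences of the results already established. A secondary subtlety is ensuring the grading shifts assemble into a well-defined $\Z$- (or fractional $\Z$-) grading on the limit; this uses the $c_1(\ker\lambda)$ hypothesis implicit in the dynamically separated setup and the trivialization conventions discussed in Section \ref{gradingsec}.
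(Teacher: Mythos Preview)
Your proposal is correct and follows essentially the same route as the paper: compute the filtered homology via the Morse--Bott proposition, observe that the index gap of $4$ between consecutive fiber-iterates in a fixed free homotopy class prevents any differential or continuation map from mixing the copies of $H_*(\Sigma;\Q)$, and then pass to a direct limit over action windows (equivalently, over $\vepsilon\to 0$). The paper packages the ``no mixing'' step as degeneration of an action spectral sequence at the $E^2$-page (Lemma~\ref{page0diff} and the paragraph following it) and handles the change of contact form via a continuation-map morphism of spectral sequences (Lemma~\ref{continuationlem}), whereas you invoke the index gap and Theorem~\ref{invariance-thm2} directly; the content is the same. Your remark about the scaling isomorphism is unnecessary---the paper does not use it here---and your parenthetical that ``all the $\lambda_\vepsilon$ share the same Reeb orbits below the relevant action'' should be sharpened to say they share the same \emph{orbit sets projecting to $\mathrm{Crit}(H)$} with the same index data, which is what Lemma~\ref{continuationlem} actually exploits.
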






\begin{remark}[Applicability to higher genus surfaces]\em
Prequantization bundles over closed Riemann surfaces $\Sigma_g$ with $g\geq1$ are not  dynamically separated as there does not exist a global trivialization of $\xi$.  However, there exist local constant trivializations which are sufficient to define and compute cylindrical contact homology as in Theorem \ref{prequantch}.   This is due to the
 the absence of contractible orbits, that the multiplicity of the orbit determines its free homotopy class, and the existence of trivializations which guarantee regularity of the relevant unbranched covers of low index cylinders.  This is explained in Section \ref{cz-section}.
 \end{remark}

The following remarks detail applications of the above Morse-Bott methods for prequantization bundles over closed oriented surfaces.   {These applications require more robust invariance results than obtained in this paper, such as those in \cite{HN2} in the hypertight case or the forthcoming joint work with Hutchings \cite{HN3} for the three dimensional dynamically convex case.  The abstract perturbation methods under development by Hofer, Wysocki, and Zehnder, together with Fish and Wehrheim are also expected to suffice.}

\begin{remark}[Growth Rates]\em
In conjunction with Vaugon's work \cite{annegrowth}, we expect the above methods to permit us to prove growth results  for the cylindrical contact homology of prequantization bundles over closed oriented surfaces.  The growth rates should depend on the Euler characteristic of the base and the Euler number of the fibration.
\end{remark}

\begin{remark}[Refinements of the Conley Conjecture]\em
Ginzburg, G\"urel, and Macarini explain in \cite[\S 6]{ggm2} how one could use cylindrical contact homology in conjunction with Morse-Bott methods to refine \cite[Theorem 2.1]{ggm2}.  This would give more precise lower bounds on the number of geometrically distinct contractible (non-hyperbolic) periodic Reeb orbits of prequantization bundles.  Another application is a refinement of the Conley Conjecture \cite[Theorem 2.1]{ggm1}, which under certain assumptions (cf. \S 4.2-4.3) guarantees that for every sufficiently large prime $k$, the Reeb flow has a simple closed orbit in the $k$-th iterate of the free homotopy class of the fiber.   We expect that the methods of this paper in conjunction with the stronger invariance results of \cite{HN2, HN3} permit these extensions for prequantization bundles $(V^3,\xi)$ over closed oriented surfaces $(\Sigma^2, \omega)$.  {In their work, Ginzburg, G\"urel, and Macarini previously analyzed $S^1$-equivariant symplectic homology to rigorously extract dynamical information of Reeb flows associated to prequantization bundles.}
\end{remark}

\begin{remark}[Hope for higher dimensions] \em
Recent work by Wendl \cite{wendlmult} establishes transversality for certain multiply covered closed curves in higher dimensions.  Given that there is no obvious obstruction to applying the same techniques to study punctured curves in symplectic
cobordisms we expect that Wendl's methods combined with those used to prove Theorem \ref{prequantch} can be generalized to apply to prequantization spaces over higher dimensional monotone symplectic manifolds.  
\end{remark}

\subsection{Examples}\label{examples-intro}

We conclude this section with some examples.

\begin{example}[3-sphere]\label{prequant}\em
The contact 3-sphere $(S^3, \xi_{std}=\ker \lambda_0)$ can be realized as a prequantization space via the Hopf fibration $\hopf $ over $(S^2, \omega_0)$,

\[
h(u,v)=(2u \bar{v}, |u|^2- |v|^2), \ (u,v) \in S^3 \subset \bbC^2.
\]

Let $H$ be a Morse-Smale function on $S^2$ and $\lep$ as in (\ref{perturbedform1}).  The only fibers that remain Reeb orbits associated to $\lep$ are iterates of fibers over the critical points $p$ of $H$.  For sufficiently small $\vepsilon$ the surviving $k$-fold covers of simple orbits in the fiber, denoted by $\gamma_p^k$, have action $L \lesssim 1/\vepsilon$, are non-degenerate, and satisfy
\begin{equation}
\label{czprequant1}
\czm(\gpk)=4k-1+\mbox{{index}}_p(H).
\end{equation}

 \begin{figure}
 \begin{center}
    \includegraphics[width=0.63\textwidth]{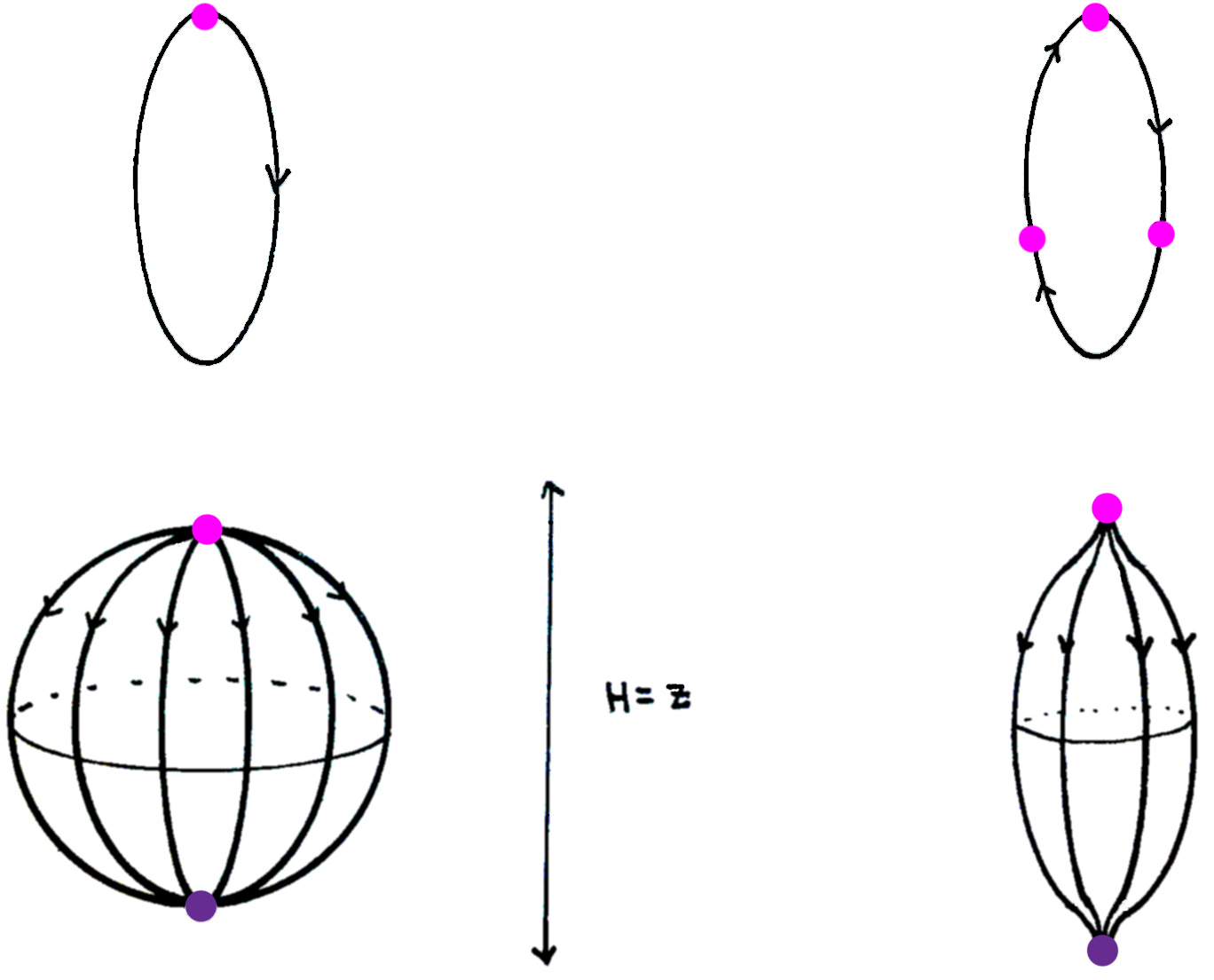}
\end{center}
  \caption{ $-\nabla H$ for $H=z$  with a fiber over $S^2$ and $S^2/\Z_3$ respectively.}
  \label{figlens}
\end{figure}

If we take $H=z$, the height function on $S^2$ as in Figure \ref{figlens} then we obtain a maximum at the north pole (index 2) and a minimum at the south pole (index 0).  Because the index increases by 4 under iteration, we have that $\czm(\gpk)$ in (\ref{czprequant1}) is always odd, so the differential vanishes, resulting in the following theorem.  
\end{example}
\begin{theorem}\label{HCsphere}
The cylindrical contact homology for the sphere $(S^{3},\xi_{std})$ is given by
\[
CH_*(S^{3}, \xi_{std};\Q) = \left\{  \begin{array}{cl}
   \Q & *  \geq 2, \mbox{ even }  \\
    0 & * \ \mbox{ else } \\
\end{array} \right.
\]
\end{theorem}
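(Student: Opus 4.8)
The plan is to use the computation of Conley--Zehnder indices in \eqref{czprequant1} together with the invariance results of Theorem~\ref{prequantch} (or, more directly, Theorem~\ref{invariance-thm1} combined with the Morse--Bott computation for prequantization bundles over $(S^2,\omega_0)$) applied to the Hopf fibration. Recall that $(S^3,\xi_{std})$ is realized as the prequantization bundle over $(S^2,\omega_0)$ with $\int_{S^2}\omega_0 = 4\pi$, so by Proposition~\ref{spheremaslov} (with $k=1$) the simple fiber $\gamma$ is contractible and $\mu_{RS}^\Phi(\gamma)=4$; hence every iterate $\gamma_p^k$ of a fiber over a critical point $p$ of $H$ is contractible, and there is only one free homotopy class to consider, namely $\Gamma = 0$.

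First I would fix $H=z$ the height function on $S^2$, which is Morse--Smale with a maximum at the north pole of Morse index $2$ and a minimum at the south pole of Morse index $0$. By Lemma~\ref{lempre} (equivalently, by the index formula \eqref{czprequant1}), for sufficiently small $\vepsilon>0$ the Reeb orbits of $\lambda_\vepsilon$ with action below $L_\vepsilon \lesssim 1/\vepsilon$ are exactly the iterates $\gamma_p^k$ of fibers over the two critical points, with
\[
\czm(\gamma_p^k) = 4k - 1 + \mbox{index}_p(H).
\]
Thus the generators, listed by their SFT-grading $|\gamma| = \czm(\gamma) + n - 3 = \czm(\gamma) - 1$ (here $n=2$), are: from the north pole ($\mbox{index}=2$), $\gamma_N^k$ with $|\gamma_N^k| = 4k$, and from the south pole ($\mbox{index}=0$), $\gamma_S^k$ with $|\gamma_S^k| = 4k-2$. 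So the generators occupy exactly the even gradings $\geq 2$, with one generator in each such grading. Since every generator sits in an \emph{odd} Conley--Zehnder grading (all $\czm$ values are odd), the parity obstruction forces the differential $\partial^{EGH}_\pm$ to vanish identically: a nonzero differential would have to decrease the grading $|\gamma|$ by $1$, but there are no generators in odd grading, so $\partial^{EGH}_\pm \equiv 0$.

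Next I would conclude that the filtered homology $CH^{EGH,L_\vepsilon}_*(S^3,\lambda_\vepsilon,J_\vepsilon)$ is $\Q$ in each even degree between $2$ and roughly $4/\vepsilon$ and $0$ otherwise. Taking the direct limit over $L$ as in \eqref{directlimiteq}, and invoking Theorem~\ref{prequantch} (with base $(S^2,\omega_0)$, whose singular homology is $\Q$ in degrees $0$ and $2$), we obtain $CH_*^{EGH}(S^3,\xi_{std};\Q) = \Q$ in every even degree $* \geq 2$ and $0$ otherwise. Finally, invariance (Corollary~\ref{cor:eghinv}, or its dynamically separated analogue via Theorem~\ref{invariance-thm1} together with Section~\ref{filtered-continuation}) shows that this is independent of the choice of $(\lambda,J)$, so it is a genuine invariant of $(S^3,\xi_{std})$, completing the proof.

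The main obstacle I anticipate is not the index bookkeeping but the bookkeeping of \emph{which} invariance/limit statement is being invoked and ensuring consistency of gradings: one must be careful that the SFT-grading shift $|\gamma| = \czm^\Phi(\gamma) + n - 3$ is applied with $n=2$, that the trivialization $\Phi$ used is the canonical one coming from a global complex volume form (which exists since $c_1(\xi_{std})=0$ and $H^1(S^3;\Q)=0$), and that the ``infinite number of SFT-grading shifted copies of the singular homology of the base'' in Theorem~\ref{prequantch} assembles precisely to one $\Q$ per even degree $\geq 2$. One should also note that the direct limit is legitimate here because, although $\lambda_0$ is degenerate (Morse--Bott), the perturbed forms $\lambda_\vepsilon$ are $L_\vepsilon$-nondegenerate and dynamically separated up to large action, and the filtered continuation maps of Section~\ref{filtered-continuation} are compatible with the inclusion maps $\iota_J^{L,L'}$, so the colimit is well-defined and independent of the auxiliary choices.
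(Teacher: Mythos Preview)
Your proposal is correct and follows essentially the same approach as the paper: take the height function $H=z$ on $S^2$, observe via \eqref{czprequant1} that all Conley--Zehnder indices are odd, conclude the differential vanishes by parity, and read off the homology from the filtered/direct-limit framework culminating in Theorem~\ref{prequantch}. You have simply made explicit the grading bookkeeping and the invariance/limit steps that the paper leaves implicit in Example~\ref{prequant}.
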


We similarly obtain the following result for the lens space $L(n+1,1)$, equipped with the standard contact structure induced from the standard one on $S^3$.

\begin{theorem}
The cylindrical contact homology for the lens space $(L(n+1,1), \xi_{std})$ is given by
\[
CH_*(\lens, \xi_{std};\Q) = \left\{  \begin{array}{cl}
    \Q^n & *  =0   \\
    \Q^{n+1} & * \geq 2, \mbox{ even } \\
       0 & * \ \mbox{ else } \\
\end{array} \right.
\]
\end{theorem}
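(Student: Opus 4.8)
The plan is to realize $(L(n+1,1),\xi_{std})$ as the prequantization bundle over $(S^2, (n+1)\omega_0)$ via Proposition \ref{spheremaslov}, perturb the canonical contact form by a lift of the height function $H = z$ as in Example \ref{prequant}, and then apply the Morse--Bott computational machinery (Proposition on $CH^{EGH,L_\vepsilon}_*$ and Theorem \ref{prequantch}) together with the index formula of Lemma \ref{lempre}. First I would invoke Proposition \ref{spheremaslov} with $k = n+1$ to get $\mu_{RS}^\Phi(\gamma^{n+1}) = 4$ and the fact that exactly the $(n+1)$-fold covers of the simple fibers are contractible, so the free homotopy classes of Reeb orbits of $\lep$ are indexed by $\baar \in \Z/(n+1)\Z \cong \pi_0(\Omega L(n+1,1))$: the class $\baar = 0$ (contractible) and the $n$ nontrivial classes $\baar = 1, \dots, n$.

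Next I would run the Conley--Zehnder computation separately for each free homotopy class. Using Lemma \ref{lempre} and the Maslov computation, a simple fiber over a critical point $p$ of $H = z$ has its $m$-th iterate (for $m$ in the appropriate arithmetic progression determined by $\baar$) contributing $\czm(\gamma_p^m) = 2m + \textrm{index}_p(H) - 1$ up to the normalization, so that the index increases by $4$ as one passes from $k_i(\baar,\gamma)$ to $k_{i+1}(\baar,\gamma) = k_i(\baar,\gamma) + (n+1)$. Crucially, with $H = z$ we have two critical points: the minimum (index $0$) and the maximum (index $2$), and both contribute \emph{odd} Conley--Zehnder indices in every free homotopy class, exactly as in \eqref{czprequant1}. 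Therefore the differential $\partial^{EGH}_\pm$ vanishes identically on every subcomplex $C^{EGH}_*(L(n+1,1),\lep, J_\vepsilon, \baar)$, and by the Proposition preceding Theorem \ref{prequantch} the filtered homology in each class is a grading-shifted copy of $H_*^{\textrm{Morse}}(S^2, H;\Q) = \Q \oplus \Q$ (concentrated in the two even degrees $0$ and $2$ of the Morse complex). Taking the direct limit \eqref{directlimiteq} as in Theorem \ref{prequantch} yields, for each of the $n+1$ free homotopy classes, infinitely many copies of $H_*(S^2;\Q)$ placed in a sequence of even degrees.

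Finally I would carefully bookkeep the SFT-grading shifts to assemble the total answer. For the $n$ noncontractible classes $\baar = 1,\dots,n$, the lowest iterates $\gamma_p^{k_1(\baar,\gamma)}$ already have $\czm \geq 1$, and the grading formula \eqref{sftgrading} with $n=2$ (here $n$ is the \emph{contact} dimension parameter, which is $2$) gives $|\gamma| = \czm^\Phi(\gamma) - 1$; the resulting degrees, together with the $+4$ shift under each further iteration, populate precisely the even integers $\geq 0$, contributing one $\Q$ per even degree $\geq 0$ from the minimum and one $\Q$ per even degree $\geq 2$ from the maximum of $H$. Summing the contributions of the minimum and maximum across iterations, and then summing over the $n$ nontrivial classes and the one contractible class, I would match the stated groups: $\Q^n$ in degree $0$ (only the $n$ noncontractible classes have a generator of degree exactly $0$, since in the contractible class the lowest contributing iterate is $\gamma^{k_1(0,\gamma)} = \gamma^{n+1}$ with $\czm = 3$ from the minimum, placing its lowest generator in degree $\geq 2$), $\Q^{n+1}$ in each even degree $\geq 2$ (all $n+1$ classes contribute), and $0$ otherwise. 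The main obstacle is this last step: one must organize the grading shifts in each free homotopy class, correctly identify which class first contributes a generator in which degree — in particular verifying that the contractible class fails to contribute in degree $0$ while the others do not — and confirm that for $H = z$ no cancellation or collision of gradings spoils the clean count. The underlying holomorphic-curve transversality and the identification of the differential with the Morse differential are furnished wholesale by Theorem \ref{prequantch} and the preceding Morse--Bott proposition, so no new analysis is required.
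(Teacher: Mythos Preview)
Your approach is correct and is precisely the one the paper has in mind: it presents this theorem immediately after Example~\ref{prequant} with the phrase ``We similarly obtain,'' meaning one realizes $L(n+1,1)$ as the prequantization bundle over $(S^2,(n+1)\omega_0)$ via Proposition~\ref{spheremaslov}, perturbs by the height function, observes that all Conley--Zehnder indices are odd so $\partial^{EGH}_\pm=0$, and then tallies the generators by free homotopy class exactly as you outline. Your intermediate expression $\czm(\gamma_p^m)=2m+\mbox{index}_p(H)-1$ is not literally correct (it disagrees with the contractible case $\czm(\gamma_p^{\ell(n+1)})=4\ell-1+\mbox{index}_p(H)$), but your hedge ``up to the normalization'' and your use of the key feature --- the $+4$ jump between successive iterates in a fixed free homotopy class, with the contractible class starting in degree $2$ and each nontrivial class starting in degree $0$ --- is exactly what is needed and matches the paper's bookkeeping.
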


We are able to adapt these methods to compute cylindrical contact homology of $(L(n+1,n),\xi_{std})$ as follows.  It is interesting to note that cylindrical contact homology groups alone cannot distinguish $(L(n+1,n),\xi_{std})$ from $(L(n+1,1),\xi_{std})$.  However, the classical first Chern class is capable of distinguishing them.    

\begin{example}[$(L(n+1,n),\xi_{std})$]\label{lensex}
\em
If  $\pi_1(M)$ is abelian then the $k_i(\baar, \ga)$ form an arithmetic progression because 
\[
\pi_0(\Omega M) = \pi_1(M) / \{ \mbox{conjugacy} \} \cong \pi_1(M).
\]
This applies to the lens space $(L(n+1,n), \xi_{std})$, as each free homotopy class $\baar$ may be represented as an element of $\{ 0, 1, ... n\}$, where 0 represents a contractible class.  As a result, an arbitrary cover of a closed orbit may not be of the same free homotopy class $\baar$.  This will only be the case when the $k_\ell(\baar, \ga)$-th cover is given by 
\[
k_\ell(\baar, \ga) = \ell (n+1)+\baar,\mbox{ for } \baar \neq 0 \mbox{ and } \ell \in \Z_{\geq 0}.
\]
The procedure described in the previous example holds, though some care must be taken in regards to the fact that the base is now a symplectic orbifold.

We note that the Lens spaces $(L(n+1, n), \xi_{std})$ are contactomorphic to the links of the $A_n$ singularities $(L_{A_n},\xi_{A_n})$, with
\[
L_{A_n}:=\{\mathbf{z} \in \bbC^3 \ | \ z_0^{n+1} + z_1^2+z_2^2 =0 \}\cap S^{5} 
\]
and the canonical contact structure given by
\[
\xi_{A_n}:= T( L_{A_n} )\cap J_0 T( L_{A_n}).
\]
As $(L_{A_n},\xi_{A_n})$ is an example of a Brieskorn manifold, it is well known that $c_1(\xi_{A_n})=0$ \cite[\S 2]{vk}, thus $c_1(\xi_{L(n+1,n)})=0.$  The quotient of $S^3$ with the following cyclic subgroup of $\mbox{SU}_2(\bbC)$ yields the Lens space $L(n+1,n)$.  This cyclic subgroup is $\Z_{n+1}$, which acts on $\bbC^2$ by $u \mapsto \varepsilon u, \  v \mapsto \varepsilon^{-1}v$, where $\varepsilon = e^{2\pi i/(n+1)}$, a primitive $(n+1)$-th order root of unity.   The complex volume form $du \wedge dv $ on $\bbC^2$ can be used to compute the Conley-Zehnder indices associated to Reeb orbits of $S^3$ without local trivializations.  Since $\Z_{n+1} \subset \mbox{SU}_2(\mathbb{C})$, this means that the complex volume form $du \wedge dv $ descends to the quotient, allowing one to compute the Conley-Zehnder indices associated to Reeb orbits of $L(n+1,n)$.  This procedure yields the following formulas for the Conley-Zehnder indices. 

Let $\ga_p$ be the underlying simple orbit over a critical point $p$ of $H$.  For every $\ell \in  \Z_{> 0}$, we obtain a contractible orbit $\ga_p^{\ell(n+1)}$ of index
\begin{equation}
\label{cznoncontractible}
\czm(\ga_p^{\ell(n+1)})=4\ell- 1+\mbox{{index}}_p(H)
\end{equation}

Otherwise for every $\ell \in  \Z_{\geq 0}$ we obtain a noncontractible Reeb orbit $\ga_p^{\ell(n+1)+\Gamma}$ in the free homotopy class $\Gamma \in \{1, 2, ..., n\}$ of index
\begin{equation}
\label{czcontractible}
\begin{array}{lcl}
\czm(\ga_p^{\ell(n+1)+\Gamma})&=&2+ 4 \left \lfloor \frac{\ell(n+1)+\Gamma}{n+1}  \right \rfloor - 1+\mbox{\emph{index}}_p(H) \\
&=& 2 + 4 \ell - 1 +\mbox{{index}}_p(H), \\
\end{array}
\end{equation}

When using the height function as in Figure \ref{figlens} the differential vanishes in light of (\ref{cznoncontractible}), yielding the following theorem.

\begin{theorem}\label{HClens}
The cylindrical contact homology for the lens space $(\lens, \xi_{std})$ is given by
\[
CH_*(\lens, \xi_{std};\Q) = \left\{  \begin{array}{cl}
    \Q^n & *  =0   \\
    \Q^{n+1} & * \geq 2, \mbox{ even } \\
       0 & * \ \mbox{ else } \\
\end{array} \right.
\]
\end{theorem}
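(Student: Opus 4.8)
The plan is to realize $(\lens,\xi_{std})$ as an orbifold prequantization bundle, perturb the canonical contact form by the height function $H=z$ on the base, read the generators and gradings of the filtered chain complex directly off the index formulas of Example~\ref{lensex}, observe that the differential vanishes for parity reasons, and then assemble the unfiltered groups by a direct limit summed over free homotopy classes.

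First I would invoke Example~\ref{lensex}: $\lens=S^3/\Z_{n+1}$ is the prequantization space of the symplectic orbifold $S^2/\Z_{n+1}$, one has $c_1(\xi_{std})=0$ so that the $\Z$-grading~(\ref{sftgrading}) is available, and the complex volume form $du\wedge dv$ on $\bbC^2$ descends to the quotient since $\Z_{n+1}\subset\mathrm{SU}_2(\bbC)$, fixing a homotopy class of trivialization of $\xi$. Applying Lemma~\ref{lempre} to $\lep=(1+\vepsilon\pi^*H)\lambda$ with $H=z$ (Figure~\ref{figlens}), for each $L>0$ there is an $\vepsilon>0$ making $\lep$ both $L$-nondegenerate and $L$-dynamically separated, with every Reeb orbit of action $<L$ an iterate $\gamma_p^m$ of the fiber over the north pole $N$, where $\mathrm{index}_N(H)=2$, or the south pole $S$, where $\mathrm{index}_S(H)=0$, with Conley-Zehnder indices given by~(\ref{cznoncontractible}) and~(\ref{czcontractible}).

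Next I would convert to the SFT grading: since $\dim\lens=3$, (\ref{sftgrading}) reads $|\gamma|=\czm(\gamma)-1$. In the contractible class this gives $|\gamma_p^{\ell(n+1)}|=4\ell-2+\mathrm{index}_p(H)$ for $\ell\geq1$, that is, exactly one generator in each even degree $\geq2$ (degrees $2,6,10,\dots$ over $S$ and $4,8,12,\dots$ over $N$); in each non-primitive class $\Gamma\in\{1,\dots,n\}$ it gives $|\gamma_p^{\ell(n+1)+\Gamma}|=4\ell+\mathrm{index}_p(H)$ for $\ell\geq0$, that is, exactly one generator in each even degree $\geq0$ (degrees $0,4,8,\dots$ over $S$ and $2,6,10,\dots$ over $N$). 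In particular every generator has even SFT degree --- equivalently every such orbit has odd Conley-Zehnder index --- so the differential $\partial^{EGH}_\pm$, which lowers degree by one, vanishes on $C_*^{EGH,L}(\lens,\lep,J_\vepsilon,\Gamma)$; equivalently, by the Morse-Bott computation of the filtered groups stated above, the differential equals the Morse differential of $H=z$ on the base, which vanishes since $H$ has only the two critical points $N$ and $S$, of indices $2$ and $0$. Hence $CH_*^{EGH,L}(\lens,\lep,J_\vepsilon,\Gamma)$ is the free $\Q$-module on the surviving orbits of class $\Gamma$.

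Finally I would pass to the direct limit $L\to\infty$, equivalently $\vepsilon\to0$, using the invariance of the filtered groups (Theorem~\ref{invariance-thm1}) and the limit~(\ref{directlimiteq}) as packaged in Theorem~\ref{prequantch}, and sum over $\Gamma\in\pi_0(\Omega\lens)\cong\{0,1,\dots,n\}$. In degree $0$ only the $n$ non-primitive classes contribute, via the orbits $\gamma_S^{\Gamma}$, giving $\Q^n$; in each even degree $\geq2$ the contractible class contributes one generator and each of the $n$ non-primitive classes contributes one, giving $\Q^{n+1}$; all odd degrees vanish, which is the stated answer. I expect the real work to lie not in this bookkeeping but in its two inputs: the Conley-Zehnder index computation over the \emph{orbifold} base of Example~\ref{lensex}, which requires care with local trivializations reconciled through the descent of $du\wedge dv$ from the $\mathrm{SU}_2$-quotient; and the justification --- via Theorem~\ref{invariance-thm1}, Theorem~\ref{prequantch}, and the associated continuation maps --- that the $L$-filtered computations genuinely assemble into an invariant of $(\lens,\xi_{std})$ independent of all auxiliary choices.
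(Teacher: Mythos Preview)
Your proposal is correct and follows essentially the same route as the paper: Example~\ref{lensex} perturbs by the height function, records the Conley--Zehnder formulas~(\ref{cznoncontractible}) and~(\ref{czcontractible}), observes that all indices are odd so the differential vanishes, and reads off the result. One small terminological slip: the classes $\Gamma\in\{1,\dots,n\}$ should be called \emph{noncontractible} rather than ``non-primitive'' (in $\Z_{n+1}$ some of them are primitive generators), and note that Lemma~\ref{lempre} and Theorem~\ref{prequantch} are stated for manifold bases, so --- as both you and the paper acknowledge --- the orbifold case requires the extra care the paper defers to future work.
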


\end{example}

The lens space $(L(n+1,n),\xi_{std})$ is contactomorphic to the link of the $A_n$ singularity \cite[Theorem 1.8]{reu}, and our computation agrees with \cite[Theorem 1.5]{reu}.  Thus an alternate interpretation of Theorem \ref{HClens} is that cylindrical contact homology of the link of the $A_n$ singularity is a free $\Q[u]$ module of rank equal to the number of conjugacy classes of the finite subgroup $A_n$ of SL$(2;\C)$.

 In future work, we will generalize Theorem \ref{prequantch} so that one can work with prequantization bundles over symplectic orbifolds.  In this setting, the contact homology differential should agree with the Morse orbifold differential.  This would allow us to compute cylindrical contact homology of many Seifert fiber spaces and many three dimensional links of weighted homogeneous polynomials.  When the defining polynomial is homogeneous the link can be realized as a prequantization bundle over a symplectic manifold. This generalization yields a Floer theoretic interpretation of the McKay correspondence in terms of the Reeb dynamics of the links of the simple singularities.   This agrees with  work by McLean and Ritter \cite{MR} which establishes a relationship between the cohomological McKay correspondence and symplectic homology.  Thus we expect that the cylindrical contact homology $CH_*^{EGH}(S^3/\Gamma, \xi_0)$  is a free $\Q[u]$ module of rank equal to the number of conjugacy classes of the finite subgroup $\Gamma$ of $\mbox{\em SL}(2;\C)$. 



\section{Pseudoholomorphic preliminaries} \label{background}
The chain map and chain homotopy will be defined via counts of elements of moduli spaces of cylinders in Section \ref{invariance}. However, we still need to consider moduli spaces of finite energy genus 0 curves with one positive and an arbitrary number of negative ends asymptotic to Reeb orbits.  Section \ref{jback} reviews the necessary background of finite energy genus 0 curves with an arbitrary number of negative punctures. Section \ref{cz-sec} reviews some facts about the Conley-Zehnder index of Reeb orbits associated to contact 3-manifolds.  Section \ref{reg-sec} shows that under the dynamically separated assumption, index -1 and 0 cylinders are regular, a key component in proving that the chain map and chain homotopy are well-defined.  

\subsection{The letter $J$ is for pseudoholomorphic}\label{jback}
Let $(\overline{W}, \overline{\lambda})$ be a compact, connected, exact symplectic manifold such that
\[
\partial \overline{W} = M_+ - M_-,
\]
and $\lambda_\pm =\lambda \arrowvert_{M_\pm}$ is a contact form on $M_\pm$.  Define $(W,\lambda)$ to be the completion of $(\overline{W}, d\overline{\lambda})$ by

\[
W = (-\infty, 0] \times M_- \sqcup \overline{W} \sqcup [0,\infty) \times M_+.
\]
Let $J$ be an almost complex structure which is $d\lambda$-compatible on $\overline{W}$ as well as $\lambda_\pm$-compatible on the symplectization ends of $W$.  The pair $(W, \lambda)$ is called an \textbf{exact symplectic cobordism}. 

 An almost complex structure $J$ on $W$ is said to be \textbf{cobordism compatible} if 
 \begin{itemize}
\item $J$ agrees on $[0, \infty) \times M_+$ with the restriction of a $\lambda_+$-compatible almost complex structure $J_+$ on $\R \times Y_+$; 
\item $J$ agrees on $(-\infty, 0] \times M_-$ with the restriction of a $\lambda_-$-compatible almost complex structure $J_-$ on $\R \times Y_-$;
\item $J$ is compatible with the symplectic form $d\overline{\lambda}$ on $\overline{X}$. 
\end{itemize}
Throughout we will assume that all cobordisms are exact.

When constructing the chain homotopy we need to consider a one parameter family of 1-forms $\{ \overline{\lambda}_\tau \}_{\tau \in[0,1]}$ on $\overline{W}$ such that $d\overline{\lambda}_\tau$ is symplectic and $\overline{\lambda}_\tau\arrowvert_{M_\pm}=\lambda_\pm$ for all $\tau \in [0,1]$.   For each $\tau \in [0,1]$, let $(W,\lambda^\tau)$ be the completion of $(\overline{W},\overline{\lambda}^\tau)$ and let $\mathbb{J} =\{J_\tau\}$ be a  1-parameter smooth family of almost complex structures which is cobordism compatible for each $\tau \in [0,1]$.

\textbf{Asymptotically cylindrical curves} are equivalent to \textbf{finite (Hofer) energy curves} and defined as follows.  Let $(\Sigma, j)$  be a closed Riemann surface and $\Gamma$ be a set of points which are the punctures of $\dot{\Sigma}:=\Sigma \setminus \Gamma$.   Asymptotically cylindrical maps are pseudoholomorphic maps 
\[
u: (\dot{\Sigma}, j) \to (W, J),
\]
subject to the asymptotic condition (\ref{ass1}).  The domain of all the curves of interest in this paper is a multiply punctured sphere $(\dot{\Sigma}, j):=(S^2\setminus \{x,y_1,...,y_{s}\}, j_0)$.

After partitioning the punctures into positive and negative subsets wherein $\g_+:= \{x\}$ and $\g_-:=\{y_1,..y_s\} $,  we consider asymptotically cylindrical $J$-holomorphic curves which are assumed to have the property that for each $z \in \g_\pm$, there exist holomorphic cylindrical coordinates identifying a punctured neighborhood of $z$ with a respective positive half-cylinder $Z_+=[0, \infty) \times S^1$ or negative half-cylinder $Z_-=(-\infty, 0] \times S^1$ 
and a trivial cylinder $u_{\gamma_z}: \R \times S^1 \to \R \times M$ such that
\begin{equation}\label{ass1}
u(s,t) = \exp_{u_{\gamma_z}(s,t)}h_z(s,t) \mbox{ for $|s|$ sufficiently large,}  
\end{equation}\label{ass2}
where $h_z(s,t)$ is a vector field along $u_{\gamma_z}$ satisfying $|h_z(s,\cdot)| \to 0$ uniformly as $s \to \pm \infty$.  Both the norm and the exponential map are assumed to be defined with respect to a translation-invariant choice of Riemannian metric on $\R \times M$.

The moduli space of asymptotically cylindrical curves is the space of equivalence classes of asymptotically cylindrical pseudoholomorphic maps; here an equivalence class is defined by the data $(\Sigma, j, \Gamma, u)$, where $\Gamma$ is an ordered set.  An equivalence class $(\Sigma, j, \g, u) \sim (\Sigma', j', \g', u')$ of asymptotically cylindrical pseudoholomorphic maps, $[(\Sigma, j, \g, u)]$, is determined whenever there exists a biholomorphism $\phi: (\Sigma, j) \to (\Sigma', j')$ taking $\g$ to $\g'$ with the ordering preserved, i.e. $\phi(\g_+) =\g'_+$ and $\phi(\g_-) =\g'_-$, such that $u = u' \circ \phi.$

We denote the moduli space of genus 0 \textbf{asymptotically cylindrical pseudoholomorphic} curves with 1 positive end and $s$ negative ends limiting on the Reeb orbits $\gp, \ga_1,...,\ga_s$ by
\[
\calm(\gp;\ga_1,...,\ga_s).
\]
We also are interested in genus 0 \textbf{finite energy planes\footnote{Note that $(S^2 \setminus \{x\}, j_0)$ is biholomorphic to $(\bbC, j_0)$, hence the terminology plane.}}, which are pseudoholomorphic curves
\[
u: (S^2 \setminus \{x\}, j_0) \to (W, \Jt)
\]
asymptotically cylindrical to a single nondegenerate Reeb orbit $\ga$ at the puncture $x$. When  $W=\R \times M$ and $\Jt$ is $\R$-invariant then $\R$ acts on these moduli spaces by \textbf{external translations}
\[
u=(a,f) \to (a + \rho, f),
\]
and we denote the quotient by 
\[
\mhat(\gamma;\gamma_1,...\gamma_s):=\calm(\gamma;\gamma_1,...\gamma_s)/\R.
 \]


{When $W= \R \times M$ and $J$ is $\lambda$-compatible, then the maximum principle implies that the puncture of a finite energy plane is always positive.  In a non-$\R$ invariant exact symplectic cobordism, Stokes' theorem can be used to obtain that the puncture is always positive, because the energy is positive.  For further details see \cite[\S 10]{wendl-sft}.}


\begin{definition}\em
An asymptotically cylindrical pseudoholomorphic curve 
\[
u: (\ds:=\Sigma \setminus ( \g_+ \sqcup \g_- ), j) \to (W, J)
\]
is said to be \textbf{multiply covered} whenever there exists a pseudoholomorphic curve
\[
v: (\ds':=\Sigma' \setminus ( \g'_+ \sqcup \g'_-), j') \to (W, J),
\]
 and a holomorphic branched covering $\varphi: (\Sigma, j) \to (\Sigma', j') $ with $\g'_+ = \varphi(\g_+)$ and  $\g'_- = \varphi(\g_-)$ such that
\[
u = v \circ \varphi, \ \ \ \mbox{deg}(\varphi)>1,
\]
allowing for $\varphi$ to not have any branch points.  Recall that $\mult(u) := \mbox{deg}(\varphi).$ 
\end{definition}

An asymptotically cylindrical pseudoholomorphic curve \ $u$ is called \textbf{simple} whenever it is not multiply covered.  In \cite[\S 3.2]{jo1} we gave a proof of the folk theorem that that every simple asymptotically cylindrical curve  is \textbf{somewhere injective}, meaning for some $z \in \ds$,
\[
du(z) \neq 0 \ \  \ u^{-1}(u(z))=\{z\}.
\]
A point $z \in \ds$ with this property is called an \textbf{injective point} of $u$.  

An \textbf{immersed} pseudoholomorphic curve (with one positive puncture) is an equivalence class of tuples $(\Sigma, j, \Gamma, u)$ such that $u$ is an immersion.

If the asymptotic orbits of a curve $u \in \calm(\ga; \ga_1,...,\ga_s)$ are all nondegenerate, then the \textbf{virtual dimension} of $\calm(\ga; \ga_1,...,\ga_s)$ is equal to the \textbf{index},  which is given by
\begin{equation}\label{w-index}
{\ind}(u) = -\chi(\ds)   +  \mu_{CZ}^\Phi(\gamma) - \displaystyle \sum_{i=1}^s\mu_{CZ}^\Phi(\gamma_i) + 2c_1^\Phi(u^*TW, J),
\end{equation}
as in \cite{Wtrans}, with $\chi(\ds) =(2-2g(\Sigma) - \#\g^+ - \#\g^-)$ and $\Phi$ a trivialization of $\xi$ along the asymptotic orbits of $u$.   In particular, $c_1^\Phi(u^*TW, J)$ is the relative first Chern number of $(u^*TW, J) \to \ds $ with respect to a suitable choice of $\Phi$ along the ends and boundary.  Moreover, the relative first Chern class vanishes when the trivialization $\Phi$ extends to a trivialization of $u^*\xi$.   
\begin{remark}\label{trivchoice}\em 
When $g=0$ we can always choose a trivialization $\Phi$ (fixed up to homotopy) such that $c_1^\Phi(u^*TW, J)=0$. 
More precisely, we choose a trivialization $\Phi$ so that $c_1^\Phi(v^*TW, J)=0$ for a somewhere injective curve genus 0 asymptotically cylindrical curve $v$ with one positive puncture and at least one negative puncture.  This implies for any (branched) cover  $u := \varphi \circ v$,  that $c_1^\Phi(u^*TW, J)=0$. Without loss of generality we can work with the following index formula

   \begin{equation}\label{indexformula}
 {\ind}(u) = -(1-s) + \czm^\Phi(\gamma) - \displaystyle \sum_{i=1}^s\czm^\Phi(\gamma_i).
 \end{equation}
 \end{remark}

 If $u$ is a non-constant curve then the action of $\mbox{Aut}(\ds,j)$ induces a natural inclusion of its Lie algebra $\mathfrak{aut}(\dot{\Sigma},j)$ into $\ker D\deebar(u).$  In \cite[\S7]{wendl-sft} Wendl provides a complete proof that the moduli spaces of somewhere injective curves are cut out transversely for generic choice of $J$ in a cobordism $(W,J)$.  When $J$ is $\R$-invariant and $W=\R \times M$, the ``standard'' argument must be modified; see \cite[\S 8]{wendl-sft}. 
 
Next we recall some transversality theorems in cobordisms.  In the statements of these theorems we suppress the notation specifying Reeb orbits and denote $\calm$ to be a moduli space of asymptotically cylindrical curves.  

\begin{theorem}{\em \cite[Theorem 8.1]{wendl-sft}} \label{si-thm1}
Let $\J$ be the set of all $\lambda$-compatible almost complex structures on $\R \times M$ where $\lambda$ is nondegenerate. Then there exists a comeager subset
$\J_{reg} \subset \J, $ such that for every $J \in \J_{reg}$, every curve $u \in \calm$ with a representative $u \colon (\dot{\Sigma},j) \to (\R \times M, \Jt)$ that has an injective point $z\in \dot{\Sigma}$ satisfying $ \pi_\xi \circ du(z)  \neq 0 $ is Fredholm regular.
\end{theorem}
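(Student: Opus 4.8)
\emph{Proof strategy.} I would run the standard Sard--Smale universal moduli space argument, taking care of the fact that in a symplectization the almost complex structure is required to be $\R$-invariant; this is carried out in \cite[\S 8]{wendl-sft}. First, fix an ordered tuple of nondegenerate Reeb orbits $(\gamma;\gamma_1,\dots,\gamma_s)$ and a topological type for the punctured domain $\ds$, so that $\mathcal{M}$ is the corresponding moduli space of asymptotically cylindrical $J$-holomorphic curves, realized inside a weighted Sobolev completion (say $W^{1,p}_\delta$ with $p>2$ and small $\delta>0$) on which $\bpa_J$ is a smooth Fredholm section whose linearization $D_u$ at a zero has index given by \eqref{w-index}. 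Then replace $\J$ by the Banach manifold $\J^\varepsilon$ of $\lambda$-compatible $\R$-invariant almost complex structures of suitable finite regularity (Floer's $C^\varepsilon$ space); its tangent space at $J$ consists of $\R$-invariant bundle endomorphisms $Y$ of $T(\R\times M)$ that anticommute with $J$, vanish on $\mathrm{span}\{\pa/\pa\tau, R\}$, and satisfy the linearized $d\lambda$-compatibility condition on $\xi$, so $Y$ is determined by a section over $M$ of endomorphisms of $\xi$.

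The key step is to form the universal moduli space $\mathcal{M}^* = \{(u,J): J\in\J^\varepsilon,\ u\in\mathcal{M}\text{ is }J\text{-holomorphic and has an injective point }z\text{ with }\pi_\xi\circ du(z)\neq 0\}$ and to prove it is a smooth Banach manifold; since having an injective point forces $u$ to be somewhere injective, hence simple, the usual perturbation machinery applies. Concretely one must show the extended linearization $(\eta,Y)\mapsto D_u\eta + Y(u)\circ du\circ j$ is surjective at every $(u,J)$. As $D_u$ is Fredholm this reduces to showing that no nonzero cokernel element $\zeta$ (defined by a weighted $L^2$ pairing) annihilates every term $Y(u)\circ du\circ j$. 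Suppose one did: by elliptic regularity and the similarity principle $\zeta$ satisfies a first-order Cauchy--Riemann type equation, hence is nonzero on an open dense set, so we may assume $\zeta(z)\neq 0$ at the injective point $z$; and since $\pi_\xi\circ du(z)\neq 0$ the $\xi$-component of $du(z)$ is nonzero, so a perturbation $Y$ localized near $f(z)\in M$ (where $f=\pi_M\circ u$) has nonvanishing pointwise pairing with $\zeta$ near $z$.

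The genuine obstacle --- and the only place the $\R$-invariant setting differs from a general cobordism --- is that $Y$ cannot be localized at the point $u(z)\in\R\times M$ but only along its entire $\R$-orbit, so it simultaneously perturbs $J$ on every other sheet of $u$ lying over a neighborhood of $f(z)$. To handle this I would choose the injective point $z$, within the dense set of available ones, so that in addition $f(z)$ avoids the (measure-zero) traces of the asymptotic Reeb orbits and the (countable) critical values of $f$; then the preimage under $f$ of a sufficiently small ball about $f(z)$ is a compact subset of $\ds$ away from the punctures, hence a finite union of immersed sheets all passing through $f(z)$, and by injectivity of $u$ at $z$ the competing sheets sit at $\R$-levels different from $a(z)$. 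After a further suitable choice one localizes $Y$ near a point of the $z$-sheet that is close to but distinct from $f(z)$ and is missed by all other sheets, making the remaining single contribution to the pairing nonzero --- contradicting $\zeta\neq 0$. This is also why the hypothesis $\pi_\xi\circ du(z)\neq 0$, which excludes branched covers of trivial cylinders (on which every admissible $Y$ acts trivially), is indispensable.

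With $\mathcal{M}^*$ established as a Banach manifold, the projection $\Pi\colon\mathcal{M}^*\to\J^\varepsilon$, $(u,J)\mapsto J$, is a Fredholm map of index $\ind(u)$, so by Sard--Smale its set of regular values $\J^\varepsilon_{reg}$ is comeager; for $J$ in this set every curve of $\mathcal{M}$ possessing an injective point with nonzero $\xi$-projection is a regular point of $\Pi^{-1}(J)$, i.e.\ Fredholm regular. Intersecting the $\J^\varepsilon_{reg}$ over the countably many choices of asymptotic tuple and topological type, and then upgrading from finite to $C^\infty$ regularity by the standard Taubes argument (exhibiting the smooth regular set as a countable intersection of dense open subsets of $\J$), yields the comeager $\J_{reg}\subset\J$ in the statement. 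The step I expect to be the crux is the third one: reconciling the $\R$-invariance constraint on admissible perturbations with the requirement that they exhaust the cokernel of $D_u$.
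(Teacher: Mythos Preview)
The paper does not give its own proof of this theorem: it is stated with a citation to \cite[Theorem 8.1]{wendl-sft}, and the surrounding text merely remarks that when $J$ is $\R$-invariant the ``standard'' argument must be modified, referring the reader to \cite[\S 8]{wendl-sft}. Your proposal is a correct and fairly detailed outline of precisely that argument --- the Sard--Smale universal moduli space setup, with the crucial adaptation being that perturbations $Y$ are only localizable in $M$ rather than in $\R\times M$, which you handle by choosing the injective point so that the other sheets of $u$ over the same $M$-neighborhood sit at distinct $\R$-levels. This is the approach of the cited reference, so there is nothing to compare against in the paper itself.
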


The above result also holds for the set of somewhere injective curves in completed exact symplectic cobordisms $(W, J)$; see \cite[Theorem 7.2]{wendl-sft}.  
\begin{theorem}\label{si-thm2}
Let $\lambda_\pm$ be nondegenerate contact forms on a closed manifold $M$ and $J$ be a generic cobordism compatible almost complex structure.  Then every somewhere injective curve $u \in \calm$ is Fredholm regular. 
\end{theorem}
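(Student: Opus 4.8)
The plan is to run the standard Sard--Smale argument on a universal moduli space, exactly as in \cite[\S 7]{wendl-sft}; below I describe the shape of the argument and indicate where the hypotheses enter. First I would fix the asymptotic Reeb orbits $\gamma,\gamma_1,\dots,\gamma_s$ (all nondegenerate by assumption), choose a small exponential weight $\delta>0$, and form the Banach manifold $\mathcal{B}$ of $W^{1,p}_\delta$-maps $\ds\to W$ asymptotic to the trivial cylinders over these orbits, together with the Banach manifold $\mathcal{J}^\varepsilon$ of cobordism-compatible almost complex structures of class $C^\varepsilon$ (Floer's $C^\varepsilon$-space, so that $\mathcal{J}^\varepsilon$ is a genuine Banach manifold), required to agree near the ends with $\mathbb{R}$-invariant $\lambda_\pm$-compatible data. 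Over $\mathcal{B}\times\mathcal{J}^\varepsilon$ sits the Banach bundle $\mathcal{E}$ with fibre $L^p_\delta\bigl(\overline{\mathrm{Hom}}_{\mathbb{C}}(T\ds,u^*TW)\bigr)$, and $\bar{\partial}$ is a smooth section; the universal moduli space $\mathcal{M}^{\mathrm{univ}}$ is the intersection of its zero set with the somewhere-injective locus.

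The crux is to show that $\mathcal{M}^{\mathrm{univ}}$ is a Banach manifold, i.e.\ that the full linearization
\[
L_{(u,J)}\colon T_u\mathcal{B}\oplus T_J\mathcal{J}^\varepsilon \longrightarrow \mathcal{E}_{(u,J)}, \qquad (\eta,Y)\longmapsto D\bar{\partial}_J(u)\,\eta + \tfrac{1}{2}\,Y(u)\circ du\circ j,
\]
is surjective at every $(u,J)\in\mathcal{M}^{\mathrm{univ}}$. Since $D\bar{\partial}_J(u)$ is Fredholm, its image is closed of finite codimension, so it suffices to show that any $\zeta$ in the $L^q_\delta$-annihilator of $\mathrm{im}\,L_{(u,J)}$ vanishes identically. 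Such a $\zeta$ is a weak solution of the formal adjoint equation, hence smooth by elliptic regularity, and by Aronszajn's unique continuation theorem it either vanishes identically or has nowhere-dense zero set. If $\zeta\not\equiv 0$, pick an injective point $z_0$ of $u$ lying in the interior region $\overline{W}$ where $J$ is unconstrained (if $u$ is entirely contained in a symplectization end one instead invokes the $\mathbb{R}$-invariant version, Theorem~\ref{si-thm1}), with $du(z_0)\neq 0$ and $\zeta(z_0)\neq 0$; somewhere injectivity guarantees $p_0:=u(z_0)$ has no other preimage. The standard linear-algebra lemma says that $Y\mapsto\tfrac{1}{2}Y(p_0)\circ du(z_0)\circ j$, over admissible endomorphisms $Y$, is onto $\overline{\mathrm{Hom}}_{\mathbb{C}}(T_{z_0}\ds,T_{p_0}W)$; choosing $Y$ supported in a small ball about $p_0$ — which by the no-other-preimage property disturbs $u$ nowhere else — gives $\langle\zeta,Y(u)\circ du\circ j\rangle\neq 0$, contradicting $\zeta\perp\mathrm{im}\,L_{(u,J)}$. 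Hence $L_{(u,J)}$ is surjective and $\mathcal{M}^{\mathrm{univ}}$ is a Banach manifold.

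With this in hand, the projection $\pi\colon\mathcal{M}^{\mathrm{univ}}\to\mathcal{J}^\varepsilon$ is a Fredholm map of index $\mathrm{ind}(u)$, so Sard--Smale yields a comeager set $\mathcal{J}^\varepsilon_{\mathrm{reg}}\subset\mathcal{J}^\varepsilon$ of regular values; for such $J$, $D\bar{\partial}_J(u)$ is surjective for every somewhere injective $u\in\calm$, which is precisely Fredholm regularity. Finally one upgrades from $C^\varepsilon$ to $C^\infty$ by Taubes's trick: exhaust the relevant moduli spaces $\calm$ by the at most countably many pieces cut out by bounds on energy, number of punctures, and index, apply the above to each, and use density of $C^\infty$ in $C^\varepsilon$ to intersect the resulting comeager sets into a comeager $\mathcal{J}_{\mathrm{reg}}\subset\mathcal{J}$ of smooth cobordism-compatible $J$. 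I expect the genuine obstacle to be the bookkeeping in these last two steps — guaranteeing an injective point inside the region where $J$ is free to vary (rather than frozen on the symplectization ends) and carrying out the $C^\varepsilon$-to-$C^\infty$ passage with the exponential weights in place — since the surjectivity computation itself is by now routine; in any event this is exactly \cite[Theorem 7.2]{wendl-sft}.
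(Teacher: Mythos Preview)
Your proposal is correct and matches the paper's approach: the paper does not give its own proof of this theorem but simply cites \cite[Theorem~7.2]{wendl-sft}, which is precisely the Sard--Smale argument you have outlined. You have supplied considerably more detail than the paper does, and you correctly flag the one genuinely delicate point (arranging an injective point in the interior region where $J$ is allowed to vary, and the $C^\varepsilon$-to-$C^\infty$ passage).
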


Moreover, we have that Fredholm regularity implies that a neighborhood of a curve admits the structure of a smooth orbifold.

\begin{theorem}[Theorem 0, \cite{Wtrans}]\label{folk0}
Assume that $u\colon (\dot{\Sigma},j) \to (W, \Jt)$ is a non-constant curve in $\calm(\ga; \ga_1,...,\ga_s)$ asymptotic to nondegenerate orbits.  If $u$ is regular, then a neighborhood of $u$ in $\calm(\ga, \ga_1,...,\ga_s)$ naturally admits the structure of a smooth orbifold of dimension 
\[
{\mbox{\em ind}}(u) =  -(1-s) + \czm^\Phi(\gamma) - \displaystyle \sum_{i=1}^s\czm^\Phi(\gamma_i),
\] 
whose isotropy group at $u$ is given by
\[
\mbox{\em Aut}:=\{ \varphi \in \mbox{\em Aut}(\dot{\Sigma}, j) \ | \ u = u \circ \varphi \}.
\] 
Moreover, there is a natural isomorphism
\[
T_u\calm(\ga; \ga_1,...,\ga_s) = \ker D\deebar(j,u)/\mathfrak{aut}(\dot{\Sigma},j).
\]
\end{theorem}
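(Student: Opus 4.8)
The plan is to obtain the orbifold structure from the implicit function theorem for a smooth Fredholm section of a Banach bundle, followed by a slice theorem for the residual reparametrization action; the functional analysis is that of \cite{Wtrans}. First I would realize a neighborhood of $u$ in the universal solution space, before dividing by automorphisms of the domain, as the zero set of a smooth section. Concretely, fix a Teichm\"uller slice $\{j_\sigma\}_{\sigma\in\calt}$ through $j$ for the punctured surface $\ds$ (a finite-dimensional family of complex structures with $j_0=j$, transverse to the reparametrization action, with residual isotropy exactly $\mathrm{Aut}(\ds,j)$), and form the Banach manifold $\calb$ of pairs $(\sigma,u)$ where $u\colon\ds\to W$ is an asymptotically cylindrical map in the exponentially weighted Sobolev completion of \cite{Wtrans}, the weight adapted to the nondegenerate orbits $\ga,\ga_1,\dots,\ga_s$ and chosen so that $\mathfrak{aut}(\ds,j)$ lies in the space of admissible infinitesimal variations. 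Over $\calb$ sits the Banach bundle $\cale$ with fiber the matching completion of the $(0,1)$-forms on $(\ds,j_\sigma)$ valued in $u^*TW$, and $(\sigma,u)\mapsto\bar\partial_{j_\sigma,\Jt}u$ is a smooth section $\mathbf{s}$; by the exponential decay estimates for finite-energy ends (cf.\ \cite[\S 10]{wendl-sft}), $\mathbf{s}^{-1}(0)$ agrees near $(0,u)$ with the asymptotically cylindrical $\Jt$-holomorphic curves near $u$ whose conformal structure lies in the slice.

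Next I would invoke the Fredholm theory. The vertical part of $D\mathbf{s}$ at $(0,u)$ is the operator $D\deebar(j,u)$ of the statement; by the Riemann-Roch computation for asymptotically cylindrical Cauchy-Riemann operators with nondegenerate asymptotics \cite{Wtrans}, and with the trivialization of Remark \ref{trivchoice}, it is Fredholm of index $\mathrm{ind}(u)+\dim\mathfrak{aut}(\ds,j)$, the second summand recording the domain automorphisms not yet divided out. By definition, Fredholm regularity of $u$ is the surjectivity of $D\deebar(j,u)$, so the implicit function theorem presents a neighborhood of $(0,u)$ in $\mathbf{s}^{-1}(0)$ as a smooth manifold $\widetilde{\M}$ of dimension $\mathrm{ind}(u)+\dim\mathfrak{aut}(\ds,j)$ with $T_{(0,u)}\widetilde{\M}=\ker D\deebar(j,u)$.

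Finally I would pass to the quotient. The group $G:=\mathrm{Aut}(\ds,j)$ acts on $\widetilde{\M}$ by $\varphi\cdot(\sigma,u)=(\varphi^{*}\sigma,u\circ\varphi)$, preserving the slice, and near $u$ the moduli space $\calm(\ga;\ga_1,\dots,\ga_s)$ is the quotient $\widetilde{\M}/G$. The infinitesimal orbit directions are exactly the image of $\mathrm{Lie}(G)=\mathfrak{aut}(\ds,j)$ under the inclusion $\mathfrak{aut}(\ds,j)\hookrightarrow\ker D\deebar(j,u)$ recalled above (an infinitesimal reparametrization of a pseudoholomorphic curve solves the linearized equation). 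Since $u$ is non-constant its stabilizer $\mathrm{Aut}(u)=\{\varphi\in G:u\circ\varphi=u\}$ is finite: if $u$ is simple it is somewhere injective by \cite[\S 3.2]{jo1}, forcing $\mathrm{Aut}(u)=\{\mathrm{id}\}$, while if $u=v\circ\psi$ is multiply covered over a simple curve $v$, then somewhere injectivity of $v$ gives $\psi\circ\varphi=\psi$, so $\mathrm{Aut}(u)$ is the finite deck group of the branched cover $\psi$. Once the $G$-action on $\widetilde{\M}$ is shown to be proper, the slice theorem for proper Lie group actions with finite stabilizers exhibits $\widetilde{\M}/G$ near $u$ as a smooth orbifold of dimension $\dim\widetilde{\M}-\dim G=\mathrm{ind}(u)$, with isotropy group $\mathrm{Aut}(u)$ at $u$, and yields
\[
T_u\calm(\ga;\ga_1,\dots,\ga_s)=\ker D\deebar(j,u)/\mathfrak{aut}(\ds,j).
\]

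The step I expect to be the main obstacle is this last quotient. Unlike the closed Gromov-Witten case, $G=\mathrm{Aut}(\ds,j)$ need not be compact: it is $\C^{*}$ for a cylinder and the affine group for a plane, and in the symplectization case it absorbs $\R$-translations of the target, so the properness of its action on the local solution space must be established directly, ruling out sequences $\varphi_n\to\infty$ in $G$ with $u\circ\varphi_n$ staying near $u$; this is where a localized form of SFT/Gromov compactness enters (cf.\ \cite[\S\S 7--8]{wendl-sft}). One must also arrange the weighted Sobolev model so that the Teichm\"uller-slice contribution and the $\mathfrak{aut}$-quotient combine to give exactly the index $\mathrm{ind}(u)$ of \eqref{indexformula}, and so that $\widetilde{\M}$ genuinely furnishes a chart on $\calm(\ga;\ga_1,\dots,\ga_s)$ compatible with the asymptotic condition \eqref{ass1}.
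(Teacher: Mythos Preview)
The paper does not prove this statement: it is quoted verbatim as Theorem~0 of \cite{Wtrans} and used as a black box, so there is no ``paper's own proof'' to compare against. Your outline is the standard argument underlying Wendl's theorem---Teichm\"uller slice, weighted Sobolev Banach setup, Riemann--Roch index computation, implicit function theorem, then quotient by the residual automorphism group---and it is essentially correct as a sketch. The point you flag about properness of the $\mathrm{Aut}(\ds,j)$-action when $\ds$ is a cylinder or plane is indeed the place where care is needed, and it is handled in \cite{Wtrans} and \cite[\S\S 7--8]{wendl-sft} along the lines you indicate; since the present paper simply invokes the result, your sketch already goes well beyond what the paper supplies.
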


\begin{remark}\label{par-reg}\em
The above results can be extended to include moduli spaces dependent on finitely many parameters, necessary in establishing the chain homotopy.  Let $P$ be a smooth finite-dimensional manifold and let $\{J_\tau\}_{\tau \in P}$ be a smooth family of complex structures.  A \textbf{parametric moduli space} is defined by
\[
\widehat{\mathcal{M}}(\{ J_\tau \}_{\tau \in P}) = \{ (\tau,u) \ \arrowvert \ \tau \in P, \ u\in \M(J_\tau) \}.
\] 
An analogous notion of \textbf{parametric regularity} holds for pairs $(\tau, u) \in \calm(\{ J_\tau \}_{\tau \in P})$, which is an open condition such that the space $
\widehat{\mathcal{M}}{\mbox{\tiny reg}}(\{ J_\tau \}_{\tau \in P})$
of parametrically regular elements will be an orbifold of dimension
\[
\mbox{dim }\widehat{\mathcal{M}}{\mbox{\tiny reg}}(\{ J_\tau \}_{\tau \in P}) = \mbox{vdim}\calm + \mbox{dim}(P).
\]
\end{remark}

In particular, we obtain the following parametric regularity result.  The proof follows by modifying the full details given in the closed case in \cite[\S 4.5]{Wnotes} to the set up for the punctured case in \cite[\S 7]{wendl-sft}.  Full details in the Hamiltonian Floer setting are given in \cite[\S 11.3]{ADfloer}.

\begin{theorem}\label{thmparreg}
Let $\lambda_\pm$ be nondegenerate contact forms on a closed, connected manifold $M$ and suppose the smooth family of cobordism compatible almost complex structures $\{ J_\tau \}_{\tau \in P}$ {is generic} and varies on an open subset $\mathcal{U}$ in the complement of the cylindrical ends of $(W,J)$ for $\tau$ lying in some precompact open subset $\mathcal{V} \subset {P}$.  Then all elements $(\tau,u) \in \widehat{\mathcal{M}}(\{ J_\tau \}_{\tau \in P})$ for which $\tau \in \mathcal{V}$ and $u$ has an injective point mapping to $\mathcal{U}$ are parametrically regular.
 \end{theorem}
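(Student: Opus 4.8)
The plan is to reduce Theorem \ref{thmparreg} to a Sard--Smale argument exactly as in the somewhere-injective case of Theorem \ref{si-thm2}, the only new feature being that we must work over the parameter space $P$ rather than at a single $J$. First I would set up the universal moduli space: fix a Banach manifold of admissible data consisting of pairs $(\tau, u)$ where $\tau$ ranges over a neighborhood of $\mathcal{V}$ in $P$ and $u$ lies in an appropriate $W^{k,p}$ (or $C^\infty_\epsilon$, following Wendl) completion of the space of asymptotically cylindrical maps with the prescribed asymptotics, together with the Banach manifold $\mathcal{J}^\ell$ of $C^\ell$ cobordism-compatible almost complex structures that agree with the fixed $J$ outside $\mathcal{U}$. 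The relevant section is the Cauchy--Riemann operator $\bar\partial_{J_\tau}(\tau,u,J)$ viewed as a section of a Banach space bundle; its zero set is the universal parametric moduli space $\widehat{\mathcal{M}}^{\mathrm{univ}}$.

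The key step is to show that the linearization of this section is surjective at every triple $(\tau, u, J)$ with $u$ having an injective point $z$ mapping into $\mathcal{U}$. This is the standard unique continuation / annihilator argument: if $\eta \in L^q$ (the dual exponent) annihilates the image of $D\bar\partial$, then testing against variations supported in $J$ near $u(z)$ forces $\eta$ to vanish on a neighborhood of the injective point — here one uses precisely that $z$ is injective and $u(z)\in\mathcal{U}$, so the perturbations of $J$ allowed by our constraint can be made to separate $u(z)$ from the rest of the image — and then the Carleman-type similarity principle propagates $\eta \equiv 0$ across the connected domain. Wendl's proof of Theorem \ref{si-thm2} (\cite[\S 7]{wendl-sft}) does exactly this for a single $J$, and in the closed setting \cite[\S 4.5]{Wnotes} carries the same reasoning through a parameter; I would assemble these two ingredients. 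The constraint that $J$ varies only on $\mathcal{U}$ (away from the cylindrical ends) is harmless because the injective point is required to land in $\mathcal{U}$, so the perturbations needed for surjectivity are available there.

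Granting surjectivity, the universal space $\widehat{\mathcal{M}}^{\mathrm{univ}}$ (restricted to the locus with an injective point in $\mathcal{U}$) is a Banach manifold, and the projection to $\mathcal{J}^\ell$ is Fredholm of index $\mathrm{vdim}\,\mathcal{M} + \dim P$. The Sard--Smale theorem then yields a comeager set of $\ell$-regular families; a standard Taubes-type argument upgrades this to a comeager set in the $C^\infty$ family topology by exhausting and intersecting. For such a generic $\{J_\tau\}$, every $(\tau,u)$ with $\tau\in\mathcal{V}$ and an injective point in $\mathcal{U}$ is parametrically regular, which by the discussion in Remark \ref{par-reg} gives the claimed orbifold of dimension $\mathrm{vdim}\,\mathcal{M}+\dim P$. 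The main obstacle is the surjectivity step — specifically, verifying that the allowed perturbations (supported in $\mathcal{U}$, fixed at the ends) genuinely suffice to kill the cokernel; this is where one must be careful that the injective point condition is used correctly and that the asymptotic decay of $\eta$ together with elliptic regularity legitimizes the integration-by-parts and unique-continuation steps.
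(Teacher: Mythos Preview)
Your proposal is correct and follows essentially the same approach as the paper, which does not give a self-contained proof but simply states that one modifies the closed-case argument of \cite[\S 4.5]{Wnotes} to the punctured setup of \cite[\S 7]{wendl-sft} (with \cite[\S 11.3]{ADfloer} cited for the Floer-theoretic analogue). Your sketch is in fact more detailed than what the paper provides, and the references you invoke are exactly the ones the paper points to.
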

 
 \begin{remark}\em
In this paper we will take $P=[0,1]$ and $\mathcal{V}=(0,1)$ so that we can consider generic homotopies of almost complex structures.  Note that regularity in the sense of Theorem \ref{si-thm2} always implies parametric regularity, while the converse is false.  However, when automatic transversality holds, one can guarantee regularity for all $J_\tau$ with no need for genericity.  
 \end{remark}


\subsection{The Conley-Zehnder index in dimension 3}\label{cz-sec}

In this section we review properties of the Conley-Zehnder index of a Reeb orbit $\gamma$, with respect to an appropriate (local) trivialization $\Phi$, in an arbitrary 3-dimensional nondegenerate contact manifold $(M, \lambda)$.  First, pick a parametrization $\gamma: \R / T\Z \to M$.  Let $\{ \varphi_t\}_{t\in \R}$ denote the one-parameter group of diffeomorphisms defined by the flow of the Reeb vector field $R$.  The linearized flow
\[
d\varphi_t : T_{\gamma(0)}M \to T_{\gamma(t)}M
\]
induces a symplectic linear map
\[
\phi_t : \xi_{\gamma(0)} \to \xi_{\gamma(t)},
\]
which can be realized as a $2 \times 2$ symplectic linear map via the trivialization $\Phi$.  We have that $\phi_0=1$ and $\phi_T$ is the linearized return map with respect to our trivialization.  

We define and compute the Conley-Zehnder index $\czm^\Phi(\gamma):=\czm^\Phi\left(\{ \phi_t \}_{t\in[0,T]}\right) \in \Z$ via the family of symplectic matrices $\{ \phi_t \}_{t\in[0,T]}$ as follows.


 \begin{list}{\labelitemi}{\leftmargin=2em }
\item[\textbf{Elliptic case:}]

In the elliptic case there is a special trivialization that one can pick so that the linearized flow $\{\phi_t\}$ can be realized as a path of rotations.  If we take $\Phi$ to this trivialization so that each $\phi_t$ is rotation by the angle $2 \pi \vartheta_t \in \R$ then $\vartheta_t$ is a continuous function of $t \in [0, T]$ satisfying $\vartheta_0=0$ and $\vartheta:=\vartheta_T \in \R \setminus \Z $.  The number $\vartheta \in \R \setminus \Z $ the \textbf{rotation angle} of $\gamma$ with respect to the trivialization and
\[
\mu_{CZ}^\Phi(\ga^k) = 2 \lfloor k \vartheta \rfloor + 1. 
\]
More generally, there is a definition of rotation number of a path of invertible $2 \times 2 $ matrices (starting at the identity) which does not require any of the matrices to be a rotation, resulting in the same formula in terms of $\vartheta$.  In the latter situation we continue the abusive practice of referring to the quantity $\vartheta$ as the rotation angle of $\gamma$.

 \item[ \textbf{Hyperbolic case:}] 
 Let $v \in \R^2$ be an eigenvector of $\phi_T$. Then for any trivialization used, the family of vectors $\{ \phi_t(v) \}_{t \in [0,T]}$, rotates through angle $\pi r$ for some integer $r$.  The integer $r$ is dependent on the choice of trivialization $\Phi$, but is always even in the positive hyperbolic case and odd in the negative hyperbolic case.  We obtain
 \[
 \mu_{CZ}(\ga^k) = k r.
 \]
 \end{list}

The Conely-Zehnder index depends only on the Reeb orbit $\gamma$ and homotopy class of $\Phi$ in the set of homotopy classes of symplectic trivializations of the 2-plane bundle $\gamma^*\xi$ over $S^1 = \R / T\Z$.  Our sign convention is that if 
\[
\Phi_1, \ \Phi_2: \gamma^*\xi \to S^1 \times \R^2
\]
are two trivializations then
\begin{equation}\label{degtrivs}
\Phi_1 - \Phi_2 = \mbox{deg} \left(\Phi_2 \circ \Phi_1^{-1} : S^1 \to \mbox{Sp}(2,\R) \cong S^1\right).
\end{equation}
Given two trivializations $\Phi_1$ and $\Phi_2$ we have that
\begin{equation}\label{cztrivs}
\czm^{\Phi_1}(\gamma^k) - \czm^{\Phi_2}(\gamma^k) =2 k (\Phi_1 - \Phi_2 ).
\end{equation}
We denote the set of homotopy classes of symplectic trivializations of the 2-plane bundle $\gamma^*\xi$ over $S^1$ by $\mathcal{T}(\gamma)$.

 The following proposition shows that in dimension 3, the Conley-Zehnder index grows almost linearly and will be used in Section \ref{quandaries}.  It follows immediately by considering the above Conley-Zehnder index formulas; see \cite[Prop. 4.4]{jo1} for further details.

 \begin{proposition}\label{almostlinear}
 Let $(M, \A)$ be a nondegenerate contact 3-manifold.  Let $\ga$ be any closed Reeb orbit of $R$ and $\ga^k$ its $k$-fold cover. Then
 \begin{equation}\label{czlove}
 k\czm^\Phi(\ga) - k+1 \leq \czm^\Phi(\ga^k) \leq k \czm^\Phi(\ga)+k-1.
 \end{equation}
\end{proposition}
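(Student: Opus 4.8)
The plan is to reduce \eqref{czlove} to elementary floor estimates by running through the three possible types of a nondegenerate Reeb orbit in dimension three, using the explicit Conley--Zehnder index formulas recalled above for each type.

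I would first dispense with the two hyperbolic cases simultaneously. If $\gamma$ is positive or negative hyperbolic then, for the given trivialization $\Phi$, there is an integer $r$ with $\czm^\Phi(\gamma)=r$ and $\czm^\Phi(\gamma^k)=kr$; here \eqref{czlove} becomes $kr-k+1\le kr\le kr+k-1$, i.e.\ $1-k\le 0\le k-1$, which holds for every $k\ge 1$ (with equality exactly when $k=1$).

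The remaining case is the elliptic one. Fix $\Phi$ and let $\vartheta\in\R\setminus\Z$ be the rotation angle of $\gamma$ with respect to $\Phi$, defined as the rotation number of the path $\{\phi_t\}_{t\in[0,T]}$ of invertible $2\times 2$ matrices as above; nondegeneracy of $\lambda$ guarantees $k\vartheta\notin\Z$, and the formula $\czm^\Phi(\gamma^m)=2\lfloor m\vartheta\rfloor+1$ holds for all $m\ge 1$, so in particular $\czm^\Phi(\gamma)=2\lfloor\vartheta\rfloor+1$. Substituting into \eqref{czlove}, the lower bound becomes $k\lfloor\vartheta\rfloor\le\lfloor k\vartheta\rfloor$, which holds since $k\lfloor\vartheta\rfloor$ is an integer not exceeding $k\vartheta$; and the upper bound becomes $\lfloor k\vartheta\rfloor\le k\lfloor\vartheta\rfloor+(k-1)$, which holds since $k\vartheta<k(\lfloor\vartheta\rfloor+1)$ forces the integer $\lfloor k\vartheta\rfloor$ to be at most $k\lfloor\vartheta\rfloor+k-1$. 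As every closed Reeb orbit of a nondegenerate contact form on a closed $3$-manifold is elliptic, positive hyperbolic, or negative hyperbolic, this exhausts all cases.

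The only subtlety --- and the one place where a little care is warranted --- is in the elliptic case: one should recall that the ``rotation angle'' meant here is the rotation number of an arbitrary path of invertible matrices (not necessarily a path of rotations), a quantity that changes only by an integer under change of trivialization, so that the displayed floor formula is legitimate for whatever $\Phi$ one has fixed. With that understood, the proof is exactly the two floor inequalities above, and there is no genuine obstacle.
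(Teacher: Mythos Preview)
Your proof is correct and follows essentially the same approach as the paper: the paper simply states that the inequality ``follows immediately by considering the above Conley-Zehnder index formulas,'' referring precisely to the hyperbolic formula $\czm^\Phi(\gamma^k)=kr$ and the elliptic formula $\czm^\Phi(\gamma^k)=2\lfloor k\vartheta\rfloor+1$, and defers further details to \cite[Prop.~4.4]{jo1}. Your case-by-case verification via the floor inequalities $k\lfloor\vartheta\rfloor\le\lfloor k\vartheta\rfloor\le k\lfloor\vartheta\rfloor+k-1$ is exactly what is intended.
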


The almost linear behavior of the Conley-Zehnder index is used to prove the following estimate on the index of multiply covered cylinders in symplectizations; see \cite[Lem. 2.5]{HN1} and \cite[Prop 4.5]{jo1} for full details.  

\begin{lemma}\label{index-cyl-pos}
Let $(M^3, \lambda)$ be a closed nondegenerate contact manifold, $J$ be a generic $\lambda$-compatible almost complex structure, and  $u$ be a nontrivial $J$-holomorphic cylinder in $\R \times M$.  If $\overline{u}$ denotes the somewhere injective pseudoholomorphic cylinder underlying $u$ then
\begin{enumerate}[{\em (i)}]
\item $1 \leq \mbox{\em ind}(\overline{u}) \leq \mbox{\em ind}(u).$
\item If $\mbox{\em ind}(u) =1$, and if $u$ has an end at a bad Reeb orbit, then the corresponding end of $\overline{u}$ is also at a bad Reeb orbit.
\end{enumerate}

\end{lemma}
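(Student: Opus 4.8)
The plan is to reduce the statement to the structure of a holomorphic cylinder as a branched cover of a somewhere injective cylinder, and then to combine the almost-linear growth of the Conley--Zehnder index (Proposition \ref{almostlinear}) with the index formula \eqref{indexformula}. First I would invoke the fact proven in \cite[\S 3.2]{jo1} that every simple asymptotically cylindrical curve is somewhere injective, so that a nontrivial $J$-holomorphic cylinder $u$ factors as $u = \overline u \circ \varphi$ where $\overline u \colon \R \times S^1 \to \R \times M$ is somewhere injective and $\varphi \colon \R \times S^1 \to \R \times S^1$ is a holomorphic (branched) covering. Since the domain of $u$ has genus $0$ with two punctures, Riemann--Hurwitz forces $\varphi$ to be \emph{unbranched}; hence $\overline u$ is itself a cylinder, say between Reeb orbits $\overline\gamma_+$ and $\overline\gamma_-$, and $u$'s asymptotic orbits are iterates of $\overline\gamma_\pm$ by a common factor $m = \mult(u) = \deg(\varphi)$. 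With the trivialization $\Phi$ chosen as in Remark \ref{trivchoice} so that all relative first Chern numbers vanish, \eqref{indexformula} gives $\ind(u) = \czm^\Phi(\gamma_+) - \czm^\Phi(\gamma_-)$ and $\ind(\overline u) = \czm^\Phi(\overline\gamma_+) - \czm^\Phi(\overline\gamma_-)$, where $\gamma_\pm = \overline\gamma_\pm^{\,m}$ (allowing also the case $m=1$, $\overline u = u$, where (i) is trivial and (ii) is vacuous).

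For part (i), the lower bound $\ind(\overline u) \geq 1$ should follow from the fact that a nontrivial somewhere injective $J$-holomorphic cylinder in a symplectization, being Fredholm regular for generic $J$ (Theorem \ref{si-thm1}) and not a trivial cylinder, must lie in a moduli space of positive dimension after quotienting by $\R$; more directly, one can cite the positivity statement already recorded as \cite[Lem. 2.5]{HN1} / \cite[Prop 4.5]{jo1}, or argue via $\windpi$ as in the original Hofer--Wysocki--Zehnder / Hutchings framework, that a nontrivial finite-energy cylinder has $\ind \geq 1$ (equivalently $\windpi(\overline u) \geq 0$ together with the writhe/adjunction relation). For the upper bound $\ind(\overline u) \leq \ind(u)$, I would write $\ind(u) - \ind(\overline u) = \big(\czm^\Phi(\overline\gamma_+^{\,m}) - \czm^\Phi(\overline\gamma_+)\big) - \big(\czm^\Phi(\overline\gamma_-^{\,m}) - \czm^\Phi(\overline\gamma_-)\big)$ and bound each parenthetical term using the elliptic/hyperbolic index formulas from Section \ref{cz-sec}: in the hyperbolic case $\czm^\Phi(\gamma^k) = kr$ so the increment is $(m-1)r$; in the elliptic case $\czm^\Phi(\gamma^k) = 2\lfloor k\vartheta\rfloor + 1$ so the increment is $2\lfloor m\vartheta\rfloor - 2\lfloor\vartheta\rfloor$. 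The key inequality is then that the increment at the positive puncture dominates the increment at the negative puncture; this is precisely where one uses that $\overline u$ is a genuine holomorphic cylinder (so $\vartheta_+$, $r_+$ at the top are constrained relative to those at the bottom by $\ind(\overline u)\ge 1$ and the sign of $\windpi$), together with the monotonicity $\lfloor mx\rfloor - \lfloor x \rfloor \geq m(\lfloor x\rfloor)$-type estimates packaged in Proposition \ref{almostlinear}.

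For part (ii), suppose $\ind(u) = 1$ and $u$ has an end at a bad Reeb orbit; since bad orbits in dimension three are exactly even iterates of simple negative hyperbolic orbits, the relevant asymptotic orbit $\gamma_\pm = \overline\gamma_\pm^{\,m}$ is an even iterate of a simple negative hyperbolic orbit. I claim the corresponding $\overline\gamma_\pm$ is then also a bad orbit, i.e. an even iterate of that same simple negative hyperbolic orbit. Indeed, writing $\overline\gamma_\pm = \beta^{\ell}$ with $\beta$ simple, badness of $\overline\gamma_\pm^{\,m} = \beta^{m\ell}$ forces $\beta$ to be negative hyperbolic and $m\ell$ even; the remaining point is to rule out $m\ell$ even with $\ell$ odd (which would make $\overline\gamma_\pm$ good while $\gamma_\pm$ bad), and this is where the index constraint $\ind(u)=1$ enters: if $\ell$ were odd while $m\ell$ is even, then $\czm^\Phi$ changes parity between $\overline\gamma_\pm$ and $\gamma_\pm$ at that end but not (generically) at the other, contradicting $\ind(u) = \ind(\overline u) + (\text{even}) $ forced by part (i)'s computation combined with $1 \le \ind(\overline u) \le \ind(u) = 1$, hence $\ind(\overline u) = 1$ and the parity increments at the two ends must cancel. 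Carefully, $\ind(\overline u)=\ind(u)=1$ pins down the increments, and a parity count on $\czm^\Phi(\overline\gamma_\pm^{\,m})-\czm^\Phi(\overline\gamma_\pm)$ shows the bad end of $u$ lies over a bad end of $\overline u$.

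\emph{Main obstacle.} The crux is the inequality $\ind(\overline u) \leq \ind(u)$ in part (i): showing that the Conley--Zehnder increment under $m$-fold iteration at the positive asymptotic orbit is at least as large as the increment at the negative one. This is not automatic from Proposition \ref{almostlinear} alone — one genuinely needs the holomorphicity of $\overline u$, entering through the nonnegativity of $\windpi(\overline u)$ and the relation between $\windpi$, the ends, and $\ind(\overline u)$ in Hutchings' / Hofer--Wysocki--Zehnder's framework — to compare the two ends rather than bound them separately. I expect this to be the heart of the argument, with parts (i)'s lower bound and (ii) following more mechanically once the iteration-increment bookkeeping is in place.
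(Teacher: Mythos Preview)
Your overall approach is correct and matches the argument in the cited references (\cite[Lem.~2.5]{HN1}, \cite[Prop.~4.5]{jo1}): factor through the somewhere injective cylinder via an unbranched cover (Riemann--Hurwitz on a twice-punctured sphere), obtain $\ind(\overline u)\ge 1$ from generic transversality of somewhere injective curves, and then use the Conley--Zehnder iteration formulas for the upper bound. Part (ii) is indeed a parity argument once part (i) pins down $\ind(\overline u)=\ind(u)=1$.

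However, you misdiagnose the ``main obstacle.'' The inequality $\ind(\overline u)\le\ind(u)$ does \emph{not} require $\windpi$ or any further holomorphic input beyond the already-established $\ind(\overline u)\ge 1$; it follows from the explicit formulas of Section~\ref{cz-sec} by a short case split. The point is that for any orbit $\gamma$ lying over a hyperbolic simple orbit one has $\czm(\gamma^m)=m\,\czm(\gamma)$ exactly, so the $(m-1)$ ``looseness'' in Proposition~\ref{almostlinear} occurs only when the underlying simple orbit is elliptic. If $\ind(\overline u)\ge 2$, the crude bound $\ind(u)\ge m\,\ind(\overline u)-2(m-1)$ already yields $\ind(u)\ge\ind(\overline u)$. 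If $\ind(\overline u)=1$, then $\czm(\overline\gamma_+)$ and $\czm(\overline\gamma_-)$ have opposite parities, so at most one of the two ends can lie over an elliptic simple orbit; hence the looseness is incurred at most once, and $\ind(u)\ge m\cdot 1-(m-1)=1=\ind(\overline u)$. In other words, the ``comparison of increments at the two ends'' that you flag as the crux is pure parity bookkeeping, not an application of the asymptotic winding analysis.

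Your sketch of (ii) is on the right track but should be sharpened in the same spirit. With $\ind(\overline u)=\ind(u)=1$ the two increments $\czm(\overline\gamma_\pm^{\,m})-\czm(\overline\gamma_\pm)$ coincide. If the bad end of $u$ sits over a \emph{good} end $\overline\gamma_\pm=\beta^\ell$ of $\overline u$ (so $\beta$ is negative hyperbolic, $\ell$ odd, $m$ even), that common increment equals $(m-1)\ell r$, which is odd. A case check at the other end (elliptic or positive hyperbolic give even increment; negative hyperbolic forces an odd iterate) then makes both $\czm(\overline\gamma_+)$ and $\czm(\overline\gamma_-)$ odd, hence $\ind(\overline u)$ even --- contradicting $\ind(\overline u)=1$.
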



\subsection{Regularity for cylinders}\label{reg-sec}
In this section we flesh out an observation of Hutchings \cite{obg-blog}, which enables us to obtain transversality for certain unbranched multiple covered cylinders in cobordisms of closed contact 3-manifolds where the usual automatic transversality approach, e.g. \cite[Theorem 1]{Wtrans}, is not applicable.  Before stating the results, we review the necessary set up, including background needed from embedded contact homology (ECH).

Let $u$ be an immersed pseudoholomorphic curve in $W$. Let $\pi:\widetilde{u}\to u$ be a degree $k$ unbranched\footnote{Much of this discussion also holds for branched covers of curves.} cover of $u$.  Let $N_u$ be the normal bundle to $u$.  As explained in \cite[\S 2.3]{Hu2}, there is a deformation operator
\begin{equation}\label{def-op-u}
D_u:L^2_1(\dot{\Sigma}, N_u) \to L^2(\dot{\Sigma}, T^{(0,1)}\dot{\Sigma}\otimes_{\C} N_u)
\end{equation}
and the moduli space of pseudoholomorphic curves is cut out transversely when $D_u$ is surjective.  When $D_u$ is surjective, the tangent space of the moduli space  can be identified with ker$(D_u)$ and the index of $D_u$ is the Fredholm index $\ind(u)$.


There is an induced operator associated to the cover $\widetilde{u}$ of $u$
\begin{equation}\label{def-op-cover}
D_{\widetilde{u}}:L^2_1(\widetilde{\dot{\Sigma}}, \pi^*N_u)\to L^2(\widetilde{\dot{\Sigma}}, T^{(0,1)}\widetilde{\dot{\Sigma}}\otimes_\C \pi^*N_u).
\end{equation}
The definition of these operators requires the choice of a local complex trivialization of $N_u$.  Let $z=s + it$ be a local coordinate on $u$ and use $id\bar{z}$ to locally trivialize $T^{(0,1)}\dot{\Sigma}$.  Then choose a local trivialization of $N_u$ over this coordinate neighborhood.  With respect to these coordinates and trivializations, the operator locally is of the form
\[
D_u = \partial_s + J\partial_t + \beta
\]
where $\beta$ is some $(0,1)$-form on $u$, determined by the derivative of the almost complex structure in directions normal to $u$. Using the same local trivialization for $\pi^*N_u$, we define
\[
D_{\widetilde{u}} =  \partial_s + J\partial_t + \pi^*\beta.
\]
Intuitively speaking, $D_{\widetilde{u}}$ sees deformations of $\widetilde{u}$ in directions normal to $u$. 

\begin{definition} \em
The  cover $\widetilde{u}$ is \textbf{agreeable} if $\mbox{ker}(D_{\widetilde{u}})=0.$
\end{definition}

\begin{remark}\label{nobranchtransverse}\em
If there are no branch points and $\ind(\widetilde{u})=0,$ then $\widetilde{u}$ is agreeable if and only if it is transverse.  The regularity result we will prove is in regards to unbranched covers.  However, should one need to consider the possibility of branch points,  all branched covers in the moduli space of branched covers of $u$ containing $\widetilde{u}$ must be agreeable.  This is necessary to define an \emph{obstruction bundle} over the moduli space of such branched covers in order to do gluing as in \cite{HT2}.
\end{remark}

When there are no branch points,
\[
 \ind ( D_{\widetilde{u}}) = \ind ( \widetilde{u}),
\]
otherwise
\begin{equation}\label{branchyD}
\ind(D_{\widetilde{u}}) = \ind(\widetilde{u}) - 2b.
\end{equation}
{To see how \eqref{branchyD} arises, recall that the Fredholm index of $\widetilde{u}$ is given by
\[
\ind ( \widetilde{u}) = - \chi ( \widetilde{u}) + 2c_1^\Phi(\widetilde{u}^*TW) + \mu^\Phi(\widetilde{u}).
\]
If $\widetilde{u}$ is a $k$-fold cover of $u$ we obtain by Riemann-Hurwitz, Theorem \ref{RH},
\[
\ind ( \widetilde{u}) = k\chi( u ) + b + 2kc_1^\Phi(u^*TW) + \mu^\Phi(\widetilde{u}),
\]
where $b$ is the weighted count of branch points.  However, $\ind ( \widetilde{u}) $ is not the Fredholm index of the operator $D_{\widetilde{u}}$ because the operator $D_{\widetilde{u}}$ does not consider deformations of $\widetilde{u}$ that move the branch points, so the dimension of its domain is $2b$ fewer.}

Let $h_+(\widetilde{u})$ denote the number of ends of $\widetilde{u}$ that are at positive hyperbolic orbits; this includes even covers of negative hyperbolic orbits.  A basic form of automatic transversality for asymptotically cylindrical curves is as follows.

\begin{proposition}\label{aut-agree}
Suppose that $\widetilde{u}$ is an immersed pseudoholomorphic curve and
\begin{equation}\label{eqn:atcondition}
{\mbox{\em ind}(\widetilde{u})  - 2b + 2- 2g(\widetilde{u}) - h_+(\widetilde{u}) > 0.}
\end{equation}
 Then $\widetilde{u}$ is agreeable.
\end{proposition}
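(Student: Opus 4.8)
The plan is to deduce Proposition~\ref{aut-agree} from Wendl's automatic transversality criterion by applying it to the operator $D_{\widetilde u}$, or rather to the pseudoholomorphic curve $\widetilde u$ viewed through its normal deformation operator. Recall that the classical automatic transversality theorem (e.g.\ \cite{Wtrans}, Theorem~1, also reviewed via Theorem~\ref{folk0}) states that if $u$ is an immersed asymptotically cylindrical curve satisfying
\[
\mbox{ind}(u) > 2 g(u) - 2 + h_+(u) + \#\Gamma_0,
\]
where $\#\Gamma_0$ counts the even (``degenerate'') punctures, then the full deformation operator $D\deebar(u)$ is surjective, i.e.\ $u$ is regular. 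The statement we want replaces the genuine Fredholm index $\mbox{ind}(\widetilde u)$ with $\mbox{ind}(\widetilde u) - 2b$, which by \eqref{branchyD} is exactly $\mbox{ind}(D_{\widetilde u})$, the Fredholm index of the normal operator on the cover. So the proposal is: first observe that agreeability is precisely the statement $\ker(D_{\widetilde u}) = 0$, and since $D_{\widetilde u}$ has Fredholm index $\mbox{ind}(\widetilde u) - 2b$, vanishing of the kernel follows once we know $D_{\widetilde u}$ is injective; but in the automatic transversality regime (where the relevant Cauchy--Riemann type operator has a ``generalized'' winding-number obstruction that is controlled), injectivity is what the argument actually produces.

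The key steps, in order: (1) set up $D_{\widetilde u}$ as a real-linear Cauchy--Riemann operator on the line bundle $\pi^* N_u \to \widetilde{\dot\Sigma}$ over the punctured cover, of the local form $\partial_s + J\partial_t + \pi^*\beta$ as in the excerpt, with asymptotic operators at the punctures of $\widetilde u$ determined by the pulled-back asymptotic data; (2) recall Wendl's refinement of the Hofer--Wysocki--Zehnder asymptotic analysis, namely that for a nontrivial element $v \in \ker(D_{\widetilde u})$ the algebraic count of zeros $Z(v) \geq 0$, and that one has the inequality
\[
2 Z(v) \leq \mbox{ind}(D_{\widetilde u}) - 2 + 2 g(\widetilde u) + h_+(\widetilde u)
\]
(this is the ``similarity principle + asymptotic winding'' bound: zeros are isolated and positive, the number of them is bounded by winding numbers at the punctures, and the parity/count of punctures enters through $h_+$, while $g$ enters through the genus term); (3) combine with the hypothesis \eqref{eqn:atcondition}, which says exactly $\mbox{ind}(\widetilde u) - 2b + 2 - 2g(\widetilde u) - h_+(\widetilde u) > 0$, i.e.\ $\mbox{ind}(D_{\widetilde u}) - 2 + 2g(\widetilde u) + h_+(\widetilde u) < 0$ after rearranging; then $2 Z(v) < 0$, a contradiction, forcing $\ker(D_{\widetilde u}) = 0$; (4) conclude agreeability by the definition.

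The main obstacle — and the step I would spend the most care on — is justifying the zero-count inequality in step~(2) for the operator $D_{\widetilde u}$ on the \emph{cover}, as opposed to for a genuine somewhere-injective curve. The subtlety is that $\widetilde u$ is multiply covered, so $\pi^* N_u$ is a pullback bundle and the asymptotic operators at the punctures of $\widetilde u$ are iterates of those of $u$; one must check that the relevant winding-number estimates (the analogs of $\mbox{wind}_\pi$, $\mbox{wind}_\infty$, and the relation $c_1 = Z - \mbox{wind}_\pi$-type identities) behave correctly for these iterated asymptotic operators, and in particular that it is the count $h_+(\widetilde u)$ of ends at positive hyperbolic orbits (including even covers of negative hyperbolic ones, as the statement emphasizes) that governs how many punctures contribute a ``$+1$'' to the obstruction. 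This is essentially the content of the asymptotic eigenvalue/winding analysis for iterated orbits; I would cite the relevant parts of \cite[\S 2--3]{Wtrans} and \cite{wendl-sft} for the winding bounds and the Riemann--Hurwitz bookkeeping (Theorem~\ref{RH}) that was already used to derive \eqref{branchyD}, and then the proof is a short rearrangement of inequalities. A secondary point to be careful about: $D_{\widetilde u}$ must genuinely be a Cauchy--Riemann type operator on a line bundle for the similarity principle to apply — this uses that $\widetilde u$ is immersed (hypothesis) and that the covering $\pi$ is unbranched in the setting of the proposition (with the branched case absorbed into the $-2b$ via \eqref{branchyD} and handled at the level of the normal operator's index rather than its domain geometry).
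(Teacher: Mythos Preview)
Your outline matches the paper's argument: both assume a nonzero $\psi\in\ker(D_{\widetilde u})$, invoke the Carleman similarity principle to obtain $Z(\psi)\ge 0$, and then use asymptotic winding bounds to control $Z(\psi)$ and force a contradiction. The technical concern you flag about iterated asymptotic operators on the cover is legitimate but is handled by the standard theory exactly as you indicate, so that is not the obstacle.

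The genuine gap is your step~(3). The winding bound you record in step~(2), namely $2Z(\psi)\le \ind(D_{\widetilde u}) - 2 + 2g(\widetilde u) + h_+(\widetilde u)$, is the correct one for elements of $\ker(D_{\widetilde u})$. But the hypothesis \eqref{eqn:atcondition}, which reads $\ind(D_{\widetilde u}) + 2 - 2g - h_+ > 0$, does \emph{not} rearrange to $\ind(D_{\widetilde u}) - 2 + 2g + h_+ < 0$; these differ by the sign on every term except $\ind(D_{\widetilde u})$. Take $\ind(D_{\widetilde u})=2$, $g=0$, $h_+=1$: the hypothesis gives $3>0$, while your ``rearranged'' version gives $1$, which is not negative. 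More to the point, whenever $\ind(D_{\widetilde u})>0$ one has $\dim\ker(D_{\widetilde u})\ge \ind(D_{\widetilde u})>0$ by Fredholm theory alone, so the conclusion $\ker=0$ is simply false in that regime and no rearrangement can rescue it. What the hypothesis \eqref{eqn:atcondition} actually encodes is Wendl's criterion for \emph{surjectivity} of $D_{\widetilde u}$: running the identical zero-count argument on $\eta\in\ker(D_{\widetilde u}^*)$ produces $2Z(\eta)\le -\ind(D_{\widetilde u})-2+2g+h_+$, and it is \emph{this} quantity that the hypothesis makes negative. The paper's own displayed inequality chain is terse on exactly this point; its applications, however, are all to covers with $\ind(D_{\widetilde u})\le 0$ (in fact $=0$ for the main use, cf.\ Remark~\ref{nobranchtransverse}), where surjectivity and injectivity coincide and the argument goes through---but via $D_{\widetilde u}^*$, not via the inequality you wrote.
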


\begin{proof}
Suppose $\psi\in \mbox{ker}(D_{\widetilde{u}})$ is not identically zero. From the Carleman similarity principle, every zero of $\psi$ is isolated and has positive multiplicity.  Thus the signed count of zeroes of $\psi$ is nonnegative. On the other hand, we can bound the number of zeroes of $\psi$ as in \cite[\S 4.1]{HN1} to obtain
\[
0 \geq {2 \cdot \#\psi^{-1}(0) \geq \ind(\widetilde{u}) -2b + 2- 2g(\widetilde{u})  -h_+(\widetilde{u}) .}
\]
If the right hand side is negative we obtain a contradiction. As a consequence, if the right hand side is negative then $\widetilde{u}$ is agreeable.
\end{proof}

In \cite[\S 4.2]{HN1}, we obtained the following transversality result for cylinders in symplectizations via the above form of automatic transversality.

\begin{lemma}
\label{lem:at}
Let $M$ be a closed three-manifold with a nondegenerate contact form $\lambda$. Let $J$ be a generic $\lambda$-compatible almost complex structure on $\R\times M$. Then:
\begin{enumerate}[{\em (i)}]
\item
For any Reeb orbits $\gamma_+$ and $\gamma_-$, the moduli space $\widehat{\mathcal{M}}^J_1(\gamma_+,\gamma_-)/\R$ is a $0$-manifold cut out transversely.
\item
If $\gamma_+$ and $\gamma_-$ are good Reeb orbits, then the moduli space $\widehat{\mathcal{M}}^J_2(\gamma_+,\gamma_-)/\R$ is a $1$-manifold cut out transversely.
\item If $\gamma_+$ and $\gamma_-$ are good, then the function
\[
d:\widehat{\mathcal{M}}^J_2(\gamma_+,\gamma_-)/\R \longrightarrow \Z^{>0},
\]
which associates to each cylinder its covering multiplicity, is locally constant.
\end{enumerate}
\end{lemma}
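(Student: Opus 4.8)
The plan is to prove all three parts by separating a given nontrivial $J$-holomorphic cylinder $u$ into the somewhere injective case and the multiply covered case, reducing the former to Theorem~\ref{si-thm1} and fleshing out Hutchings' observation from Section~\ref{reg-sec} for the latter.

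First, the somewhere injective case. Since $u$ is nontrivial it is not a cover of a trivial cylinder, so $\pi_\xi\circ du\not\equiv 0$; as the zero set of this section is isolated, $u$ has an injective point $z$ with $\pi_\xi\circ du(z)\neq 0$, so Theorem~\ref{si-thm1} gives that a generic $\lambda$-compatible $J$ makes $u$ Fredholm regular. By Theorem~\ref{folk0} the unquotiented moduli space is then a manifold of dimension $\operatorname{ind}(u)$ near $u$, and since translation in the $\R$-factor acts freely on nontrivial cylinders, $\widehat{\mathcal{M}}^J_1(\gamma_+,\gamma_-)/\R$ is a transversely cut out $0$-manifold near somewhere injective index-$1$ cylinders and $\widehat{\mathcal{M}}^J_2(\gamma_+,\gamma_-)/\R$ is a transversely cut out $1$-manifold near somewhere injective index-$2$ cylinders; no hypothesis on good versus bad orbits is used here.

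Next, the multiply covered case for (i) and (ii). Write $u=\bar u\circ\varphi$ with $\bar u$ somewhere injective and $\deg\varphi=k\geq 2$. By Lemma~\ref{index-cyl-pos}, $\bar u$ is again a cylinder and $1\leq\operatorname{ind}(\bar u)\leq\operatorname{ind}(u)$; a branched cover of a cylinder by a cylinder is impossible by Riemann--Hurwitz (Theorem~\ref{RH}), so $\varphi$ is unbranched with cyclic deck group $\Z/k$. The normal deformation operator then splits into $\Z/k$-isotypic pieces $D_u=\varphi^*D_{\bar u}=\bigoplus_{j=0}^{k-1}D_{\bar u}^{(j)}$, where $D_{\bar u}^{(0)}=D_{\bar u}$ is surjective with kernel of dimension $\operatorname{ind}(\bar u)$ (by the somewhere injective case applied to $\bar u$), and each $D_{\bar u}^{(j)}$ with $j\neq 0$ is a Cauchy--Riemann operator on a line bundle over a cylinder whose asymptotic operators are those of $\bar u$ rotated by $j/k$, so that $\sum_{j\neq 0}\operatorname{ind}(D_{\bar u}^{(j)})=\operatorname{ind}(u)-\operatorname{ind}(\bar u)$. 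For (i), $\operatorname{ind}(u)=1$ forces $\operatorname{ind}(\bar u)=1$; for (ii), a parity computation with the Conley--Zehnder index formulas shows that the only way an index-$2$ cover of an index-$1$ cylinder can occur is as a double cover forcing one of $\gamma_\pm$ to be an even iterate of a negative hyperbolic orbit, hence bad, so under the good-orbit hypothesis of (ii) we again get $\operatorname{ind}(\bar u)=2=\operatorname{ind}(u)$. In either case $\sum_{j\neq 0}\operatorname{ind}(D_{\bar u}^{(j)})=0$, and applying the zero-counting estimates behind Proposition~\ref{aut-agree} (as in \cite[\S 4.1--4.2]{HN1}) to each $D_{\bar u}^{(j)}$ and to its formal adjoint, together with the extremal asymptotic winding behaviour forced by the low index of $\bar u$, yields $\ker D_{\bar u}^{(j)}=\operatorname{coker}D_{\bar u}^{(j)}=0$ for every $j\neq 0$. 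Hence $D_u$ is surjective with $\ker D_u=\varphi^*\ker D_{\bar u}$, so $u$ is regular and, dividing by the free $\R$-action, $[u]$ again lies in a transversely cut out moduli space of the stated dimension.

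Finally, (iii) follows from (ii): along a connected component of the $1$-manifold $\widehat{\mathcal{M}}^J_2(\gamma_+,\gamma_-)/\R$ the underlying somewhere injective cylinder varies continuously, so its asymptotic Reeb orbits --- a discrete invariant --- are locally constant, and since the fixed orbits $\gamma_\pm$ are the $d(u)$-fold iterates of those of the underlying curve, the covering multiplicity $d(u)$ is locally constant. The step I expect to be the main obstacle is the vanishing of $\ker D_{\bar u}^{(j)}$ and $\operatorname{coker}D_{\bar u}^{(j)}$ for the twisted pieces $j\neq 0$: unlike for a somewhere injective cylinder, Wendl's automatic transversality criterion \cite[Thm~1]{Wtrans} does not apply to the cover directly, and one must extract the needed estimate from the zero-counting argument behind Proposition~\ref{aut-agree} combined with the extremal winding of low-index cylinders --- and, for (ii), with the good-orbit hypothesis that rules out the offending double cover. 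Once this input is secured, (iii) is essentially formal.
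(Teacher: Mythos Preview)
Your overall strategy matches the argument in \cite[\S 4.2]{HN1} that the paper cites: handle the somewhere injective case by Theorem~\ref{si-thm1}, then for a $k$-fold unbranched cover use Lemma~\ref{index-cyl-pos} (for (i)) and the parity argument with the good-orbit hypothesis (for (ii)) to force $\operatorname{ind}(u)=\operatorname{ind}(\bar u)$, from which one deduces both regularity of $u$ and the identification $\ker D_u=\varphi^*\ker D_{\bar u}$ so that the $\Z/k$ deck group acts trivially and the quotient is a genuine manifold. Your argument for (iii) is also the standard one.

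The difference, and the gap, is in how you extract regularity of the cover. You decompose $D_u=\bigoplus_j D_{\bar u}^{(j)}$ and assert that each twisted piece with $j\neq 0$ is an isomorphism. But you only know $\sum_{j\neq 0}\operatorname{ind}(D_{\bar u}^{(j)})=0$; the individual indices are differences of Conley--Zehnder numbers of asymptotic operators shifted by $j/k$, and there is no reason each should vanish --- some pieces could have positive index (hence nonzero kernel) balanced by others with negative index (hence nonzero cokernel), and the zero-counting bound applied piecewise does not rule this out. The approach in \cite{HN1} bypasses the decomposition entirely: one applies the automatic transversality criterion (the winding/zero-counting argument behind Proposition~\ref{aut-agree}, dualised to the formal adjoint) directly to the full operator $D_u$ on the immersed cover, which succeeds because for any cylinder the inequality $\operatorname{ind}(u)>2g-2+h_+(u)$ is automatic when $\operatorname{ind}(u)\in\{1,2\}$. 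Surjectivity of $D_u$ then gives $\dim\ker D_u=\operatorname{ind}(u)=\operatorname{ind}(\bar u)=\dim\varphi^*\ker D_{\bar u}$, forcing the kernel identification without ever analysing the twisted pieces separately.
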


We also want to show in certain situations that $\widetilde{u}$ is agreeable, even when Proposition \ref{aut-agree} is not applicable.  The formulation and proof of these conditions, necessitates some embedded contact homology (ECH) apparatus, which we now review. 

\subsubsection{The ingredients comprising the ECH index}

The definition of the ECH index depends on three components: the relative first Chern class $c_\Phi$, which detects the contact topology; the relative intersection pairing $Q_\Phi$, which detects the algebraic topology; and the Conley-Zehnder terms, which detect the contact geometry.  
Let $\alpha=\{(\alpha_i,m_i)\}$ and $\beta=\{(\beta_j,n_j)\}$ be Reeb orbit sets  in the same homology class, $\sum_i m_i [\alpha_i]=\sum_j n_j [\beta_j]=\Gamma\in H_1(M).$   Let $H_2(M,\alpha,\beta)$ denote the set of relative homology classes of 2-chains $Z$ in $M$ such that 
\[
\partial Z =\sum_i m_i \alpha_i - \sum_j n_j \beta_j.
\]

\begin{definition}[relative first Chern class] 

\em
 Fix trivializations $\Phi_i^+ \in \mathcal{T}(\alpha_i)$ for each $i$ and 
$\Phi_j^- \in \mathcal{T}(\beta_j)$ and denote this set of trivalization choices by $\Phi \in \mathcal{T}(\alpha,\beta)$.  Let  $Z \in H_2(M,\alpha,\beta)$.  We define the \textbf{relative first Chern class}
\[
c_\Phi(Z) : =  c_1\left( \xi |_Z, \Phi \right) \in \Z
\]
in terms of the following signed count of zeros of a particular section.  Given a class $Z\in H_2(M,\alpha,\beta)$ we represent $Z$ by a smooth map $f: S \to M$ where $S$ is a compact orieted surface with boundary.  Choose a section $\psi$ of $f^*\xi$ over $S$ such that $\psi$ is transverse to the zero section and $\psi$ is nonvanishing over each boundary component of $S$ with winding number zero with respect to the trivialization $\Phi$.  We define 
\[
c_\Phi(Z) : = \# \psi^{-1}(0),
\]
where `\#' denotes the signed count.
\end{definition}

  In addition to being well-defined, the relative first Chern class satisfies
\[
c_\Phi(Z) - c_\Phi(Z') = \langle c_1(\xi), Z - Z' \rangle.
\]
Given another collection of trivialization choices $\Phi' = \left( \{ {\Phi'}_i^+ \}, \{ {\Phi'}_j^-\} \right)$ over the orbit sets and the convention \eqref{degtrivs}, we have
\begin{equation}\label{cherntriv}
c_\Phi(Z) - c_{\Phi'}(Z) = \sum_i m_i\left(\Phi_i^+-{\Phi}_i^{+'}\right) - \sum_j n_j \left(\Phi_j^--{\Phi}_j^{-'}\right).
\end{equation}
{The above formula  \eqref{cherntriv} also holds for computations of the relative first Chern class of the normal bundle of a curve.}

Before defining the relative intersection pairing we define the writhe and linking number.  Given a somewhere injective curve $u \in \M^J(\gamma_+,\gamma_-)$, we consider the slice $u \cap ( \{s\} \times M)$.  If $s \gg 0$, then the slice $u \cap ( \{s\} \times M)$ is an embedded curve which is a braid $\zeta_+$ around the Reeb orbit $\gamma_+$ with $m(\gamma_+)$ strands.  If $\gamma_+$ is an embedded Reeb orbit with tubular neighborhood $N$ then we can identify $N$ with a disk bundle in the normal bundle to $N$, and also with $\xi|_{\gamma_+}$.

Thus $\zeta_+$ can be realized as a braid in $N$, defined as a link in $N$ such that that the tubular neighborhood projection restricts to a submersion $\zeta_+ \to \gamma$.    Since the braid $\zeta_+$ is embedded for all $s \gg 0$, its isotopy class does not depend on $s\gg 0$. The trivialization $\Phi$ is used to identify the braid $\zeta_+$ with a link in $S^1 \times D^2$.  We identify $S^1 \times D^2$ with an annulus cross an interval, projecting $\zeta_+$ to the annulus, and require that the normal derivative along $\gamma$ agree with the trivialization $\Phi$.  

\begin{definition}[writhe]

\em
We define the \textbf{writhe} of this link, which we denote by $w_\Phi(\zeta_+) \in \Z$, by counting the crossings of the projection to $\R^2 \times \{ 0 \} $ with (nonstandard) signs.  Namely, we use the sign convention in which counterclockwise rotations in the $D^2$ direction as one travels counterclockwise around $S^1$ contribute positively.  Analogously the slice $u \cap ( \{s\} \times M)$ for $s \gg 0$ produces a braid $\zeta_-$ and we denote this braid's writhe by $w_\Phi(\zeta_-) \in \Z$.  
\end{definition}

The writhe depends only on the isotopy class of the braid and the homotopy class of the trivialization $\Phi$.  If $\zeta$ is an $m$-stranded braid and  $\Phi' \in \mathcal{T}(\gamma)$ is another trivialization then
\[
w_\Phi(\zeta) - w_{\Phi'}(\zeta) = m(m-1)(\Phi' - \Phi)
\]
because shifting the trivialization by one adds a full clockwise twist to the braid.

If $\zeta_1$ and $\zeta_2$ are two disjoint braids around an embedded Reeb orbit $\gamma$ we can define their \textbf{linking number} $\ell_\Phi(\zeta_1,\zeta_2) \in \Z$
to be the linking number of their oriented images in $\R^3$.  The latter is by definition one half of the signed count of crossings of the strand associated to $\zeta_1$ with the strand associated to $\zeta_2$ in the projection to $\R^2 \times \{0\}$.  If the braid $\zeta_k$ has $m_k$ strands then a change in trivialization results in the following formula
\[
\ell_\Phi(\zeta_1,\zeta_2) - \ell_{\Phi'}(\zeta_1,\zeta_2) = m_1m_2 (\Phi'-\Phi).
\]
The writhe of the union of two braids can be expressed in terms of the writhe of the individual components and the linking number:
\[
w_\Phi(\zeta_1 \cup \zeta_2) = w_\Phi(\zeta_1) + w_\Phi(\zeta_2) + 2\ell_\Phi(\zeta_1,\zeta_2).
\]
If $\zeta$ is a braid around an embedded Reeb orbit $\gamma$ which is disjoint from $\gamma$ we define the \textbf{winding number} to be the linking number of $\zeta$ with $\gamma$:
\[
\eta_\Phi(\zeta) :=\ell_\Phi(\zeta,\gamma) \in \Z.
\]
In order to speak more ``globally" of writhe and winding numbers associated to a curve, we need the following notion of an admissible representative for a class $Z \in H_2(M,\alpha,\beta)$, as in \cite[Def. 2.11]{Hrevisit}.  Given $Z\in H_2(M,\alpha,\beta)$ we define an \textbf{admissible representative} of $Z$ to be a smooth map $f: S \to [-1,1] \times M$, where $S$ is an oriented compact surface such that
\begin{enumerate}
\item The restriction of $f$ to the boundary $\partial S$ consists of positively oriented  covers of $\{ 1\} \times \alpha_i$ whose total multiplicity is $m_i$ and negatively covers of $\{ -1\} \times \beta_j$ whose total multiplicity is $n_j$.
\item The projection $\pi: [-1,1] \times M \to M$ yields $[\pi(f(S))]=Z$.
\item The restriction of $f$ to $\mbox{int}(S)$ is an embedding and $f$ is transverse to $\{-1,1\} \times M$.
\end{enumerate}
The utility of the notion of an admissible representative $S$ for $Z$ can be seen in the following. For $\varepsilon >0$ sufficiently small, $S \cap ( \{ 1 - \varepsilon \} \times Y)$ consists of braids $\zeta_i^+$ with $m_i$ strands in disjoint tubular neighborhoods of the Reeb orbits $\alpha_i$, which are well defined up to isotopy.  Similarly, $S \cap ( \{ -1+ \varepsilon \} \times Y)$ consists of braids $\zeta_j^-$ with $n_j$ strands in disjoint tubular neighborhoods of the Reeb orbits $\alpha_i$, which are well defined up to isotopy.  

Thus an admissible representative of $Z\in H_2(M;\alpha,\beta)$ permits us to define the \textbf{total writhe} of a curve interpolating between the orbit sets $\alpha$ and $\beta$ by 
\[
w_\Phi(S) = \sum_i w_{\Phi_i^+}(\zeta_i^+) -  \sum_j w_{\Phi_j^-}(\zeta_j^-).
\]
Here $\zeta_i^+$ are the braids with $m_i$ strands in a neighborhood of each of the $\alpha_i$ obtained by taking the intersection of $S$ with $\{ s \} \times M$ for $s$ close to 1.  Similarly, the $\zeta_j^-$ are the braids with $n_j$ strands in a neighborhood of each of the $\beta_j$ obtained by taking the intersection of $S$ with $\{ s \} \times M$ for $s$ close to $-1$.  Bounds on the writhe in terms of the Conley-Zehnder index are given in \cite[\S 3.1]{HN1}, which relates to asymptotic behavior of pseudoholomorphic curves, extensively explored by Hutchings, cf. \cite[\S 5.1]{Hu2}.  

Taking a similar viewpoint with regard to the linking number results in the following formula.  If $S'$ is an admissible representative of $Z' \in H_2(M,\alpha',\beta')$ such that the interior of $S'$ does not intersect the interior of $S$ near the boundary, with braids $\zeta_i^{+ '}$ and $\zeta_j^{- '}$ we can define the \textbf{linking number of $S$ and $S'$} to be
\[
\ell_\Phi(S,S') := \sum_i \ell_\Phi(\zeta_i^+,\zeta_i^{+ '}) - \sum_j \ell_\Phi(\zeta_j^-,\zeta_j^{- '}).
\]
Above the orbit sets $\alpha$ and $\alpha'$ are both indexed by $i$, so sometimes $m_i$ or $m_{i}'$ is 0, similarly both $\beta$ and $\beta'$ are indexed by $j$ and sometimes $n_j$ or $n_{j}'$ is 0.  The trivialization $\Phi$ is a trivialization of $\xi$ over all Reeb orbits in the sets $\alpha, \alpha', \beta,$ and $\beta'$.

The relative intersection pairing can be defined using an admissible representative, which is more general than the notion of a $\Phi$-representative \cite[Def. 2.3]{Hindex}, as the latter uses the trivialization to control the behavior at the boundary.  Consequently, \emph{we see an additional linking number term appear in the expression of the relative intersection pairing when we use an admissible representative. }  
\begin{definition}[relative intersection pairing using an admissible representative]
 \em
Let  $S$ and $S'$ be two surfaces which are admissible representatives of $Z \in H_2(M,\alpha,\beta)$ and $Z'\in H_2(M,\alpha',\beta')$  whose interiors $\dot{S}$ and $\dot{S}'$ are transverse and do not intersect near the boundary.  We define 
the \textbf{relative intersection pairing} by the following signed count
\begin{equation}
Q_\Phi(Z,Z'):= \# \left( \dot{S} \cap \dot{S}'\right) - \ell_\Phi(S,S').
\end{equation}
Moreover, $Q_\Phi(Z,Z')$ is an integer which depends only on $\alpha, \beta, Z, Z'$ and $\Phi$.  If $Z=Z'$ then we write $Q_\Phi(Z):=Q_\Phi(Z,Z)$.  
\end{definition}


For another collection of trivialization choices $\Phi'$, 
\[
Q_\Phi(Z,Z') - Q_{\Phi'}(Z,Z') = \sum_i m_i m_i' (\Phi_i^+ - \Phi_i^{+'}) - \sum_i n_j n_j' (\Phi_i^- - \Phi_i^{-'}).
\]
We recall how \cite[\S 3.5]{Hrevisit} permits us to compute the relative intersection pairing using embedded surfaces in $M$.  An admissible representative $S$ of $Z \in H_2(M,\alpha,\beta)$ is said to be \textbf{nice} whenever the projection of $S$ to $M$ is an immersion and the projection of the interior $\dot{S}$ to $M$ is an embedding which does not intersect the $\alpha_i$'s or $\beta_j$'s. Lemma 3.9 from  \cite{Hrevisit} establishes that if none of the $\alpha_i$ equa the $\beta_j$ then every class $Z \in H_2(M,\alpha,\beta)$ admits a nice admissible representative.

If $S$ is a nice admissible representative of $Z$ with associated braids $\zeta_i^+$ and $\zeta_j^-$ then we can define the winding number
\[
\eta_\Phi(S) : = \sum_i \eta_{\Phi_i^+}\left(\zeta_i^+\right) -\sum_j \eta_{\Phi_j^-}\left(\zeta_j^-\right)
\]

\begin{lemma}[Lemma 3.9 \cite{Hrevisit}]
Suppose that $S$ is a nice admissible representative of $Z$.  Then
\[
Q_\Phi(Z) = -w_\Phi(S) - \eta_\Phi(S)
\]
\end{lemma}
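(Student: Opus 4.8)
$Q_\Phi(Z) = -w_\Phi(S) - \eta_\Phi(S)$ for a nice admissible representative $S$.

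The plan is to compute $Q_\Phi(Z) = Q_\Phi(Z,Z)$ directly from the definition $Q_\Phi(Z,Z) = \#(\dot S \cap \dot S') - \ell_\Phi(S,S')$ by choosing $S'$ to be a small pushoff of $S$. First I would take a second admissible representative $S'$ obtained by pushing $S$ off itself; since $S$ is nice, the projection of $\dot S$ to $M$ is an embedding disjoint from the orbits, so the pushoff can be arranged in the $[-1,1]$-direction (or along a normal section) so that $\dot S$ and $\dot S'$ are transverse and, crucially, their interiors do not intersect at all: $\#(\dot S \cap \dot S') = 0$. This reduces the claim to showing $\ell_\Phi(S,S') = w_\Phi(S) + \eta_\Phi(S)$ for this particular pushoff. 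The linking number $\ell_\Phi(S,S')$ decomposes as $\sum_i \ell_\Phi(\zeta_i^+, \zeta_i^{+\prime}) - \sum_j \ell_\Phi(\zeta_j^-, \zeta_j^{-\prime})$, so it suffices to analyze the contribution near each orbit separately.

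The key local computation is to understand, near an embedded Reeb orbit $\alpha_i$ with braid $\zeta_i^+$ of $m_i$ strands, how the braid $\zeta_i^{+\prime}$ of the pushoff links $\zeta_i^+$. Here I would use the standard fact (the analogue for the ``self-linking'' of a braid together with a parallel copy) that for a pushoff which is ``constant'' with respect to the trivialization $\Phi$ — i.e. the normal section defining the pushoff has winding number zero relative to $\Phi$ — one has $\ell_\Phi(\zeta, \zeta') = w_\Phi(\zeta) + \eta_\Phi(\zeta)$, where the $w_\Phi(\zeta)$ term records the self-crossings of $\zeta$ (each crossing of $\zeta$ becomes a pair of crossings between $\zeta$ and its parallel copy, contributing the writhe) and the $\eta_\Phi(\zeta)$ term records the total winding of the $m_i$ strands of $\zeta$ around $\alpha_i$ (since the pushoff of $\alpha_i$ itself is what the strands of $\zeta'$ wind around, and $\eta_\Phi(\zeta) = \ell_\Phi(\zeta,\alpha_i)$). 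Summing the positive-end contributions with $+$ signs and the negative-end contributions with $-$ signs assembles exactly $w_\Phi(S) + \eta_\Phi(S)$, giving $Q_\Phi(Z) = -w_\Phi(S) - \eta_\Phi(S)$.

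The main obstacle I anticipate is making precise the choice of pushoff so that two conditions hold simultaneously: (a) $\dot S'$ is genuinely disjoint from $\dot S$ in the interior (using niceness), and (b) the pushoff is compatible with the trivialization $\Phi$ in the sense needed near the boundary so that the braids $\zeta_i^{+\prime}$ are the ``$\Phi$-parallel'' copies of $\zeta_i^+$ rather than copies twisted by some nonzero amount — any discrepancy here would introduce correction terms of the form $m_i(m_i-1)(\Phi'-\Phi)$ and $m_i^2(\Phi'-\Phi)$ from the writhe and linking change-of-trivialization formulas, and one must check these cancel or vanish. This is essentially a careful bookkeeping argument with the crossing-sign conventions (recalling the nonstandard writhe sign convention where counterclockwise rotation in the $D^2$-direction counts positively), and the cleanest route is to cite the local model directly: near each orbit the situation is a braid in $S^1 \times D^2$ together with a $\Phi$-constant parallel pushoff, for which the identity $\ell = w + \eta$ is a standard computation in braid/link theory. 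I would also remark that well-definedness of both sides (independence of the choice of nice admissible representative) follows from the already-established well-definedness of $Q_\Phi$, $w_\Phi$, and $\eta_\Phi$, so the identity, once verified for one convenient $S'$, holds in general.
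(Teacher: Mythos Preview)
The paper does not supply its own proof of this lemma; it is quoted directly from \cite[Lemma~3.9]{Hrevisit} and used as a black box. Your sketch is correct and matches the argument in Hutchings's original reference: take $S'$ to be a pushoff of the nice representative $S$ along a nowhere-vanishing normal section which is $\Phi$-trivial near the ends, so that $\#(\dot S\cap\dot S')=0$ by niceness, and then reduce $Q_\Phi(Z)=-\ell_\Phi(S,S')$ to the local braid identity $\ell_\Phi(\zeta,\zeta')=w_\Phi(\zeta)+\eta_\Phi(\zeta)$ for a $\Phi$-parallel pushoff.

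One minor correction: pushing off ``in the $[-1,1]$-direction'' does not quite make sense as stated, since an admissible representative must keep $\partial S$ on $\{-1,1\}\times M$; the pushoff you want is along a section of the normal bundle of $S$ in $[-1,1]\times M$ (equivalently, since $\pi(\dot S)\subset M$ is embedded, along a normal section in $M$), chosen $\Phi$-trivially near the boundary. Your second option (``along a normal section'') is the right one, and your identification of the compatibility issue between conditions (a) and (b) is exactly the point where the argument needs care, though in practice it is just the observation that a nonvanishing section exists extending the $\Phi$-trivial section over the ends.
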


We are now ready to give the definition of the ECH index.


\begin{definition}[ECH index]
\em
We define the \textbf{ECH index} to be
\[
I(\alpha,\beta,Z) = c_1^\Phi(Z) + Q_\Phi(Z) +  \sum_i \sum_{k=1}^{m_i}\czm^\Phi(\alpha_i^k) - \sum_j \sum_{k=1}^{n_j}\czm^\Phi(\beta_j^k).
\]
\end{definition}

Given a trivialization $\Phi$ of $\xi$ over the $\gamma_i$'s contained in an orbit set $\gamma = \{ (\gamma_i, m_i )\}$ we make the shorthand definition
\[
\mu_\Phi(\gamma) := \sum_i \sum_{k=1}^{m_i}\czm^\Phi(\gamma_i^k).
\]
In this shorthand notation the ECH index is expressed as
\begin{equation}
I(\alpha,\beta,Z) = c_1^\Phi(Z) + Q_\Phi(Z) +  \mu_\Phi(\alpha) - \mu_\Phi(\beta).
\end{equation}
Another set of trivialization choices $\Phi'$ for $\gamma$ yields
\begin{equation}
\mu_\Phi(\gamma) - \mu_{\Phi'}(\gamma) = \sum_i (m_i^2 + m_i)(\Phi_i' - \Phi_i).
\end{equation}
Moreover, the ECH index does not depend on the choice of trivialization.  The budding ECH enthusiast can find further details in \cite[\S 3]{Hu2}.

\begin{remark} \em
If $u$ is a cylinder, then the orbit sets $\alpha$ and $\beta$ each consist of single Reeb orbit.  We denote these orbits by $\gamma_+$ and $\gamma_-$, respectively.  We further take $\overline{\gamma_+}$ and $\overline{\gamma_-}$ to be the respective underlying embedded Reeb orbits for $\gamma_+$ and $\gamma_-$, e.g. 
\[
\overline{\gamma_\pm}^{m(\gamma_\pm)} = \gamma_\pm,
\]
where $m(\gamma)$ is the multiplicity of the orbit $\gamma$.
Then for $Z \in H_2(M,\gamma_+,\gamma_-)$ we have 
\[
{I(u) = c_1^\Phi(u^*\xi) + Q_\Phi(Z) + \sum_{\ell=1}^{m(\gamma_+)} \czm\left (\overline{\gamma_+}^\ell \right)- \sum_{\ell=1}^{m(\gamma_-)} \czm\left (\overline{\gamma_-}^\ell \right)}
\]

\end{remark}

\begin{definition}[\em{$I_u(\widetilde{u})$, the ECH index of $\widetilde{u}$ ``relative to $u$''}]\em
We can similarly define the ECH index of an immersed curve $u$ and its cover $\widetilde{u}$ within the (underlying) normal bundle $N_u$.  We regard $N_u$ a type of completed symplectic cobordism between the disjoint union over the ends of $u$ of the normal bundle of the corresponding (possibly multiply covered) Reeb orbit.  When $u$ is regarded as a zero section it defines an embedded pseudoholomorphic curve in $N_u$ whose ends are all at distinct simple Reeb orbits, even if this is not true for the original curve $u$.   As a result, there is a well defined notion of the ECH index of $\widetilde{u}$ in the normal bundle $N_u$. This is defined by copying the above formulas in the normal bundle $N_u$.  We can think of this as the ECH index ``relative to $u$'', and we denote it by $I_u(\widetilde{u}).$ 
\end{definition}

\begin{remark}\em
If $u$ is somewhere injective in $(W,J)$ and all its ends are at distinct simple Reeb orbits, then the ECH index of $\widetilde{u}$ in $N_u$ agrees with the ECH index of $\widetilde{u}$ in $W$,
\[
I_u(\widetilde{u}) =I(\widetilde{u}). 
\]
If $u$ does not have these properties, then it is possible that $I_u(\widetilde{u})\neq I(\widetilde{u}).$
\end{remark}

\subsubsection{Recollections on the relative adjunction formula}

In this section we review the relative adjunction formulas of interest, which are later used to show certain multiply covered cylinders are agreeable.   This is taken from \cite[\S 3]{Hindex} and is stated for pseudoholomorphic curves interpolating between orbit sets $\alpha$ and $\beta$ in symplectizations.   As explained in \cite[\S 4.4]{Hrevisit} the proof carries over in a straightforward manner to exact symplectic cobordisms.   

\begin{lemma}
\label{lem:adjunction}
Let $u\in \mathcal{M}(\alpha,\beta)$ be somewhere injective, $S$ be a representative of $Z\in H_2(M,\alpha,\beta)$, and $\Phi\in \mathcal{T}(\alpha,\beta)$.  Let $N_S$ be the normal bundle to $S$.  
\begin{enumerate}[{\em (i)}]
\item If $u$ is further assumed to be embedded everywhere then
\begin{equation}\label{relcalc}
c_1^{\Phi}(Z) = \chi(S) + c_1^\Phi(N_S).
\end{equation}
\item For general embedded representatives $S$, e.g. ones not necessarily coming from pseuoholomorphic curves, \eqref{relcalc} holds mod 2 and
\begin{equation}\label{chernwQ}
c_1^\Phi(N_S) = w_\Phi(S) + Q_\Phi(Z,Z).
\end{equation}
\item If $u$ is embedded except at possibly finitely many singularities then
\begin{equation}\label{relsingeq}
c_1^\Phi(Z) = \chi(u) + Q_\Phi(Z) + w_\Phi(u) - 2\delta(u), 
\end{equation}
where $\delta(u)$ is a sum of positive integer contributions from each singularity.
\end{enumerate}
\end{lemma}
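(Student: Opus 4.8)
The plan is to reproduce Hutchings' proof of the relative adjunction formula from \cite[\S 3]{Hindex}, carried out there for curves in symplectizations, and to observe (following \cite[\S 4.4]{Hrevisit}) that the argument transplants to a completed exact symplectic cobordism with no change beyond carrying trivialization data at the negative ends as well as the positive ones, since none of the relevant computations involves the cylindrical ends. For part (i) the two structural inputs are: first, that in $\R\times M$ one has $T(\R\times M) = \R\partial_s \oplus \R R \oplus \xi$, so that $c_1^\Phi(Z) = c_1^\Phi(\xi|_Z) = c_1^\tau(T(\R\times M)|_Z)$ where $\tau$ is the trivialization near the ends built from $\partial_s$, $R$, and $\Phi$; and second, that for an embedded $J$-holomorphic $u$ the tangent plane $TS$ is a $J$-complex line subbundle of $T(\R\times M)|_S$ with $J$-complex quotient $N_S$. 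Near each end $S$ is $C^1$-close to the trivial cylinder over $\gamma$, so $TS \to \langle \partial_s, R\rangle$ and $N_S \to \xi|_\gamma$, and the splitting $T(\R\times M)|_S = TS \oplus_\C N_S$ respects $\tau$; hence $c_1^\Phi(Z) = c_1^{\tau'}(TS) + c_1^\Phi(N_S)$. Finally $\tau'$ is precisely the trivialization of $TS$ in which the inward normal to $\partial\bar{S}$ winds zero times, so the relative Euler number $c_1^{\tau'}(TS)$ equals $\chi(\bar{S}) = \chi(S)$ by Poincar\'e--Hopf for surfaces with boundary, which gives \eqref{relcalc}.

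For part (ii), when $S$ is merely an oriented embedded surface $TS$ need not be $J$-complex, but $T(\R\times M)|_S = TS \oplus_\R N_S$ still holds as oriented real plane bundles with compatible boundary data; reducing the relative first Chern numbers modulo $2$ to relative Stiefel--Whitney numbers $w_2^{\mathrm{rel}}$, using the Whitney sum formula (the $w_1$ terms vanish by orientability) together with $w_2^{\mathrm{rel}}(TS) \equiv \chi(S)$ and $w_2^{\mathrm{rel}}(N_S) \equiv c_1^\Phi(N_S) \pmod 2$, shows that \eqref{relcalc} holds modulo $2$. For \eqref{chernwQ}, observe that $c_1^\Phi(N_S)$ is the relative Euler number of the oriented plane bundle $N_S$ with respect to the winding-zero boundary trivialization, i.e.\ the signed count $\#\psi^{-1}(0)$ for a generic section $\psi$ of $N_S \to \bar{S}$ which is nonvanishing with $\Phi$-winding number zero along $\partial\bar{S}$. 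Pushing $S$ off along a sufficiently small multiple of $\psi$ produces an admissible representative $S'$ of $Z$ with embedded interior whose boundary braids are the $\Phi$-parallel copies of those of $S$, and for the push-off small $\#\psi^{-1}(0) = \#(\dot{S} \cap \dot{S}')$. By the definition of the relative intersection pairing $\#(\dot{S} \cap \dot{S}') = Q_\Phi(Z,Z) + \ell_\Phi(S,S')$, and the linking number of each braid with its $\Phi$-parallel copy is its $\Phi$-writhe, so $\ell_\Phi(S,S') = w_\Phi(S)$; combining these gives \eqref{chernwQ}.

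For part (iii), $u$ is somewhere injective and hence has only isolated singularities. By the Micallef--White local structure theorem, near each singular point $u$ agrees, after a $C^1$ change of coordinates on the target, with a union of honestly holomorphic branches, which we smooth holomorphically inside this local model; patching the local smoothings in produces an embedded surface $\tilde{S}$ representing the same class $Z$ with the same asymptotic braids, which is $J'$-holomorphic for an almost complex structure $J'$ agreeing with $J$ away from small balls around the singular points and satisfies $\chi(\tilde{S}) = \chi(\dot{\Sigma}) - 2\delta(u)$, where $\delta(u) \ge 0$ is the total $\delta$-invariant of the singularities (a positive integer at each genuine singularity, by positivity of intersections of $J$-holomorphic branches). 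Applying (i) to $(\tilde{S},J')$ and then (ii)---which is purely topological---to $\tilde{S}$ gives $c_1^\Phi(Z) = \chi(\tilde{S}) + c_1^\Phi(N_{\tilde{S}}) = \chi(\tilde{S}) + Q_\Phi(Z) + w_\Phi(\tilde{S})$, and since $w_\Phi(\tilde{S}) = w_\Phi(u)$ and $\chi(\dot{\Sigma}) = \chi(u)$, substituting $\chi(\tilde{S}) = \chi(u) - 2\delta(u)$ yields \eqref{relsingeq}.

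The main obstacle I anticipate is the combinatorial and sign bookkeeping underlying \eqref{chernwQ} and the $-2\delta(u)$ term in part (iii): one must check that the nonstandard writhe sign convention, the linking-number conventions, and the $\Phi$-winding-zero normalization of $\psi$ fit together so that the push-off $S'$ is genuinely an admissible representative of $Z$ and $\ell_\Phi(S,S') = w_\Phi(S)$ on the nose, and one must verify (via the Micallef--White model together with positivity of intersections) that smoothing the singularities of $u$ to an embedded curve drops the Euler characteristic by precisely twice the total $\delta$-invariant. These are exactly the verifications carried out in \cite[\S 3]{Hindex} and \cite[\S 4.4]{Hrevisit}, and being entirely local they carry over to the exact-cobordism setting.
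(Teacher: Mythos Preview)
Your proposal is correct and follows essentially the same argument as the paper's sketch, which is itself a recapitulation of Hutchings' proof from \cite[\S 3]{Hindex}. The only organizational differences are that in (ii) you push $S$ off along a $\Phi$-trivial normal section and invoke the admissible-representative definition of $Q_\Phi$ directly (whereas the paper first modifies the ends of $S$ to a $\Phi$-representative and then computes), and in (iii) you fully smooth the singularities to an embedded surface via Micallef--White (whereas the paper perturbs only to a symplectic immersion with transverse double points and tracks the $2\delta(u)$ correction through \eqref{chernwQ}); both variants are standard and equivalent.
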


\begin{proof}[Sketch of Proof]
\begin{enumerate}[(i)]
\item[]
\item We have the following decomposition of complex vector bundles: 
\begin{equation}\label{VBiso}
({\mathbb C}\oplus\xi)|_S = TW|_S = TS\oplus N_S.
\end{equation}
Let $\psi_\xi$ and $\psi_N$ be $\Phi$-trivial sections of $\xi|_S$ and $N|_S$ and let $\psi_S$ be a nonvanishing section of $TS|_{\partial S}$ tangent to $S$.  Over $\partial S$ we have a homotopy through nonvanishing sections of the determinant line bundles
\[
1 \wedge \psi_\xi \approx \psi_S \wedge \psi_N.
\]
In general, if $L_i$ is a complex line bundle on $S$ and $s_i$ is a nonvanishing section of $L{_i}|_{\partial S}$ up to homotopy for $i=1,2$ then
\[
c_1(\mbox{det}(L_1 \oplus L_2), s_1 \wedge s_2) = c_1(L_1,s_1) + c_1(L_2,s_2).
\]
In light of this identity with respect to the above sections, we obtain
\[
c_1^\Phi(Z) = \chi(S) + c_1^\Phi(N_S),
\]
as desired.
\item The isomorphism in \eqref{VBiso} still holds at the level of real vector bundles and still respects the complex structure on $\partial S$ after straightening $S$ to be normal to $\{-1,1\} \times M$.  As a result, the relative first Chern classes differ by an even integer because changing the complex structure on a rank two complex vector bundle over a closed surface changes the first Chern class by an even integer.

To prove \eqref{chernwQ} we recall the following argument.  Let $\epsilon > 0$ be small and let 
\[
S_0 = S \cap (-1 +\epsilon, 1-\epsilon) \times M).  
\]
Let $S'$ be a surface in which $S \setminus S_0$ is replaced by a surface $S_1$, consisting of cobordisms with $\Phi$-trivial braids so that $S'$ is a $\Phi$-representative of $Z$.  Let $\psi$ be a section of the normal bundle $N_{S'}$ that is $\Phi$-trivial over $\partial S_1$.  Let $\psi_0$ and $\psi_1$. denote the restrictions of $\psi$ to $S_0$ and $S_1$ respectively.  We can compute $Q_\Phi(Z,Z)$ by counting the intersections of $S'$ with a pushoff of $S'$ via $\Psi$ so that
\[
\begin{array}{lcl}
Q_\Phi(Z,Z) &=& \# \Psi^{-1}(0) \\
&=& \# \psi^{-1}_0(0) + \# \psi^{-1}_1(0) \\
&=&c_1^\Phi(N_S) + \# \psi^{-1}_1(0), \\
\end{array}
\]
where `$\#$' indicates the number of points with signs after appropriately perturbing to obtain transversality.  

To see why
\[
\# \psi^{-1}_1(0) = -w_\Phi(S),
\]
we note that in our cobordism of braids that we can take $\psi_1$ to be the projection of a nonzero vertical tangent vector in the annulus cross (-1,1) that we have identified a tubular neighborhood of $\gamma$ with.  This section will have zeros at the branch points of the projection to $([-1,-1+\epsilon] \cup [1-\epsilon, 1])\times M$ where the writhes of the braids change.
\item Near each singular point we can perturb the surface to become an immersion which is symplectic with respect to $\omega + ds \wedge dt$ on $\R \times M$ and which only has transverse double point singularities.  The local contribution to $\delta$ is then the number of double points.  

To prove \eqref{relsingeq} we carry out the above perturbation near each singularity without affecting any of the other terms.  As a result, the normal bundle $N_u \to u$ is well defined and a modification of the proof shows that  shows that \eqref{relcalc} still holds while a correction of $2\delta(u)$ is needed for \eqref{chernwQ} to hold. 
\end{enumerate}

\end{proof}

\begin{remark}\em
If $u$ is a closed pseudoholomorphic curve, then there is no writhe term or trivialization choice, and \eqref{relsingeq} reduces to the usual adjunction formula
\[
\langle c_1(TW),[u]\rangle = \chi(u) + [u] \cdot [u] - 2\delta(u).
\]
\end{remark}

The following remark follows from Lemma \ref{lem:adjunction}(ii) and will be used later on to compute the ECH index for certain unbranched multiply covered cylinders relative to the embededed curve.

\begin{remark}\label{helpfuladjrem}\em
Suppose that $u$ is a somewhere injective cylinder in $(W,J)$ with ends at simple positive hyperbolic orbits $\gamma_+$ and $\gamma_-$ satisfying $\ind(u) = 0$ and that $\Phi$ is a trivialization for which $c_1^{\Phi}(u)=0$.  Let $Z$ be the zero section in the normal bundle $N$ determined by $u$.  Using the same trivialization $\Phi$ we have for any representative $S$ of $Z$ that $c_1^{\Phi}(N_S)=0$.  Moreover, $w_{\Phi}(Z)=0$ because the ends of $u$ are at simple distinct Reeb orbits and since $u$ is embedded, $\delta(u)=0$.  Thus the relative adjunction formula implies that $Q_{\Phi}(Z)=0$.  By \cite[(3.11)]{Hrevisit} we can deduce that for any $k$-fold cover of $u$, the associated zero section $kZ$ in $N$ satisfies
\[
Q_{\Phi_0}(kZ)=k^2Q_{\Phi_0}(Z)=0.
\]
\end{remark}


\subsubsection{The ECH partition conditions and index inequality}
Our regularity result also relies on the ECH partition conditions.  These conditions are a topological type of data associated to the pseudoholomorphic curves (and currents) which can be obtained indirectly from certain ECH index relations.  In particular, the covering multiplicities of the Reeb orbits at the ends of the nontrivial components of the pseudoholomorphic curves (and currents) are uniquely determined by the trivial cylinder component information.  While not needed in this paper, we note that the genus can be determined by the current's relative homology class.

\begin{definition}\em \cite[\S 3.9]{Hu2} Let $\gamma$ be an embedded Reeb orbit and $m$ a positive integer.  We define two partitions of $m$, the \textbf{positive partition} $P^+_\gamma(m)$ and 
the \textbf{negative partition} $P^-_\gamma(m)$\footnote{Previously the papers \cite{Hindex, Hrevisit} used the terminology incoming and outgoing partitions.}  as follows.
\begin{itemize}
\item If $\gamma$ is positive hyperbolic, then
\[
P_\gamma^+(m): = P_\gamma^-(m): = (1,...,1).
\]
\item If $\gamma$ is negative hyperbolic, then
\[
P_\gamma^+(m): = P_\gamma^-(m): = \left\{ \begin{array}{ll}
(2,...,2) & m \mbox{ even,} \\
(2,...,2,1) & m \mbox{ odd. } \\
\end{array}
\right .
\]
\item If $\gamma$ is elliptic then the partitions are defined in terms of the quantity $\vartheta \in \R \setminus \Z $ for which $\czm^\Phi(\gamma^k) = 2\lfloor k \vartheta \rfloor + 1$.  We write 
\[
P_\gamma^\pm(m): = P_\vartheta^\pm(m),
\]
with the right hand side defined as follows. 

 Let $\Lambda^+_\vartheta(m)$ denote the lowest convex polygonal path in the plane that starts at $(0,0)$, ends at $(m,\lceil m \vartheta \rceil)$, stays above the line $y = \vartheta x$, and has corners at lattice points.  Then the integers $P^+_\vartheta(m)$ are the horizontal displacements of the segments of the path $\Lambda^+_\vartheta(m)$ between the lattice points.

Likewise, let  $\Lambda^-_\vartheta(m)$ denote the highest concave polygonal path in the plane that starts at $(0,0)$, ends at $(m,\lfloor m \vartheta \rfloor)$, stays below above the line $y = \vartheta x$, and has corners at lattice points.  Then the integers $P^-_\vartheta(m)$ are the horizontal displacements of the segments of the path $\Lambda^-_\vartheta(m)$ between the lattice points,

Both $P_\vartheta^\pm(m)$ depend only on the class of $\vartheta$ in $\R \setminus \Z$.  Moreover, $P_\vartheta^+(m) = P_{-\vartheta}^-(m)$.
\end{itemize}

\end{definition}

\begin{example}\em
If the rotation angle for elliptic orbit $\gamma$ satisfies $\vartheta \in (0,1/m)$ then
\[
\begin{array}{lcl}
P_\vartheta^+(m) &=& (1,...,1) \\
P_\vartheta^-(m) &=& (m). \\
\end{array}
\]
The partitions are quite complex for other $\vartheta$ values, see \cite[Fig. 1]{Hu2}.
\end{example}

\begin{definition} \em
We say that $\widetilde{u}$ satisfies \textbf{the ECH partition conditions ``relative to u''} if it satisfies the usual ECH partition conditions in the normal bundle $N_u.$  
\end{definition}
If all ends of $u$ are at distinct simple Reeb orbits then $\widetilde{u}$ satisfies the ECH partition conditions if and only if $\widetilde{u}$ satisfies the ECH partition conditions relative to $u$. \\

We end this section by mentioning the ECH index inequality \cite[Theorem 4.15]{Hrevisit} in symplectic cobordisms.  As before we take $\alpha=\{(\alpha_i,m_i)\}$ and $\beta=\{(\beta_j,n_j)\}$ to be Reeb orbit sets in the same homology class.  Let $C \in \mathcal{M}(\alpha,\beta)$.  For each $i$ let $a_i^+$ denote the number of positive ends of $C$ at $\alpha_i$ and let $\{ q_{i,k}^+\}_{k=1}^{a_i^+}$ denote their multiplicities.  Thus $\sum_{k=1}^{a_i^+} q_{i,k}^+=m_i$.   Likewise, for each $j$ let $b_i^-$ denote the number of negative ends of $C$ at $\beta_j$ and let $\{ q_{j,k}^-\}_{k=1}^{b_j^-}$
denote their multiplicities; we have $\sum_{k=1}^{b_j^-} q_{j,k}^-=n_j$.


\begin{theorem}[ECH index inequality]
Suppose $C \in \mathcal{M}(\alpha,\beta)$ is somewhere injective.  Then
\[
\mbox{\em ind}(C) \leq I(C) - 2 \delta(C).
\]
Equality holds only if $\{  q_{i,k}^+ \} = P_{\alpha_i}^+(m_i)$ for each $i$ and $\{  q_{j,k}^- \} = P_{\beta_j}^-(n_j)$ for each $j$.
\end{theorem}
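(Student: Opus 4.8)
The plan is to follow the strategy of \cite{Hrevisit}: rewrite the difference $I(C) - \ind(C) - 2\delta(C)$ by means of the relative adjunction formula so that it becomes a sum of purely local contributions, one for each end of $C$, and then bound each contribution using the asymptotic writhe estimates together with a combinatorial (lattice‑path) optimization over partitions. Fix a collection of trivializations $\Phi \in \mathcal{T}(\alpha,\beta)$; since both $I$ and $\ind$ transform consistently under changes of $\Phi$ (by \eqref{cherntriv}, \eqref{cztrivs} and the analogous formula for $\mu_\Phi$), it suffices to prove the inequality for one such choice. Let $Z \in H_2(M,\alpha,\beta)$ be the relative class of $C$. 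By \eqref{w-index}, $\ind(C) = -\chi(C) + 2c_1^\Phi(Z) + \sum_{i,k}\czm^\Phi(\alpha_i^{q_{i,k}^+}) - \sum_{j,k}\czm^\Phi(\beta_j^{q_{j,k}^-})$, while by definition $I(C) = c_1^\Phi(Z) + Q_\Phi(Z) + \mu_\Phi(\alpha) - \mu_\Phi(\beta)$. A somewhere injective curve in a four‑manifold is embedded away from finitely many singularities, so Lemma \ref{lem:adjunction}(iii) applies: $c_1^\Phi(Z) = \chi(C) + Q_\Phi(Z) + w_\Phi(C) - 2\delta(C)$.

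Substituting this relation to eliminate $c_1^\Phi(Z)$, $Q_\Phi(Z)$ and $\chi(C)$ gives, after cancellation,
\[
I(C) - \ind(C) - 2\delta(C) = -w_\Phi(C) + \Big(\mu_\Phi(\alpha) - \sum_{i,k}\czm^\Phi(\alpha_i^{q_{i,k}^+})\Big) - \Big(\mu_\Phi(\beta) - \sum_{j,k}\czm^\Phi(\beta_j^{q_{j,k}^-})\Big).
\]
Since $w_\Phi(C) = \sum_i w_{\Phi_i^+}(\zeta_i^+) - \sum_j w_{\Phi_j^-}(\zeta_j^-)$ and $\mu_\Phi(\alpha) = \sum_i \sum_{k=1}^{m_i}\czm^\Phi(\alpha_i^k)$, the right‑hand side splits into a sum over the ends of $C$. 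Thus it suffices to prove the following local statement: for each embedded orbit $\gamma$ carrying positive ends of multiplicities $(q_1,\dots,q_a)$, a partition of $m := \sum_k q_k$, with associated braid $\zeta^+$,
\[
w_\Phi(\zeta^+) \le \sum_{k=1}^{m}\czm^\Phi(\gamma^k) - \sum_{k=1}^{a}\czm^\Phi(\gamma^{q_k}),
\]
with equality forcing $(q_1,\dots,q_a) = P_\gamma^+(m)$; together with the mirror statement for negative ends, with the inequality reversed and $P^-_\gamma(m)$ in place of $P^+_\gamma(m)$.

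To prove the local inequality, decompose $\zeta^+ = \bigsqcup_k \zeta^+_k$ into the $a$ sub‑braids (the $k$‑th being a $q_k$‑stranded braid around $\gamma$), so that $w_\Phi(\zeta^+) = \sum_k w_\Phi(\zeta^+_k) + 2\sum_{k<l}\ell_\Phi(\zeta^+_k,\zeta^+_l)$. Then invoke the asymptotic behavior of the ends of a pseudoholomorphic curve, in the form of the writhe and winding bounds of \cite[\S 3.1]{HN1} and \cite[\S 5.1]{Hu2}: each single‑end writhe $w_\Phi(\zeta^+_k)$ is bounded above by a quantity expressible through $\czm^\Phi(\gamma^{q_k})$ and the extremal asymptotic winding number of that end, and each linking number $\ell_\Phi(\zeta^+_k,\zeta^+_l)$ is bounded above by the minimum of the two extremal windings. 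Summing these reduces the claim to a combinatorial optimization: among all partitions $(q_1,\dots,q_a)$ of $m$, the resulting upper bound is maximized exactly at $P^+_\gamma(m)$, with value $\sum_{k=1}^m \czm^\Phi(\gamma^k) - \sum_k\czm^\Phi(\gamma^{q_k})$ there. For $\gamma$ hyperbolic this is immediate from $\czm^\Phi(\gamma^k) = kr$; for $\gamma$ elliptic with rotation number $\vartheta$ one uses $\czm^\Phi(\gamma^k) = 2\lfloor k\vartheta\rfloor + 1$ to reinterpret both sides as counts of lattice points beneath polygonal paths, whereupon the optimality of the lowest convex lattice path above $y=\vartheta x$ from $(0,0)$ to $(m,\lceil m\vartheta\rceil)$, namely $\Lambda^+_\vartheta(m)$ and hence $P^+_\vartheta(m)$, is precisely the extremal property built into its definition; the negative case follows symmetrically from $P^+_\vartheta(m) = P^-_{-\vartheta}(m)$.

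I expect the third step to be the main obstacle: importing the precise asymptotic writhe and linking estimates in a form where the per‑end and per‑pair bounds carry equality characterizations matching the lattice‑path geometry, and then verifying that equality throughout the entire chain — all individual writhe and linking bounds sharp \emph{and} the partition extremal — can hold simultaneously only when the partition is $P^\pm_\gamma$. The hyperbolic subcases are routine, but the elliptic lattice‑geometry argument (proving $\Lambda^\pm_\vartheta$ are the unique optimizers and matching their lattice‑point counts to the Conley–Zehnder sums) is the genuinely delicate part, and is where most of the work of \cite{Hrevisit} is concentrated.
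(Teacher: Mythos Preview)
The paper does not actually prove this theorem: it is simply stated as the ECH index inequality and attributed to \cite[Theorem 4.15]{Hrevisit}, with no proof given in the paper itself. Your proposal correctly reconstructs the standard argument from \cite{Hindex, Hrevisit}---use the relative adjunction formula (Lemma~\ref{lem:adjunction}(iii)) to rewrite $I(C)-\ind(C)-2\delta(C)$ as a sum of local end contributions involving writhes and Conley--Zehnder sums, then bound each via the asymptotic writhe/linking estimates and the lattice-path combinatorics defining $P^\pm_\gamma(m)$---so there is nothing to compare against here beyond noting that your outline matches the cited source.
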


\subsubsection{Agreeability via ECH and regularity for cylinders}

We are now ready to prove the below result in regards to agreeable multiply covered curves, which we will use to prove that certain cylinders in cobordisms are regular.

\begin{proposition}\label{awesomesauce}
Assume that $\mbox{\em ker}(D_u)=0$. Suppose that either $\mbox{\em ind}(\widetilde{u})>I_u(\widetilde{u}),$ or $\mbox{\em ind}(\widetilde{u})=I_u(\widetilde{u})$ and $\widetilde{u}$ does not satisfy the ECH partition conditions relative to $u$. Furthermore, if $\widetilde{u}\to u$ factors through a branched cover $\widehat{u}\to u$ whose degree is between 1 and $k$, then assume that the above condition also holds with $\widetilde{u}$ replaced by $\widehat{u}$. Then $\widetilde{u}$ is agreeable.
\end{proposition}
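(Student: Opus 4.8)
The plan is to reduce agreeability of $\widetilde{u}$ to a statement about the ECH index of the pseudoholomorphic current it would define inside the normal bundle $N_u$, and then derive a contradiction from the ECH index inequality together with the partition conditions. So first I would argue by contradiction: suppose $\psi \in \ker(D_{\widetilde{u}})$ is not identically zero. As in the proof of Proposition \ref{aut-agree}, the Carleman similarity principle tells us that the zeros of $\psi$ are isolated with positive multiplicities, so the signed count $\#\psi^{-1}(0)$ is a well-defined nonnegative integer, and moreover $\psi$ singles out (up to a sign ambiguity for branched covers) an honest pseudoholomorphic curve $\widetilde{u}_\psi$ sitting as a section-like object in $N_u$; the key point is that this object defines a pseudoholomorphic \emph{current} in $N_u$ whose relative homology class is $k$ times the zero section's class. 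The reason $\ker(D_u)=0$ is assumed is precisely so that the zero section $u$ is cut out transversely in $N_u$, so $u$ itself contributes nothing that obstructs this interpretation.

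Next I would run the ECH index inequality (Theorem, ECH index inequality) for the somewhere injective \emph{components} of this current in $N_u$. If $\widetilde{u}$ is itself somewhere injective in $N_u$ — equivalently it does not factor through a nontrivial branched cover $\widehat{u}\to u$ — then the index inequality gives $\mbox{ind}(\widetilde{u}) \le I_u(\widetilde{u}) - 2\delta(\widetilde{u})$, and the nonnegativity of $\delta$ together with the hypothesis $\mbox{ind}(\widetilde{u})>I_u(\widetilde{u})$ is an immediate contradiction; in the equality case $\mbox{ind}(\widetilde{u})=I_u(\widetilde{u})$, equality in the index inequality forces the ECH partition conditions relative to $u$ to hold, contradicting the second hypothesis. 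The more delicate situation is when $\widetilde{u}$ is a nontrivial branched or unbranched cover of some intermediate $\widehat{u}\to u$: here $\widetilde{u}$ is not somewhere injective, so one cannot apply the index inequality to $\widetilde{u}$ directly. Instead I would pass to the somewhere injective curve $\widehat{u}$ underlying the relevant component of the current cut out by $\psi$ — whose degree over $u$ lies strictly between $1$ and $k$ — and apply the index inequality and partition conditions to \emph{that} curve; the hypothesis was stated precisely so that the desired strict/equality-plus-non-partition condition also holds for every such $\widehat{u}$, yielding the same contradiction. I would also need to compare $\ind(D_{\widehat u})$ with $\ind(\widehat u)$ using \eqref{branchyD} to account correctly for the $2b$ branch-point correction when relating the analytic count $\#\psi^{-1}(0)$ to the topological quantities.

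The main obstacle, and the step that needs the most care, is the bookkeeping connecting the analytic object $\psi \in \ker(D_{\widetilde{u}})$ to a genuine pseudoholomorphic current in $N_u$ to which the ECH index inequality legitimately applies — in particular, correctly identifying its relative homology class as $k$ (or $\deg\widehat u$) times the zero section, correctly accounting for the writhe/linking contributions via the relative adjunction formula (Lemma \ref{lem:adjunction} and Remark \ref{helpfuladjrem}), and handling the branch-point correction \eqref{branchyD} so that the inequality $0 \le 2\,\#\psi^{-1}(0)$ translates into $\mbox{ind}(\widetilde{u}) \le I_u(\widetilde{u}) - 2\delta$. Once that dictionary is set up cleanly, the contradiction in each of the two hypothesis cases (and their branched-cover refinements) is essentially forced, and the proposition follows.
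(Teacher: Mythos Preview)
Your approach is essentially the same as the paper's: argue by contradiction, interpret a nonzero $\psi\in\ker(D_{\widetilde u})$ as a pseudoholomorphic curve in $N_u$, reduce to the somewhere injective case using the intermediate-cover hypothesis (with $\ker(D_u)=0$ handling the degree-$1$ case), and derive a contradiction from the ECH index inequality and partition conditions in $N_u$. That skeleton is correct.

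However, you are cluttering the argument with machinery that does not belong here. The zero-counting $\#\psi^{-1}(0)$ and Carleman similarity principle were the mechanism for Proposition~\ref{aut-agree}; they play no role in this proof. The point is not to count zeros of $\psi$ and translate that count into an index bound. Rather, there is a tautological almost complex structure on the total space of $N_u$ for which a local section lies in $\ker(D_u)$ if and only if it is pseudoholomorphic as a map into $N_u$; hence the image of $\widetilde\psi$ under $\pi^*N_u\to N_u$ is literally a $J$-holomorphic curve $\psi:\widetilde{\dot\Sigma}\to N_u$. One then applies the ECH index inequality to (the somewhere injective curve underlying) $\psi$ directly, as a black box. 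Since $\psi$ has the same domain, the same asymptotics, and the same relative homology class as $\widetilde u$ (or the relevant intermediate $\widehat u$) viewed in $N_u$, one has $\ind(\psi)=\ind(\widetilde u)$ and $I_u(\psi)=I_u(\widetilde u)$, and the contradiction is immediate. In particular, the branch-point correction~\eqref{branchyD} never enters: you are comparing Fredholm indices of \emph{curves}, not of the operators $D_{\widetilde u}$, so there is no $2b$ discrepancy to track. Dropping the zero-count and branch-point discussion will make your argument match the paper's and remove the parts that currently read as confused.
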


\begin{proof}

Regarding the normal bundle $N_u$ as a  four manifold, there is a unique almost complex structure on $N_u$ whose restriction to the fibers agrees with the almost complex structure $J$ on $W$, such that a local section $\psi$ is in the kernel of the operator $D_u$ if and only if $\psi$ is a pseudoholomorphic map from a neighborhood in $u$ to $N_u.$

Suppose $\widetilde{\psi}$ is a nonzero element of $\mbox{ker}(D_{\widetilde{u}})$. Let $\psi$ denote the image of $\widetilde{\psi}$ under the projection $\pi^*N_u\to N_u.$ Then $\psi$ is a holomorphic curve in $N_u.$ By the assumption that $\mbox{ker}(D_u)=0$ and the assumption about intermediate branched covers in Proposition \ref{awesomesauce}, we can assume without loss of generality that $\psi$ is somewhere injective.

A version of the ECH index inequality tells us that $\ind(\psi) \le I_u(\psi)$, with equality only if $\psi$ satisfies the ECH partition conditions relative to $u$. This is proven in the same manner as the usual ECH index inequality, except that in this case one does not need Siefring's nonlinear analysis \cite{Sief}. Rather, one can appeal to the linear analysis of \cite{Hindex}.

As a consequence, we can replace $\psi$ everywhere by $\widetilde{u}$ without changing anything in the first paragraph of our proof. This yields a contradiction to the assumptions of said Proposition \ref{awesomesauce}, which means that $\psi$ could not exist, so $\widetilde{u}$ is agreeable. 

\end{proof}

We can now use the above result to obtain regularity for the unbranched covers of a somewhere injective cylinder with Fredholm index zero having one positive end and one negative end, each at positive hyperbolic orbits, in a cobordism.  Unbranched covers of cylinders with Fredholm index zero which do not limit on positive hyperbolic orbits are guaranteed to be regular by automatic transversality as stated in Proposition \ref{aut-agree}.

\begin{proposition}\label{cylcoverssep}
Let $(W, \lambda)$ be an exact symplectic cobordism between two dynamically separated contact forms and $J$ a generic compatible almost complex structure..   Suppose that $u \in \calm(\ga_+; \ga_-)$ is a somewhere injective nonconstant cylinder with Fredholm index zero which has one positive end and one negative end, each at positive hyperbolic orbits.  Then any unbranched cover of $\widetilde{u} \in \calm(\ga_+^k; \ga_-^k)$ is agreeable and hence regular.

\end{proposition}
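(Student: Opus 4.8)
The plan is to verify the hypotheses of Proposition~\ref{awesomesauce} for $\widetilde u$ and for every intermediate unbranched cover of $u$, and then invoke Remark~\ref{nobranchtransverse} to pass from agreeability to regularity. First I would note that $\ker(D_u)=0$: since $u$ is somewhere injective and $J$ is generic, Theorem~\ref{si-thm2} makes $u$ Fredholm regular, so $D_u$ is surjective, and as $\ind(D_u)=\ind(u)=0$ this forces $\ker(D_u)=0$. Next, by Remark~\ref{trivchoice} choose a trivialization $\Phi$ of $\xi$ along $\gamma_\pm$ with $c_1^\Phi(u^*TW)=0$; then $c_1^\Phi(N_u)=c_1^\Phi(u^*\xi)=0$, and \eqref{indexformula} gives $\czm^\Phi(\gamma_+)=\czm^\Phi(\gamma_-)=:r$. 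Since $\gamma_\pm$ are positive hyperbolic, $\czm^\Phi(\gamma_\pm^\ell)=\ell r$ for all $\ell\ge 1$. Because $\widetilde u$ is unbranched, its Fredholm index is additive and $c_1^\Phi$ scales by $k$, so $\ind(\widetilde u)=\czm^\Phi(\gamma_+^k)-\czm^\Phi(\gamma_-^k)=0$; the same computation applies verbatim to every intermediate unbranched cover of degree $\ell$ with $1<\ell\le k$.

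The heart of the argument is computing the ECH index $I_u(\widetilde u)$ in the completed normal bundle $N_u$, in which $u$ is the embedded zero section $Z$ and $\widetilde u$ represents the class $kZ$. Remark~\ref{helpfuladjrem}, applied to our index-zero cylinder $u$ with simple positive hyperbolic ends, yields $Q_\Phi(Z)=0$ and hence $Q_\Phi(kZ)=k^2Q_\Phi(Z)=0$; the relative adjunction formula of Lemma~\ref{lem:adjunction}(i) applied in $N_u$, together with $\chi(Z)=0$ and $c_1^\Phi(N_u)=0$, gives $c_1^\Phi(kZ)=k\,c_1^\Phi(Z)=0$; and since the normal asymptotic operator of $\widetilde u$ at each end is the linearized Reeb-flow operator of the corresponding iterate, its Conley--Zehnder contributions are $\czm^\Phi(\gamma_\pm^\ell)=\ell r$, so $\mu_\Phi(\alpha)=\mu_\Phi(\beta)=\tfrac{k(k+1)}{2}r$. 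Adding these up gives $I_u(\widetilde u)=0=\ind(\widetilde u)$, so we are exactly in the borderline case of Proposition~\ref{awesomesauce}. Here the ECH partition conditions rescue us: the connected cover $\widetilde u$ has a single positive end of multiplicity $k$ over a positive hyperbolic simple orbit in $N_u$, whereas the ECH positive partition of a positive hyperbolic orbit with total multiplicity $k$ is $(1,\dots,1)$; for $k\ge 2$ these differ, so $\widetilde u$ fails the ECH partition conditions relative to $u$, and the identical computation shows each intermediate cover of degree $\ell>1$ fails them too.

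With $\ker(D_u)=0$, with $\ind(\widetilde u)=I_u(\widetilde u)$ and the relative partition conditions failing for $\widetilde u$ and all intermediate covers of degree $>1$, Proposition~\ref{awesomesauce} applies and shows $\widetilde u$ is agreeable; since $\widetilde u$ is unbranched with $\ind(\widetilde u)=0$, Remark~\ref{nobranchtransverse} then gives that $\widetilde u$ is transverse, i.e.\ regular. I expect the main obstacle to be the bookkeeping in the $I_u(\widetilde u)$ computation: one must be careful that the Conley--Zehnder terms are those of the normal linearized Reeb-flow operators at the ends and that Remark~\ref{helpfuladjrem} genuinely forces $c_1^\Phi$ and $Q_\Phi$ to vanish, because the naive automatic transversality estimate of Proposition~\ref{aut-agree} is precisely borderline here, $\ind(\widetilde u)-2b+2-2g(\widetilde u)-h_+(\widetilde u)=0$, so the argument lives or dies by extracting the extra constraint from the equality case of the ECH index inequality.
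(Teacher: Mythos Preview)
Your proposal is correct and follows essentially the same route as the paper's proof: verify the hypotheses of Proposition~\ref{awesomesauce} via Remark~\ref{helpfuladjrem} to get $I_u(\widetilde u)=\ind(\widetilde u)=0$, observe that the ECH partition conditions relative to $u$ fail because a positive hyperbolic orbit of total multiplicity $k$ demands the partition $(1,\dots,1)$ whereas $\widetilde u$ has a single end of multiplicity $k$, and then pass from agreeability to regularity via Remark~\ref{nobranchtransverse}. You are in fact more thorough than the paper in two respects: you explicitly justify $\ker(D_u)=0$ from genericity and $\ind(u)=0$, and you check the intermediate covers of degree $1<\ell<k$ as Proposition~\ref{awesomesauce} requires; the paper omits both verifications. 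Your computation of $\ind(\widetilde u)=0$ directly from the positive hyperbolic formula $\czm^\Phi(\gamma_\pm^\ell)=\ell r$ is also cleaner than the paper's appeal to the dynamically separated condition, though the conclusion is the same.
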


\begin{proof}
From the definition of dynamically separated and because $[\ga_+]=[\ga_-]$ we have
\[
{\mbox{ ind}}(\widetilde{u})={\mbox{ ind}}({u}) =0 .
\]
The discussion in Remark \ref{helpfuladjrem} permits us to conclude that
\[
I_u(\widetilde{u}) = {\mbox{ ind}}(\widetilde{u}) =0 .
\]
Moreover, the ECH partition conditions fail, because they stipulate that $\widetilde{u}$ would need to have $k$ positive ends and $k$ negative ends.  As a result, Proposition \ref{awesomesauce} permits us to conclude that $\widetilde{u}$ is agreeable.  Finally, since $\widetilde{u}$ does not have any branch points Remark \ref{nobranchtransverse} yields that agreeability holds if and only if regularity does.
 \end{proof}


\begin{proposition}\label{cylcoverssep-par}
Let $(M,\xi)$ be a closed contact 3-manifold which admits two distinct nondegenerate dynamically separated contactomorphic contact forms $\lambda_\pm$ and $J_\pm$ be generic $\lambda_\pm$-compatible almost complex structures.  Suppose $J_0$ and $J_1$ are two generic choices of compatible almost complex structures on $W$ that both match $J_\pm$ on the cylindrical ends and let $\{ J_\tau \}_{\tau \in [0,1]}$ be a generic smooth path of cobordism compatible almost complex structures connecting $J_0$ to $J_1$. Then any unbranched cover $\widetilde{u}$ of a somewhere injective parametrically regular cylinder $u \in \M^{J_\tau}(\gamma_+,\gamma_-)$ satisfying $\mbox{\em ind}(u)=-1$, which exists for isolated values of $\tau \in (0,1)$, satisfies 
$\mbox{\em ind}(\widetilde{u})=-1$ and is also parametrically regular.
\end{proposition}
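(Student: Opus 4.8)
The plan is to run the argument of Proposition~\ref{cylcoverssep} in the parametric setting, keeping track throughout that the Fredholm index governing the parametric problem is the ordinary one augmented by $\dim P = 1$; since $\mbox{ind}(u) = -1$ here, the parametric index is $0$, which is precisely the value that made the ECH/agreeability argument of Proposition~\ref{cylcoverssep} go through in the unparametrized index-$0$ case. Note that at the isolated parameter where $u$ lives, its unbranched cover $\widetilde u$ is $J$-holomorphic for that same parameter. I begin with the index bookkeeping: because $W$ interpolates between dynamically separated contact forms and $[\gamma_+] = [\gamma_-]$, the argument of Proposition~\ref{cylcoverssep} gives $\mbox{ind}(\widetilde u) = \mbox{ind}(u) = -1$, and the same reasoning applies to every unbranched cover $u^{(\ell)}$ of $u$ (again a cylinder between iterates of $\gamma_\pm$ lying in a common free homotopy class), so $\mbox{ind}(u^{(\ell)}) = -1$ for all $\ell$; equivalently $\czm^\Phi(\gamma_+^\ell) - \czm^\Phi(\gamma_-^\ell) = -1$ for the trivialization $\Phi$ of Remark~\ref{trivchoice}.

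Next I would extract the linear input for Proposition~\ref{awesomesauce} from the hypothesis that $u$ is parametrically regular. That hypothesis says the operator $D_u^{\mathrm{par}}(\phi,a) = D_u\phi + aK_u$ is surjective, where $K_u$ is the $(0,1)$-form on $u$ coming from the $\tau$-derivative of the family $\{J_\tau\}$ at the relevant parameter; its index is $\mbox{ind}(u)+1 = 0$, so $D_u^{\mathrm{par}}$ is an isomorphism. This forces $\ker(D_u) = 0$, which is the standing hypothesis of Proposition~\ref{awesomesauce}, and also $K_u \notin \mbox{im}(D_u)$.

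I would then establish ordinary agreeability of $\widetilde u$ by checking the hypotheses of Proposition~\ref{awesomesauce}. Regarding $u$ as the embedded zero section of $N_u$, whose ends are at distinct simple normal orbits $\mu_\pm$ (whose Conley--Zehnder indices coincide with those of $\gamma_\pm$), the relative adjunction formula of Lemma~\ref{lem:adjunction}, together with $\chi(u) = 0$, vanishing writhe, and $\delta(u) = 0$, produces a trivialization with $c_1^\Phi(Z_u) = Q_\Phi(Z_u) = 0$, exactly as in Remark~\ref{helpfuladjrem}. For an unbranched $d$-fold cover $\widehat u$ of $u$ one then has $c_1^\Phi(dZ_u) = 0$ and $Q_\Phi(dZ_u) = d^2 Q_\Phi(Z_u) = 0$, so $I_u(\widehat u) = \sum_{\ell=1}^d\left(\czm^\Phi(\mu_+^\ell) - \czm^\Phi(\mu_-^\ell)\right) = \sum_{\ell=1}^d \mbox{ind}(u^{(\ell)}) = -d$, whereas $\mbox{ind}(\widehat u) = \czm^\Phi(\mu_+^d) - \czm^\Phi(\mu_-^d) = -1$. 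Hence $\mbox{ind}(\widehat u) - I_u(\widehat u) = d-1 > 0$ for every $d \ge 2$, a strict inequality in contrast with the index-$0$ situation of Proposition~\ref{cylcoverssep}, where one had only equality and had to invoke failure of the ECH partition conditions. Applying this with $\widehat u$ ranging over $\widetilde u$ and all of its intermediate unbranched covers, and using $\ker(D_u) = 0$, all hypotheses of Proposition~\ref{awesomesauce} are met, so $\ker(D_{\widetilde u}) = 0$.

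It remains to upgrade ordinary agreeability to parametric regularity. Since $\widetilde u \to u$ is an unbranched cyclic cover, the deck group acts on $\pi^*N_u$ commuting with $D_{\widetilde u}$, so the relevant Sobolev completions split into isotypic summands preserved by $D_{\widetilde u}$; the trivial summand reproduces $D_u$ and contains $K_{\widetilde u}$ (the pullback of $K_u$), whence $K_{\widetilde u} \in \mbox{im}(D_{\widetilde u})$ if and only if $K_u \in \mbox{im}(D_u)$, which is false. The parametric operator $D_{\widetilde u}^{\mathrm{par}}(\psi,a) = D_{\widetilde u}\psi + aK_{\widetilde u}$ has index $\mbox{ind}(\widetilde u)+1 = 0$, hence is surjective exactly when injective; a nonzero kernel element with $a = 0$ contradicts $\ker(D_{\widetilde u}) = 0$, and one with $a \neq 0$ places $K_{\widetilde u}$ in $\mbox{im}(D_{\widetilde u})$, a contradiction either way. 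Thus $\widetilde u$ is parametrically regular, which together with the first paragraph proves the proposition. I expect the third paragraph to be the main obstacle, in particular the identity $\czm^\Phi(\mu_+^\ell) - \czm^\Phi(\mu_-^\ell) = \mbox{ind}(u^{(\ell)}) = -1$ drawn from the dynamically separated hypothesis and the bookkeeping in the relative adjunction formula, while the genuinely new ingredient compared with Proposition~\ref{cylcoverssep} is the parametric reduction of the last paragraph.
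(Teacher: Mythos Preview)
Your proof is correct and tracks the paper's argument closely: both establish $\mathrm{ind}(\widetilde u)=\mathrm{ind}(u)=-1$ from the dynamically separated hypothesis, compute $I_u(\widetilde u)=-\deg(\widetilde u)$ via the relative adjunction formula as in Remark~\ref{helpfuladjrem}, invoke Proposition~\ref{awesomesauce} for agreeability, and then upgrade to parametric regularity. There are two minor differences worth noting. First, you observe that for covers of degree $d\ge 2$ the \emph{strict} inequality $\mathrm{ind}(\widehat u)=-1>-d=I_u(\widehat u)$ already suffices for Proposition~\ref{awesomesauce}, so checking failure of the ECH partition conditions (which the paper records anyway) is not needed here; you also make explicit the input $\ker(D_u)=0$, which Proposition~\ref{awesomesauce} requires and which follows from parametric regularity of $u$ together with $\mathrm{ind}(D_u^{\mathrm{par}})=0$. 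Second, your upgrade from agreeability to parametric regularity runs on the primal side: the deck group of the unbranched cover commutes with $D_{\widetilde u}$, the variation $K_{\widetilde u}=\pi^*K_u$ lies in the invariant isotypic piece, and averaging shows $K_{\widetilde u}\in\mathrm{im}(D_{\widetilde u})$ iff $K_u\in\mathrm{im}(D_u)$, which combined with $\ker(D_{\widetilde u})=0$ and parametric index $0$ forces $D_{\widetilde u}^{\mathrm{par}}$ to be an isomorphism. The paper argues dually via the formal adjoint: $\dim\ker(D_{\widetilde u}^*)=1$, the generator $\eta\in\ker(D_u^*)$ pulls back to generate $\ker(D_{\widetilde u}^*)$, and the pairing $\langle\pi^*\eta,\tfrac{d}{d\tau}\bar\partial_{J_\tau}\widetilde u\rangle$ equals $\deg(\widetilde u)$ times $\langle\eta,\tfrac{d}{d\tau}\bar\partial_{J_\tau}u\rangle\ne 0$. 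The two routes are equivalent by $L^2$ duality; yours is a bit more self-contained, while the paper's is shorter once the adjoint language is in hand.
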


\begin{proof}
From the definition of dynamically separated and because $[\ga_+]=[\ga_-]$ we have
\[
{\mbox{ ind}}(\widetilde{u})={\mbox{ ind}}({u}) = -1.
\]
As a result, one of the orbits must be positive hyperbolic and an analogous argument as in Remark \ref{helpfuladjrem} permits us to conclude that
\[
I_u(\widetilde{u}) = -{\mbox{ deg}}(\widetilde{u}).
\]
Moreover, the ECH partition conditions fail, given that one of the orbits is hyperbolic for index reasons.  As a result, Proposition \ref{awesomesauce} permits us to conclude that $\widetilde{u}$ is agreeable. 

Finally, we show that parametric agreeability in this situation implies parametric regularity.  Recall the deformation operators $D_u$ and $D_{\widetilde{u}}$ from \eqref{def-op-u} and \eqref{def-op-cover} respectively.  Parametric regularity is the condition that $\dfrac{d}{d\tau}\left( \overline{\partial}_{J_\tau}u \right)$ span $\mbox{Coker}(D_u)$.  Denoting $D_u^*$ as the formal adjoint of $D_u$ we have that $\mbox{Coker}(D_u) = \mbox{Ker}(D_u^*)$, hence parametric regularity is equivalent to the following conditions
\begin{enumerate}[(i)]
\item $\mbox{dim}(\mbox{Ker}(D_u^*))=1$;  
\item If $\eta$ generates $\mbox{Ker}(D_u^*)$ then $\left \langle \eta, \dfrac{d}{d\tau}\left( \overline{\partial}_{J_\tau}u \right) \right \rangle =0.$
\end{enumerate}

We established that $\widetilde{u}$ is agreeable, e.g. $\mbox{dim}(\mbox{Ker}(D_{\widetilde{u}}^*))=1$, so condition (i) is satisfied.   To see why condition (ii) holds,  note that the generator $\eta$ of the underlying somewhere injective cylinder $u$ pulls back to $\widetilde{\eta}$ which generates $\mbox{Ker}(D_{\widetilde{u}}^*).$  Thus 
\[
\left \langle \widetilde{\eta}, \dfrac{d}{d\tau}\left( \overline{\partial}_{J_\tau}\widetilde{u} \right) \right \rangle = - \mbox{deg}(\widetilde{u}) \left \langle \eta, \dfrac{d}{d\tau}\left( \overline{\partial}_{J_\tau}u \right) \right \rangle =0,
\]
hence we may conclude that $\widetilde{u}$ is parametrically regular.
 \end{proof}


 \begin{remark}\label{remcyltrans}\em
Proposition \ref{aut-agree}, Proposition \ref{cylcoverssep}, and Proposition \ref{cylcoverssep-par} are used to show that the chain map and chain homotopy equations are well-defined, as they guarantee (parametric) regularity for the necessary index 1, 0, -1 cylinders.  Outside of a symplectization, wherein automatic transversality holds (cf. Lemma \ref{lem:at}), these (parametric) regularity results are highly dependent on the dynamically separated condition.  
\end{remark}


\section{Bounds on buildings}\label{invariance}
In Section \ref{quandaries} we obtain lower bounds on the Fredholm index which enables us to rule out noncylindrical levels from appearing in the compactification of  moduli spaces of cylinders in cobordisms between nondegenerate dynamically separated contact forms.   When combined with the regularity results of Section \ref{background}, we will be able to define the chain map and chain homotopy by directly counting of elements of moduli spaces of cylinders. 
The construction of the chain map is given in Section \ref{chainmap-sec} and the construction of the chain homotopy is given in Section \ref{chainhtpy-sec}.  
For ease of exposition, we first prove the following unfiltered invariance result, assuming the existence of nondegenerate dynamically separated contact forms.    Slight variations on these arguments yield analogous results on filtered cylindrical contact homology, which are explained in Section \ref{filtered-continuation}.

We define a \textbf{dynamically separated pair} $(\lambda, J)$ on a closed contact manifold $(M^3,\xi)$ to consist of a dynamically separated contact form $\lambda$ such that ker $\lambda=\xi$ and a generic $\lambda$-compatible almost complex structure $J$.  

\begin{theorem}\label{chainthm}
Let $(\lambda_1, J_1)$ and $(\lambda_2, J_2)$ be two nondegenerate dynamically separated pairs on a closed contact manifold $(M^3, \xi)$.  Then there exists a natural isomorphism
\begin{equation}\label{chainmap}
\Phi^{21}:CH_*(M, \lambda_1, J_1;\Q) \to CH_*(M, \lambda_2, J_2;\Q).
\end{equation}
If $(\A_3, J_3)$ is another nondegenerate dynamically separated pair, then
\[
\Phi^{31}=\Phi^{32} \circ \Phi^{21}, \ \ \ \Phi^{11}=\Phi^{22}=\Phi^{33}=\mbox{id}.
\]
\end{theorem}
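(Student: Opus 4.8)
The plan is to realize $\Phi^{21}$ as the map on homology induced by a cobordism map, built by counting rigid (index $0$) holomorphic cylinders in an exact symplectic cobordism from $\lambda_1$ to $\lambda_2$; to verify the chain map property and independence of auxiliary data using the regularity results of Section \ref{background} together with the Fredholm index bounds of Section \ref{quandaries}; and to deduce the composition law and the normalization by a neck-stretching argument and the trivial cobordism. Concretely: choose an exact symplectic cobordism $(\overline{W},\overline{\lambda})$ with $\partial\overline{W}=M_+-M_-$, $\lambda_+=\lambda_2$, $\lambda_-=\lambda_1$, and a generic cobordism-compatible almost complex structure $J$ on the completion $W$ agreeing with $J_i$ on the cylindrical ends. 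For good Reeb orbits $x$ of $R_{\lambda_1}$ and $y$ of $R_{\lambda_2}$ in a common free homotopy class, let $\M_0(x;y)$ be the moduli space of index $0$ cylinders in $(W,J)$ positively asymptotic to $x$ and negatively asymptotic to $y$; this is not quotiented by $\R$, since $J$ is not translation invariant. Define
\[
\Phi^{21}(x):=\sum_{y}\left(\sum_{u\in\M_0(x;y)}\frac{\epsilon(u)\,\mult(y)}{\mult(u)}\right)y ,
\]
the cobordism analogue of the operator in \eqref{pa1}, where $\epsilon(u)=\pm1$ comes from coherent orientations compatible with those used for $\partial^{EGH}_\pm$; by \eqref{pa2} and the analogue of Remark \ref{coefficients} in cobordisms, the weighting convention is immaterial in the dynamically separated setting. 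Finiteness of every count follows from SFT compactness together with the index lower bounds of Section \ref{quandaries} --- which exclude any building with a noncylindrical level from the compactification of a $0$-dimensional moduli space --- and from regularity: index $0$ cylinders and all of their unbranched covers are cut out transversely by Lemma \ref{lem:at}, Proposition \ref{aut-agree}, and Proposition \ref{cylcoverssep}, so $\M_0(x;y)$ is a finite set of points.

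To obtain the chain map equation $\Phi^{21}\circ\partial^{EGH}_{1}=\partial^{EGH}_{2}\circ\Phi^{21}$, where $\partial^{EGH}_{i}$ denotes the differential on $C^{EGH}_*(M,\lambda_i,J_i)$, I would examine the compactified $1$-dimensional moduli space $\overline{\M}_1(x;y)$ of index $1$ cylinders in $(W,J)$. By the index bounds of Section \ref{quandaries}, its boundary contains no noncylindrical level, hence consists of two-level buildings: an index $1$ cylinder in the symplectization of $\lambda_1$ modulo $\R$ stacked over an index $0$ cylinder in $W$, or an index $0$ cylinder in $W$ stacked over an index $1$ cylinder in the symplectization of $\lambda_2$ modulo $\R$. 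Regularity of all relevant cylinders and their covers (Proposition \ref{cylcoverssep} in the cobordism, Lemma \ref{lem:at} in the symplectizations), together with gluing, shows that each such broken configuration is the limit of exactly one end of $\overline{\M}_1(x;y)$, with the weights matching the definitions of $\partial^{EGH}_\pm$ and of $\Phi^{21}$; equating to zero the signed count of boundary points yields the identity. This is the content of Section \ref{chainmap-sec}.

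For independence of the auxiliary data, note that two cobordism structures $(W,J)$ and $(W,J')$ with the same cylindrical ends are joined by a generic path $\{J_\tau\}_{\tau\in[0,1]}$ of cobordism-compatible almost complex structures varying in a precompact open set disjoint from the ends, as in Theorem \ref{thmparreg}. Counting with weights the index $-1$ cylinders that occur at the isolated parameters $\tau\in(0,1)$ defines a map $K\colon C^{EGH}_*(M,\lambda_1,J_1)\to C^{EGH}_{*+1}(M,\lambda_2,J_2)$; these cylinders and their unbranched covers are parametrically regular by Theorem \ref{thmparreg} and Proposition \ref{cylcoverssep-par}. Analyzing the boundary of the parametric $1$-dimensional moduli space of index $0$ cylinders --- whose ends are index $0$ cylinders at $\tau\in\{0,1\}$ and broken configurations pairing an index $-1$ cylinder in the cobordism with an index $1$ cylinder in a symplectization --- gives
\[
\Phi^{21}_{J'}-\Phi^{21}_{J}=\partial^{EGH}_{2}\circ K+K\circ\partial^{EGH}_{1},
\]
so $\Phi^{21}$ on homology is independent of $J$; running the analogous argument over a path of interpolating cobordisms shows it is independent of the cobordism as well. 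This is the content of Section \ref{chainhtpy-sec}.

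Finally, given a third pair $(\lambda_3,J_3)$, stack the cobordism from $\lambda_1$ to $\lambda_2$ below the one from $\lambda_2$ to $\lambda_3$ and insert a neck of length $R$ modeled on the symplectization of $\lambda_2$. Letting $R\to\infty$ and applying SFT compactness and gluing --- licit because all relevant index $0$ and index $1$ cylinders and their covers are regular, so ordinary gluing suffices --- identifies, at the chain level for $R$ large, the cobordism map of the glued cobordism with $\Phi^{32}\circ\Phi^{21}$, the only contributing broken configurations being an index $0$ cylinder in the lower cobordism followed by one in the upper cobordism (noncylindrical levels once more excluded by Section \ref{quandaries}). Since the glued cobordism is one admissible cobordism from $\lambda_1$ to $\lambda_3$, independence gives $\Phi^{31}=\Phi^{32}\circ\Phi^{21}$ on homology. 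For $\Phi^{ii}=\mathrm{id}$, use the trivial cobordism $\R\times M$ with the translation-invariant $J_i$: a non-trivial index $0$ cylinder would generate a $1$-parameter family under translation, contradicting rigidity, so $\M_0(x;y)$ contains only trivial cylinders over good orbits, each rigid with weight $\mult(y)/\mult(u)=1$ and sign $+1$, whence $\Phi^{ii}=\mathrm{id}$ at the chain level. Consequently $\Phi^{21}\circ\Phi^{12}=\Phi^{22}=\mathrm{id}$ and $\Phi^{12}\circ\Phi^{21}=\Phi^{11}=\mathrm{id}$, so each $\Phi^{21}$ is an isomorphism. The principal obstacle throughout is excluding holomorphic buildings with a noncylindrical level from every compactification above: this rests on the Fredholm index lower bounds of Section \ref{quandaries}, which depend essentially on the dynamically separated hypothesis and the almost-linear growth of the Conley--Zehnder index (Proposition \ref{almostlinear}), used in concert with the agreeability and regularity results for multiply covered cylinders (Propositions \ref{cylcoverssep} and \ref{cylcoverssep-par}) that let one avoid obstruction-bundle gluing.
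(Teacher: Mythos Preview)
Your approach is essentially identical to the paper's own proof, carried out across Sections \ref{chainmap-sec}--\ref{chainhtpy-sec} and Proposition \ref{prophtpy2}: define the cobordism chain map by a weighted count of index $0$ cylinders, verify the chain map identity via the boundary of the index $1$ moduli space (Lemma \ref{bldgchain}), establish independence of the cobordism data by a chain homotopy counting parametric index $-1$ cylinders (Lemma \ref{bldghomotopy}), deduce the composition law by neck stretching, and obtain $\Phi^{ii}=\mathrm{id}$ from the trivial cobordism. One small slip: your assignment $\lambda_+=\lambda_2$, $\lambda_-=\lambda_1$ is inconsistent with your description of cylinders positively asymptotic to $\lambda_1$-orbits; since the cobordism map goes from the positive to the negative end (cf.\ \eqref{chaineq}), to realize $\Phi^{21}\colon CH_*(\lambda_1)\to CH_*(\lambda_2)$ you need $\lambda_+=\lambda_1$ and $\lambda_-=\lambda_2$, which in general requires a preliminary rescaling as in the paper's proof of Proposition \ref{prophtpy2}.
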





\subsection{Numerology of the multiply covered}\label{quandaries}


This section gives lower bounds on the index of  multiply covered curves via the Riemann-Hurwitz theorem and Conley-Zehnder index formulas in dimension three,  extending methods previously used in \cite{HN1, jo1}.  To avoid cumbersome statements in this section, all propositions and lemmata are stated under the assumption that nondegenerate dynamically separated contact forms exist.  They hold for $L$-nondegenerate dynamically separated contact forms provided that the Reeb orbits comprising the asymptotics of the moduli spaces are all of action less than $L$.

First we recall the Riemann-Hurwitz Theorem.
\begin{theorem}[Hartshorne, Corollary IV.2.4]\label{RH}
Let $\varphi:\widetilde{\dot{\Sigma}} \to \ds$ be a compact $k$-fold cover of the Riemann surface $\ds$.  Then
\[
\chi(\widetilde{\dot{\Sigma}}) =  k\chi(\ds) - \sum_{p \in\widetilde{\dot{\Sigma}}} (e(p)-1),
\]
where $e(p)-1$ is the ramification index of $\varphi$ at $p$.
\end{theorem}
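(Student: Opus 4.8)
The plan is to prove this by a triangulation argument, reducing the identity to an Euler‑characteristic bookkeeping computation. Since $\varphi$ is a nonconstant holomorphic map of (possibly punctured) Riemann surfaces, near each $p\in\widetilde{\dot{\Sigma}}$ there are holomorphic coordinates in which $\varphi$ has the local normal form $w=z^{e(p)}$ with $e(p)\geq 1$; the points with $e(p)>1$ are the \textbf{ramification points}, and their $\varphi$‑images the \textbf{branch points}. First I would record two preliminary facts. The branch locus $B\subset\ds$ is finite: ramification points are isolated by the local normal form, and since the cover is compact (equivalently, $\varphi$ extends to a branched covering of the closed Riemann surfaces underlying $\ds$ and $\widetilde{\dot{\Sigma}}$) there can only be finitely many. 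Moreover, for every $q\in\ds$ one has the fiber‑degree identity
\[
\sum_{p\in\varphi^{-1}(q)}e(p)=k,
\]
which follows from the local normal form together with properness of the degree‑$k$ map $\varphi$.

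Next I would choose a smooth triangulation $T$ of $\ds$ with the property that every branch point of $\varphi$ (and, if one wishes to argue directly on the punctured surface, every puncture) is a vertex of $T$; write $V,E,F$ for its numbers of vertices, edges, and faces, so $\chi(\ds)=V-E+F$. Over $\ds\setminus B$ the map $\varphi$ is an honest $k$‑sheeted covering, so each open edge and each open face of $T$ — being contractible and disjoint from $B$ — has exactly $k$ disjoint homeomorphic preimages; pulling $T$ back therefore produces a cell decomposition $\widetilde T$ of $\widetilde{\dot{\Sigma}}$ with $kE$ edges and $kF$ faces, each closed cell mapping homeomorphically onto its image (near a ramification point the thin sector occupied by a face of $T$ is carried homeomorphically by $z\mapsto z^{e(p)}$). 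For the vertices: a non‑branch vertex $q$ has exactly $k$ preimages, while a branch vertex $q$ has
\[
\#\varphi^{-1}(q)=\sum_{p\in\varphi^{-1}(q)}1=k-\sum_{p\in\varphi^{-1}(q)}\bigl(e(p)-1\bigr)
\]
preimages by the fiber‑degree identity. Summing over all vertices of $T$, the decomposition $\widetilde T$ has exactly $kV-\sum_{p\in\widetilde{\dot{\Sigma}}}(e(p)-1)$ vertices, the sum ranging over ramification points.

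Then the result falls out of additivity of the Euler characteristic:
\[
\chi(\widetilde{\dot{\Sigma}})=\Bigl(kV-\sum_{p}(e(p)-1)\Bigr)-kE+kF=k(V-E+F)-\sum_{p}(e(p)-1)=k\,\chi(\ds)-\sum_{p\in\widetilde{\dot{\Sigma}}}(e(p)-1).
\]
If one prefers to avoid triangulating a noncompact surface, an equivalent route is to run the computation above on the closed Riemann surfaces $\widehat\Sigma\to\Sigma$ underlying the cover and then subtract the punctures: writing $\widetilde\Gamma:=\varphi^{-1}(\Gamma)$, the fiber‑degree identity gives $\#\widetilde\Gamma=k\,\#\Gamma-\sum_{p\in\widetilde\Gamma}(e(p)-1)$, and substituting this into $\chi(\widetilde{\dot{\Sigma}})=\chi(\widehat\Sigma)-\#\widetilde\Gamma$ reproduces the stated formula with the sum over ramification points in the open surface (so that ramification occurring at punctures does not contribute). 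The only genuinely non‑formal ingredients are the local normal form $w=z^{e(p)}$ for a holomorphic map and the finiteness of the branch locus; once these are in hand the argument is entirely combinatorial, so I expect the single point requiring care to be the count of vertex preimages over branch points and the verification that closed cells of $\widetilde T$ map homeomorphically, not just the open ones.
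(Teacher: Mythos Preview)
Your proof is correct and is the standard triangulation argument for Riemann--Hurwitz. However, the paper does not actually prove this statement: it is merely recalled as a classical result, with a citation to Hartshorne, Corollary IV.2.4, and then used as a black box in the index computations of Section~\ref{quandaries}. So there is nothing to compare your argument against; you have supplied a complete and valid proof where the paper simply invokes the literature.
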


We will use $b$ to keep track of the number of branch points counted with multiplicity:
\begin{equation}\label{b}
b:=\sum_{p \in \widetilde{\dot{\Sigma}}} (e(p)-1).
\end{equation}
At unbranched points $p$ we have $e(p)-1=0$, thus for any $q \in \ds$, 
\[
\sum_{p \in \varphi^{-1}(q)}e(p)=k.
\]
The multiplicity of the Reeb orbits of the cover of an asymptotically cylindrical curve are determined by the monodromy with the local behavior of a curve near its punctures \cite{MiWh, Sief}.

\begin{remark}\em
In this section, we denote $\gamma^\ell_{+}$ to be the $\ell$-fold cover of a simple orbit $\gp$ and $\gamma^d_{-}$ the $d$-fold cover of a simple orbit $\gm$. Depending on the multiplicities of the orbits and existence of a covering map, the curve $u \in \calm (\glp;\gdm)$ may or may not be multiply covered.
\end{remark}

We obtain the following result.
\begin{proposition}\label{Hurwitztentacles}
Let $(M^3, \ker \A_\pm)$ be a closed contact manifold such that $\lambda_\pm$ are nondegenerate dynamically convex contact forms.  Let $(W,J)$ be a generic exact symplectic cobordism and $u \in \calm(\gp; \ga_0, ... \ga_s)$ be a somewhere injective curve.  Then any genus zero $k$-fold cover $\widetilde{\calc}$ of $\calc$  with 1 positive puncture must have  $1+ks+b$ negative punctures and satisfies
\begin{equation}\label{RHtentacleseqn}
{\mbox{\em ind}}(\widetilde{\calc}) \geq  k \cdot {\mbox{\em ind}}(\calc) -2k + 2b +2 .
\end{equation}
\end{proposition}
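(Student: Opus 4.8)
The plan is to extract the combinatorics of the branched cover from the Riemann--Hurwitz theorem (Theorem \ref{RH}), substitute into the index formula \eqref{indexformula}, and bound each Conley--Zehnder contribution using the almost-linear estimate of Proposition \ref{almostlinear}. All indices and relative first Chern numbers will be taken with respect to the trivialization $\Phi$ of Remark \ref{trivchoice}, for which $c_1^\Phi(\calc^*TW)=0$ and hence also $c_1^\Phi(\widetilde{\calc}^*TW)=0$ for every branched cover $\widetilde{\calc}=\calc\circ\varphi$; with this choice both relevant index formulas lose their Chern term.

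First I would record the topological input. Since $\calc$ has one positive puncture and negative punctures $\ga_0,\dots,\ga_s$, its domain satisfies $-\chi(\ds)=s$, and \eqref{indexformula} reads $\ind(\calc)=s+\czm^\Phi(\gp)-\sum_{i=0}^s\czm^\Phi(\ga_i)$. Applying Theorem \ref{RH} to the $k$-fold branched cover $\varphi\colon\widetilde{\ds}\to\ds$ underlying $\widetilde{\calc}$ gives $\chi(\widetilde{\ds})=k\chi(\ds)-b$ with $b$ as in \eqref{b}, so $-\chi(\widetilde{\ds})=ks+b$; since $\widetilde{\ds}$ has genus zero and, by hypothesis, exactly one positive puncture, it must then have $1-\chi(\widetilde{\ds})=1+ks+b$ negative punctures, which is the asserted count. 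The unique positive puncture of $\widetilde{\ds}$ has local degree $k$ over $z_+$ (the local degrees over $z_+$ sum to $k$ and there is only one preimage), so the positive asymptotic orbit of $\widetilde{\calc}$ is $\gp^k$; over the $i$-th negative puncture of $\calc$ the preimages $w_{i,1},\dots,w_{i,t_i}$ have local degrees $e_{i,1},\dots,e_{i,t_i}$ with $\sum_{j=1}^{t_i}e_{i,j}=k$, the corresponding asymptotic orbits of $\widetilde{\calc}$ are $\ga_i^{e_{i,j}}$, and $\sum_{i=0}^s t_i=1+ks+b$.

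Next I would carry out the estimate. By \eqref{indexformula} applied to $\widetilde{\calc}$,
\[
\ind(\widetilde{\calc})=ks+b+\czm^\Phi(\gp^k)-\sum_{i=0}^s\sum_{j=1}^{t_i}\czm^\Phi(\ga_i^{e_{i,j}}).
\]
Proposition \ref{almostlinear} gives $\czm^\Phi(\gp^k)\geq k\czm^\Phi(\gp)-k+1$ and $\czm^\Phi(\ga_i^{e_{i,j}})\leq e_{i,j}\czm^\Phi(\ga_i)+e_{i,j}-1$; summing the latter over $j$ and using $\sum_j e_{i,j}=k$ yields $\sum_{j}\czm^\Phi(\ga_i^{e_{i,j}})\leq k\czm^\Phi(\ga_i)+k-t_i$, and then summing over $i$ and using $\sum_i t_i=1+ks+b$ gives $\sum_{i,j}\czm^\Phi(\ga_i^{e_{i,j}})\leq k\sum_{i=0}^s\czm^\Phi(\ga_i)+k-1-b$. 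Substituting these two bounds, the part $ks+k\czm^\Phi(\gp)-k\sum_{i=0}^s\czm^\Phi(\ga_i)$ reassembles, via \eqref{indexformula} again, into $k\cdot\ind(\calc)$, while the leftover constants sum to $2b-2k+2$, which is precisely \eqref{RHtentacleseqn}.

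I expect the only real obstacle to be bookkeeping in the second step: keeping the local degrees $\{e_{i,j}\}$, the fibrewise puncture counts $t_i$, the branch-point count $b$, and the Euler characteristics mutually consistent, and in particular remembering that $b$ in \eqref{b} counts only the branch points of $\widetilde{\ds}$, so that the ramification carried by the punctures ($k-1$ over $z_+$ and $k-t_i$ over each $z_i$) is already folded into $\chi(\widetilde{\ds})$. No analytic input beyond Theorem \ref{RH} and Proposition \ref{almostlinear} is required; the dynamically convex hypothesis on $\lambda_\pm$ enters only through the standing assumptions of the section, which guarantee that $\calm(\gp;\ga_0,\dots,\ga_s)$ and the trivialization of Remark \ref{trivchoice} are available.
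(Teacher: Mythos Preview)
Your proof is correct and follows essentially the same approach as the paper's own proof: extract the negative puncture count from Riemann--Hurwitz, apply the index formula \eqref{indexformula} with the trivialization of Remark \ref{trivchoice}, and bound the Conley--Zehnder terms via Proposition \ref{almostlinear}. Your bookkeeping with the local degrees $e_{i,j}$ and fibrewise counts $t_i$ is slightly more explicit than the paper's, but the computation and the final assembly into $k\cdot\ind(\calc)-2k+2b+2$ are identical.
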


\begin{proof}
Recall that the index of the underlying curve $\calc$ is given by
\begin{equation}\label{underlying}
\begin{array}{lcl}
{\ind}(\calc) &=& s + \czm(\gp) - \displaystyle \sum_{i=0}^s\czm(\ga_i), \\
\end{array}
\end{equation}
and that Lemma \ref{almostlinear}  yields
\begin{equation}\label{iterateineq}
k \czm(\ga) - k +1 \leq \czm(\ga^k) \leq k \czm(\ga) + k -1.
\end{equation}

From the Riemann-Hurwitz Theorem if $\widetilde{u}$ has 1 positive puncture then it must have $1+ks+b$ negative punctures.   

Let $\delta_0,...,\delta_{ks+b}$ denote the Reeb orbits at which $\calc$ has negative ends; these are covers of $\ga_0,...,\ga_s$. Moreover,
\begin{equation}\label{waytogo}
\sum_{i=0}^{ks+b}\czm(\delta_i) \leq k \sum_{i=0}^s \czm(\ga_i) + (k(s+1)-(ks+b+1))
 \end{equation}
Then (\ref{iterateineq}) and (\ref{waytogo}) yield
\[
\begin{array}{lcl}
{\ind}(\widetilde{\calc}) &=& ks+b+ \czm(\ga_+^k) - \displaystyle \sum_{i=0}^{ks+b}\czm(\delta_i) \\
& \geq & ks  + b + (\displaystyle k\czm(\gp) -k +1) - k \sum_{i= 0}^s \czm(\ga_i) - k +b +1  \\
& = & k\left(  s + \czm(\gp) -\displaystyle \sum_{i=0}^s\czm(\ga_i)\right) -2k + 2b + 2 \\
&=& k \cdot {\ind}(\calc) -2k + 2b +2. \\
\end{array}
\]
\end{proof}

In some cases $ k \cdot {\ind}(\calc) -2k + 2b +2  \leq 0$, but this is not problematic because we will cap off $ks+b$ ends, each of which have index $\geq 2$; precise arguments appear in a subsequent series of lemmata. 

 However, we need to improve improve the preceding result when the underlying curve is a cylinder as well as when it is a pair of pants of index -1 in a one parameter family of exact symplectic cobordisms.  The following results for covers of cylinders in symplectizations are proven in \cite[Prop. 4.11, 4.12]{jo1}.
\begin{proposition}[covers of cylinders in a symplectization]\label{trivcyl}
Let $(M^3,\A)$ be a nondegenerate closed contact manifold and $\Jt$ a generic $\A$-compatible almost complex structure on $\R \times M$.  Any genus zero branched $k$-fold cover $\widetilde{\calc}$ of a nontrivial cylinder $\calc$  with 1 positive puncture must be an element of $\calm(\ga_+^k;\ga_-^{k_1},...\ga_-^{k_n})$ where $k:=k_1+...+k_n$.  Moreover,
\begin{itemize}
\item[\em (i)]if ${\mbox{\em ind}}(\calc) \geq 2$ then ${\mbox{\em ind}}(\widetilde{\calc}) \geq 2n$;
\item[\em (ii)] if ${\mbox{\em ind}}(\calc) = 1$ with $\gp$ hyperbolic then ${\mbox{\em ind}}(\widetilde{\calc}) \geq 2n-1$;
\item[\em (iii)] if ${\mbox{\em ind}}(\calc) = 1$ with $\gm$ hyperbolic then ${\mbox{\em ind}}(\widetilde{\calc}) \geq n$.
\end{itemize}

Any genus zero  $k$-fold cover $\widetilde{\calc}_0$ of of a trival cylinder $\calc_0$  with 1 positive puncture must either be an element of $\calm(\ga^k;\ga^k)$ or $\calm(\ga^k;\ga^{k_1},...\ga^{k_n})$ where $k:=k_1+...+k_n$.  In the former case when we have 
\[
{\mbox{\em ind}}(\widetilde{\calc}_0)=0 .
\]
In the latter case when $\widetilde{\calc}_0 \in \calm(\ga^k;\ga^{k_1},...\ga^{k_n})$ we have
\begin{equation}\label{RHtrivial}
{\mbox{\em ind}}(\widetilde{\calc}_0) \geq 0.
\end{equation}
\end{proposition}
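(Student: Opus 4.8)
The plan is to first read off the combinatorial shape of a genus-zero $k$-fold cover $\widetilde{\calc}=\calc\circ\varphi$ with one positive puncture, and then to bound its Fredholm index via the index formula \eqref{indexformula} together with the almost-linear growth of the Conley--Zehnder index (Proposition \ref{almostlinear}). For the shape: $\varphi$ is a holomorphic branched cover of the closed sphere underlying the cylinder $\calc$, taking positive punctures to the positive puncture and negative punctures to the negative one. Since $\widetilde\calc$ has exactly one positive puncture, $\varphi$ is totally ramified of order $k$ there, so that puncture is asymptotic to $\ga_+^k$; over the negative puncture of $\calc$ the preimage is $n$ points with ramification indices $k_1,\dots,k_n$ summing to $k$, giving $n$ negative punctures of $\widetilde\calc$ asymptotic to $\ga_-^{k_1},\dots,\ga_-^{k_n}$. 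Hence $\widetilde\calc\in\calm(\ga_+^k;\ga_-^{k_1},\dots,\ga_-^{k_n})$, and Theorem \ref{RH} applied to the punctured domain together with $\chi(\ds)=0$ shows the interior branching is $n-1$. The trivial-cylinder case has the same description, and the stated dichotomy is simply $n=1$ (so $\widetilde\calc_0\in\calm(\ga^k;\ga^k)$) versus $n\ge 2$.

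Next I would fix a trivialization $\Phi$ as in Remark \ref{trivchoice}, so $c_1^\Phi$ vanishes on $\calc$ and on all of its covers, and apply \eqref{indexformula}:
\[
\ind(\widetilde\calc)=(n-1)+\czm^\Phi(\ga_+^k)-\sum_{i=1}^n\czm^\Phi(\ga_-^{k_i}),\qquad
\ind(\calc)=\czm^\Phi(\ga_+)-\czm^\Phi(\ga_-).
\]
Inserting $\czm^\Phi(\ga_+^k)\ge k\czm^\Phi(\ga_+)-k+1$ and $\czm^\Phi(\ga_-^{k_i})\le k_i\czm^\Phi(\ga_-)+k_i-1$ from Proposition \ref{almostlinear}, and using $\sum_i k_i=k$, $\sum_i 1=n$, yields the master inequality $\ind(\widetilde\calc)\ge k\cdot\ind(\calc)-2k+2n$. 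This already gives (i): if $\ind(\calc)\ge 2$ then $\ind(\widetilde\calc)\ge 2n$. For $\ind(\calc)=1$ the generic bound $2n-k$ is too weak, so I would sharpen one of the two estimates to an equality using hyperbolicity. If $\ga_+$ is hyperbolic, $\czm^\Phi(\ga_+^k)=k\czm^\Phi(\ga_+)$ exactly (for any $\Phi$), which improves the bound to $\ind(\widetilde\calc)\ge 2n-1-k+k\cdot\ind(\calc)=2n-1$, proving (ii); if $\ga_-$ is hyperbolic, $\czm^\Phi(\ga_-^{k_i})=k_i\czm^\Phi(\ga_-)$ gives $\ind(\widetilde\calc)\ge n-k+k\cdot\ind(\calc)=n$, proving (iii). (Since $\ind(\calc)=1$ forces exactly one of $\ga_\pm$ to have even Conley--Zehnder parity, hence to be positive hyperbolic, one of (ii), (iii) always applies.)

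For the trivial cylinder, the case $n=1$ is immediate: $\ind(\widetilde\calc_0)=\czm^\Phi(\ga^k)-\czm^\Phi(\ga^k)=0$. The case $n\ge 2$ is the step I expect to be the main obstacle, since the almost-linear bound alone only gives $\ind(\widetilde\calc_0)\ge 2n-2k\le 0$. The remedy is to exploit that now every asymptotic orbit is an iterate of the \emph{same} orbit $\ga$, and to choose $\Phi$ adapted to $\ga$: over a trivial cylinder $u_0^*T(\R\times M)\cong\C\oplus\pi^*(\ga^*\xi)$, so the relative first Chern number vanishes for any such trivialization and \eqref{indexformula} still applies. If $\ga$ is hyperbolic, $\czm^\Phi(\ga^\ell)=\ell\,\czm^\Phi(\ga)$ is exactly additive, so $\czm^\Phi(\ga^k)-\sum_i\czm^\Phi(\ga^{k_i})=0$ and $\ind(\widetilde\calc_0)=n-1\ge 0$. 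If $\ga$ is elliptic with rotation number $\vartheta$ and $\Phi$ is the rotation trivialization, then $\czm^\Phi(\ga^\ell)=2\lfloor\ell\vartheta\rfloor+1$, and the superadditivity $\lfloor k\vartheta\rfloor\ge\sum_i\lfloor k_i\vartheta\rfloor$ gives $\czm^\Phi(\ga^k)-\sum_i\czm^\Phi(\ga^{k_i})\ge 1-n$, whence $\ind(\widetilde\calc_0)\ge (n-1)+(1-n)=0$. Collecting these cases completes the argument.
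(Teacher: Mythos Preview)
Your proof is correct. The paper does not give its own proof of this proposition but cites \cite[Prop.~4.11, 4.12]{jo1} instead; your argument---Riemann--Hurwitz to determine the shape of the cover, the index formula \eqref{indexformula} combined with the almost-linear Conley--Zehnder bounds of Proposition~\ref{almostlinear} to obtain the master inequality $\ind(\widetilde{\calc})\ge k\cdot\ind(\calc)-2k+2n$, the sharpening via the exact linearity $\czm(\ga^\ell)=\ell\,\czm(\ga)$ at hyperbolic ends for (ii) and (iii), and the elliptic/hyperbolic dichotomy using floor-superadditivity for the trivial-cylinder case---is exactly the expected route and presumably matches what is in \cite{jo1}. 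One small remark: your parenthetical that $\ind(\calc)=1$ forces one end to be positive hyperbolic is correct but not needed, since (ii) and (iii) are stated as conditional implications rather than an exhaustive case split.
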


The next result we need is concerns covers of cylinders in exact symplectic cobordisms between nondegenerate dynamically separated pairs.  Since we are only concerned with those covers which could appear in a building, the positive end and one of the negative ends must be in the same free homotopy class while the remaining negative ends  must be  contractible.  We denote the free homotopy class of a Reeb orbit $\gamma$ by $[\gamma]$.

\begin{proposition}\label{coveredcyl}
Let $(W,J)$ be a generic exact symplectic cobordism between nondegenerate dynamically separated contact forms on a closed contact 3-manifold.  Let $\calc \in \calm(\ga_+; \ga_-)$ be a somewhere injective.  Then any genus zero  $k$-fold cover with $n-1$ contractible ends $\widetilde{\calc} \in \calm(\ga_+^k;\ga^{k_1}_-,...\ga^{k_n}_-)$ satisfies $k:=k_1+...+k_n$.  If $[\ga_+^k]=[\ga_-^{k_1}]$ and $0=[\ga_-^{k_2}]=...=[\ga_-^{k_n}]$ then 
\begin{equation}\label{RHtrivial}
{\mbox{\em ind}}(\widetilde{\calc}) \geq  n+1 - \sum_{i=2}^n \czm(\gamma_-^{k_i}).
\end{equation}
\end{proposition}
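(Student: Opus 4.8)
The plan is to follow the pattern of Proposition~\ref{Hurwitztentacles} and \cite[Prop.~4.11, 4.12]{jo1}: read off the combinatorics from Riemann--Hurwitz, write the Fredholm index of $\widetilde{\calc}$ in a convenient trivialization, and reduce the whole statement to a single Conley--Zehnder estimate which the dynamically separated conditions are built to supply.

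For the combinatorial part, since $\calc$ is a cylinder $\chi(\dot{\Sigma})=0$, so a genus zero $k$-fold cover $\widetilde{\calc}$ with one positive puncture and $n$ negative punctures has $\chi(\widetilde{\dot{\Sigma}})=1-n$, and Theorem~\ref{RH} (with $b$ as in \eqref{b}) gives $b=n-1$, while the ramification indices over the negative puncture of $\calc$ sum to $k$, so $k=k_1+\dots+k_n$. Throughout I take $n\ge 2$, i.e.\ at least one genuine branch point and hence at least one extra contractible end; the unbranched case $n=1$ is set aside (see Propositions~\ref{cylcoverssep} and \ref{cylcoverssep-par}). Since $g(\widetilde{\calc})=0$ and $c_1(\ker\lambda_\pm)=0$, Remark~\ref{trivchoice} supplies a trivialization $\Phi$ with $c_1^{\Phi}(\widetilde{\calc}^{*}TW)=0=c_1^{\Phi}(\calc^{*}TW)$, which we take to be induced by the global complex volume form so that $\czm^{\Phi}=\czm$. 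Then \eqref{indexformula} reads
\[
\ind(\widetilde{\calc})=(n-1)+\czm(\ga_+^k)-\czm(\ga_-^{k_1})-\sum_{i=2}^{n}\czm(\ga_-^{k_i}),
\]
so the asserted bound is equivalent to the single inequality $\czm(\ga_+^k)-\czm(\ga_-^{k_1})\ge 2$.

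To establish this I would combine three facts. First, since $\calc$ is somewhere injective and $J$ is generic, Theorems~\ref{si-thm2} and \ref{folk0} make $\calc$ regular, and as it lies in a nonempty moduli space $\ind(\calc)=\czm(\ga_+)-\czm(\ga_-)\ge 0$; also $[\ga_+]=[\ga_-]$, since $\calc$ is a cylinder. Second, writing $\ga_+=\alpha^a$, $\ga_-=\beta^d$ for simple orbits $\alpha,\beta$, both $\ga_+^{k_1}=\alpha^{ak_1}$ and $\ga_+^{k}=\alpha^{ak}$ lie in the class $\baar:=[\ga_+^k]$, because $[\ga_+^{k_1}]=k_1[\ga_+]=k_1[\ga_-]=[\ga_-^{k_1}]=\baar$, and $\baar$ — being the $k$-th power of $[\ga_+]$ with $k\ge 2$ — is either contractible or non-primitive, hence in the scope of Definition~\ref{taut}; since $ak_1<ak$, condition (II) gives $\czm(\ga_+^k)-\czm(\ga_+^{k_1})=4(j-j')$, where $j'<j$ index $ak_1<ak$ in the list $k_{\bullet}(\baar,\alpha)$, and $j-j'$ is bounded below using $k-k_1=\sum_{i\ge 2}k_i$ together with the fact that each $k_i$, $i\ge2$, is a multiple of the order of $[\ga_-]$ (since $\ga_-^{k_i}$ is contractible). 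Third, the residual gap $\czm(\ga_+^{k_1})-\czm(\ga_-^{k_1})$ is controlled from below by Proposition~\ref{almostlinear} together with $\ind(\calc)\ge 0$, and the base bounds (I.i)/(I.ii) pin down the first $\baar$-iterate. When $\alpha=\beta$ the conclusion is immediate: $ak>dk_1$ (using $a\ge d$, which follows from $\ind(\calc)\ge0$ and condition (II)), both of $ak,dk_1$ lie in $k_{\bullet}(\baar,\alpha)$, and (II) yields $\czm(\ga_+^k)-\czm(\ga_-^{k_1})\ge 4\ge 2$.

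The main obstacle is the case $\alpha\ne\beta$: there condition (II) governs the Conley--Zehnder indices of the $\baar$-iterates of $\alpha$ and of $\beta$ only separately, so the two simple orbits must be bridged through $\ind(\calc)\ge 0$ and Proposition~\ref{almostlinear}, while tracking how the lists $k_{\bullet}(\baar,\alpha)$ and $k_{\bullet}(\baar,\beta)$ are spaced among all iterates — a spacing dictated by the torsion in $\pi_0(\Omega M)$, as in Example~\ref{lensex}. I expect the right organization is a case split on whether $\baar$ is contractible or non-primitive, and within each on whether $k_1=1$ or $k_1\ge2$, handled uniformly with Proposition~\ref{almostlinear} and the lower bound $\czm(\ga_-^{k_i})\ge3$ for the contractible ends $i\ge2$, exactly as in the proofs of Propositions~\ref{Hurwitztentacles} and \ref{trivcyl}.
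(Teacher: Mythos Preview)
Your reduction of the statement to the single inequality $\czm(\gamma_+^k) - \czm(\gamma_-^{k_1}) \geq 2$ via the index formula \eqref{indexformula} is exactly what the paper does, and your derivation is correct. However, the paper's proof of that inequality is a two-line direct invocation of Definition~\ref{taut}, with no case analysis: since $[\gamma_+^k] = [\gamma_-^{k_1}]$ and $[\gamma_+] = [\gamma_-]$, the dynamically separated condition is asserted to force $\czm(\gamma_+^k) - \czm(\gamma_-^{k_1}) \geq -2 + 4 = 2$. The $+4$ is condition (II): from $[\gamma_+] = [\gamma_-]$ one also has $[\gamma_+^{k_1}] = [\gamma_-^{k_1}] = [\gamma_+^k]$, so $\gamma_+^{k_1}$ and $\gamma_+^k$ (and symmetrically $\gamma_-^{k_1}$ and $\gamma_-^k$) are iterates of the same underlying simple orbit lying in the same free homotopy class with $k > k_1$, forcing at least one step of $+4$. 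The $-2$ is read off from the width-$3$ windows $[2m-1,2m+1]$ in (I.i)/(I.ii). The paper never splits on $\alpha = \beta$ versus $\alpha \neq \beta$, never tracks the spacing of $k_{\bullet}(\baar,\cdot)$, and never invokes Proposition~\ref{almostlinear}.

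More seriously, your proposed tool for the residual gap does not work. Proposition~\ref{almostlinear} applied to $\gamma_+^{k_1}$ and $\gamma_-^{k_1}$ together with $\ind(u)\ge 0$ gives only
\[
\czm(\gamma_+^{k_1}) - \czm(\gamma_-^{k_1}) \;\geq\; k_1\bigl(\czm(\gamma_+)-\czm(\gamma_-)\bigr) - 2(k_1-1) \;\geq\; 2 - 2k_1,
\]
which degrades linearly in $k_1$ and is far weaker than the $-2$ you need. So the elaborate organization you sketch (contractible versus non-primitive $\baar$, $k_1=1$ versus $k_1\ge 2$, $\alpha=\beta$ versus $\alpha\ne\beta$) is not merely longer than the paper's argument---the central estimate you plan to lean on will not close the gap. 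The paper's point is precisely that Definition~\ref{taut} is engineered so that this inequality falls out structurally, without the almost-linear bounds of Proposition~\ref{almostlinear}.
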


\begin{proof}
Since $[\ga_+^k] = [\gamma_-^{k_1}]$ and $[\gamma_+]=[\gamma_-]$ the definition of dynamically separated forces 
\[
\czm(\ga_+^k) - (\gamma_-^{k_1}) \geq -2+ 4 =2,
\]
and the result follows from the Fredholm index formula.
\end{proof}

We improve the preceding result when the underlying somewhere injective curve is a pair of pants of index -1 in a cobordism.  We first treat the case when there are no branch points.

\begin{proposition}\label{unbranchedpants}
Let $(W, \mathbb{J}=\{J^\tau\}_{\tau \in [0,1]})_{\tau \in [0,1]}$ be a one parameter family  of generic exact symplectic cobordisms between nondegenerate dynamically separated pairs.   Let $\calc \in \widehat{\mathcal{M}}^{J_\tau}(\gp; \ga_0, \ga_1)$ be somewhere injective with $\mbox{\em ind}(u)=-1$ for some $\tau \in (0,1).$  Then any genus zero unbranched $k$-fold cover $\widetilde{\calc}$ of $\calc$  with 1 positive puncture must be an element of $ \widehat{\mathcal{M}}^{{J_\tau}}(\ga_+^k;\ga_0^{k},\underbrace{\ga_1,...,\ga_1}_\text{$k$ copies})$ or $ \widehat{\mathcal{M}}^{{J_\tau}}(\ga_+^k;\underbrace{\ga_0,...,\ga_0}_\text{$k$ copies},\ga_1^k)$.

\noindent In the former case,
\[
\mbox{\em ind}(\widetilde{u}) \geq -2k+1 + k \czm(\ga_0) - \czm(\ga_0^k).
\]
\end{proposition}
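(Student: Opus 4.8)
The plan is to follow the three-step pattern of Section~\ref{quandaries}: extract the combinatorial shape of $\widetilde{\calc}$ from Riemann--Hurwitz, pin down its asymptotic orbits using the dynamically separated condition together with the standing restriction to covers that can occur in a pseudoholomorphic building, and then feed the result into the index formula \eqref{indexformula} and the almost-linear growth of the Conley--Zehnder index (Proposition~\ref{almostlinear}). For the combinatorics: since $\widetilde{\calc}=\calc\circ\varphi$ for an honest holomorphic covering $\varphi$ of the thrice-punctured sphere $\dot{\Sigma}$ (no branch points, $b=0$), Theorem~\ref{RH} gives $\chi(\widetilde{\dot{\Sigma}})=k\chi(\dot{\Sigma})=-k$; as $\widetilde{\calc}$ has genus zero and one positive puncture it must have exactly $k+1$ negative punctures, the preimage of the positive puncture is a single point of local degree $k$ (so its positive asymptotic orbit is $\ga_+^k$), and if $a$ negative punctures lie over $y_0$ with covering multiplicities $p_1,\dots,p_a$ summing to $k$ and $c$ over $y_1$ with multiplicities $q_1,\dots,q_c$ summing to $k$, then $a+c=k+1$ with $1\le a,c\le k$.

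To identify the asymptotics, recall that since $\calc$ is an index~$-1$ pair of pants occurring in a building contributing to a moduli space of cylinders, its positive end and one negative end lie in a common free homotopy class while the other negative end is contractible; after relabelling---which produces the two cases of the statement---we may assume $[\ga_1]=0$ and $[\ga_+]=[\ga_0]$, so that $[\ga_+^k]=[\ga_0^k]$. Then every iterate of $\ga_1$ occurring as a negative end of $\widetilde{\calc}$ is contractible, so the unique negative end of $\widetilde{\calc}$ lying in the non-contractible class $[\ga_+^k]$ must be an iterate of $\ga_0$; Definition~\ref{taut} controls precisely which iterates of the underlying simple orbit of $\ga_0$ land in $[\ga_0^k]$ as opposed to the contractible class, and combined with $\sum_i p_i=k$ and $a+c=k+1$ this forces $a=1$ with $p_1=k$, hence $c=k$ and all $q_j=1$. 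Therefore $\widetilde{\calc}\in\widehat{\mathcal{M}}^{J_\tau}(\ga_+^k;\ga_0^k,\ga_1,\dots,\ga_1)$ with $k$ copies of $\ga_1$, and the other relabelling gives the symmetric conclusion.

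For the index bound in this case, \eqref{indexformula} with $s=k+1$ negative ends gives
\[
\ind(\widetilde{\calc})=k+\czm^\Phi(\ga_+^k)-\czm^\Phi(\ga_0^k)-k\,\czm^\Phi(\ga_1).
\]
Proposition~\ref{almostlinear} yields $\czm^\Phi(\ga_+^k)\ge k\,\czm^\Phi(\ga_+)-k+1$, and since $\ind(\calc)=-1$, formula \eqref{indexformula} for $\calc$ gives $\czm^\Phi(\ga_+)=\czm^\Phi(\ga_0)+\czm^\Phi(\ga_1)-2$. Substituting the second relation into the first, the terms involving $\czm^\Phi(\ga_1)$ cancel and one is left with
\[
\ind(\widetilde{\calc})\ \ge\ -2k+1+k\,\czm^\Phi(\ga_0)-\czm^\Phi(\ga_0^k),
\]
which is the asserted estimate.

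I expect the index step to be a routine substitution; the real content, and the main obstacle, is the identification of the asymptotics. An abstract unbranched genus-zero cover of a pair of pants with one positive puncture need not have the special shape above---for $k=3$, say, the monodromy pair $((1\,2),(2\,3))$ produces a perfectly good holomorphic cover with two negative ends over each of $y_0$ and $y_1$---so the restriction to the two listed moduli spaces genuinely uses the dynamically separated hypothesis. Making this precise requires a careful analysis of how free homotopy classes behave under iteration, via Definition~\ref{taut}, and in particular handling torsion classes (as arise for the lens spaces of Example~\ref{lensex}), where the iterates falling in a given class form an arithmetic progression rather than a single multiple; this is the step where care is needed.
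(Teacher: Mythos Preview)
Your index computation is essentially identical to the paper's: both solve $\ind(u)=-1$ for $\czm(\ga_1)$ (equivalently $\czm(\ga_+)$), substitute into the index formula for $\widetilde{u}$, and apply the lower bound $\czm(\ga_+^k)\ge k\,\czm(\ga_+)-k+1$ from Proposition~\ref{almostlinear}. There is nothing to add on that step.

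For the combinatorics of the cover, the paper takes a completely different and much shorter route than you do. It does \emph{not} invoke free homotopy classes or the dynamically separated condition at all; instead it asserts that, since the single positive puncture has local degree $k$, the covering map can be modeled as $z\mapsto z^k$ on a closed disk whose boundary represents the positive puncture, with $y_0,y_1$ as interior marked points. Placing one of $y_0,y_1$ at the origin then forces a single degree-$k$ preimage there and $k$ simple preimages over the other point, yielding exactly the two listed configurations. Your $k=3$ monodromy example $((1\,2),(2\,3))$ is a genuine unbranched genus-zero cover with one positive puncture that is \emph{not} of this shape, so the paper's disk model is in fact only capturing the cyclic covers and is incomplete for $k\ge 3$; your instinct that something beyond Riemann--Hurwitz is needed here is well founded.

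That said, your proposed fix also imports hypotheses not present in the proposition as stated. You assume $\widetilde{u}$ occurs as a level in a building with one positive and at most one negative end, and then argue that at most one negative end of $\widetilde{u}$ can be non-contractible. Even granting that extra context, your sketch only cleanly forces $a=1$ when the class $[\ga_0]$ is non-torsion and nontrivial; in the contractible case the argument gives no constraint, and in the torsion case (lens spaces) one must control which $\ga_0^{p_i}$ fall in which class, which you flag but do not resolve. So neither argument, as written, establishes the classification of asymptotics in full generality; the index estimate, which is what the subsequent lemmata primarily exploit, is on solid ground in both.
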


\begin{proof}
{As a preliminary step we first explain why being an unbranched cover forces the configuration of asymptotic limits at negative punctures as claimed.  
From Riemann-Hurwitz, we have that number of punctures of $\dot{\Sigma}$ is $2+k$. To understand the multiplicities of the negative asymptotic limits we note that these are determined by the monodromy of the local behavior of a curve near its punctures which are in turn governed by the monodromy of the covering.  }

{We can model the $k$-fold cover $\varphi : \widetilde{\dot{\Sigma}} \to \dot{\Sigma}$ in terms of the cover of a closed unit disk $ p: D^2 \to D^2$ by $z \mapsto z^k$, with interior marked points corresponding to the negative asymptotics.  The boundary of the disk will correspond to the positive asymptotic $\gamma_+$ in the target and $\gamma_+^k$ in the preimage.  If we fix the origin to become the puncture corresponding to $\gamma_0$ or $\gamma_1$ in the target then we obtain $\gamma_0^k$ or $\gamma_1^k$ in the preimage respectively.    
Any other point in the interior of $D^2 \setminus \mathbf{0}$ will have $k$ preimages, which corresponds to the $k$-copies of $\gamma_1$ or $\gamma_0$, corresponding to the respective asymptotic assignment of $\gamma_0$ or $\gamma_1$ to the origin.}


We have
\[
\mbox{ ind}(u)  = 1 + \czm(\ga_+) - \czm(\ga_0)-\czm(\ga_1) = -1,
\]
thus
\[
\czm(\ga_1)=2+\czm(\ga_+)-\czm(\ga_0).
\]
Then in the former case,
\[
\begin{array}{lcl}
\mbox{ ind}(\widetilde{u}) &=& k + \czm(\ga_+^k) -\czm(\ga_0^k) - k\czm(\ga_1) \\
 &=& - k + \czm(\ga_+^k)  - \czm(\ga_0^k) + k(\czm(\ga_0)-\czm(\ga_+)) \\
 &\geq& -k + k\czm(\gp) - k + 1-  \czm(\ga_0^k) + k(\czm(\ga_0)-\czm(\ga_+)) \\
 &=& -2k +1 + k \czm(\ga_0) - \czm(\ga_0^k).
 \end{array}
\]
\end{proof}

We refine Proposition \ref{unbranchedpants} when $k=2$ and $\czm(\gamma_0)=1$.
 
 \begin{proposition}\label{unpant1}
 Let $\calc \in  \widehat{\mathcal{M}}^{J_\tau}(\gp; \ga_0, \ga_1)$ be somewhere injective with $\mbox{ \em ind}(u)=-1$ and $\czm(\gamma_0)=1$ for some $\tau \in (0,1).$ Then either $\gamma_+$ hyperbolic or $\gamma_1$ hyperbolic.  For an unbranched cover $\widetilde{\calc} \in  \widehat{\mathcal{M}}^{{J_\tau}}(\ga_+^2;\ga_0^{2},{\ga_1,\ga_1})$, if $\gamma_+$ is hyperbolic then
\[
\mbox{\em ind}(\widetilde{u}) \geq -3
\]
otherwise if $\gamma_1$ is hyperbolic {then}
\[
{\mbox{\em ind}(\widetilde{u}) \geq -2}.
\]
\end{proposition}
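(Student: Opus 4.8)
The plan is to treat the dichotomy and the two index estimates separately, using only the index formula \eqref{indexformula} and the explicit Conley--Zehnder formulas of Section~\ref{cz-sec}. I would begin with the dichotomy: expanding \eqref{indexformula} for the pair of pants gives $\ind(u)=1+\czm(\gp)-\czm(\gamma_0)-\czm(\gamma_1)=-1$, so the hypothesis $\czm(\gamma_0)=1$ forces $\czm(\gamma_1)=1+\czm(\gp)$. Hence $\czm(\gp)$ and $\czm(\gamma_1)$ have opposite parity, and since in dimension three an orbit is positive hyperbolic precisely when its Conley--Zehnder index is even, exactly one of $\gp,\gamma_1$ is positive hyperbolic --- in particular hyperbolic.

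For the index of the cover, the configuration is as in Proposition~\ref{unbranchedpants}: $\widetilde u\in\widehat{\mathcal{M}}^{J_\tau}(\gp^2;\gamma_0^2,\gamma_1,\gamma_1)$ has a four-punctured sphere as domain, so \eqref{indexformula} gives $\ind(\widetilde u)=2+\czm(\gp^2)-\czm(\gamma_0^2)-2\czm(\gamma_1)$, and substituting $\czm(\gamma_1)=1+\czm(\gp)$ collapses this to
\[
\ind(\widetilde u)=\bigl(\czm(\gp^2)-2\czm(\gp)\bigr)-\czm(\gamma_0^2).
\]
This is exactly the estimate of Proposition~\ref{unbranchedpants} with the first-iterate inequality \eqref{czlove} for $\gp$ left unapplied, so the proposition reduces to controlling the two terms $\czm(\gp^2)-2\czm(\gp)$ and $\czm(\gamma_0^2)$. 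Proposition~\ref{almostlinear} gives $\czm(\gamma_0^2)\le 2\czm(\gamma_0)+1=3$ unconditionally, with sharper values $\czm(\gamma_0^2)=2$ when $\gamma_0$ is negative hyperbolic and $\czm(\gamma_0^2)\in\{1,3\}$ when $\gamma_0$ is elliptic (according as its rotation angle lies in $(0,\tfrac12)$ or $(\tfrac12,1)$; it cannot be positive hyperbolic since $\czm(\gamma_0)=1$ is odd).

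If $\gp$ is hyperbolic then $\czm(\gp^2)=2\czm(\gp)$, whence $\ind(\widetilde u)=-\czm(\gamma_0^2)\ge -3$, which is the first bound. If $\gp$ is elliptic --- so $\gamma_1$ is positive hyperbolic by the dichotomy --- then with rotation angle $\vartheta$ one has $\czm(\gp^2)-2\czm(\gp)=2(\lfloor 2\vartheta\rfloor-2\lfloor\vartheta\rfloor)-1\in\{-1,1\}$, and when this equals $1$ the bound $\ind(\widetilde u)\ge 1-3=-2$ is immediate. The remaining subcase, $\czm(\gp^2)-2\czm(\gp)=-1$ (i.e.\ $\vartheta$ has fractional part below $\tfrac12$), is the delicate one: here I would rule out $\czm(\gamma_0^2)>1$ using the free homotopy bookkeeping. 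Since $\czm(\gamma_0)=1<3$, $\gamma_0$ is non-contractible, so in any building in which $\widetilde u$ occurs it is $\gamma_1$ that is the contractible (cap-off) end; dynamic separation then constrains $\czm(\gamma_1)=1+\czm(\gp)$ and hence $\lfloor\vartheta\rfloor$, while condition (II) applied to the non-primitive class $2[\gamma_0]$ of $\gamma_0^2$, together with \eqref{czlove}, restricts $\czm(\gamma_0^2)$; one then checks these constraints are incompatible unless $\czm(\gamma_0^2)\le 1$, giving $\ind(\widetilde u)\ge -2$.

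I expect this last subcase to be the main obstacle --- extracting the extra unit that upgrades $\ind(\widetilde u)\ge -3$ to $\ind(\widetilde u)\ge -2$ in the elliptic case. All the rest is a mechanical substitution into \eqref{indexformula} followed by \eqref{czlove}; the real content is the numerical interplay between the rotation numbers of $\gp$ and $\gamma_0$ and the dynamically separated index progression, which must be used to show that a ``bad'' fractional part of $\vartheta$ and a ``large'' value of $\czm(\gamma_0^2)$ cannot occur simultaneously.
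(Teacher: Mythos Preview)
Your argument for the dichotomy and for the case $\gamma_+$ hyperbolic is identical to the paper's. For the case $\gamma_1$ hyperbolic (so $\gamma_+$ elliptic), you are in fact more careful than the paper: the paper's Case~2 applies Proposition~\ref{almostlinear} in the wrong direction, in effect asserting $\czm(\gamma_+^2)\ge 2\czm(\gamma_+)+1$ when only $\czm(\gamma_+^2)\ge 2\czm(\gamma_+)-1$ holds, and thereby reaches $-2$ via an invalid step. You correctly split into the two possibilities $\czm(\gamma_+^2)-2\czm(\gamma_+)=+1$ (immediate) and $=-1$ (delicate).

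However, your resolution of the $-1$ subcase is a genuine gap. The dynamically separated hypothesis does not constrain $\czm(\gamma_0^2)$ in the way you need: if $\gamma_0$ is simple and $k_1([\gamma_0^2],\gamma_0)=2$, condition~(I.ii) of Definition~\ref{taut} merely requires $\czm(\gamma_0^2)$ to be a positive integer, while condition~(II) controls the jump to the \emph{next} iterate in that class, not the starting value. Nothing excludes $\gamma_0$ elliptic with rotation angle in $(\tfrac12,1)$ (so $\czm(\gamma_0)=1$, $\czm(\gamma_0^2)=3$) while simultaneously $\gamma_+$ is elliptic with fractional rotation below $\tfrac12$ (so $\czm(\gamma_+^2)-2\czm(\gamma_+)=-1$); these are orbits of two different Reeb flows $R_{\lambda_+}$ and $R_{\lambda_-}$, and the separated conditions apply to each independently. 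In that configuration $\ind(\widetilde u)=-4$, violating the claimed bound. So neither your outline nor the paper's argument actually establishes the $\ge -2$ estimate in this subcase; the obstacle you correctly flagged is real and is not resolved by the free-homotopy bookkeeping you sketch.
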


\begin{proof}
Since $\ind(u) = -1 $ and $\czm(\ga_0)=1$ then 
\[
\czm(\gp)-\czm(\ga_1)=-1,
\]
thus one of $\gamma_+$ and $\gamma_1$ is hyperbolic.  Moreover, 
\[
\czm(\ga_0^2) \leq 2 \czm(\ga_0) + 2 -1,
\]
thus $-\czm(\ga_0^2) \geq -3.$

\noindent Case 1: If $\gp$ is hyperbolic then  $\czm(\ga_+^2) = 2\czm(\gp)$ and
\[
\begin{array}{lcl}
\ind(\widetilde{u}) &=& 2 + \czm(\ga_+^2) - \czm(\ga_0^2) - 2\czm(\ga_1) \\
&\geq & -1 + 2\czm(\gp) - 2 \czm(\ga_1) \\
&= & -3. \\
\end{array}
\]

\noindent Case 2: If $\ga_1$ is hyperbolic then $\czm(\ga_1^2) = 2\czm(\ga_1)$. In combination with Proposition \ref{almostlinear},
\[
{\begin{array}{lcl}
\ind(\widetilde{u}) &=& 2 + \czm(\ga_+^2) - \czm(\ga_0^2) - 2\czm(\ga_1) \\
&\geq & 2 + 2\czm(\ga_+) + 2 -1 - \czm(\ga_0^2) - 2\czm(\ga_1) \\
& = & 2 -2  + 2 -1 - \czm(\ga_0^2) \\
& \geq & 1 - 3. \\
\end{array}}
\]
as desired.
\end{proof}

Next we consider branched covers of pants in cobordisms.

\begin{proposition}\label{branchedpants}
  Let $\calc \in  \widehat{\mathcal{M}}^{J_\tau}(\gp; \ga_0, \ga_-)$ be somewhere injective with $\mbox{ \em ind}(u)=-1$.  Then any genus zero branched $k$-fold cover $\widetilde{\calc}$ of $\calc$  with 1 positive puncture and $b$ branch points must be an element of  $ \widehat{\mathcal{M}}^{J_\tau}(\ga_+^k;\ga_0^{k_0},,...,\gamma_0^{k_b}, \underbrace{\ga_1,...,\ga_1}_\text{$k$ copies})$ or  $ \widehat{\mathcal{M}}^{J_\tau}(\ga_+^k;\underbrace{\ga_0,...,\ga_0}_\text{$k$ copies},\ga_1^{k_0},...,\gamma_1^{k_b})$ with  $k_0+...+k_b=k$.
  
\noindent In the former case,
\[
\mbox{\em ind}(\widetilde{u}) \geq -2k + b+1 + k\czm(\ga_0) - \sum_{i=0}^b\czm(\ga_0^{k_i}).
\]

\end{proposition}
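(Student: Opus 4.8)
The plan is to mimic the proof of Proposition \ref{unbranchedpants}, now keeping careful track of the branching. First I would establish the configuration of asymptotic limits. As in the unbranched case, the monodromy of the covering $\varphi:\widetilde{\dot\Sigma}\to\dot\Sigma$ near a negative puncture governs the multiplicities of the asymptotic Reeb orbits there. Model $\varphi$ locally over each negative puncture of $\calc$ by a branched cover of a disk; since $\calc$ has exactly two negative punctures (at $\ga_0$ and $\ga_-=\ga_1$), all the branch points must sit over one of these two punctures (they cannot sit over the positive puncture or over a generic interior point without creating extra ends elsewhere, and the cover has genus zero with one positive puncture so Riemann--Hurwitz pins down the count $1+ks+b$ of negative punctures with $s=2$, i.e.\ $2k+b$ negative ends wait---recall $s$ here is the number of negative punctures of $u$ minus one, so with two negative punctures $\calc$ has the shape of a pair of pants and the cover has $k+b+1$ negative punctures). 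Concentrating all branching over the $\ga_0$-puncture gives ends of multiplicities $k_0,\dots,k_b$ with $\sum k_i=k$ over $\ga_0$ together with $k$ simple copies of $\ga_1$ over the $\ga_1$-puncture; concentrating it over the $\ga_1$-puncture gives the symmetric configuration. These are the only possibilities for a genus zero cover with a single positive puncture.

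Next I would run the index computation. From $\mathrm{ind}(u)=-1$ and the index formula \eqref{indexformula} we get $\czm(\ga_1)=2+\czm(\ga_+)-\czm(\ga_0)$, exactly as in Proposition \ref{unbranchedpants}. In the former configuration, by \eqref{indexformula} applied to $\widetilde u\in\widehat{\mathcal M}^{J_\tau}(\ga_+^k;\ga_0^{k_0},\dots,\ga_0^{k_b},\ga_1,\dots,\ga_1)$, which has $k+b+1$ negative punctures, so $-\chi(\dot{\widetilde\Sigma})=-(1-(k+b+1))=k+b$, we have
\[
\mathrm{ind}(\widetilde u)=k+b+\czm(\ga_+^k)-\sum_{i=0}^b\czm(\ga_0^{k_i})-k\czm(\ga_1).
\]
Substituting the expression for $\czm(\ga_1)$ and applying the lower bound $\czm(\ga_+^k)\ge k\czm(\ga_+)-k+1$ from Proposition \ref{almostlinear} gives
\[
\mathrm{ind}(\widetilde u)\ge k+b+k\czm(\ga_+)-k+1-\sum_{i=0}^b\czm(\ga_0^{k_i})-k(2+\czm(\ga_+)-\czm(\ga_0)),
\]
and the $\czm(\ga_+)$-terms cancel, leaving $\mathrm{ind}(\widetilde u)\ge -2k+b+1+k\czm(\ga_0)-\sum_{i=0}^b\czm(\ga_0^{k_i})$, which is the claimed inequality. (The second configuration would be handled by the symmetric computation, though only the former is stated.)

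The main obstacle I anticipate is the bookkeeping in the first step: justifying rigorously that a genus zero branched $k$-fold cover with a single positive puncture of a pair of pants must have \emph{all} its branch points clustered over a single negative puncture, with the multiplicity pattern $(k_0,\dots,k_b)$ at that puncture and $k$ simple ends at the other. This is where one uses that the covering restricted to the boundary circle (the positive end) is connected of degree $k$, that the Euler characteristic count from Riemann--Hurwitz (Theorem \ref{RH}) forces exactly $k+b+1$ negative punctures, and that branch points over a generic interior point would force the preimage to have the wrong number of ends or the wrong genus. Once that combinatorial description is in hand, the index estimate is a routine substitution using Proposition \ref{almostlinear}, exactly parallel to Propositions \ref{unbranchedpants} and \ref{unpant1}.
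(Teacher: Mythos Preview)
Your index computation is essentially identical to the paper's: you solve $\ind(u)=-1$ for $\czm(\ga_1)$, write out $\ind(\widetilde u)$ with the correct Euler characteristic contribution $k+b$, substitute, and apply the lower bound $\czm(\ga_+^k)\ge k\czm(\ga_+)-k+1$ from Proposition~\ref{almostlinear}. The paper does exactly these steps in the same order and arrives at the same inequality; in fact the paper's proof is \emph{only} this computation and does not attempt to justify the configuration claim at all.

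Where your proposal goes wrong is in the configuration argument you sketch. Your claim that ``all the branch points must sit over one of these two punctures'' and ``cannot sit over a generic interior point without creating extra ends'' is incorrect: an interior ramification point of $\varphi:\widetilde{\dot\Sigma}\to\dot\Sigma$ lying over a non-puncture of $\dot\Sigma$ is perfectly allowed and does not create any new ends --- it is simply a point where $d\varphi$ vanishes, and Riemann--Hurwitz already accounts for it via the quantity $b$. In fact the general genus-zero $k$-fold branched cover of a pair of pants with one positive puncture need not split as $(b{+}1)$ ends over one negative puncture and $k$ simple ends over the other; one can have nontrivial ramification over both negative punctures simultaneously. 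The specific form asserted in the statement should be read as a normalization rather than a consequence --- note that when the proposition is applied later (e.g.\ Case~(2) of Lemma~\ref{lemma2htpy}) the text explicitly writes ``Without loss of generality, denote $u_1\in\dots$''. So you were right to flag this step as the main obstacle, but the resolution is not a combinatorial lemma forcing the shape; it is that only the index inequality is actually used downstream, and that inequality (or its symmetric counterpart) holds for the relevant configurations once one labels the ends appropriately.
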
 

\begin{proof}
Since $\ind(u)=-1$, 
\[
-\czm(\ga_1) = -2 + \czm(\ga_0) - \czm(\gp), 
\]
and
\[
\begin{array}{lcl}
\ind(\widetilde{u}) &=& k+ b + \czm(\ga_+^k) - d\czm(\ga_1) - \sum_{i=0}^b \czm(\ga_0^{k_i}) \\
&=& -k+b + \czm(\ga_+^k)  +k\czm(\ga_0) - k\czm(\gp) - \sum_{i=0}^b \czm(\ga_0^{k_i}) \\
&\geq& -k + b + k\czm(\gp) - k +1 + k\czm(\ga_0) - k\czm(\gp) - \sum_{i=0}^b \czm(\ga_0^{k_i})  \\
&=& -2k + b+1 + k\czm(\ga_0) - \sum_{i=0}^b \czm(\ga_0^{k_i}). \\
\end{array}
\]

\end{proof}

\begin{remark}\label{rembp} \em
If $\widetilde{u}$ is a branched 2-fold cover of $\calc \in  \widehat{\mathcal{M}}^{{J}}(\gp; \ga_0, \ga_1)$ then $\widetilde{u} \in  \widehat{\mathcal{M}}^{{J}}(\gamma_+^2; \gamma_0,\gamma_0, \gamma_1,\gamma_1)$.
\end{remark}

The proof of Theorem \ref{chainthm} relies on the following series of inductive lemmata utilizing the above numerics. These results will allow us to exclude complicated compactifications as in Figure \ref{egads}.  Before proceeding, we recall the definition of a pseudoholomorphic building from \cite{BEHWZ}, which we adapt to our setting in which all curves and their limits are non-nodal and unmarked.

 \begin{figure}[h!]
  \centering
    \includegraphics[scale=2]{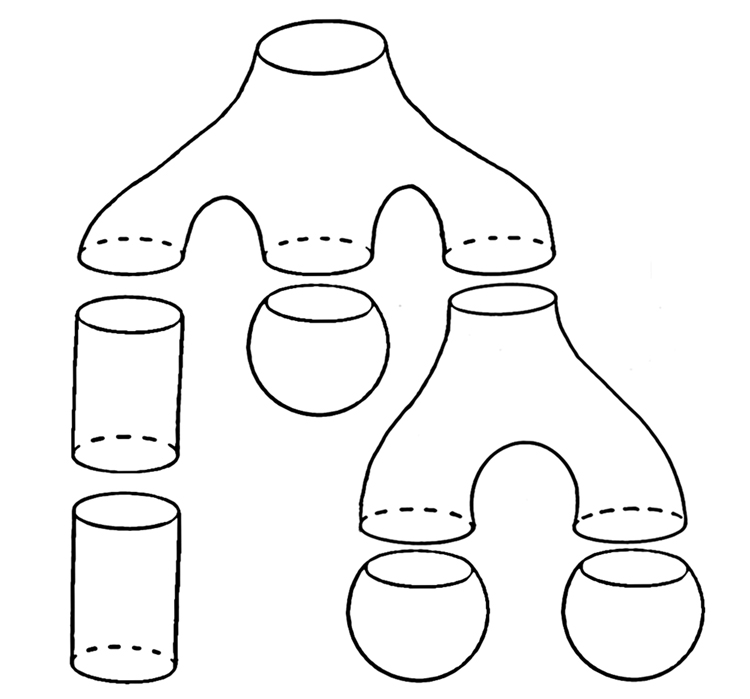}
\caption{A catastrophe of compactness best avoided.}
\label{egads}
\end{figure}

\begin{definition}\label{building} \em
Any asymptotically cylindrical curve $u_i=[\Sigma_i, j_i, \g_i:=\g^+_i \sqcup \g^+_i, u_i]$, with $\Sigma_i$ possibly disconnected, is said to be a \textbf{height-1 non-nodal building}, or height-1 building for short.  
Assuming there are bijections $\Psi_i: \to \g_i^- \to \g_{i+1}^+$ between the negative punctures of one curve and the positive punctures of the curve next in the sequence, a \textbf{height-$k$ non-nodal building} consists of a sequence $(u_1,...u_m)$ of $m$ height-1, non-nodal buildings and $(\Psi_1,...\Psi_{m-1})$, provided the punctures identified via $\Psi_i$ have the same asymptotic limit. 
\end{definition}

\begin{remark}\label{nontrivbuilding}\em
Throughout the following lemmata we assume that each level $u_i$ of the building $(u_1,..,u_m)$  contains at least one nontrivial component, i.e. a component which is neither a trivial cylinder nor a constant map.   
\end{remark}

These proofs are done via induction on the number of levels of $\calb:=(u_1,...,u_m)$, where the $u_i$ are levels of $\calb$ in decreasing order, e.g. $u_1$ is the top level.  For any $\calb$ with one positive end asymptotic to the Reeb orbit $\gp$ and no negative ends, 
\begin{equation}\label{helpfuless1}
\ind(\calb) = \czm(\gp) -1.
\end{equation}

\begin{remark}\label{nontrivbuilding}\em
Throughout the following lemmata in Sections \ref{obs-chain}, \ref{obs-htpy} we assume that each level $u_i$ of the building $(u_1,..,u_n)$  contains at least one nontrivial component, i.e. a component which is neither a trivial cylinder nor a constant map.   
\end{remark}

\subsubsection{Excluding obstructions to the chain map}\label{obs-chain}

Unless otherwise specified, we assume we are working in a generic exact symplectic cobordism of contactomorphic nondegenerate dynamically separated pairs.  This means that $W$ is an exact symplectic cobordism between dynamically separated contact forms and $J$ is a generic cobordism compatible almost complex structure. We first recall the following result from \cite{jo1}.  

\begin{lemma}[Lemma 4.15 \cite{jo1}]\label{lemma0}
Let $\calb:=(u_1,...u_n)$ be a genus 0 building with one positive contractible end, and no negative ends in a symplectization.  Then ${\mbox{\em ind}}(\calb) \geq 2,$
with equality if and only if $\calb$ consists only of a pseudoholomorphic plane.
\end{lemma}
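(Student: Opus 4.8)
The argument rests on the index identity $\ind(\calb)=\czm(\gp)-1$ from \eqref{helpfuless1}, which holds because $\calb$ has genus zero, one positive end at $\gp$, and no negative ends (the relative first Chern number vanishes, since $\gp$ is contractible and $\xi$ may be trivialized over a capping disk). A dynamically separated form satisfies the dynamical convexity bound $\czm(\gamma)\geq 3$ on every contractible Reeb orbit $\gamma$ (conditions (I.i) and (II) of Definition \ref{taut}), so $\ind(\calb)=\czm(\gp)-1\geq 2$ at once. A single plane asymptotic to $\gp$ has index $\czm(\gp)-1$, so it realizes equality precisely when $\czm(\gp)=3$; hence it remains to prove that $\ind(\calb)=2$, equivalently $\czm(\gp)=3$, forces $\calb$ to consist of a single (necessarily somewhere injective) plane.

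Assuming $\czm(\gp)=3$, I would induct on the number $n$ of levels of $\calb$. For $n=1$, the curve $u_1$ is connected --- any further component would carry only negative punctures, which is impossible in a symplectization, and closed components are constant and thus excluded --- so $\calb=u_1$ is a plane; it must be somewhere injective, because a $k$-fold cover ($k\geq 2$) of a plane bounding a contractible orbit $\alpha$ is asymptotic to $\alpha^k$, and condition (II) of Definition \ref{taut} together with $\czm(\alpha)\geq 3$ give $\czm(\alpha^k)\geq\czm(\alpha)+4\geq 7\neq 3$. For $n\geq 2$, a genus-zero building is a tree of curves, so the top level $u_1$ is connected with positive end $\gp$ and negative ends at orbits $\ga_1,\dots,\ga_s$ with $s\geq 1$, and each $\ga_i$ is capped by its own disjoint sub-building $\calb_i$ with one positive end at $\ga_i$, no negative ends, and fewer than $n$ levels; in particular $\ga_i$ is contractible. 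By the inductive hypothesis $\ind(\calb_i)=\czm(\ga_i)-1\geq 2$, hence
\[
\ind(u_1)=\ind(\calb)-\sum_{i=1}^{s}\ind(\calb_i)=2-\sum_{i=1}^{s}\bigl(\czm(\ga_i)-1\bigr)\leq 2-2s\leq 0.
\]

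The heart of the proof is to contradict $\ind(u_1)\leq 0$, using that $u_1$ carries a nontrivial component (Remark \ref{nontrivbuilding}) and is therefore itself nontrivial. If $u_1$ is somewhere injective, then for generic $J$ a neighbourhood of $u_1$ is a manifold of dimension $\ind(u_1)$ (Theorems \ref{si-thm1}, \ref{folk0}) on which $\R$ acts freely by translation --- a nontrivial curve in a symplectization is never $\R$-invariant --- so $\ind(u_1)\geq 1$, a contradiction. If instead $u_1=\widetilde v$ is a $k$-fold branched cover ($k\geq 2$, with $b$ branch points) of a somewhere injective curve $v$ with one positive puncture, there are three sub-cases. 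If $v$ is a plane, the base-case computation again gives $\czm(\gp)\geq 7$. If $v$ is a nontrivial cylinder, then $\ind(v)\geq 1$ as above, and Proposition \ref{trivcyl} bounds $\ind(u_1)$ below by $2\ell$, $2\ell-1$, or $\ell$, where $\ell\geq 1$ is the number of negative ends of $u_1$ --- when $\ind(v)=1$ the two ends of $v$ have Conley-Zehnder indices of opposite parity, so one is positive hyperbolic, which places us in item (ii) or (iii) of that proposition --- and each of these contradicts $\ind(u_1)\leq 2-2\ell$. If $v$ has $t+1\geq 2$ negative punctures, then $\ind(v)\geq 1$, the curve $u_1$ has $1+kt+b$ negative ends, all contractible, so $\ind(u_1)\leq 2-2(1+kt+b)$, whereas Proposition \ref{Hurwitztentacles} gives $\ind(u_1)\geq k\,\ind(v)-2k+2b+2\geq -k+2b+2$; together these force $k(2t-1)\leq -4b-2<0$ with $t\geq 1$, which is absurd. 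Hence $n\geq 2$ cannot occur, and $\calb$ is a single somewhere injective plane.

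The step I expect to be the main obstacle is the multiply covered top level: one must verify that the negative asymptotics of $\widetilde v$ are all contractible (so that their Conley-Zehnder indices are at least $3$) and must feed the correct configuration of ends into the covering-index estimates of Section \ref{quandaries}, invoking the sharp statements for planes and for covers of cylinders (Proposition \ref{trivcyl}) rather than only the coarse bound of Proposition \ref{Hurwitztentacles}. By contrast the somewhere injective case is immediate once the free $\R$-action is invoked, and the reduction to a bound on $\ind(u_1)$ is routine bookkeeping with the tree structure and the index identity.
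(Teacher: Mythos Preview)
Your argument is correct in structure and follows the same inductive pattern the paper employs for the cobordism analogue, Lemma~\ref{lemma1}; note that the present lemma is merely recalled from \cite{jo1} and not reproved in this paper. The direct observation $\ind(\calb)=\czm(\gp)-1\geq 2$ handles the inequality without any induction, and your reduction of the equality case to the bound $\ind(u_1)\leq 2-2s$ followed by a contradiction via the covering estimates of Section~\ref{quandaries} is exactly the right approach and mirrors the paper's treatment of Lemma~\ref{lemma1}.

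There is one small omission in your case analysis of the multiply covered top level. You treat $v$ a plane, $v$ a \emph{nontrivial} cylinder, and $v$ with at least two negative ends, but you do not address the possibility that $v$ is a trivial cylinder over a simple orbit. Such a $v$ is somewhere injective with $\ind(v)=0$, and a genus-zero branched $k$-fold cover $u_1$ of it with one positive puncture and $\ell\geq 2$ negative punctures is a legitimate nontrivial component: Remark~\ref{nontrivbuilding} only excludes $u_1$ itself being a trivial cylinder, not its underlying somewhere injective curve. In this case your bound $\ind(v)\geq 1$ fails, and Proposition~\ref{Hurwitztentacles} alone gives only $\ind(u_1)\geq -2k+2b+2$, which is too weak to contradict $\ind(u_1)\leq 2-2\ell$. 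The fix is immediate: the final clause of Proposition~\ref{trivcyl} gives $\ind(u_1)\geq 0$ for any genus-zero cover of a trivial cylinder, which contradicts $\ind(u_1)\leq 2-2\ell\leq -2$. With this case inserted, your proof is complete.
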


Next, we prove the following result.

\begin{proposition}\label{0compact}
Let $(W,J)$ be a generic exact dynamically separated symplectic cobordism.  If $\czm(\gp) = \czm(\gm)$ then the space $\calm(\gp, \gm)$ is a compact 0-dimensional manifold.
\end{proposition}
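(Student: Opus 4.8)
The plan is to show separately that $\calm(\gp,\gm)$ is a $0$-dimensional manifold and that it is compact. For the first point, note that $\calm(\gp,\gm)=\emptyset$ unless $[\gp]=[\gm]$, and that since $c_1(\xi)=0$ the global trivialization $\Phi$ satisfies $c_1^\Phi(u^*\xi)=0$; hence \eqref{indexformula} with $s=1$ gives $\ind(u)=\czm^\Phi(\gp)-\czm^\Phi(\gm)=0$ for every $u\in\calm(\gp,\gm)$, so the moduli space has virtual dimension $0$.

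For regularity, a somewhere injective $u$ is Fredholm regular by Theorem \ref{si-thm2} for generic $J$. If $u=v\circ\varphi$ is multiply covered, Riemann--Hurwitz (Theorem \ref{RH}) forces $v$ to be a cylinder and $\varphi$ an \emph{unbranched} degree-$k$ cover, so $\gp=\delta_+^k$, $\gm=\delta_-^k$ and $v\in\calm(\delta_+,\delta_-)$; from $\czm^\Phi(\delta_+^k)=\czm^\Phi(\delta_-^k)$ and Proposition \ref{almostlinear} one has $\ind(v)\in\{-1,0,1\}$, a parity check against $\ind(u)=0$ excludes the value $1$, and index $-1$ somewhere injective curves do not occur for generic $J$, so $\ind(v)=0$, whence $\ker(D_v)=0$. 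Then $u$ is agreeable: by automatic transversality (Proposition \ref{aut-agree}, with $b=g=0$) when $h_+(u)\le1$, and by Proposition \ref{cylcoverssep}---equivalently Proposition \ref{awesomesauce} together with the relative adjunction computation of Remark \ref{helpfuladjrem}, which gives $I_v(u)=\ind(u)=0$ and the failure of the ECH partition conditions relative to $v$---when $h_+(u)=2$ (for good $\gp,\gm$ the underlying $\delta_\pm$ are then positive hyperbolic, as required). Since $u$ is unbranched of index $0$, Remark \ref{nobranchtransverse} upgrades agreeability to regularity, so every point of $\calm(\gp,\gm)$ is regular.

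For compactness, let $u_n\in\calm(\gp,\gm)$. By SFT compactness \cite{BEHWZ} a subsequence converges to a genus-$0$ pseudoholomorphic building $\calb$ with one positive puncture at $\gp$, one negative puncture at $\gm$, and $\ind(\calb)=\ind(u_n)=0$. Because $W$ contributes exactly one cobordism level and $\calb$ has arithmetic genus $0$, $\calb$ consists of a ``spine'' of cylinder-like components running from $\gp$ to $\gm$ together with ``cap'' subbuildings --- one positive end, no negative ends --- hanging off the remaining negative punctures. Each cap $\calb_i$ satisfies $\ind(\calb_i)=\czm^\Phi(\delta_i)-1\ge2$ by \eqref{helpfuless1} and the dynamically separated condition (its positive orbit $\delta_i$ is contractible, hence of Conley--Zehnder index $\ge3$); a nontrivial symplectization spine cylinder has index $\ge1$ (Lemma \ref{index-cyl-pos}, Proposition \ref{trivcyl}); a somewhere injective cobordism component of negative index cannot occur for generic $J$; and the index of a covered cobordism component carrying extra (contractible) ends is bounded below by Propositions \ref{Hurwitztentacles} and \ref{coveredcyl}. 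Feeding these estimates into $\ind(\calb)=0$ forces $\calb$ to have no caps and no symplectization levels, i.e.\ to be a single cobordism cylinder; thus $u_n$ converges in $\calm(\gp,\gm)$, and, combined with the previous step, $\calm(\gp,\gm)$ is a compact $0$-manifold, hence a finite set.

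The crux of the argument is the index bookkeeping in the compactness step: one must use the genus-$0$ tree structure of $\calb$ and check that \emph{every} degeneration costs strictly positive total index. The delicate feature is that a covered cobordism cylinder may itself have index $0$ --- which is exactly why the hypothesis is the equality $\czm(\gp)=\czm(\gm)$ rather than a larger index gap --- so it is only the appearance of a cap (penalized via Lemma \ref{lemma0} / \eqref{helpfuless1} against the dynamically separated bound $\czm\ge3$ on contractible orbits) or of a symplectization level that produces the contradiction; this is the prototype of the inductive lemmata that follow. A minor secondary point is the $h_+(u)=2$ case in the regularity step, dispatched by observing that a good orbit which is a hyperbolic iterate must be an iterate of a positive hyperbolic simple orbit, so that Proposition \ref{cylcoverssep} indeed applies.
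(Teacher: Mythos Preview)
Your argument follows the paper's strategy closely, with two differences worth a brief comment. For regularity of multiple covers, the paper appeals directly to the dynamically separated hypothesis to conclude that an unbranched cylinder cover satisfies $\ind(\widetilde u)=\ind(v)$ (this is exactly how the proof of Proposition~\ref{cylcoverssep} opens), so the underlying somewhere injective cylinder $v$ automatically has index~$0$; you instead reach $\ind(v)=0$ via Proposition~\ref{almostlinear} plus a parity check. That step works provided $\gamma_\pm$ are good---Conley--Zehnder parity is preserved under iteration except across even covers of negative hyperbolic orbits, so with good ends $\ind(u)\equiv\ind(v)\pmod 2$ and hence $\ind(v)=0$ (this already excludes $-1$ as well, making your separate genericity argument redundant); without goodness the parity step does not exclude $\ind(v)=1$, though this is a technicality the paper's proof does not confront either and goodness is the only case needed downstream. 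For compactness, your treatment is actually more complete than the paper's here: the paper only rules out cylinder-into-cylinder breaking at this stage and defers noncylindrical degenerations to Lemmata~\ref{lemma1}--\ref{lemma2}, which then cite Proposition~\ref{0compact} in their base cases; you have effectively inlined Lemma~\ref{lemma2}, avoiding that forward reference. You omit the paper's closing remark on manifold versus orbifold structure, but since $\ker(D_u)=0$ in index~$0$ this is automatic.
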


\begin{proof}
 By Lemmata \ref{index-cyl-pos} and \ref{lem:at} all nontrivial cylinders $u \in \widehat{\M}^{J_\circ}(x,y)$ in a symplectization $(\R \times M, J_\circ)$ equipped with a generic $\lambda_\circ$-compatible almost complex structure $J_\circ$ satisfy ind$(u) \geq 1$ and are cut out transversely.  Thus for moduli spaces of nontrivial cylinders,
\[
  \left\{
  \begin{array}{ll}
\widehat{\mathcal{M}}^{J_+}(x_i, x_{i+1}) = \emptyset & \mbox{if } \czm(x_i) - \czm(x_{i+1}) \leq 0; \\
\widehat{\mathcal{M}}^{J_-}(y_i, y_{i+1}) = \emptyset & \mbox{if } \czm(y_i) - \czm(y_{i+1}) \leq 0. \\
\end{array} \right.
\]
Thus 
\[
\czm(x_i) - \czm(x_{i+1}) > 0 \mbox{ and } \czm(y_i) - \czm(y_{i+1}) > 0.
\]
The dynamically separated assumption ensures that there are no unbranched covers of cylinders $u$ with ind$(u) <0$.  Proposition \ref{cylcoverssep} and automatic transversality ensure that all index 0 cylinders are regular.  As a result, when $\czm(\gp)=\czm(\gm)$, the moduli space $\calm(\gp,\gm)$ is cut out transversely and compact.  

 To prove that $\widehat{\mathcal{M}}^J(\gp,\gm)$ is in fact a manifold near $u$, we need to further show that the order $k$ group of deck transformations of $u$ over a somewhere injective cylinder $\overline{u}$ acts trivially on $\mbox{Ker}(D_u)$. To do so, it suffices to show that every element of $\mbox{Ker}(D_u)$ is pulled back from an element of $\mbox{Ker}(D_{\overline{u}})$, thus it will be enough to show that $\mbox{ind}(u)=\mbox{ind}(\overline{u})$.  Under the dynamically separated assumption this equality holds.
 \end{proof}

\begin{lemma}\label{lemma1}
Let $\calb:=(u_1,...u_n)$ be a genus 0 building with one positive contractible end, and no negative ends in an generic exact dynamically separated symplectic cobordism.  Then 
${\mbox{\em ind}}(\calb) \geq 2,$ with equality if $\calb$ consists of one holomorphic plane or an index 0 cylinder $u\in \widehat{\M}^J(\gp,\gm)$ and a plane.
\end{lemma}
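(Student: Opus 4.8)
I would prove this by induction on the number $n$ of levels of the building $\calb=(u_1,\dots,u_n)$, closely following the symplectization case of Lemma~\ref{lemma0}; the only genuinely new ingredient is that exactly one level of $\calb$ is the non-$\R$-invariant cobordism level $W$, the levels above it (if any) lying in $\R\times M_+$ and those below in $\R\times M_-$. The inequality itself is immediate: $\calb$ has a single positive end at $\gp$ and no negative ends, so the telescoping index identity recorded above gives $\ind(\calb)=\czm(\gp)-1$, and since a dynamically separated contact form is in particular dynamically convex, $\czm(\gp)\geq 3$ and hence $\ind(\calb)\geq 2$. The substance of the statement is the equality case, so from here on I assume $\ind(\calb)=2$, i.e.\ $\czm(\gp)=3$.

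For the base case $n=1$, the single level $u_1$ is the cobordism level, and as a genus-$0$ curve with one positive end and no negative ends it is a plane in $W$. I would then rule out $u_1$ being a nontrivial cover: such a $u_1$ would be a $k$-fold cover, $k\geq 2$, of a plane asymptotic to some $\delta$ with $\delta^k=\gp$, but then the almost-linearity of the Conley--Zehnder index (Proposition~\ref{almostlinear}) together with the defining conditions (I) and (II) of a dynamically separated form force $\czm(\gp)=\czm(\delta^k)>3$, a contradiction. Thus $u_1$ is a somewhere injective plane in $W$ --- the first equality configuration.

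For the inductive step $n\geq 2$, I would first show that the top level $u_1$ must be the cobordism level. If instead $u_1\subset\R\times M_+$, then $u_1$ runs from $\gp$ to negative orbits $\delta_1,\dots,\delta_s$ with $s\geq 1$, and the levels below decompose into sub-buildings $\calb_1,\dots,\calb_s$, each with one positive end $\delta_i$ and no negative ends; in particular each $\delta_i$ is contractible, so $\ind(\calb_i)=\czm(\delta_i)-1\geq 2$. Feeding into $\ind(\calb)=\ind(u_1)+\sum_i\ind(\calb_i)$ the regularity of somewhere injective curves in a generic symplectization (Theorem~\ref{si-thm1}, so the somewhere injective curve underlying $u_1$, nontrivial by Remark~\ref{nontrivbuilding}, has index $\geq 1$) and the cover estimates of Propositions~\ref{Hurwitztentacles}, \ref{trivcyl} and~\ref{coveredcyl}, one obtains $\ind(\calb)\geq 3$ in every case, a contradiction. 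Hence $u_1\subset W$ and $u_2,\dots,u_n$ all lie in $\R\times M_-$. Now $u_1$ runs from $\gp$ to contractible orbits $\delta_1,\dots,\delta_s$ with $\ind(\calb_i)=\czm(\delta_i)-1\geq 2$, and $\ind(\calb)=2$ forces $s\leq 1$: if $s\geq 2$ then a somewhere injective $u_1$ has $\ind(u_1)\geq 0$ in a generic $W$ (Theorem~\ref{si-thm2}) while $\ind(\calb)\geq\ind(u_1)+4$; a covered $u_1$ that covers a curve with at least two negative ends forces $\ind(\calb)>2$ by Proposition~\ref{Hurwitztentacles}; and a branched cover of a cylinder is excluded because $\czm(\gp)=3$ pins the covering multiplicity to the first contractible iterate of the underlying orbit (by conditions (I) and (II), the next contractible iterate already has index $\geq 7$), leaving no room for branch points. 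So $s=1$: $u_1\in\widehat{\M}^J(\gp,\gm)$ is an index-$0$ cylinder --- possibly an unbranched cover of one, still of index $0$ under dynamical separation and well behaved by Proposition~\ref{0compact} --- whence $\czm(\gm)=3$, and $\ind(\calb_1)=2$ then forces $\calb_1$ to be a single plane by Lemma~\ref{lemma0}. This is precisely the second equality configuration.

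The hard part is the last paragraph: organizing the equality case once multiple covers are allowed. The decisive input is the rigidity of the dynamically separated hypothesis --- the uniform bound $3\leq\czm\leq 5$ on the first contractible iterate and the exact $+4$ jump under further iteration inside a free homotopy class --- which forbids $\czm(\gp)=3$ from being realized by a high-degree or branched cover, so that the combinatorics of admissible buildings collapses to the two listed shapes; without this the bare telescoping identity leaves open branched covers of the index-$0$ cylinder and of the capping planes, as well as levels carrying many negative punctures in the symplectization ends. Genericity of $J$ (regularity of somewhere injective curves in $W$ and in both symplectization ends) and the identity $\ind(\calb)=\czm(\gp)-1$ are what keep the induction self-contained.
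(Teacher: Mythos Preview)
Your proof is correct and rests on the same inductive skeleton and cover estimates as the paper's, but the organization differs in two ways worth noting. First, you dispose of the inequality $\ind(\calb)\geq 2$ at once via the telescoping identity $\ind(\calb)=\czm(\gp)-1$ and then run the induction solely on the equality case $\czm(\gp)=3$, explicitly arguing that the top level must sit in the cobordism $W$ (since a nontrivial top level in $\R\times M_+$ already forces $\ind(\calb)\geq 3$); the paper instead runs a single induction establishing $\ind(u_1)+2\ell\geq 2$ case by case, using the uniform bound $\ind(u_1)\geq 0$ for somewhere injective $u_1$ without ever distinguishing whether $u_1$ lies in $W$ or in a symplectization end. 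Second, to exclude $u_1$ being a branched cover of a cylinder, you use that $\czm(\gp)=3$ forces $\gp$ to be the \emph{first} contractible iterate of its underlying simple orbit, so that via $[\alpha_+]=[\alpha_-]$ every contractible negative end $\alpha_-^{k_i}$ must have $k_i=k$ and hence $n=1$; the paper instead invokes Proposition~\ref{coveredcyl} together with the capping sub-buildings to obtain $\ind(\calb)\geq 4$ directly. Your route exploits conditions (I) and (II) of dynamical separation more transparently and buys a sharper equality characterization; the paper's is more uniform across sub-cases but leaves the equality clause less explicit. (A minor point: you cite Proposition~\ref{coveredcyl} when ruling out $u_1\subset\R\times M_+$, but that result is stated for cobordism levels; the symplectization analogue you need there is Proposition~\ref{trivcyl}. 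Also, you are proving the ``only if'' direction of the equality clause, and that the height-one plane is somewhere injective; the statement as written demands neither, though the former is what Lemmata~\ref{lemma2} and~\ref{bldgchain} actually use.)
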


\begin{proof}
This proof will be done via induction on the number of levels of $\calb:=(u_1,...,u_n)$, where the $u_i$ are levels of $\calb$ in decreasing order, e.g. $u_1$ is the top level.  For any $\calb$ with one positive end asymptotic to the Reeb orbit $\gp$ and no negative ends, 
\begin{equation}\label{helpfuless}
\hind(\calb) = \czm(\gp) -1.
\end{equation}

If $\calb$ consists of only one level we are done since $\czm(\gp)\geq3$ by the dynamically separated hypothesis.

Suppose $n>1$ and that Lemma is true for buildings of height $n-1$.  We need to show that $\hind(\calb) > 2$. The building $(u_2,...u_n)$ is the disjoint union of $\ell$ genus 0 buildings, each having one positive end at each of the negative ends of $u_1$ and no negative ends.  By the inductive hypothesis we have
\[
\hind(\calb) \geq \hind(u_1) + 2\ell.
\]
Thus we must show that
\begin{equation}\label{lemma1ineq}
\hind(u_1) + 2\ell \geq 2.  
\end{equation}

If $u_1$ is somewhere injective then $\hind(u_1) \geq 0$.  If $u_1$ is the unbranched cover of a cylinder then $\hind(u_1) \geq 0$ and $u_1$ is regular by Proposition \ref{cylcoverssep}.  By Proposition \ref{0compact} there cannot be additional index 0 cylindrical levels.  

If $u_1$ is the $k$-fold cover with of a somewhere injective curve $\calc \in \calm(\gp;\ga_0,...\ga_s)$ with $b$ branch points\footnote{Note $b$ could be 0 since the result holds if $u_1$ doesn't have any branch points.} counted with multiplicity then Proposition \ref{Hurwitztentacles} yields
\[
\hind(u_1) \geq 2-2k + 2b,
\]
with $\ell=1+ ks +b$.   Thus $\hind(u_1) + 2\ell \geq 4+4b+2k(s-1) > 2$ for $s \geq 1$. If $s=0$ it remains to consider $u_1 \in \mhat(\ga_+^k; \ga_-^{k_1},...,\ga_-^{k_n})$ with $k_1 + ... + k_n =k$. Moreover,
\[
\hind(\calb) = \hind(u_1) + \sum_{i=1}^n |\ga_-^{k_i}|,
\]
and, in combination with Proposition \ref{coveredcyl}, we obtain
\[
 \hind(\calb) \geq n+1 - \sum_{i=2}^n \czm(\gamma_-^{k_i}) + \sum_{i=1}^n |\ga_-^{k_i}| \geq 4.
\]

\end{proof}


Building upon this theme we continue with the following lemma.
\begin{lemma}\label{lemma2}
Let $\calb:=(u_1,...u_n)$ be a  genus 0 building with one positive end and one negative end in a cobordism. Then ${\mbox{\em ind}}(\calb) \geq 0,$
with equality if and only if $\calb$ consists of only one cylinder.
\end{lemma}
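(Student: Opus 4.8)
The plan is to argue by induction on the number of levels $n$ of the building $\calb=(u_1,\dots,u_n)$, mirroring the proof of Lemma~\ref{lemma1}. For the base case $n=1$, the building is a single, possibly multiply covered, cylinder $u_1\subset W$ between Reeb orbits $\gamma_+$ and $\gamma_-$ lying in the same free homotopy class. If $u_1$ is somewhere injective it is Fredholm regular for generic $J$ by Theorem~\ref{si-thm2}, so nonemptiness of its moduli space forces $\ind(u_1)\geq 0$. If $u_1$ is multiply covered it is an unbranched cover of a somewhere injective cylinder $\overline{u}$, and the dynamically separated condition together with $[\gamma_+]=[\gamma_-]$ forces $\ind(u_1)=\ind(\overline{u})\geq 0$, exactly as in the proofs of Propositions~\ref{cylcoverssep} and~\ref{0compact}. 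In either case $\ind(\calb)\geq 0$, with $\calb$ a single cylinder, as the equality clause asserts.

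For the inductive step, suppose $n>1$ and assume both this statement and Lemma~\ref{lemma1} for buildings of smaller height. Let $u_1$ be the top level, with positive end at $\gamma_+$ and negative ends at $\delta_0,\dots,\delta_s$; any trivial-cylinder components contribute $0$ to the total index and do not affect asymptotics, so we disregard them, and recall (Remark~\ref{nontrivbuilding}) that each level retains a nontrivial component. The remaining levels split as a disjoint union $\calb_0\sqcup\calb_1\sqcup\cdots\sqcup\calb_s$, where $\calb_0$ is attached along $\delta_0$ and carries the unique negative end of $\calb$, while each $\calb_i$ with $i\geq 1$ is a genus-$0$ building with one positive end at $\delta_i$ and no negative ends. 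Each such $\calb_i$ fills $\delta_i$ by a mapped disk, so $\delta_i$ is contractible and Lemma~\ref{lemma1} applies, giving $\ind(\calb_i)\geq 2$ (indeed $\ind(\calb_i)=\czm(\delta_i)-1$ by \eqref{helpfuless1}); the inductive hypothesis gives $\ind(\calb_0)\geq 0$. Additivity of the Fredholm index then yields
\[
\ind(\calb)=\ind(u_1)+\ind(\calb_0)+\sum_{i=1}^{s}\ind(\calb_i)\ \geq\ \ind(u_1)+2s,
\]
so it suffices to bound $\ind(u_1)+2s$ from below and to control the equality case.

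I would then run the case analysis on $u_1$ as in Lemma~\ref{lemma1}. If $u_1$ is somewhere injective it is regular, so $\ind(u_1)\geq 0$ and $\ind(u_1)+2s\geq 2s\geq 0$. If $u_1$ is a multiply covered cylinder then $s=0$ and $\ind(u_1)\geq 0$ by the base-case argument. If $u_1$ is a $k$-fold cover of a somewhere injective curve with at least two negative ends, the Riemann--Hurwitz estimate of Proposition~\ref{Hurwitztentacles} together with nonnegativity of that curve's index makes $\ind(u_1)+2s$ strictly positive after a short count; and if the covered curve is a cylinder, Proposition~\ref{trivcyl} (for symplectization levels) or Proposition~\ref{coveredcyl} combined with \eqref{helpfuless1} for each cap (for the cobordism level) forces $\ind(\calb)>0$. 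In all cases $\ind(\calb)\geq 0$. For the equality clause, tracing the bounds shows that $\ind(\calb)=0$ with $n>1$ can only occur when $s=0$, $\ind(u_1)=0$, and $\ind(\calb_0)=0$; by the inductive hypothesis $\calb_0$ is then a single cylinder, so $u_1$ is an index-$0$ cylinder with a cylinder stacked below it. But a nontrivial cylinder in a symplectization has index $\geq 1$ by Lemma~\ref{lem:at}, so an index-$0$ cylinder must occupy the cobordism level; then $\calb_0$ lies in a symplectization below $u_1$ and is a nontrivial cylinder of index $\geq 1$, a contradiction. Hence $n>1$ forces $\ind(\calb)>0$, which closes the induction.

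The main obstacle is the equality clause, not the inequality: the bound $\ind(\calb)\geq 0$ follows rather formally from Lemma~\ref{lemma1} and the cover estimates of Propositions~\ref{Hurwitztentacles}, \ref{trivcyl}, and~\ref{coveredcyl}, but excluding an index-$0$ building with more than one level rests on the rigidity of index-$0$ cylinders in the cobordism, i.e. on Proposition~\ref{0compact} and on the structural fact that the cobordism is a single level of any such building. A secondary point one must verify is that the capped-off negative ends $\delta_i$ of $u_1$ really are contractible, since this is what licenses the use of Lemma~\ref{lemma1}; as noted, it follows because a genus-$0$ building with one positive puncture and no negative punctures fills its positive end by a disk.
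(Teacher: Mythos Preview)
Your proof is correct and follows essentially the same inductive structure as the paper's: peel off the top level $u_1$, apply Lemma~\ref{lemma1} to the capped sub-buildings and the inductive hypothesis to the one carrying the negative end, and run the same case analysis on $u_1$ via Propositions~\ref{Hurwitztentacles}, \ref{trivcyl}, and~\ref{coveredcyl}. Your treatment of the equality clause is a bit more explicit than the paper's terse appeal to Proposition~\ref{0compact}; the one small correction is that the bound ``a nontrivial cylinder in a symplectization has index $\geq 1$'' is Lemma~\ref{index-cyl-pos}, not Lemma~\ref{lem:at}.
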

\begin{proof}
As before the proof will be done via induction on the number of levels of the building $\calb:=(u_1,...,u_n)$, where the $u_i$ are levels of $\calb$ in decreasing order, e.g. $u_1$ is the top level.  {If $\calb$ consists of only one level then the proof is complete by Proposition \ref{0compact}. }


Suppose there is more than one level.   Call the top level $u_1$ and assume that the lemma is true for buildings of height $n-1$.  We need to show that $\hind(\calb) > 1$. The building $(u_2,...u_n)$ is the disjoint union of $\ell$ genus 0 buildings, each consisting of one positive end at each of the negative ends of $u_1$ and all but one, say $\calb_1$ having no negative ends.  This exceptional building, $\calb_1$, has one positive and one negative end.  By the inductive hypothesis we have
\[
\hind(\calb) \geq \hind(u_1) + \hind(\calb_1) + 2(\ell-1).
\]
Thus we must show that
\begin{equation}\label{lemma2ineq}
\hind(u_1) + 2(\ell-1) \geq 1.  
\end{equation}

If $u_1$ is somewhere injective then $\hind(u_1) \geq 0$.  If $u_1$ is the unbranched cover of a cylinder then $\hind(u_1) \geq 0$ and $u_1$ is regular by Proposition \ref{cylcoverssep}.  By Proposition \ref{0compact} there cannot be additional index 0 cylindrical levels.  

If $u_1$ is the cover of a somewhere injective curve $\calc \in \calm(\gp;\ga_0,...\ga_s)$,  by Proposition \ref{Hurwitztentacles}
\[
\hind(u_1) \geq 2-2k + 2b,
\]
with $\ell = 1+ks+b$. Thus for $s \geq 1$, $\hind(u_1) + 2(\ell-1)  \geq 2 + 2k(s-1) + 4b \geq 2$.  If $s=0$ then $u_1 \in \mhat(\ga_+^k; \ga_-^{k_1},...,\ga_-^{k_n})$ with $k_1 + ... + k_n =k$. Without loss of generality, 
\[
\hind(\calb) = \hind(u_1) + \sum_{i=2}^n |\ga_-^{k_i}|.
\]
In combination with Proposition \ref{coveredcyl} we obtain
\[
 \hind(\calb) \geq n+1 - \sum_{i=2}^n \czm(\gamma_-^{k_i}) + \sum_{i=2}^n |\ga_-^{k_i}| \geq 2.
\]
\end{proof}

We recall the following lemma in regards to buildings in symplectizations.

\begin{lemma}[Lemma 4.19 \cite{jo1}]\label{lemma3}
Let $\calb$ be a genus 0 building with one positive end and one negative end associated to a nondgenerate dynamically separated contact form in a symplectization $(\R \times M, J_\circ)$, with ${\mbox{\em ind}}(\calb) =2.$ Then $\calb$ is one of the following types,
\begin{enumerate}
\item[\em {(i)}] An unbroken cylinder of index 2;
\item[\em {(ii)}] A once broken cylinder given by a pair of cylinders, each of index 1, $(u,v) \in  \widehat{\mathcal{M}}^{J_\circ}(x, y) \times \widehat{\mathcal{M}}^{J_\circ}(y, z)$, where $\czm(x)-\czm(y)=1$.
\end{enumerate}

\end{lemma}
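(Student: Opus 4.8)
The plan is to induct on the number $m$ of levels of the building $\calb=(u_1,\dots,u_m)$, with $u_1$ the top level, following the template of the proofs of Lemma \ref{lemma1} and Lemma \ref{lemma2} but keeping enough control to pin down exactly which configurations realize index $2$. First I would record the combinatorics of $\calb$: since $d\lambda$ is exact there are no closed $J_\circ$-holomorphic components, and every component of a symplectization level has at least one positive end (Stokes), so the top level $u_1$ — which carries the unique positive end of $\calb$ — is connected, with one positive end and some number $\ell\ge1$ of negative ends. Genus $0$ makes the combinatorial graph of $\calb$ a tree, so $(u_2,\dots,u_m)$ splits as a disjoint union of $\ell$ genus-$0$ sub-buildings hanging off the negative ends of $u_1$; exactly one, call it $\calb_1$, inherits the negative end of $\calb$ and so has one positive and one negative end, while the remaining $\ell-1$ have one positive and no negative ends. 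Each of the latter exhibits its positive orbit as contractible, so Lemma \ref{lemma0} gives it index $\ge2$; additivity of the index under gluing then yields
\[
\ind(\calb)\;\ge\;\ind(u_1)+\ind(\calb_1)+2(\ell-1).
\]

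The heart of the argument is a sharpening of Lemma \ref{lemma2} in the symplectization setting: a genus-$0$ building with one positive and one negative end in $(\R\times M,J_\circ)$, each of whose levels contains a nontrivial component (Remark \ref{nontrivbuilding}), has index $\ge1$, with equality only if it is a single index-$1$ cylinder. The one-level case is the delicate point. Such a building is a cylinder; if it is somewhere injective and nontrivial, genericity of $J_\circ$ (Theorem \ref{si-thm1}) makes its moduli space a manifold of dimension $\ind$ carrying a free $\R$-action, forcing $\ind\ge1$; if it is a multiple cover of a nontrivial cylinder, Proposition \ref{trivcyl}(i)--(iii) gives $\ind\ge1$ in every case — the configuration "$\ind(\mathcal C)=1$ with both ends elliptic" absent from Proposition \ref{trivcyl} is excluded because the odd parity of an index-$1$ cylinder forces one end to be positive hyperbolic — and covers of trivial cylinders are ruled out by Remark \ref{nontrivbuilding}. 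For $m>1$ one feeds $\ind(u_1)\ge1$ (the same dichotomy, now with $\ell\ge2$ negative ends allowed, where Propositions \ref{Hurwitztentacles} and \ref{trivcyl} give bounds $\ge2$) and the inductive bound on $\ind(\calb_1)$ into the displayed inequality and checks that $\ind(\calb)\ge2$, so equality $\ind(\calb)=1$ can occur only when $m=1$.

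Granting this, the classification at index $2$ is quick. If $m=1$, then $\calb$ is a single cylinder, nontrivial since $\ind=2\ne0$ — case (i). If $m>1$, then combining $\ind(u_1)\ge1$ and $\ind(\calb_1)\ge1$ with $\ind(\calb)=2$ in the displayed inequality forces $\ell=1$ and $\ind(u_1)=\ind(\calb_1)=1$. The equality clause just established then forces $\calb_1$ to be a single index-$1$ cylinder $v\in\widehat{\mathcal{M}}^{J_\circ}(y,z)$, while $u_1$ — a connected genus-$0$ curve with one positive and one negative end and index $1$ — is a (possibly multiply covered) index-$1$ cylinder $u\in\widehat{\mathcal{M}}^{J_\circ}(x,y)$; automatic transversality (Lemma \ref{lem:at}(i)) confirms these are cut out transversely. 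Thus $\calb=(u,v)$ is a once-broken pair of index-$1$ cylinders with $\czm(x)-\czm(y)=\ind(u)=1$, choosing trivializations as in Remark \ref{trivchoice} so the relative first Chern term vanishes — case (ii).

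I expect the main obstacle to be the index-$\ge1$ bound for a single symplectization level: one cannot argue that such a level is somewhere injective — case (ii) genuinely permits multiply covered index-$1$ cylinders — so the bound must be extracted from the covering-index estimates of Proposition \ref{trivcyl} together with the parity observation, and one must take care to treat covers of trivial cylinders as "trivial" in the sense of Remark \ref{nontrivbuilding} rather than letting them slip in as honest index-$0$ levels. Everything else is the Riemann--Hurwitz and Conley--Zehnder bookkeeping already assembled in Section \ref{quandaries} and the regularity results of Section \ref{background}.
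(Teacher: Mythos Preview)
Your argument is sound and follows exactly the inductive template of Lemmas \ref{lemma0}--\ref{lemma2}; the paper itself does not reprove this statement but simply quotes it as \cite[Lemma 4.19]{jo1}, so there is no in-paper proof to compare against. Two small remarks: your parity detour through Proposition \ref{trivcyl} to get $\ind\ge1$ for a multiply covered nontrivial cylinder is correct but unnecessary, since Lemma \ref{index-cyl-pos}(i) gives $1\le\ind(\overline u)\le\ind(u)$ directly; and when you pass to the sub-building $\calb_1$ you should note that any levels of $\calb_1$ consisting entirely of trivial cylinders can be collapsed without changing its index (so that the inductive hypothesis, which assumes each level carries a nontrivial component, legitimately applies), or alternatively observe that a cylinder covering a cylinder is forced by Riemann--Hurwitz to be unbranched, so a one-level $\calb_1$ that is a cover of a trivial cylinder is itself trivial and hence excluded.
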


Stacking the above lemmata together we obtain the following result for buildings in exact symplectic cobordisms of nondegenerate dynamically separated pairs.
\begin{lemma}\label{bldgchain}
Let $\calb$ be a genus 0 building with one positive end and one negative end in a generic exact symplectic cobordism of nondegenerate dynamically separated pairs.  If ${\mbox{\em ind}}(\calb) =1,$ then $\calb$ is one of the following types,
\begin{enumerate}
\item[\em {(i)}] An unbroken cylinder of index 1 in the cobordism;
\item[\em {(ii)}] A once broken cylinder of one of the following forms:
\[
 (u, v) \in  \widehat{\mathcal{M}}^{J_+}(x, y) \times \widehat{\mathcal{M}}^J(y, z)\]
with $\czm(x)-\czm(y)=1, \ \czm(y)=\czm(z) $ or 
\[  
 (u,v) \in \widehat{\mathcal{M}}^J(x, y) \times \widehat{\mathcal{M}}^{J_-}(y, z)
\]
 with $\czm(y)=\czm(x), \ \czm(y)-\czm(z)=1$.
\end{enumerate}
\end{lemma}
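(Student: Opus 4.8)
Here is my plan to prove Lemma \ref{bldgchain}.

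\medskip

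The plan is to run the same induction-on-levels scheme that was used to prove Lemma \ref{lemma1} and Lemma \ref{lemma2}, but now starting from the target index value $\ind(\calb)=1$ rather than the larger values $2$ and $0$ in those lemmata. Recall that $\calb = (u_1,\dots,u_n)$ with $u_1$ the top level in the cobordism $W$. First I would observe that, as recorded in \eqref{helpfuless1}, for a building with one positive and one negative end the index is $\ind(\calb) = \czm(\gp) - \czm(\gm)$ under the trivialization for which the relative first Chern term vanishes (Remark \ref{trivchoice}). The base case is $n=1$: a single curve in $W$ of index $1$. By Lemma \ref{index-cyl-pos} together with Proposition \ref{cylcoverssep} and automatic transversality (Proposition \ref{aut-agree}), a nontrivial one-positive-one-negative-end configuration of index $1$ in the cobordism is an honest (possibly multiply covered, but then with index of the underlying curve also $1$ by the dynamically separated condition, forcing it to be regular) cylinder; no genuinely branched or higher-genus behavior is possible at index $1$ since the dynamically separated condition rules out underlying curves of index $\le -1$ via Propositions \ref{Hurwitztentacles}, \ref{coveredcyl}. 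This gives case (i).

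\medskip

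For the inductive step, suppose $n>1$ and the statement holds for buildings of height $n-1$. Split off the top level $u_1$. The remaining part $(u_2,\dots,u_n)$ decomposes as a disjoint union of $\ell$ genus $0$ buildings attached to the negative ends of $u_1$: exactly one of them, call it $\calb_1$, carries the single overall negative end (so $\calb_1$ has one positive and one negative end), and the other $\ell-1$ buildings have one positive end and no negative ends. By the inductive hypothesis $\ind(\calb_1)\ge 0$, and by Lemma \ref{lemma1} (applied in $W$ when that building lives partly in the cobordism, and Lemma \ref{lemma0} for the purely symplectization pieces) each of the remaining $\ell-1$ no-negative-end buildings has index $\ge 2$, so
\[
\ind(\calb)\ \geq\ \ind(u_1) + 0 + 2(\ell-1).
\]
Now I would case-split on the nature of $u_1$ exactly as in the proof of Lemma \ref{lemma2}: if $u_1$ is somewhere injective or an unbranched cover of a cylinder then $\ind(u_1)\ge 0$ (the unbranched-cover case using Proposition \ref{cylcoverssep} for regularity, and Proposition \ref{0compact} to prevent extra index-$0$ cylindrical levels from accumulating); if $u_1$ is a genuine $k$-fold cover of a somewhere injective curve with $s+1$ negative ends, apply Proposition \ref{Hurwitztentacles} to get $\ind(u_1)\ge 2-2k+2b$ with $\ell = 1+ks+b$, which for $s\ge 1$ already gives $\ind(\calb)\ge 2+2k(s-1)+4b\ge 2>1$; and if $s=0$ apply Proposition \ref{coveredcyl} exactly as in Lemma \ref{lemma2} to again get $\ind(\calb)\ge 2$. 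So a multilevel building with $\ind(\calb)=1$ forces $\ind(u_1)=0$ and $\ell=1$, i.e. $u_1$ is an index-$0$ cylinder, and $\calb_1$ is an index-$1$ building with one positive and one negative end. Iterating (or invoking the induction once more) forces $\calb_1$ to be either a single index-$1$ cylinder in the cobordism, or — if $u_1$ sits in the symplectization end — to break off further index-$0$ symplectization cylinders before the unique nontrivial level. Tracking which end ($+$ or $-$) the cobordism level sits on, and noting that above the cobordism we are in $\R\times M$ with $J_+$ and below in $\R\times M$ with $J_-$ where index-$0$ nontrivial cylinders do not exist (Lemma \ref{index-cyl-pos}, so only a single index-$1$ symplectization cylinder can appear and the index-$0$ level must be the cobordism one), one arrives precisely at the once-broken configurations listed in (ii), with the index bookkeeping $\czm(x)-\czm(y)=1$, $\czm(y)=\czm(z)$ in the first case and $\czm(y)=\czm(x)$, $\czm(y)-\czm(z)=1$ in the second.

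\medskip

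The main obstacle I anticipate is the careful bookkeeping in the inductive step to rule out configurations with \emph{two} nontrivial levels each of index $\ge 1$ that could in principle total $1$ only if one of them had index $0$ — this is exactly why Proposition \ref{0compact} is needed: to guarantee that a nontrivial index-$0$ cylinder in a symplectization simply cannot occur, so that the only index-$0$ level is the cobordism level and it must be an honest cylinder. A secondary subtlety is handling multiply covered top levels $u_1$: one must be sure the relevant covers are regular (via Propositions \ref{cylcoverssep}, \ref{aut-agree}) so that the moduli spaces are as expected and the index inequalities of Propositions \ref{Hurwitztentacles}, \ref{coveredcyl} are the operative constraints; the dynamically separated condition is doing all the work here, since it is what makes $\ind(\tilde u)=\ind(u)$ for the relevant covers and forbids underlying curves of negative index. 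Once these two points are in hand, the numerics are routine arithmetic of the sort already carried out in Lemmata \ref{lemma1} and \ref{lemma2}.
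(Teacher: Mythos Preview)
Your approach is correct in substance but considerably more elaborate than the paper's. The paper's proof is a single sentence: since the index of a building is additive over its levels, Lemmata \ref{lemma0}--\ref{lemma3} and Proposition \ref{0compact} together force the configurations listed in (i)--(ii). In other words, the heavy lifting has already been done in Lemmata \ref{lemma1} and \ref{lemma2}, and there is no need to re-run the induction from scratch; you simply observe that any building of total index $1$ must split (by additivity) into pieces whose indices are constrained by those earlier results, and the only way the arithmetic works out is one index-$1$ symplectization cylinder paired with one index-$0$ cobordism cylinder, or a single index-$1$ cobordism cylinder.

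Two small points where your write-up slips: first, when you write ``by the inductive hypothesis $\ind(\calb_1)\ge 0$,'' that is not the inductive hypothesis of the lemma you are proving (which concerns $\ind=1$, not a lower bound); what you actually need there is Lemma \ref{lemma2} itself. Second, your case analysis does not cleanly distinguish whether $u_1$ lives in the top symplectization end or in the cobordism level: in the former case $\ind(u_1)\ge 1$ (Lemma \ref{index-cyl-pos}), not merely $\ge 0$, so your conclusion ``forces $\ind(u_1)=0$'' is not literally correct in that branch --- rather one gets $\ind(u_1)=1$ with $\calb_1$ an index-$0$ cobordism cylinder. You recover from this at the end with the ``tracking which end'' remark, but the argument would be cleaner (and shorter) if you simply invoked the already-proven lemmata directly, as the paper does.
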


\begin{proof}
 Since the index of a building is additive with respect to its components the results of Lemmata \ref{lemma0} - \ref{lemma3} and Proposition \ref{0compact} imply that the only possible configurations for a building $\calb$ of index 1 are those described in (i)-(ii).
 \end{proof}


\subsubsection{Excluding obstructions to the chain homotopy}\label{obs-htpy}

In this section we consider parametric moduli spaces of pseudoholomorphic curves.  As before, we take $(W, \mathbb{J}=\{J_\tau\}_{\tau \in [0,1]})$ to  be a generic one parameter family (e.g. homotopy) of exact symplectic cobordisms between nondegenerate dynamically separated pairs.   As in Section \ref{jback}, the parametric moduli space is defined by
\[
\widehat{\M}^\mathbb{J}(\gp,\gm) = \{ (\tau,u) \ \arrowvert  \ \tau \in [0,1], \ u\in \widehat{\M}^{J_\tau}(\gp, \gm) \},
\]
and is of dimension
\[
\begin{array}{lcl}
\dim \widehat{\M}^\mathbb{J}(\gp,\gm) &=&  \czm(\gp) - \czm(\gm) +1 \\
&=& \mbox{ind}(u) + 1\\
\end{array}
\]
Our first result is the analogous statement to Proposition \ref{0compact} for 0-dimensional parametric moduli spaces.

\begin{proposition}\label{0compacthtpy}
 If $\mathbb{J}=\{J_\tau\}_{\tau \in [0,1]}$ is a generic one parameter family of almost complex structures associated to an exact dynamically separated symplectic cobordism $W$ then the 0-dimensional parametric moduli space $\widehat{\M}^\mathbb{J}(\gp, \gm)$ is a compact 0-dimensional manifold.
\end{proposition}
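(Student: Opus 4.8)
The plan is to mimic the proof of Proposition~\ref{0compact}, raising every index bound by one to account for the parameter $\tau$. Since $\dim\widehat{\mathcal{M}}^{\mathbb{J}}(\gp,\gm)=\ind(u)+1$, a point of a $0$-dimensional parametric moduli space is a pair $(\tau,u)$ with $\ind(u)=-1$; because the endpoints $J_0,J_1$ are generic cobordism-compatible almost complex structures, no such $u$ can occur at $\tau\in\{0,1\}$, so all elements occur at isolated interior values of $\tau$ exactly as in Proposition~\ref{cylcoverssep-par}. First I would record the two index inputs, both already available: by Lemma~\ref{index-cyl-pos} and Lemma~\ref{lem:at} every nontrivial cylinder in either symplectization end has Fredholm index $\ge 1$ and is cut out transversely; and the dynamically separated hypothesis together with $[\gp]=[\gm]$ forces $\ind(\widetilde{u})=\ind(u)$ for every unbranched cover $\widetilde{u}$ of a cylinder, exactly as in the computation in the proof of Proposition~\ref{cylcoverssep-par}. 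Consequently a generic path $\{J_\tau\}$ admits no somewhere injective cylinder, and a fortiori no unbranched cover of one, of index $\le -2$.

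Next I would establish the manifold structure. Somewhere injective index $-1$ cylinders in the family are parametrically regular by Theorem~\ref{thmparreg} (after a generic perturbation their injective point may be taken in the region where $J_\tau$ varies), and their unbranched covers are parametrically regular of index $-1$ by Proposition~\ref{cylcoverssep-par}. As in Proposition~\ref{0compact} one then checks that the deck group of a cover $u$ over a somewhere injective $\overline{u}$ acts trivially on the relevant kernel: since $\ind(u)=\ind(\overline{u})=-1$, every element of $\mbox{Ker}(D_{\widetilde{u}}^{*})$ is pulled back from $\mbox{Ker}(D_{\overline{u}}^{*})$, so there are no orbifold points. Being $0$-dimensional, the parametric moduli space is thus a discrete set of transversely cut-out points.

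Compactness would then follow from SFT compactness \cite{BEHWZ}. A sequence $(\tau_n,u_n)$ in $\widehat{\mathcal{M}}^{\mathbb{J}}(\gp,\gm)$ has, after passing to a subsequence with $\tau_n\to\tau_\infty$, a limit given by a genus-zero holomorphic building $\calb$ in $(W,J_{\tau_\infty})$ with one positive end at $\gp$, one negative end at $\gm$, and total index $-1$, each level carrying a nontrivial component (Remark~\ref{nontrivbuilding}). A cobordism building has exactly one cobordism level, and the single end at top and single end at bottom together with genus zero forces every component to be a cylinder: a pair of pants would require a capping sub-building for its extra negative end, and any such building has index $\ge 2$ by Lemma~\ref{lemma0}, so the total index would be far larger than $-1$. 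The cobordism level cannot consist solely of trivial cylinders either, for then $\calb$ would reduce to a nontrivial symplectization building from $\gp$ to $\gm$, which by the previous paragraph has index $\ge 1$. If $\calb$ were broken it would therefore consist of one cobordism cylinder of index $\ge -1$ together with at least one symplectization cylinder of index $\ge 1$, giving total index $\ge 0$, a contradiction. Hence $\calb$ is a single index $-1$ cylinder, so $(\tau_\infty,\calb)\in\widehat{\mathcal{M}}^{\mathbb{J}}(\gp,\gm)$ and the space is compact.

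The main obstacle I anticipate is the bookkeeping in the last step: carefully ruling out pair-of-pants (and more complicated) components in the limiting building, and ruling out a cobordism level all of whose components are trivial, both of which rely on the covered-curve index estimates of Propositions~\ref{Hurwitztentacles}, \ref{coveredcyl}, and \ref{unbranchedpants} and on the dynamically separated condition---the very inputs that are systematized in the lemmata of Section~\ref{obs-htpy}. Everything else is a faithful transcription of the non-parametric argument of Proposition~\ref{0compact}.
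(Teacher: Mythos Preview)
Your proposal is correct and follows the same approach as the paper: rule out cylinders of index below $-1$ via the dynamically separated condition (unbranched covers inherit the index of the underlying somewhere injective cylinder), establish parametric regularity for index $-1$ cylinders via Proposition~\ref{cylcoverssep-par}, and import the manifold argument from Proposition~\ref{0compact}. Your explicit building analysis for compactness goes beyond the paper's terse assertion; note only that a capping sub-building which passes through the cobordism level requires Lemma~\ref{lemma1} rather than Lemma~\ref{lemma0}.
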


\begin{proof}
 By Proposition \ref{0compact} we have for any generic exact symplectic cobordism $(W,J)$ between two dynamically separated contact forms that any cylinder $u \in \widehat{\M}^J(\gp,\gm)$ satisfies ind$(u) \geq 0$ and is regular.  Thus for moduli spaces of cylinders
\[
  \left\{
  \begin{array}{ll}
\widehat{\mathcal{M}}^{J_{\tau_0}}(x_i, x_{i+1}) = \emptyset & \mbox{if } \czm(x_i) - \czm(x_{i+1}) < 0; \\
\widehat{\mathcal{M}}^{J_{\tau_1}}(y_i, y_{i+1}) = \emptyset & \mbox{if } \czm(y_i) - \czm(y_{i+1}) < 0. \\
\end{array} \right.
\]
hence 
\[
\czm(x_i) - \czm(x_{i+1}) \geq 0 \mbox{ and } \czm(y_i) - \czm(y_{i+1}) \geq 0.
\]
The dynamically separated assumption allows us to conclude that there are no unbranched covers of cylinders $u$ with ind$(u) <-1$.  Proposition \ref{cylcoverssep-par} and the related results in Section \ref{reg-sec} allow us to conclude that we know that there  and that all index -1, 0, and 1 cylinders are regular.  Since the moduli spaces associated to the homotopy at $\tau=0$ and $\tau=1$ are regular there cannot be any cylinders with index $-1$, e.g. when $\czm(\gp)=\czm(\gm)-1$ for $\tau=0$ and $\tau=1$.  However, in a generic 1-parameter family, such cylinders do occur for isolated parameter values of $\tau$. 

As a result, the 0-dimensional parametric moduli space $\widehat{\M}^{\mathbb{J}}(\gp,\gm)$ is cut out transversely and compact. The manifold structure follows from the same arguments given in the proof of Proposition \ref{0compact}.
 \end{proof}

\begin{remark}\em
The counts of cylinders with index $-1$, e.g. when $\czm(\gp)=\czm(\gm)-1$ which occur for isolated parameter values of $\tau \in (0,1)$  give rise to the chain homotopy equivalence.  
\end{remark}

\begin{lemma}\label{lemma1htpy}
Let $\calb:=(u_1,...u_n)$ be a genus 0 building with 1 positive contractible end, and no negative ends in $(W, \mathbb{J})$.  Then 
\[
{\mbox{\em ind}}(\calb) = \sum_{i=1}^n  {\mbox{\em ind}}(u_i) \ {+ \ n } \geq 2,
\]
with equality if and only if there {are} isolated parameter {values} $\tau \in (0,1)$ for which there exists $u_i \in \widehat{\mathcal{M}}^{J_\tau}(\gp, \gm)$ with ${\mbox{\em ind}}(u_i)=-1$.
\end{lemma}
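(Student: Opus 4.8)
The plan is to argue by induction on the number of levels $n$ of $\calb$, following the template of the non-parametric Lemma~\ref{lemma1} but feeding in the parametric regularity results of Section~\ref{reg-sec} in place of the corresponding fixed-cobordism statements. The displayed identity is just additivity of the Fredholm index together with the Euler-characteristic normalization \eqref{helpfuless1}, now written for the one-parameter family: exactly one level of $\calb$ lies in the cobordism $W$ and varies with $\tau$, while the remaining levels lie in the symplectizations $\R\times M_\pm$ with their fixed almost complex structures. In the symplectization levels a nontrivial cylinder has Fredholm index $\ge 1$ (Lemmata~\ref{index-cyl-pos} and~\ref{lem:at}) and any other curve has Fredholm index $\ge 0$, while in the $W$-level Proposition~\ref{0compacthtpy} gives Fredholm index $\ge 0$ for all but isolated $\tau\in(0,1)$, an index $-1$ cylinder occurring only for those isolated parameter values; trivial cylinders are excluded by Remark~\ref{nontrivbuilding}.

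For the base case $n=1$ the building is a single finite-energy plane, or a branched cover of one, asymptotic to the contractible orbit $\gp$; by Lemma~\ref{lemma0} (if it lies in a symplectization) or Theorem~\ref{thmparreg} (if it lies in $W$) it is (parametrically) regular of Fredholm index $\czm(\gp)-1\ge 2$, since $\gp$ is contractible and $\lambda$ is dynamically separated, so $\ind(\calb)>2$ and equality fails at one level. For the inductive step I would assume the statement for height $<n$, take $u_1$ to be the top level, which is necessarily connected, with $\ell\ge 1$ negative ends, and write $(u_2,\dots,u_n)$ as a disjoint union of $\ell$ genus-zero sub-buildings, each with one positive end capping a negative end of $u_1$, no negative ends, and fewer than $n$ levels. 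By the inductive hypothesis each such sub-building contributes at least $2$, and collecting terms reduces the claim to $\ind(u_1)+2\ell\ge 1$, with equality forcing $\ell=1$ and $u_1$ an index $-1$ cylinder in $W$.

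The inequality $\ind(u_1)+2\ell\ge 1$ is a case analysis on $u_1$, parallel to the proof of Lemma~\ref{lemma1}. If $u_1$ is somewhere injective it has Fredholm index $\ge 0$ in a symplectization level and $\ge -1$ in $W$, so with $\ell\ge 1$ the bound holds, with equality only for $\ell=1$ and $\ind(u_1)=-1$, i.e.\ $u_1$ an index $-1$ cylinder in $W$. If $u_1$ is an unbranched cover of a cylinder the same holds, using Proposition~\ref{cylcoverssep-par} or Proposition~\ref{cylcoverssep} in $W$, or Lemmata~\ref{index-cyl-pos} and~\ref{lem:at} in a symplectization, for both the index bound and (parametric) regularity. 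If $u_1$ is a $k$-fold branched cover with $b$ branch points of a somewhere injective curve $\calc$ with $s+1$ negative ends, then $\ell=1+ks+b$ and Proposition~\ref{Hurwitztentacles} gives $\ind(u_1)\ge k\,\ind(\calc)-2k+2b+2$ with $\ind(\calc)\ge -1$, hence $\ind(u_1)+2\ell\ge 4+4b+k(2s-3)>1$ for $s\ge 2$; for $s=0$ the underlying curve is a cylinder and one uses Propositions~\ref{coveredcyl} and~\ref{trivcyl} as in Lemma~\ref{lemma1}, capping the (necessarily contractible) additional negative ends by sub-buildings of index $\ge 2$.

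The remaining case $s=1$ is the main obstacle: there the Riemann--Hurwitz estimate of Proposition~\ref{Hurwitztentacles} decays linearly in the covering degree $k$ and no longer suffices, so I would substitute the sharp index bounds for covers of an index $-1$ pair of pants, Propositions~\ref{unbranchedpants},~\ref{unpant1} and~\ref{branchedpants}, and combine them with the fact (as in Proposition~\ref{coveredcyl}) that all but one negative limit of such a cover is contractible and hence capped by a sub-building of index at least $2$; this is what restores $\ind(u_1)+2\ell\ge 1$. Finally, to pin down the equality case I would use that a building in a one-parameter family of cobordisms has at most one level in $W$ and that an index $-1$ cylinder cannot occur in a symplectization level (Lemmata~\ref{index-cyl-pos},~\ref{lem:at}), so equality forces $\calb$ to consist of a single index $-1$ cylinder $u_i\in\widehat{\mathcal{M}}^{J_\tau}(\gp,\gm)$ in $W$ together with a plane capping $\gm$, which exists exactly for isolated $\tau\in(0,1)$ by Proposition~\ref{0compacthtpy}, as claimed.
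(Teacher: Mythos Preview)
Your proposal is correct and follows essentially the same route as the paper: induction on the height $n$, reduction via the inductive hypothesis to the inequality $\ind(u_1)+2\ell\ge 1$, and the same case split on $u_1$ (somewhere injective, unbranched cylinder cover, general $k$-fold cover via Proposition~\ref{Hurwitztentacles} for $s\ge 2$, Proposition~\ref{coveredcyl} for $s=0$, and the pair-of-pants Propositions~\ref{unbranchedpants} and~\ref{branchedpants} for $s=1$). Two small remarks: Proposition~\ref{unpant1} is not actually invoked in this lemma (the paper only needs it in Lemma~\ref{lemma2htpy}, where one fewer end is capped), and in the $s=1$ case the paper exploits that the $k$ simple copies of $\ga_1$ are individually contractible (hence $\czm(\ga_1)\ge 3$) while $\ga_0$ itself need only satisfy $\czm(\ga_0)\ge 1$, since it is $\ga_0^k$ that gets capped --- your sketch implicitly uses this but does not state it.
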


\begin{proof}
This proof will be done via induction on the number of levels of $\calb:=(u_1,...,u_n)$, where the $u_i$ are levels of $\calb$ in decreasing order, e.g. $u_1$ is the top level.  For any $\calb$ with one positive end asymptotic to the Reeb orbit $\gp$ and no negative ends, 
\begin{equation}\label{helpfuless}
\hind(\calb) = \czm(\gp) -1 + 1.
\end{equation}

If $\calb$ consists of only one level then we are done since $\czm(\gp)\geq3$ by the  dynamically separated hypothesis.

Suppose $n>1$ and that Lemma is true for buildings of height $n-1$.  We need to show that $\hind(\calb) \geq 2$ with equality if and only if there are isolated parameter values $\tau \in (0,1)$ for which there exists $u_i \in \widehat{\mathcal{M}}^{J_\tau}(\gp, \gm)$ with ${\mbox{ ind}}(u_i)=-1$. The building $(u_2,...u_n)$ is the disjoint union of $\ell$ genus 0 buildings, each having one positive end at each of the negative ends of $u_1$ and no negative ends.  By the inductive hypothesis we have
\[
\hind(\calb) \geq \hind(u_1) +1 + 2\ell.
\]
Thus we must show that
\begin{equation}\label{lemma1ineq}
\hind(u_1) +1 + 2\ell \geq 2.  
\end{equation}
If $u_1$ is somewhere injective and $\hind(u_1) \geq 0$ then we are done.  If $u_1$ is the unbranched cover of a cylinder of $\hind =0$ then $\hind(u_1) = 0$ and $u_1$ is regular by Proposition \ref{cylcoverssep}.  By Proposition \ref{0compact} we know  that for any generic exact dynamically separated symplectic cobordism $(W,J)$, if $\czm(\gp) = \czm(\gm)$ then the space $\widehat{\mathcal{M}}^{J_\tau}(\gp, \gm)$ is compact.  By Proposition \ref{0compacthtpy} there are only isolated {values} for $\tau \in (0,1)$ in which  $\hind(u_1) = -1$, and moreover, such a $u_1$ is regular. There cannot be additional cylindrical levels as a result of Section \ref{obs-chain}. 


If $u_1$ is the $k$-fold cover with of a somewhere injective curve $\calc \in \widehat{\mathcal{M}}^{J_\tau}(\gp;\ga_0,...\ga_s)$ with $b$ branch points\footnote{Note $b$ could be 0 since the result holds if $u_1$ doesn't have any branch points.} counted with multiplicity then Proposition \ref{Hurwitztentacles} yields
\[
\hind(u_1) \geq 2-3k + 2b,
\]
with $\ell=1+ ks +b$.   Thus 
\[
\hind(u_1) + 2\ell \geq 4+4b+k(2s-3) > 2
\]
 so if $s\geq2$ we are ok. 

If $s=0$ then $u_1 \in \widehat{\mathcal{M}}^{J_\tau}(\ga_+^k; \ga_-^{k_1},...,\ga_-^{k_n})$ with $k_1 + ... + k_n =k$. Moreover
\[
\hind(\calb) -1 = \hind(u_1) + \sum_{i=1}^n |\ga_-^{k_i}|,
\]
in combination with Proposition \ref{coveredcyl} yields
\[
 \hind(\calb) -1 \geq n+1 - \sum_{i=2}^n \czm(\gamma_-^{k_i}) + \sum_{i=1}^n |\ga_-^{k_i}| \geq 4.
\]

If $s=1$ we are ok by our pant propositions as follows.  Note that $\czm(\ga_0) \geq 1$ and since $\ga_1$ is contractible, $\czm(\ga_1) \geq 3$.   If $u_1$ is an unbranched $k$-fold cover Proposition \ref{unbranchedpants} yields
\[
\begin{array}{lcl}
\hind(\calb) -1 &=& \hind(u_1) + \czm(\ga_0^{k}) - 1 + k \czm(\ga_1) - k \\
 &\geq& -3k+ k \czm(\ga_0)  +  k \czm(\ga_1)  \\
 & \geq &  k. \\
\end{array}
\]
 If $u_1$ is branched $k$-fold cover Proposition \ref{branchedpants} yields
\[
\begin{array}{lcl}
\hind(\calb) -1 &=& \hind(u_1) + \sum_{i=0}^b\czm(\ga_0^{k_i}) - (b+1) + k \czm(\ga_1) - k \\
 &\geq& -3k + k \czm(\ga_0)  + k \czm(\ga_1) \\
 & \geq &  k. \\
\end{array}
\]
\end{proof}

Building again upon this theme we continue with the following lemma.
\begin{lemma}\label{lemma2htpy}
Let $\calb:=(u_1,...u_n)$ be a  genus 0 building with one positive end and one negative end in a cobordism. Then ${\mbox{\em ind}}(\calb) \geq 1,$ with equality if and only if one of the following holds
\begin{enumerate}
\item[\em {(i)}]  $\calb$ consists of a pair $(\tau, u)$ with $\tau \in [0,1]$ and $u $ is a cylinder with $\mbox{\em ind}(u)=0${\em ;}
\item[\em {(ii)}]  There {are} isolated parameter {values} $\tau \in (0,1)$ for which there exists $u_i \in \widehat{\mathcal{M}}^{J_\tau}(\gp, \gm)$ with ${\mbox{\em ind}}(u_i)=-1$.  
\end{enumerate}
\end{lemma}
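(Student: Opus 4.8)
The plan is to argue by induction on the number of levels $n$ of $\calb = (u_1, \ldots, u_n)$, following the pattern of the proof of Lemma \ref{lemma2} but feeding in the parametric regularity results of Section \ref{reg-sec} and the index estimates of Section \ref{quandaries} in place of their non-parametric counterparts; the relationship of this argument to Lemma \ref{lemma2} is the one-parameter analogue of the passage from Lemma \ref{lemma1} to Lemma \ref{lemma1htpy}. The first thing I would record is the structural fact that a building in the cobordism $(W,\mathbb{J})$ has a single level in $W$ and its remaining levels in the symplectization ends $\R \times M_\pm$, where the almost complex structure is the fixed $J_\pm$; consequently only that one $W$-level can be $\tau$-dependent, so a nontrivial building carries at most one pseudoholomorphic curve of Fredholm index $-1$, and the parametric ``$+1$'' in $\ind(\calb)$ is attached to this unique $W$-level. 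For the base case $n=1$, the single level $u_1$ is a cylinder in $W$, hence by Theorem \ref{RH} unbranched and an unbranched cover of a somewhere injective cylinder; Proposition \ref{0compacthtpy}, together with Propositions \ref{cylcoverssep}, \ref{cylcoverssep-par} and automatic transversality (Proposition \ref{aut-agree}), shows $u_1$ is parametrically regular with $\ind(u_1) \geq -1$, and $\ind(u_1) = -1$ only for isolated $\tau \in (0,1)$; the value $\ind(u_1) = 0$ is alternative (i), the value $\ind(u_1) = -1$ falls under alternative (ii), and $\ind(u_1) \geq 1$ gives strict inequality.

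For the inductive step I would take $u_1$ to be the top level, split $(u_2,\ldots,u_n)$ into $\ell$ genus-$0$ sub-buildings attached to the negative ends of $u_1$ --- one of them, $\calb_1$, carrying the single negative end of $\calb$ and the other $\ell-1$ having only a positive contractible end --- and apply the inductive hypothesis to $\calb_1$ and Lemma \ref{lemma1htpy} (or Lemma \ref{lemma1}, if that sub-building lies entirely in a symplectization) to the rest, reducing the claim as in Lemma \ref{lemma1htpy} to a lower bound on $\ind(u_1) + 2(\ell-1)$, the parametric ``$+1$'' being absorbed into $\calb_1$ or $u_1$ according to where the $W$-level sits. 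Then I would case-split on $u_1$: if $u_1$ is somewhere injective or an unbranched cover of a cylinder, then $\ind(u_1) \geq -1$ and parametric regularity holds by Theorem \ref{thmparreg}, Proposition \ref{cylcoverssep-par} or Proposition \ref{aut-agree}, while Propositions \ref{0compact} and \ref{0compacthtpy} forbid a second cylindrical level of index $\leq 0$ in $W$; if $u_1$ is a genuine $k$-fold cover of a somewhere injective $\calc \in \widehat{\mathcal{M}}^{J_\tau}(\gp;\ga_0,\ldots,\ga_s)$ with $b$ branch points, then Proposition \ref{Hurwitztentacles} applied in the family (with $\ind(\calc) \geq -1$) gives $\ind(u_1) \geq k\,\ind(\calc) - 2k + 2b + 2 \geq -3k + 2b + 2$, so $\ind(u_1) + 2(\ell-1) \geq 4b + k(2s-3) + 2$ with $\ell = 1 + ks + b$. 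This settles $s \geq 2$ at once; the case $s=0$ follows from Proposition \ref{coveredcyl} once the SFT-grading contributions of the extra contractible ends (of Conley--Zehnder index $\geq 3$) are combined with that bound; and $s=1$ is handled by Propositions \ref{unbranchedpants} and \ref{branchedpants}, with Proposition \ref{unpant1} covering the tight sub-case $k=2$, $\czm(\ga_0)=1$.

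Keeping track of when each inequality is sharp then yields the equality statement: $\ind(\calb) = 1$ forces $\sum_i \ind(u_i) = 0$, and since every symplectization level of a nontrivial building has Fredholm index $\geq 1$ (Lemma \ref{index-cyl-pos}) while the unique $W$-level has index $\geq -1$ (equal to $-1$ only at isolated $\tau \in (0,1)$, by Proposition \ref{cylcoverssep-par}), the only possibilities are that $\calb$ is a single index-$0$ cylinder in the family --- alternative (i) --- or consists of an index-$(-1)$ cylinder of $W$ at an isolated parameter value together with an index-$1$ cylinder in a symplectization end --- alternative (ii); in either case both cylinders may be multiply covered, which is accommodated by Lemma \ref{lem:at} and Propositions \ref{cylcoverssep}, \ref{cylcoverssep-par}. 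I expect the main obstacle to be precisely the $s=1$ case of the inductive step: ruling out covers of index-$(-1)$ pairs of pants in the family, where the Riemann--Hurwitz bound alone is too weak and one must invoke the refined index inequalities of Propositions \ref{unbranchedpants}--\ref{branchedpants} together with the agreeability input of Proposition \ref{cylcoverssep-par}; the remaining bookkeeping of the parametric shift is routine once one recalls that the building has a unique level in $W$.
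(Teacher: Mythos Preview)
Your proposal is correct and follows essentially the same approach as the paper: induction on the number of levels, splitting off the sub-building $\calb_1$ carrying the negative end, applying Lemma~\ref{lemma1htpy} to the remaining capped sub-buildings, and then reducing to the inequality $\ind(u_1)+2(\ell-1)\geq 1$ via Proposition~\ref{Hurwitztentacles} with the same trichotomy on $s$. The only place the paper is more explicit is the $s=1$ case, where it further splits according to \emph{which} ends of the pants cover are capped off (the $\ga_1$-ends versus the $\ga_0^{k_i}$-ends); the former sub-case is handled directly by the dynamically separated condition rather than the pant propositions, and the branched $k=2$ sub-case uses Remark~\ref{rembp} (forcing both $\ga_0,\ga_1$ contractible) rather than Proposition~\ref{unpant1}---but you have correctly flagged $s=1$ as the delicate step and named the right ingredients.
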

\begin{proof}
As before the proof will be done via induction on the number of levels of the building $\calb:=(u_1,...,u_n)$, where the $u_i$ are levels of $\calb$ in decreasing order, e.g. $u_1$ is the top level. 

 
 If $u_1$ is somewhere injective and $\hind(u_1) \geq 0$ then we are done.  If $u_1$ is the unbranched cover of a cylinder of $\hind =0$ then $\hind(u_1) = 0$ and $u_1$ is regular by Proposition \ref{cylcoverssep}.  By Proposition \ref{0compact} we know  that for any generic exact dynamically separated symplectic cobordism $(W,J)$, if $\czm(\gp) = \czm(\gm)$ then the space $\widehat{\mathcal{M}}^{J_\tau}(\gp, \gm)$ is compact.  By Proposition \ref{0compacthtpy} there are only isolated {values} for $\tau \in (0,1)$ in which  $\hind(u_1) = -1$, and moreover, such a  $u_1$ is regular. There cannot be additional cylindrical levels as a result of Section \ref{obs-chain}.

Suppose there is more than one level.   Call the top level $u_1$ and assume that the lemma is true for buildings of height $n-1$.  We need to show that $\hind(\calb) > 1$. The building $(u_2,...u_n)$ is the disjoint union of $\ell$ genus 0 buildings, each consisting of one positive end at each of the negative ends of $u_1$ and all but one, say $\calb_1$ having no negative ends.  This exceptional building, $\calb_1$, has one positive and one negative end.  By the inductive hypothesis we have
\begin{equation}\label{bound1}
\hind(\calb) -1 \geq \hind(u_1) + \hind(\calb_1) + 2(\ell-1).
\end{equation}
Thus it suffices to show that
\begin{equation}\label{lemma2ineq}
\hind(u_1) + 2(\ell-1) \geq 1.  
\end{equation}


If $u_1$ is the cover of a somewhere injective curve $\calc \in \widehat{\mathcal{M}}^{J_\tau}(\gp;\ga_0,...\ga_s)$,  by Proposition \ref{Hurwitztentacles}
\[
\hind(u_1) \geq 2-3k + 2b,
\]
with $\ell = 1+ks+b$.    Thus 
\[
\hind(u_1) + 2\ell -2  \geq 2+4b+k(2s-3) > 2
\]
 so if $s\geq2$ we are ok.

If $s=0$ then $u_1 \in \widehat{\mathcal{M}}^{J_\tau}(\ga_+^k; \ga_-^{k_1},...,\ga_-^{k_n})$ with $k_1 + ... + k_n =k$. Without loss of generality,
\[
\hind(\calb) - 1 = \hind(u_1) + \sum_{i=2}^n |\ga_-^{k_i}|.
\]
In combination with Proposition \ref{coveredcyl} we obtain
\[
 \hind(\calb) -1 \geq n+1 - \sum_{i=2}^n \czm(\gamma_-^{k_i}) + \sum_{i=1}^n |\ga_-^{k_i}| \geq 4.
\]

If $s=1$ we are ok by our pant propositions as follows.     \\

\noindent \textbf{Case (1):} Let $u_1$ be an unbranched $k$-fold cover.  Without loss of generality, denote $u_1 \in \widehat{\mathcal{M}}^{J_\tau}(\ga_+^k;\ga_0^k,\ga_1,...,\ga_1)$. \\

\noindent \textbf{(1a):} All $k$ of the $\ga_1$-ends are capped off.  Then $\czm(\ga_1) \geq 3$ and
\[
\czm(\gp)-\czm(\ga_0) = \czm(\ga_1) -2 \geq 1.
\]
Since $[\gp]=[\ga_0]$ and $[\ga_+^k]=[\ga_0^k]$  the dynamically separated assumption implies
\[
\czm(\ga_+^k)-\czm(\ga_0^k) \geq 1.
\]
Since
\[
\ind(u_1) = k + \czm(\ga_+^k)-\czm(\ga_0^k) - k\czm(\ga_1) 
\]
then
\[
\ind(\calb) -1 = \czm(\ga_+^k)-\czm(\ga_0^k) \geq 1.
\]

\noindent \textbf{(1b):} If alternately, the $\ga_0^k$ end is capped off then Proposition \ref{unbranchedpants} yields
\[
\begin{array}{lcl}
\hind(\calb) - 1 &\geq & \hind(u_1) + \czm(\ga_0^{k}) -1+ 2k-2 \\
 &\geq&  -2 +k \czm(\ga_0). 
\end{array}
\]
It remains to check $k=2$ and $\czm(\ga_0)=1$, for which we appeal to Proposition \ref{unpant1}, which yields that $\hind(\calb) > 1$ { because \eqref{lemma2ineq} is immediately seen to be satisfied}.

\medskip

\noindent \textbf{Case (2):} Let $u_1$ be a branched $k$-fold cover.  Without loss of generality, denote $u_1 \in \calm(\ga_+^k;\ga_0^{k_0},...,\ga_0^{k_b},\ga_1,...,\ga_1)$.  \\

\noindent \textbf{(2a):} All $k$ of the $\ga_1$-ends are capped off. Then for some $i$,
\[
\ind(\calb) -1 = \czm(\ga_+^k) - \czm(\ga_0^{k_i}).
\]
By the identical argument in Case (1a) we obtain $\ind(\calb) -1 \geq 1$.

\noindent \textbf{(2b):} If alternately, all the $\ga_0^{d_i}$-ends are capped off then Proposition \ref{branchedpants} yields
\[
\begin{array}{lcl}
\hind(\calb) -1 &\geq & \hind(u_1) + \left[ \sum_{i=0}^b\czm(\ga_0^{k_i}) \right] - (b+1) + 2k -2 \\
 &\geq& -2 + k \czm(\ga_0).   \\
\end{array}
\]
It remains to check $k=2$.  By Remark \ref{rembp} we see that if $k=2$ then $\ga_0$ and $\ga_1$ are both contractible.  Thus $\czm(\ga_0) \geq 3$, and the result follows.


\end{proof}

Stacking the above lemmata together, we obtain the following result.


\begin{lemma}\label{bldghomotopy}
Let $\calb$ be a genus 0 building with one positive end $\gp$ and one negative end $\gm$ in a a cobordism $W$ between nondegenerate dynamically separated contact forms and a generic smooth family $\{J_\tau\}_{\tau \in [0,1]}$ of cobordism compatible complex structures.  If
\[
\czm(\gp)-\czm(\gm)=0,
\]
then $\calb$ is one of the following types,
\begin{enumerate}
\item[\em {(i)}] Pairs $(\tau, u)$ with $\tau \in (0,1)$ and $u \in {\mathcal{M}}^{J_{\tau_0}}(\gp,\gm)$;
\item[\em {(ii)}] Pairs $(0, u)$ with $u \in {\mathcal{M}}^{J_0}(\gp,\gm)$;
\item[\em {(iii)}] Pairs $(1, u)$ with $u \in {\mathcal{M}}^{J_1}(\gp,\gm)$;
\item[\em {(iv)}] Pairs $(\tau, (u_+,u_0))$ with $(u_+,u_0)$ a broken cylinder with upper level $u_+ \in {\mathcal{M}}^{J_+}(\gp,\ga_0)$ and main level $u_0 \in {\mathcal{M}}^{J_\tau}(\ga_0,\gm)$ for some $\tau \in (0,1)$;
\item[\em {(v)}]  Pairs $(\tau, (u_0,u_-))$ with $ (u_0,u_-)$ a broken cylinder with lower level $u_- \in {\mathcal{M}}^{J_-}(\ga_0,\gm)$ and main level $u_0 \in {\mathcal{M}}^{J_\tau}(\gp,\ga_0)$ for some $\tau \in (0,1)$.
\end{enumerate}
\end{lemma}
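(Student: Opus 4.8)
The plan is to combine the SFT compactness theorem with the index/structure lemmas already established in this section (Lemmata \ref{lemma1htpy}, \ref{lemma2htpy}, Proposition \ref{0compacthtpy}) together with the chain-map analogues from Section \ref{obs-chain} (Lemmata \ref{lemma0}--\ref{bldgchain}, Proposition \ref{0compact}). The statement concerns the compactification of the $1$-dimensional parametric moduli space $\widehat{\M}^{\mathbb{J}}(\gp,\gm)$ when $\czm(\gp)=\czm(\gm)$, i.e.\ $\mathrm{ind}(u)=0$ and $\dim\widehat{\M}^{\mathbb{J}}(\gp,\gm)=1$. By the SFT compactness theorem of \cite{BEHWZ}, the boundary of this space consists of broken configurations, which are pseudoholomorphic buildings $\calb=(u_1,\dots,u_n)$ with one positive end at $\gp$ and one negative end at $\gm$, distributed over the levels as: top levels in the symplectization $(\R\times M,J_+)$, one parametric main level $(\tau,\cdot)$ in $(W,J_\tau)$, and bottom levels in $(\R\times M, J_-)$. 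The genus is $0$ because the building is a limit of cylinders and the total genus cannot increase. So the task is purely to enumerate which buildings of index equal to that of the compactified space can actually occur, using Remark \ref{nontrivbuilding} that every level contains a nontrivial component.

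\textbf{First} I would fix the index bookkeeping. The total (parametric) index of the building is $\mathrm{ind}(\calb)=\czm(\gp)-\czm(\gm)+1=1$, where the $+1$ accounts for the parameter $\tau$. The building decomposes into the main parametric level together with possibly several symplectization levels above (in $J_+$) and below (in $J_-$); each symplectization level contributes to the index with no extra parameter. Applying Lemma \ref{lemma2htpy} to the sub-building consisting of the main level and everything below it, and Lemma \ref{lemma1htpy}/Lemma \ref{lemma1} to cap off any extra negative ends, I would show that the only way to achieve total index $1$ is for every level to be a cylinder (no genuine ``tentacled'' components with $s\geq 1$, since those force index strictly larger by the pants/Hurwitz estimates). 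Indeed, the $\geq 2$ thresholds in Lemmata \ref{lemma1}, \ref{lemma1htpy} for capped-off buildings with a contractible positive end, and the $\geq 1$ thresholds in Lemmata \ref{lemma2}, \ref{lemma2htpy} for one-in-one-out buildings, together leave only enough ``room'' for a chain of cylinders with exactly one broken level (or none).

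\textbf{Then} I would split into cases by where the index drop happens. If no breaking occurs, the building is a single pair $(\tau,u)$ with $\mathrm{ind}(u)=0$; when $\tau\in(0,1)$ this is type (i), and when $\tau=0$ or $\tau=1$ it is type (ii) or (iii). If the building breaks into exactly two cylinders, the extra level must live in a symplectization (Lemma \ref{lemma3} and Lemma \ref{bldgchain} tell us there are no index-$0$ nontrivial cylinders in a symplectization with a generic $J_\circ$, and Lemma \ref{lem:at} with Lemma \ref{index-cyl-pos} give the $\mathrm{ind}\geq 1$ bound there): either an upper symplectization cylinder $u_+\in\calm^{J_+}(\gp,\ga_0)$ of index $0$ --- but wait, in a symplectization index-$0$ nontrivial cylinders don't exist, so actually the symplectization cylinder must be forced to have index $0$ only after quotienting by $\R$, i.e.\ it sits in $\widehat{\M}^{J_+}_1(\gp,\ga_0)/\R$ which is a point; hence the main level $u_0\in\calm^{J_\tau}(\ga_0,\gm)$ has index $-1$, occurring at isolated $\tau$. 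That is exactly type (iv); the symmetric configuration with a lower symplectization level is type (v). Proposition \ref{0compacthtpy} guarantees no further cylindrical levels can be inserted, since there are no index $<-1$ covers and the relevant index $-1,0,1$ cylinders are all (parametrically) regular by Propositions \ref{cylcoverssep}, \ref{cylcoverssep-par}. Covered cylinders are ruled out from appearing as extra levels by the same dynamically-separated index estimates (Propositions \ref{trivcyl}, \ref{coveredcyl}, \ref{cylcoverssep}) which prevent $\mathrm{ind}<-1$.

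\textbf{The main obstacle} I expect is the careful exclusion of multiply covered and branched components --- in particular, verifying that no building with a genuine pair-of-pants or higher-tentacled level, possibly a branched multiple cover, can sneak in with total index $1$. This is where the full strength of Propositions \ref{Hurwitztentacles}, \ref{unbranchedpants}, \ref{unpant1}, \ref{branchedpants}, and \ref{coveredcyl} is needed, together with Lemmata \ref{lemma1htpy} and \ref{lemma2htpy}, whose inductive arguments already encode precisely these estimates. The proof of Lemma \ref{bldghomotopy} should therefore be short: \emph{Since the index of a building is additive over its components (with one $+1$ from the parameter), Lemmata \ref{lemma1htpy}, \ref{lemma2htpy}, Lemma \ref{lemma3}, Lemma \ref{bldgchain}, and Propositions \ref{0compact}, \ref{0compacthtpy} imply that the only configurations of total parametric index $1$ are those listed in (i)--(v).} I would then add a sentence noting that in cases (iv) and (v) the upper (respectively lower) symplectization level is rigid in $\calm^{J_\pm}_1/\R$ while the main level carries index $-1$ and hence exists only for isolated $\tau\in(0,1)$, consistent with Proposition \ref{0compacthtpy}.
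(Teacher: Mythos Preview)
Your proposal is correct and takes essentially the same approach as the paper: the paper's proof is the one-line argument you arrive at in your final paragraph, citing additivity of the index together with Proposition \ref{0compacthtpy} and Lemmata \ref{lemma1htpy}, \ref{lemma2htpy}. Your additional citations (Lemmata \ref{lemma3}, \ref{bldgchain}, Proposition \ref{0compact}) are not strictly needed since their content is already absorbed into the inductive proofs of Lemmata \ref{lemma1htpy} and \ref{lemma2htpy}, and your extended case analysis of (iv)--(v) is likewise redundant for the same reason, but none of it is wrong.
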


\begin{proof}
 Since the index of a building is additive with respect to its components the results of Proposition \ref{0compacthtpy}, Lemmata \ref{lemma1htpy} and \ref{lemma2htpy} imply that the only possible configurations for a building $\calb$ of index 1 are those described in (i)-(v).
 \end{proof}


\subsection{The chain map}\label{chainmap-sec}

Recall that there are two equivalent differentials $\pa_+^{EGH}$ and $\pa_-^{EGH}$ defined by \eqref{pa1} and \eqref{pa2} respectively.  Throughout this discussion we will fix the differential under consideration to be $\partial_+^{EGH}$.

Let $(W,J)$ be a generic completed symplectic cobordism between $(M_+,\lambda_+,J_+)$ and $(M_-,\lambda_-,J_-)$ where $J_\pm$ are $\lambda_\pm$ compatible almost complex structures.  We define a morphism of complexes
\[
\Phi^{+-}_J:  C_*^{EGH}(M, \lambda_+, J_+) \to C_*^{EGH}(M,\lambda_-, J_-),
\]
 by\footnote{If we use the differential $\partial_-$ then $\langle \Phi^{+-}_J \ga_+, \ga_- \rangle = \displaystyle \sum_{\substack{\ga_- \in \calp_{\mbox{\tiny good}}(\lambda_-)\\  \czm(\ga_+)=\czm(\ga_-) }} \sum_{\calc \in \calm(\ga_+, \gm)} \epsilon(u)\frac{\mult(\gm)}{\mult(\calc)}.$} 

\begin{equation}\label{chaineq}
 \langle \Phi^{+-} \gp, \gm \rangle = \sum_{\substack{\gm \in \calp_{\mbox{\tiny good}}(\lambda_-), \\\czm(\gp)=\czm(\gm) }} \sum_{ \calc \in \calm(\gp, \gm)} \epsilon(u)\frac{\mult(\gp)}{\mult(\calc)} 
\end{equation}


After extracting subsequences, if necessary, the sequence of trajectories in  $\calm(\gp, \gm)$ have limits that are concatenations of at most one broken trajectory from $\mathcal{M}^{J_+}$, exactly one trajectory in $\calm(\gp, \gm)$, and at most one broken trajectory from $\mathcal{M}^{J_-}$.  For ease of notation we denote the Reeb vector field associated to $\lambda_+$ by $R_+$ and to $\lambda_-$ by $R_-$

\begin{proposition}\label{propchain}
Let $\lambda_+$ and $\lambda_-$ be dynamically separated contact forms and $(u_n)$ be a sequence of elements in $\calm(\gp, \gm)$ such that $ 0 \leq \czm(\gp) - \czm(\gm) \leq 1$. There exist
\begin{enumerate}
\item[\em {(i)}] A subsequence of $(u_n)$;
\item[\em {(ii)}] Good Reeb orbits $\gamma_+=x_0, x_1, ... , x_k$ of $R_{+}$;
 \item[\em {(iii)}] Good Reeb orbits $y_0, y_1,...,y_\ell = \gamma_-$ of $R_-$;
 \item[\em {(iv)}] Real sequences $(s^i_n)$ for $ 0 \leq i \leq k-1$ that tend to $+\infty$ and $(\varsigma_n^j)$ for $0\leq j \leq \ell -1$ that tend of $-\infty$.
 \item[\em {(v)}] Cylinders $u^i \in \mathcal{M}^{J_+}(x_i, x_{i+1})$ for $0 \leq i \leq k-1$ and cylinders $v^j \in \mathcal{M}^{J_-}(y_j, y_{j+1})$ for $0 \leq j \leq \ell-1$
 \item[\em {(vi)}] An element $w \in \calm(x_k, y_0)$ such that for $0 \leq i \leq k-1$ and $0 \leq j \leq \ell-1$,
 \[
 \lim_{n \to + \infty} u_n \cdot s_n^i = u^i, \ \ \ \lim_{n \to +\infty} u_n \cdot \varsigma_n^j = v^j
 \]
 and such that
 \[
 \lim_{n \to + \infty} u_n =w.
 \]
 \item[\em {(vii)}] 
Moreover, $ \czm(\gp) - \czm(\gm) \geq k+ \ell $ with $k, \ell \geq 0$.
\end{enumerate}
\end{proposition}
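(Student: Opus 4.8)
The plan is to invoke the SFT compactness theorem of \cite{BEHWZ} to extract a limiting pseudoholomorphic building, and then to use the index bounds from Section \ref{quandaries} to show that the building has the restricted shape asserted in (i)--(vii). First I would observe that the sequence $(u_n)$ lies in a moduli space whose asymptotics are fixed (namely $\gamma_+$ and $\gamma_-$), so the Hofer energy is uniformly bounded; indeed, since $J_\pm$ are $\lambda_\pm$-compatible and $J$ is cobordism-compatible, the energy of $u_n$ is controlled by $\mathcal{A}(\gamma_+) - \mathcal{A}(\gamma_-)$, a fixed quantity. Applying \cite[Theorem 10.2]{BEHWZ} (or \cite[\S 9]{wendl-sft}), after passing to a subsequence the $u_n$ converge to a genus $0$ pseudoholomorphic building $\mathcal{B}$ with one positive end at $\gamma_+$ and one negative end at $\gamma_-$. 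The building consists of some levels in the symplectization $(\R \times M_+, J_+)$, one ``main'' level in the cobordism $(W,J)$, and some levels in the symplectization $(\R \times M_-, J_-)$; the bijections between negative punctures of one level and positive punctures of the next furnish the gluing data. The real sequences $(s_n^i)$ and $(\varsigma_n^j)$ are the translation parameters used to recenter $u_n$ onto the symplectization levels, and their divergence to $\pm\infty$ is part of the compactness statement.

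Next I would invoke Lemma \ref{bldgchain} — specifically, because $\gamma_+$ and $\gamma_-$ lie in the same free homotopy class and the contact forms are dynamically separated, the relative first Chern term vanishes for a suitable trivialization (Remark \ref{trivchoice}), so $\hind(u_n) = \czm(\gp) - \czm(\gm)$, and this index is preserved under the limit: $\hind(\mathcal{B}) = \czm(\gp) - \czm(\gm) \in \{0, 1\}$. When this index equals $0$, Lemma \ref{bldgchain} (together with Proposition \ref{0compact}) forces $\mathcal{B}$ to be a single unbroken cylinder in the cobordism, i.e.\ $k = \ell = 0$ and $w \in \calm(\gp, \gm)$. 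When the index equals $1$, Lemma \ref{bldgchain} says $\mathcal{B}$ is either an unbroken index-$1$ cylinder in the cobordism, or a once-broken cylinder of one of the two displayed forms: either an index-$1$ cylinder in $\R \times M_+$ on top of an index-$0$ cylinder in $W$, or an index-$0$ cylinder in $W$ on top of an index-$1$ cylinder in $\R \times M_-$. In all cases, the levels of $\mathcal{B}$ are themselves cylinders, which identifies the intermediate good Reeb orbits $x_0 = \gamma_+, x_1, \ldots, x_k$ of $R_+$ and $y_0, \ldots, y_\ell = \gamma_-$ of $R_-$, and the cylinder levels $u^i \in \mathcal{M}^{J_+}(x_i, x_{i+1})$, $v^j \in \mathcal{M}^{J_-}(y_j, y_{j+1})$, $w \in \calm(x_k, y_0)$, with $k, \ell \in \{0, 1\}$ and $k + \ell \leq 1 \leq \czm(\gp) - \czm(\gm)$ in the index-$1$ case and $k = \ell = 0$ in the index-$0$ case; in both cases $\czm(\gp) - \czm(\gm) \geq k + \ell$, giving (vii). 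That all orbits appearing are \emph{good} follows since bad orbits cannot appear as asymptotics of the cylinders produced — this is already built into the chain complex and the index computations of Section \ref{quandaries}, and more directly from Lemma \ref{index-cyl-pos}(ii).

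The main obstacle, and the step that carries the real content, is ensuring that no \emph{noncylindrical} levels — pairs of pants, planes, or curves with more negative ends — can appear in $\mathcal{B}$; \emph{a priori} SFT compactness permits arbitrarily complicated configurations such as the one depicted in Figure \ref{egads}. This is precisely what the inductive lemmata \ref{lemma0}--\ref{lemma3}, Proposition \ref{0compact}, and the numerical estimates of Propositions \ref{Hurwitztentacles}--\ref{coveredcyl} rule out, by showing that any building with a noncylindrical level would have index strictly larger than $1$. So the bulk of the argument is really the bookkeeping to apply Lemma \ref{bldgchain} correctly; once that lemma is in hand, translating its conclusion into the notation of (i)--(vii) is routine. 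I would also remark that the same argument, with Lemma \ref{bldghomotopy} in place of Lemma \ref{bldgchain} and Proposition \ref{0compacthtpy} in place of Proposition \ref{0compact}, handles the parametrized version needed for the chain homotopy in Section \ref{chainhtpy-sec}.
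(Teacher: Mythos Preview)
Your proposal is correct and follows essentially the same route as the paper: invoke SFT compactness \cite{BEHWZ} to extract a limiting building, then apply Lemma~\ref{bldgchain} (together with Proposition~\ref{0compact} and the supporting lemmata of \S\ref{obs-chain}) to rule out noncylindrical levels and force the building to be a chain of cylinders. The only minor difference is in establishing (vii): the paper argues it directly by summing the inequalities $\czm(x_i)-\czm(x_{i+1})\geq 1$, $\czm(y_j)-\czm(y_{j+1})\geq 1$, and $\czm(x_k)\geq\czm(y_0)$ (the last coming from transversality of $\calm(x_k,y_0)$ via Proposition~\ref{cylcoverssep}), whereas you read off $k+\ell\leq 1$ from the explicit case list in Lemma~\ref{bldgchain}; both are valid, and the paper's argument has the virtue of not depending on the precise bound $\czm(\gp)-\czm(\gm)\leq 1$.
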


\begin{proof}
That we obtain good Reeb orbits follows from the dynamically separated condition.  By Lemma \ref{bldgchain} there is no bad breaking, and regularity for our cylinders follows from Propositions \ref{cylcoverssep} and \ref{0compact}.  Standard SFT compactness from \cite{BEHWZ} produces (iv)-(vi); see also \cite[Proposition 10.19, 10.23]{wendl-sft}.  

Finally we prove (vi), that $ \czm(\gp) - \czm(\gm) \geq k+ \ell $. We know that
\[
  \left\{
  \begin{array}{ll}
\mathcal{M}^{J_+}(x_i, x_{i+1}) = \emptyset & \mbox{if } \czm(x_i) - \czm(x_{i+1}) \leq 0 \\
\mathcal{M}^{J_-}(y_i, y_{i+1}) = \emptyset & \mbox{if } \czm(y_i) - \czm(y_{i+1}) \leq 0. \\
\end{array} \right.
\]
Thus for $0 \leq i \leq k-1$ and $0 \leq j \leq \ell -1$ we have
\[
\czm(x_i) - \czm(x_{i+1}) > 0 \mbox{ and } \mbox{if } \czm(y_i) - \czm(y_{i+1}) > 0.
\]
Since $\calm(x_k,y_0)$ is cut out transversely by Proposition \ref{cylcoverssep}, $\calm(x_k,y_0) \neq \emptyset$ implies that $\czm(x_k) \geq \czm(y_0)$.  Summing these inequalities yields the desired result.  
\end{proof}

An immediate consequence of the Propositions \ref{0compact} and \ref{propchain} is the following corollary, which means that the chain map (\ref{chaineq}) is well-defined.

\begin{corollary}\label{0manchain}
If $\czm(\gp) = \czm(\gm)$ then the moduli space $\calm(\gp, \gm)$ is compact 0-manifold, hence $\Phi^{+-}_J$ is well defined.  
\end{corollary}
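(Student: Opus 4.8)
The plan is to read off the corollary from the two immediately preceding results, Proposition~\ref{0compact} and Proposition~\ref{propchain}. First I would invoke Proposition~\ref{0compact}: since $(W,J)$ is a generic exact symplectic cobordism between dynamically separated contact forms and $\czm(\gp)=\czm(\gm)$, the moduli space $\calm(\gp,\gm)$ is a compact $0$-dimensional manifold, cut out transversely. Thus the first assertion of the corollary is already in hand, and the only remaining point is the well-definedness of $\Phi^{+-}_J$, i.e.\ that each matrix coefficient $\langle \Phi^{+-}\gp,\gm\rangle$ in \eqref{chaineq} is a finite sum and that $\Phi^{+-}_J$ carries $C_*^{EGH}(M,\lambda_+,J_+)$ into $C_*^{EGH}(M,\lambda_-,J_-)$.

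Next I would use Proposition~\ref{propchain} in the boundary case $\czm(\gp)-\czm(\gm)=0$ to see that $\calm(\gp,\gm)$ loses no compactness to breaking. Given any sequence $(u_n)\subset\calm(\gp,\gm)$, Proposition~\ref{propchain} supplies a subsequence converging to a building consisting of $k$ cylinders in the symplectization of $\lambda_+$, a central piece $w\in\calm(x_k,y_0)$, and $\ell$ cylinders in the symplectization of $\lambda_-$, with $k+\ell\le \czm(\gp)-\czm(\gm)=0$ and $k,\ell\ge 0$. Hence $k=\ell=0$, so $x_k=\gp$, $y_0=\gm$, and the limit $w$ already lies in $\calm(\gp,\gm)$: no genuine breaking occurs. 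This shows $\calm(\gp,\gm)$ is sequentially compact, consistent with Proposition~\ref{0compact}; being moreover a discrete (transversely cut out $0$-dimensional) manifold, it is therefore a finite set of points.

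Finally I would conclude well-definedness. For fixed $\gp$, only finitely many good Reeb orbits $\gm$ of $R_-$ have $\czm(\gm)=\czm(\gp)$ and action $\mathcal A(\gm)<\mathcal A(\gp)$, by $L$-nondegeneracy (and the action bound forced by the existence of a curve in $\calm(\gp,\gm)$); for each such $\gm$ the set $\calm(\gp,\gm)$ is finite by the previous paragraph; and each summand $\epsilon(\calc)\,\mult(\gp)/\mult(\calc)$ is defined because regularity holds (Proposition~\ref{cylcoverssep} in the cobordism, together with Lemma~\ref{lem:at} and automatic transversality in the symplectization ends), so that coherent orientations yield $\epsilon(\calc)=\pm1$. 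Hence $\Phi^{+-}_J\gp$ is a finite $\Q$-linear combination of generators of $C_*^{EGH}(M,\lambda_-,J_-)$, and $\Phi^{+-}_J$ is a well-defined map of chain groups. I do not anticipate a genuine obstacle here: the corollary is a bookkeeping consequence of Propositions~\ref{0compact} and \ref{propchain}, the only step needing a word of care being the observation that the action filtration bounds the number of relevant target orbits, so that the image is a finite sum.
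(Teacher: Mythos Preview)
Your proposal is correct and follows exactly the approach the paper indicates: the corollary is stated as ``an immediate consequence of the Propositions~\ref{0compact} and~\ref{propchain},'' and you unpack precisely this, first citing Proposition~\ref{0compact} for the compact $0$-manifold structure, then reading off $k=\ell=0$ from Proposition~\ref{propchain}(vii) to rule out breaking, and finally noting the action/nondegeneracy bound to ensure finiteness of the sum defining $\Phi^{+-}_J$. The only minor quibble is terminological: this section treats the unfiltered case with a fully nondegenerate dynamically separated contact form, so the finiteness of orbits below a given action threshold comes from ordinary nondegeneracy rather than ``$L$-nondegeneracy.''
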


Next we prove the following result.

\begin{proposition}\label{idmap}
If $(\lambda_+, J_+) = (\lambda_-,J_-)$ and $(W, J)$ is the trivial cobordism from $(M,\lambda,J)$ to $(M,c\lambda,J_c)$, for any  constant $c>1$, then $\Phi^{+-}_J$ is the identity.
\end{proposition}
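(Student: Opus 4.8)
\textbf{Proof strategy for Proposition \ref{idmap}.}

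The plan is to identify the relevant moduli space $\calm(\gp,\gm)$ appearing in \eqref{chaineq} explicitly in the case of the trivial cobordism and show that it contributes only the diagonal contribution. First I would observe that when $(\lambda_+,J_+)=(\lambda_-,J_-)$ one may take $(W,J) = (\R\times M, J)$, the symplectization itself regarded as a (trivial) exact symplectic cobordism, after checking that the rescaling $\lambda\mapsto c\lambda$ with $c>1$ does not change the chain complex (the generators are the same Reeb orbits, and the self-diffeomorphism $(s,y)\mapsto(cs,y)$ of $\R\times M$ carries $J$-holomorphic curves to $J_c$-holomorphic curves, exactly as in the discussion surrounding \eqref{scalingeq}). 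Thus it suffices to prove that for a generic $\lambda$-compatible $J$ on $\R\times M$, the $\R$-dependent count $\sum_{\calc\in\calm(\gp,\gm)}\epsilon(u)\mult(\gp)/\mult(\calc)$ with $\czm(\gp)=\czm(\gm)$ equals $\delta_{\gp,\gm}$.

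The key point is index considerations combined with the already-established regularity and compactness results. For the trivial cobordism $W=\R\times M$ the almost complex structure $J$ is $\R$-invariant, so $\R$ acts on $\calm(\gp,\gm)$ by translation. If $\gp\neq\gm$, any $u\in\calm(\gp,\gm)$ is non-trivial, hence $\mbox{ind}(u)\geq 1$ by Lemma \ref{index-cyl-pos}(i) applied to its somewhere injective underlying cylinder together with the dynamically separated index relation $\mbox{ind}(u)=\mbox{ind}(\overline u)$; but the Fredholm index in a trivial cobordism with $\czm(\gp)=\czm(\gm)$ and $[\gp]=[\gm]$ is $\mbox{ind}(u)=\czm(\gp)-\czm(\gm)=0$, a contradiction. (Here I use that for a cylinder with both ends in the same free homotopy class one may choose $\Phi$ with $c_1^\Phi(u^*\xi)=0$, per Remark \ref{trivchoice}, so $\mbox{ind}(u)=\czm(\gp)-\czm(\gm)$.) Hence $\calm(\gp,\gm)=\emptyset$ for $\gp\neq\gm$. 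If $\gp=\gm$, then $\calm(\gp,\gp)$ consists of (multiple covers of) trivial cylinders over $\gp$; I would invoke the fact that a non-trivial cylinder would again force index $\geq 1$, so the only element is the trivial cylinder $u_{\gp}$ itself, which is regular, cut out transversely as a point of the $0$-dimensional space $\calm(\gp,\gp)$ after quotienting (or rather: it is isolated and rigid because it is the unique $\R$-invariant curve, and the coherent orientations are normalized so that trivial cylinders count with sign $+1$, cf.\ \cite{BM}), and $\mult(u_\gp)=\mult(\gp)$ so the weight $\mult(\gp)/\mult(u_\gp)=1$. Therefore $\langle\Phi^{+-}_J\gp,\gm\rangle=\delta_{\gp,\gm}$.

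The main obstacle I anticipate is the bookkeeping for multiply covered trivial cylinders and their signs: one must be sure that when $\gp$ is a multiply covered orbit, the trivial cylinder over $\gp$ (which is itself a multiple cover, of multiplicity $\mult(\gp)$, of the trivial cylinder over the underlying simple orbit) is (a) regular as an element of the relevant moduli space and (b) counted with coherent orientation sign $+1$ and weight exactly $1$ after the $\mult(\gp)/\mult(u)$ normalization. Regularity of trivial cylinders and their covers in a symplectization is standard (automatic transversality applies, cf.\ \cite[\S 8]{wendl-sft}, since the normal deformation operator for a trivial cylinder has no kernel after quotienting by the $\R$-translation when $\czm(\gp)=\czm(\gm)$), and the sign normalization for trivial cylinders is built into the construction of coherent orientations in \cite{BM}, as recalled in \cite[\S A]{HN2}; I would cite these rather than reprove them. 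One should also remark that Proposition \ref{0compact} (together with Corollary \ref{0manchain}) already guarantees $\calm(\gp,\gm)$ is a compact $0$-manifold, so the count is finite and the statement $\Phi^{+-}_J=\mathrm{id}$ is meaningful; the content of the proposition is simply that this $0$-manifold is empty off the diagonal and a single positively-oriented point on it.
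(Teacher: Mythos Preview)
Your proposal is correct and takes essentially the same approach as the paper: reduce to the $\R$-invariant symplectization (the paper does this by exhibiting an explicit biholomorphism from the cobordism $[0,a]\times M$ with $c=e^a$ to the usual symplectization; you invoke the scaling isomorphism \eqref{scalingeq}, which amounts to the same thing), then use Lemma~\ref{index-cyl-pos} to see that no nontrivial index-$0$ cylinders exist, so only the trivial cylinder over each $\gp$ contributes. Your discussion of the multiplicity weight $\mult(\gp)/\mult(u_{\gp})=1$ and the orientation sign is more explicit than the paper's, which simply asserts that trivial cylinders are Fredholm regular and that counting them gives the identity.
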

 
 \begin{proof}
Writing $c=e^a$ for $a > 0$, the exact symplectic cobordism is
\[
(\overline{W}, d\lambda) = ([0,a] \times M, d(e^\tau\lambda)).
\]
One can choose a compatible almost complex structure which matches $J$ and $J_c$ on $\xi$ while taking $\partial_\tau$ to $g(r)R_\lambda$ for a suitable function $g$ with $g(\tau)=1$ near $\tau=0$ and $g(\tau)=\frac{1}{c}$ near $\tau = a$.  The resulting almost complex manifold is biholomorphically diffeomorphic to the usual symplectization.  Thus our count of index 0 cylinders is equivalent to the count of such cylinders in the usual symplectization.  There if $\czm(\gp) = \czm(\gm)$ then $\calm(\gp, \gm) = \emptyset$ unless $\gp = \gm$.  All the trivial cylinders are Fredholm regular, so counting these shows that $\Phi^{+-}_J$ is the identity.  
\end{proof}

Finally, we verify that $\Phi^{+-}_J$ is a chain map.

\begin{theorem}
Let $\lambda_+$ and $\lambda_-$ be nondegenerate dynamically separated contact forms on $M^3$ and $J$ generic. Then
\[
\Phi^{+-}_J \circ \partial_{+, (\lambda_+, J_+)} = \partial_{+, (\lambda_-, J_-)} \circ \Phi^{+-}_J.
\]
\end{theorem}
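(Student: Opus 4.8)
The standard strategy is to count the ends of the one-dimensional moduli spaces of index-1 cylinders in the cobordism $(W,J)$ and read off the chain-map relation from the resulting algebraic identity. Fix good Reeb orbits $\gamma_+$ for $R_+$ and $\gamma_-$ for $R_-$ with $\czm(\gamma_+)-\czm(\gamma_-)=1$, and consider the moduli space $\mathcal{M}_1(\gamma_+,\gamma_-)$ of index-1 cylinders in $W$. By Lemma~\ref{bldgchain}, together with the transversality results Proposition~\ref{cylcoverssep} (regularity of unbranched covers of index-$0$ cylinders in the cobordism) and Proposition~\ref{0compact} (compactness of the index-$0$ cylinder moduli spaces in the cobordism), this space is a $1$-manifold whose SFT-compactification is a compact $1$-manifold with boundary, and whose boundary points consist precisely of the two kinds of once-broken cylinders in Lemma~\ref{bldgchain}(ii): a broken configuration $(u,v)\in\widehat{\mathcal{M}}^{J_+}(\gamma_+,z)\times\mathcal{M}^J(z,\gamma_-)/\R$ with $\czm(\gamma_+)-\czm(z)=1,\ \czm(z)=\czm(\gamma_-)$; or a broken configuration $(u,v)\in\mathcal{M}^J(\gamma_+,w)/\R\times\widehat{\mathcal{M}}^{J_-}(w,\gamma_-)$ with $\czm(\gamma_+)=\czm(w),\ \czm(w)-\czm(\gamma_-)=1$. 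No noncylindrical levels and no bad orbits occur, by the numerology of Section~\ref{quandaries} and the dynamically separated condition.

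First I would set up gluing: each such broken configuration is the limit of a unique end of $\mathcal{M}_1(\gamma_+,\gamma_-)$, and conversely every end converges to such a configuration. This is the usual SFT gluing theorem applied in the presence of the transversality established in Section~\ref{background}; the weights $\mult(\gamma_+)/\mult(u)$ in \eqref{chaineq} and the multiplicity factors in $\partial_+^{EGH}$ are chosen exactly so that the signed count of ends on each side matches the relevant composite of operators. Then I would invoke the principle that a compact $1$-manifold with boundary has signed boundary count zero; with coherent orientations (as in \cite{BM}, \cite[\S A]{HN2}) the contributions are signed consistently, so
\[
0 = \sum_{\text{ends of type (1)}} \epsilon + \sum_{\text{ends of type (2)}} \epsilon.
\]
Unwinding the weighted counts: the type-(1) ends assemble into the coefficient $\langle \Phi^{+-}_J\circ\partial_{+,(\lambda_+,J_+)}\,\gamma_+,\gamma_-\rangle$ and the type-(2) ends into $\langle \partial_{+,(\lambda_-,J_-)}\circ\Phi^{+-}_J\,\gamma_+,\gamma_-\rangle$, up to the overall sign dictated by the orientation conventions, which one fixes once and for all so that the two terms appear with opposite signs. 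Hence $\Phi^{+-}_J\circ\partial_{+,(\lambda_+,J_+)} = \partial_{+,(\lambda_-,J_-)}\circ\Phi^{+-}_J$.

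The main obstacle is bookkeeping rather than conceptual: one must be careful that the multiplicity/weight factors combine correctly when the relevant cylinders are multiply covered, using that for dynamically separated forms $\mult(\gamma_+)=\mult(z)=\mult(\gamma_-)$ along any cylinder appearing in the count (cf.\ Remark~\ref{coefficients}), so the weights are consistent across the broken configuration and the gluing is one-to-one onto ends. A secondary point is ensuring the compactness input is complete: one needs that only the configurations of Lemma~\ref{bldgchain} arise, which requires that index-$0$ and index-$1$ cylinder moduli spaces in the cobordism and in the symplectizations are all regular and that their covers contribute no spurious boundary — this is precisely what Propositions~\ref{cylcoverssep}, \ref{0compact} and Lemma~\ref{lem:at} provide, so the argument reduces to assembling these facts. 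Finiteness of the sum defining $\Phi^{+-}_J$ (hence that it is a genuine chain map between the $\Q$-chain complexes) follows from Corollary~\ref{0manchain}.
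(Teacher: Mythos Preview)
Your proposal is correct and follows essentially the same route as the paper: both arguments compactify the index-$1$ moduli space $\overline{\mathcal{M}}^J(\gamma_+,\gamma_-)$ using Lemma~\ref{bldgchain} together with the regularity from Proposition~\ref{cylcoverssep} and the compactness from Proposition~\ref{0compact}, identify its boundary with the two types of once-broken cylinders via SFT gluing, and then read off the chain-map identity from the vanishing of the signed boundary count. Your discussion of the multiplicity weights (via Remark~\ref{coefficients}) is in fact more explicit than the paper's, which simply asserts that the two boundary types yield the two composite coefficients; the only caveat is a minor notational slip---there is no $\R$-action on the cobordism moduli spaces, so the ``$/\R$'' in your broken configurations should apply to the symplectization factors $\widehat{\mathcal{M}}^{J_\pm}$ rather than to the cobordism pieces $\mathcal{M}^J$.
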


\begin{proof}
Proposition \ref{propchain} in combination with a corresponding gluing theorem shows that for $\czm(\gp)-\czm(\gm) =1$ that $\overline{M}^J(\gp,\gm)$ is a compact 1-manifold whose boundary consists of two types of broken cylinders, depending on whether the index 1 curve occurs in an upper or lower level:
\[
\begin{aligned}
\partial \overline{\mathcal{M}}^J(\gp,\gm) & =& \ \ \ & \bigsqcup_{\substack{\ga_0 \in \calp_{\mbox{\tiny good}}(\lambda_+), \\\czm(\gp)-\czm(\ga_0)=1 }} \widehat{\mathcal{M}}^{J_+}(\gp, \ga_0) \times \calm(\ga_0,\gm) \\ 
&& \cup & \bigsqcup_{\substack{\ga_0 \in \calp_{\mbox{\tiny good}}(\lambda_-), \\\czm(\gp)-\czm(\ga_0)=1 }}  \calm(\gp, \ga_0) \times \widehat{\mathcal{M}}^{J_-}(\ga_0, \gm). \\
\end{aligned}
\] 

That $\overline{\mathcal{M}}^J(\gp,\gm)$ is a 1-manifold follows from Proposition \ref{cylcoverssep} in conjunction with an argument identical to the proof of Corollary \ref{0manchain}, all as a result of the dynamically separated assumption.  Gluing arguments follow the same reasoning as in \cite[\S 4.3]{HN1}.   Other readily accessible discussions of gluing are given in the Hamiltonian Floer setting \cite[\S11.2]{ADfloer} and in symplectic field theory in \cite[\S 11]{wendl-sft}, which includes a detailed discussion of orientations. 

Counting broken cylinders of the first type produces the coefficient of
$\langle \Phi^{+-}_J \circ \partial_{+,(\lambda_+,J_+)} \rangle$ and counts of the second type produces the desired coefficient of
$\langle \partial_{+,(\lambda_-,J_-)} \circ \Phi^{+-}_J \rangle$.  
\end{proof}

It now follows that $\Phi^{+-}_J$ descends to a homomorphism at the level of homology.



\subsection{The chain homotopy}\label{chainhtpy-sec}  

Finally, we show that $\Phi^{+-}_J$ induces an isomorphism at the level of homology.  Given two exact completed symplectic cobordisms $(W, J_0)$ and $(W,J_1)$ between dynamically separated contact forms $(\lambda_+, J_+)$ and $(\lambda_-,J_-)$, we want to prove that $\Phi_0:=\Phi^{+-}_{J_0}$ and $\Phi_1:=\Phi^{+-}_{J_1}$ are chain homotopic. 


Adapting Proposition \ref{propchain}  to allow for a converging sequence of almost complex structures in conjunction with Lemma \ref{bldghomotopy} yields the following proposition. 

\begin{proposition}\label{prophtpy}
Let $\lambda_+$ and $\lambda_-$ be dynamically separated contact forms and $(\tau_n,u_n)$ be a sequence of elements in $\widehat{\M}^{\mathbb{J}}(\gp, \gm)$ such that $ -1 \leq \czm(\gp) - \czm(\gm) \leq 0$. There exist
\begin{enumerate}
\item[\em {(i)}] A subsequence of $(\tau_n, u_n)$;
\item[\em {(ii)}] Good Reeb orbits $\gamma_+=x_0, x_1, ... , x_k$ of $R_{+}$;
 \item[\em {(iii)}] Good Reeb orbits $y_0, y_1,...,y_\ell = \gamma_-$ of $R_-$;
 \item[\em {(iv)}] Real sequences $(s^i_n)$ for $ 0 \leq i \leq k-1$ that tend to $+\infty$ and $(\varsigma_n^j)$ for $0\leq j \leq \ell -1$ that tend of $-\infty$.
 \item[\em {(v)}] Cylinders $u^i \in \mathcal{M}^{J_1}(x_i, x_{i+1})$ for $0 \leq i \leq k-1$ and cylinders $v^j \in \mathcal{M}^{J_0}(y_j, y_{j+1})$ for $0 \leq j \leq \ell-1$
 \item[\em {(vi)}] An element $(\tau_\star, w) \in \calm(x_k, y_0)$ such that for $0 \leq i \leq k-1$ and $0 \leq j \leq \ell-1$,
 \[
 \lim_{n \to + \infty} u_n \cdot s_n^i = u^i, \ \ \ \lim_{n \to +\infty} u_n \cdot \varsigma_n^j = v^j
 \]
 and such that
 \[
 \lim_{n \to + \infty} (\tau_n, u_n) = (\tau_\star, w).
 \]
 \item[\em {(vii)}] 
Moreover, $ \czm(\gp) - \czm(\gm)  + 1 \geq k+ \ell $ with $k, \ell \geq 0$.
\end{enumerate}
\end{proposition}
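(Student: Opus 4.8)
The plan is to prove Proposition \ref{prophtpy} essentially by repeating the proof of Proposition \ref{propchain}, but now with the parametric ingredients established in Section \ref{obs-htpy} substituted for their non-parametric counterparts. The overall structure should be: first invoke the appropriate SFT compactness theorem for sequences in a one-parameter family of cobordisms; then use the index and regularity bounds from Section \ref{invariance} to control which broken configurations can actually arise; and finally extract the index inequality (vii) from the emptiness criteria for the relevant moduli spaces of cylinders.

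First I would note that the dynamically separated condition guarantees, via the usual argument in the excerpt, that all Reeb orbits appearing as asymptotic limits of curves in $\widehat{\M}^\mathbb{J}(\gp,\gm)$ are good, so (ii) and (iii) make sense. Next I would apply SFT compactness from \cite{BEHWZ} in the one-parameter setting: passing to a subsequence of $(\tau_n,u_n)$, the limit is a pseudoholomorphic building together with a limiting parameter value $\tau_\star \in [0,1]$. Here the key input is Lemma \ref{bldghomotopy}, which classifies the possible buildings of index $0$ (i.e.\ with $\czm(\gp)-\czm(\gm)=0$) occurring in the homotopy: they are either an unbroken cylinder at some $\tau_\star$, or an unbroken cylinder at an endpoint, or a once-broken cylinder in which the symplectization level is a rigid cylinder (index $1$ in the symplectization) and the cobordism level is a cylinder of index $0$ or $-1$ at an isolated $\tau$. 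Combined with Lemmata \ref{lemma1htpy} and \ref{lemma2htpy}, this rules out all noncylindrical levels and all bad breaking; regularity for the cylindrical levels that do appear follows from Propositions \ref{cylcoverssep}, \ref{cylcoverssep-par}, \ref{0compact}, and \ref{0compacthtpy}. This yields (iv)--(vi), with the upper symplectization levels $u^i\in\mathcal{M}^{J_1}(x_i,x_{i+1})$ governed by $J_+$ (here written $J_1$ in the statement, matching the convention that the source end has almost complex structure $J_+$) and the lower levels $v^j\in\mathcal{M}^{J_0}(y_j,y_{j+1})$ governed by $J_-$; the rescaling sequences $s_n^i\to+\infty$ and $\varsigma_n^j\to-\infty$ are produced by the standard bubbling-off analysis of SFT compactness.

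For (vii) I would argue exactly as in the proof of Proposition \ref{propchain}, but using the parametric emptiness criterion from Proposition \ref{0compacthtpy}: for the symplectization levels we have $\mathcal{M}^{J_+}(x_i,x_{i+1})=\emptyset$ unless $\czm(x_i)-\czm(x_{i+1})>0$ and likewise $\mathcal{M}^{J_-}(y_j,y_{j+1})=\emptyset$ unless $\czm(y_j)-\czm(y_{j+1})>0$, while for the middle level $\widehat{\M}^\mathbb{J}(x_k,y_0)$ being nonempty forces $\czm(x_k)-\czm(y_0)\geq -1$, since index $-1$ cylinders occur at isolated $\tau\in(0,1)$ and index $\leq -2$ cylinders are excluded by Proposition \ref{cylcoverssep-par}. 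Telescoping, $\czm(\gp)-\czm(\gm) = \sum_{i=0}^{k-1}\big(\czm(x_i)-\czm(x_{i+1})\big) + \big(\czm(x_k)-\czm(y_0)\big) + \sum_{j=0}^{\ell-1}\big(\czm(y_j)-\czm(y_{j+1})\big) \geq k + (-1) + \ell$, giving $\czm(\gp)-\czm(\gm)+1 \geq k+\ell$ with $k,\ell\geq 0$, as claimed.

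The main obstacle will be ensuring that the SFT compactness statement is cited and applied in a form that genuinely covers one-parameter families of \emph{exact} cobordisms with a converging sequence of almost complex structures $J_{\tau_n}\to J_{\tau_\star}$ — i.e.\ that the limiting building really lives in the fiber over a single limiting parameter $\tau_\star$ — and then correctly matching that abstract output with the combinatorial restrictions of Lemma \ref{bldghomotopy} so that no unanticipated configuration (for instance one with a noncylindrical level carrying negative formal index absorbed by capping planes) slips through. Since all the hard index bookkeeping has already been done in Section \ref{obs-htpy}, the proof itself can be quite short: it is a matter of quoting \cite{BEHWZ} (and \cite[Prop.\ 10.19, 10.23]{wendl-sft}) for the compactness, quoting Lemma \ref{bldghomotopy} for the classification, quoting Propositions \ref{cylcoverssep}, \ref{cylcoverssep-par}, \ref{0compacthtpy} for regularity, and then performing the telescoping sum above.
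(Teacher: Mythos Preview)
Your proposal is correct and follows essentially the same approach as the paper: the paper simply states that Proposition~\ref{prophtpy} follows by ``adapting Proposition~\ref{propchain} to allow for a converging sequence of almost complex structures in conjunction with Lemma~\ref{bldghomotopy},'' and your write-up is precisely a fleshed-out version of that adaptation, with the parametric regularity inputs (Propositions~\ref{cylcoverssep-par} and~\ref{0compacthtpy}) correctly substituted and the telescoping argument for (vii) adjusted to allow $\czm(x_k)-\czm(y_0)\geq -1$ at the middle level.
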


We can now define a homomorphism of odd degree by 
\[
S: C_*^{EGH}(M, \lambda_+, J_+) \to C_{*+1}^{EGH}(M, \lambda_-,J_-)
\]
by
\begin{equation}\label{htpyeq}
\langle S \gp, \gm \rangle =  \sum_{\substack{\gm \in \calp_{\mbox{\tiny good}}(\lambda_-), \\\czm(\gp)=\czm(\gm)-1 }} \sum_{ \calc \in \calm(\gp, \gm)} \epsilon(u) \# \widehat{\M}^\mathbb{J}(\gp,\gm)
\end{equation}

\begin{corollary}\label{0manhtpy}
If $\czm(\gp) = \czm(\gm) -1 $ then the moduli space $\widehat{\mathcal{M}}^\mathbb{J}(\gp, \gm)$ is compact 0-manifold, hence $S$ is well-defined.  
\end{corollary}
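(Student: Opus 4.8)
The plan is to show that under the dynamically separated hypothesis, the moduli space $\widehat{\mathcal{M}}^\mathbb{J}(\gp,\gm)$ of index-$(-1)$ cylinders in the parametric cobordism, which has formal dimension $\czm(\gp)-\czm(\gm)+1 = 0$, contains no boundary strata and hence is both compact and $0$-dimensional. This is the exact analogue, one degree down, of Corollary \ref{0manchain}, and I would mirror that argument: invoke the parametric compactness theory together with the building exclusion lemmata of Section \ref{obs-htpy} to force any limiting configuration to be a single (parametrically regular) cylinder.

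First I would apply Proposition \ref{prophtpy} to a sequence $(\tau_n,u_n)$ in $\widehat{\mathcal{M}}^\mathbb{J}(\gp,\gm)$ with $\czm(\gp)-\czm(\gm)=-1$. Conclusion (vii) of that proposition gives $-1+1 = 0 \geq k+\ell$ with $k,\ell\geq 0$, which forces $k=\ell=0$; thus there are no broken symplectization levels above or below, and the limit is a single element $(\tau_\star,w) \in \calm(x_k,y_0) = \calm(\gp,\gm)$ up to the building structure. To promote this to a genuine statement that no degeneration occurs, I would invoke Lemma \ref{bldghomotopy}: a genus-$0$ building with one positive end and one negative end satisfying $\czm(\gp)-\czm(\gm)=0$ falls into types (i)--(v) there, but the relevant count for $S$ is the one where the index of the building is $\hind(\calb) = \czm(\gp)-\czm(\gm)+1 = 0$, i.e.\ one degree below the $\czm(\gp)=\czm(\gm)$ case, so the analysis in the proof of Lemma \ref{lemma2htpy} — which already tracks the $\hind = -1$ isolated-$\tau$ cylinders — shows the only configuration with the correct dimension is a single index-$(-1)$ cylinder $(\tau,u)$ with $\tau\in(0,1)$. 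In particular no splitting off of a nonconstant cobordism curve plus a symplectization level can occur, because any such splitting would, by the index additivity $\hind(\calb) = \hind(u_1) + \sum |\ga_-^{k_i}| + (\text{number of levels shifts})$ and the estimates of Lemmata \ref{lemma1htpy}, \ref{lemma2htpy}, push the total index strictly above $0$.

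Next I would address regularity. By Proposition \ref{0compacthtpy}, in a generic one-parameter family all index $-1$, $0$, and $1$ cylinders are parametrically regular; Proposition \ref{cylcoverssep-par} handles precisely the unbranched covers of somewhere injective index-$(-1)$ cylinders, guaranteeing they too are parametrically regular, and Proposition \ref{aut-agree} handles the non-positive-hyperbolic covers via automatic transversality. Hence every point of $\widehat{\mathcal{M}}^\mathbb{J}(\gp,\gm)$ is a cut-out-transversely point of a $0$-dimensional manifold. The manifold (as opposed to orbifold) structure near a multiply covered cylinder $u$ over a somewhere injective $\overline{u}$ follows verbatim from the argument at the end of the proof of Proposition \ref{0compact}: one shows the deck group acts trivially on $\ker(D_u)$ because $\ker(D_u)$ is pulled back from $\ker(D_{\overline u})$, which in turn holds once $\ind(u)=\ind(\overline u)$, and this index equality is exactly the content of the dynamically separated condition as used in Propositions \ref{cylcoverssep} and \ref{cylcoverssep-par}.

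Putting these together, $\widehat{\mathcal{M}}^\mathbb{J}(\gp,\gm)$ is a compact $0$-manifold, so the sum in \eqref{htpyeq} is finite and $S$ is well-defined. The main obstacle, as in the unfiltered chain-map case, is not any single estimate but the bookkeeping: one must be careful that the index-shift by $+1$ coming from the parameter $\tau$ is consistently incorporated into every building-index inequality, so that the ``$\geq 2$'' and ``$\geq 1$'' thresholds of Lemmata \ref{lemma1htpy} and \ref{lemma2htpy} genuinely rule out all strata except the single isolated-$\tau$ cylinder; once Lemma \ref{bldghomotopy} is in hand this is essentially immediate, so the real work has already been done upstream and this corollary is a short deduction.
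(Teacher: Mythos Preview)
Your proposal is correct and follows essentially the same approach as the paper, which simply cites Propositions \ref{0compacthtpy} and \ref{prophtpy}; you have merely unpacked what those two results contribute (regularity and manifold structure from the former, the $k=\ell=0$ compactness conclusion from part (vii) of the latter). Your invocation of Lemma \ref{bldghomotopy} is slightly off-target since that lemma treats the $\czm(\gp)-\czm(\gm)=0$ case, but you correctly recognize that the relevant building exclusion is already contained in Proposition \ref{prophtpy} via the estimate $\czm(\gp)-\czm(\gm)+1\geq k+\ell$, so this does not affect the argument.
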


\begin{proof}
This follows from Propositions \ref{0compacthtpy} and \ref{prophtpy}.
\end{proof}

Next, we claim that $S$ is a chain homotopy, e.g. that
\[
\Phi_1 -\Phi_0 = S \circ \partial_{+, (\lambda_+, J_+)} + \partial_{+, (\lambda_-,J_-)}\circ S.
\]

\begin{proposition}\label{prophtpy1}
At the level of homology, the morphism $\Phi^{+-}$ induces a morphism that is independent of the choice of completed symplectic cobordism $(W,J)$ between $(\lambda_+, J_+)$ and $(\lambda_-,J_-)$.
\end{proposition}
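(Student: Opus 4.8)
The goal is to show that the map $\Phi^{+-}$ induced on cylindrical contact homology is independent of the choice of exact symplectic cobordism $(W,J)$ interpolating between the dynamically separated pairs $(\lambda_+,J_+)$ and $(\lambda_-,J_-)$. The strategy is the standard one in Floer-theoretic arguments: given two such cobordisms $(W,J_0)$ and $(W,J_1)$, connect them by a generic smooth path $\mathbb{J} = \{J_\tau\}_{\tau \in [0,1]}$ of cobordism-compatible almost complex structures and use the resulting parametric moduli spaces to construct the chain homotopy $S$ of \eqref{htpyeq}, then verify the chain homotopy equation $\Phi_1 - \Phi_0 = S \circ \partial_{+,(\lambda_+,J_+)} + \partial_{+,(\lambda_-,J_-)} \circ S$. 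Once $S$ is shown to be a genuine chain homotopy, passing to homology gives $(\Phi_1)_* = (\Phi_0)_*$, which is the assertion.

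\textbf{Key steps.} First I would invoke Corollary \ref{0manhtpy} to ensure $S$ is well-defined: when $\czm(\gp) = \czm(\gm) - 1$, the parametric moduli space $\widehat{\mathcal{M}}^{\mathbb{J}}(\gp,\gm)$ is a compact $0$-manifold, by Propositions \ref{0compacthtpy} and \ref{prophtpy}. Second, for the chain homotopy equation I would consider the case $\czm(\gp) - \czm(\gm) = 0$, so that $\dim \widehat{\mathcal{M}}^{\mathbb{J}}(\gp,\gm) = 1$, and analyze its compactification. By Lemma \ref{bldghomotopy}, combined with Proposition \ref{prophtpy} (the SFT compactness statement adapted to a converging family of almost complex structures), the boundary of $\overline{\widehat{\mathcal{M}}^{\mathbb{J}}}(\gp,\gm)$ consists of exactly four types of configurations: the endpoint moduli spaces $\mathcal{M}^{J_0}(\gp,\gm)$ at $\tau = 0$ and $\mathcal{M}^{J_1}(\gp,\gm)$ at $\tau = 1$, which contribute $\Phi_0$ and $\Phi_1$ respectively; and the once-broken configurations $\widehat{\mathcal{M}}^{J_+}(\gp,\ga_0) \times \widehat{\mathcal{M}}^{\mathbb{J}}(\ga_0,\gm)$ with $\czm(\gp) - \czm(\ga_0) = 1$ (an index-$1$ cylinder in the top symplectization glued to an index-$(-1)$ parametric cylinder), contributing $S \circ \partial_{+,(\lambda_+,J_+)}$, and $\widehat{\mathcal{M}}^{\mathbb{J}}(\gp,\ga_0) \times \widehat{\mathcal{M}}^{J_-}(\ga_0,\gm)$ with $\czm(\ga_0) - \czm(\gm) = 1$, contributing $\partial_{+,(\lambda_-,J_-)} \circ S$. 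Here the crucial input is that Lemma \ref{bldghomotopy} rules out all noncylindrical levels (pairs of pants with extra contractible ends, planes attached, etc.) and all bad breaking, which is precisely what the numerology of Section \ref{quandaries} together with the regularity results of Section \ref{reg-sec} — notably Proposition \ref{cylcoverssep-par} for index $-1$ parametric cylinders and Proposition \ref{cylcoverssep} for index $0$ cylinders in the cobordism — were designed to guarantee. Counting the signed boundary points of each component of this $1$-manifold to zero, using coherent orientations as in \cite{BM}, yields the chain homotopy identity. Third, I would note the standard gluing theorem — that each broken configuration of the above types is the limit of a unique end of $\widehat{\mathcal{M}}^{\mathbb{J}}(\gp,\gm)$ — which, together with compactness, gives that $\overline{\widehat{\mathcal{M}}^{\mathbb{J}}}(\gp,\gm)$ is genuinely a compact $1$-manifold with boundary; references for the gluing analysis are as cited in the chain map proof, e.g. \cite[\S 11]{wendl-sft} and \cite[\S 11.2]{ADfloer}. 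Finally, passing to homology, $S$ being a chain homotopy forces $(\Phi_0)_* = (\Phi_1)_*$ on $CH_*$, which is the claim.

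\textbf{Main obstacle.} The substantive difficulty is not the homological algebra, which is routine, but ensuring that the compactification of the $1$-dimensional parametric moduli space contains only the four expected boundary strata — in particular that no multiply covered or branched cover of a cylinder or pair of pants of negative index can appear in an intermediate level of a pseudoholomorphic building. This is exactly the content of Lemma \ref{bldghomotopy}, whose proof (via Lemmata \ref{lemma1htpy} and \ref{lemma2htpy}) rests on the index estimates of Propositions \ref{Hurwitztentacles}, \ref{coveredcyl}, \ref{unbranchedpants}, \ref{unpant1}, and \ref{branchedpants}, and on the fact that the problematic low-index covered cylinders — whose existence in a generic homotopy is unavoidable at isolated parameter values $\tau$ — are nonetheless parametrically regular by Proposition \ref{cylcoverssep-par}, which in turn depends essentially on the dynamically separated hypothesis. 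Since all of these ingredients are already established in the preceding sections, the proof of the present proposition is mostly a matter of assembling them: the work of excluding bad compactness phenomena has been done, and what remains is the bookkeeping of the boundary count and the invocation of gluing.
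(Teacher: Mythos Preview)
Your proposal is correct and follows essentially the same approach as the paper's own proof: both analyze the boundary of the compactified one-dimensional parametric moduli space $\overline{\mathcal{M}}^{\mathbb{J}}(\gp,\gm)$ with $\czm(\gp)-\czm(\gm)=0$, invoke Lemma \ref{bldghomotopy} (together with gluing) to identify the four boundary strata as the counts appearing in $\Phi_0$, $\Phi_1$, $S\circ\partial_+$, and $\partial_+\circ S$, and conclude the chain homotopy identity from the vanishing of the signed boundary count. Your write-up is in fact more explicit than the paper's about the role of Corollary \ref{0manhtpy}, the regularity inputs from Propositions \ref{cylcoverssep} and \ref{cylcoverssep-par}, and the gluing references, but the argument is the same.
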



\begin{proof}
This will follow from the boundary of the compactified 1-dimensional moduli space $\overline{\mathcal{M}}^\mathbb{J}(\gp,\gm)$, where $\czm(\gp)-\czm(\gm)=0$.  By Lemma \ref{bldghomotopy} along with appropriate gluing arguments, the boundary consists of four types of objects:
\begin{enumerate}
\item Pairs $(0, u)$ with $u \in {\mathcal{M}}^{J_0}(\gp,\gm)$ which are counted by $\Phi_0$.
\item Pairs $(1, u)$ with $u \in {\mathcal{M}}^{J_1}(\gp,\gm)$ which are counted by $\Phi_1$.
\item Pairs $(\tau, (u_+,u_0))$ with $(u_+,u_0)$ a broken cylinder with upper level $u_+ \in {\mathcal{M}}^{J_+}(\gp,\ga_0)$ and main level $u_0 \in {\mathcal{M}}^{J_\tau}(\ga_0,\gm)$ for some $\tau \in (0,1)$; these are counted by $S \circ \partial_+$.
\item Pairs $(\tau, (u_0,u_-))$ with $ (u_0,u_-)$ a broken cylinder with lower level $u_- \in {\mathcal{M}}^{J_-}(\ga_0,\gm)$ and main level $u_0 \in {\mathcal{M}}^{J_\tau}(\gp,\ga_0)$ for some $\tau \in (0,1)$; these are counted by $\partial_-\circ S$.
\end{enumerate}
The sum $\Phi_1 -\Phi_0 - S \circ \partial_{+, (\lambda_+, J_+)} -\partial_{+, (\lambda_-,J_-)}\circ S$ is therefore an oriented count of the boundary points of a compact 1-manifold, so it vanishes.

\end{proof}

The final step needed in the  proof of Theorem \ref{chainthm}  is the following proposition.

\begin{proposition}\label{prophtpy2}
Let $(\lambda_1, J_1), \ (\lambda_2, J_2), \ (\lambda_3, J_3)$ be three nondegenerate dynamically separated pairs on $(M, \xi)$ and let $(W_{21}, J_{21})$ and $(W_{32}, J_{32})$ be two completed symplectic cobordisms between $(\lambda_2, J_2), \ (\lambda_1, J_1)$ and $ (\lambda_3, J_3), \ (\lambda_2, J_2)$ respectively.  Then there exists a completed symplectic cobordism $(W_{31}, J_{31})$ between $ (\lambda_3, J_3), \ (\lambda_1, J_1)$ such that
\[
\Phi^{31} \mbox{ and } \Phi^{32} \circ \Phi^{21}: C_*^{EGH}(M, \lambda_3,J_3) \to C_*^{EGH}(M, \lambda_1,J_1)
\]
induce the same homomorphism at the level of homology.
\end{proposition}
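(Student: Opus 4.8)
The plan is to establish the composition property $\Phi^{31} \simeq \Phi^{32} \circ \Phi^{21}$ by the standard SFT "stretching/gluing" argument, adapted to the dynamically separated setting where we have the index bounds and regularity results of Section \ref{invariance} at our disposal. First I would construct $(W_{31}, J_{31})$ as the concatenation (gluing along the common end $(\lambda_2, J_2)$) of the cobordisms $(W_{32}, J_{32})$ and $(W_{21}, J_{21})$, yielding a completed symplectic cobordism from $(\lambda_3, J_3)$ to $(\lambda_1, J_1)$ with a long cylindrical neck modeled on $[-R, R] \times M$ with $\lambda_2$-compatible almost complex structure $J_2$ in the middle. By Theorem \ref{si-thm2} and Theorem \ref{thmparreg}, for generic choices the relevant moduli spaces of somewhere injective curves in $W_{31}$ are regular, and by Propositions \ref{cylcoverssep} and \ref{cylcoverssep-par} the unbranched covers of index $0$ and index $-1$ cylinders that appear are also (parametrically) regular. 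Then $\Phi^{31}$ is defined by \eqref{chaineq} using $(W_{31}, J_{31})$, and by Proposition \ref{prophtpy1} the induced map on homology does not depend on the choice of cobordism, so it suffices to prove the identity for this particular $(W_{31}, J_{31})$.

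Next I would consider the neck-stretching family $\{(W_{31}, J_{31}^R)\}_{R \geq 0}$ obtained by inserting a neck of length $R$ and analyze the degeneration as $R \to \infty$. By SFT compactness \cite{BEHWZ} (cf. \cite[\S 10]{wendl-sft}), a sequence of index $0$ cylinders in $(W_{31}, J_{31}^R)$ converges to a pseudoholomorphic building; the key point is that the relevant index considerations force this building to split along the neck into exactly one piece living in (the completion of) $W_{32}$ and one piece living in (the completion of) $W_{21}$, together with possibly trivial cylinders and symplectization levels. Here I invoke the numerology of Section \ref{quandaries}: Lemmata \ref{lemma0}--\ref{bldgchain} and Proposition \ref{0compact} rule out noncylindrical levels and bad breaking, and the additivity of the Fredholm index together with the dynamically separated index constraints (each intermediate cylinder has index $\geq 1$ in a symplectization, $\geq 0$ in a cobordism, with the plane-capping estimates) forces that an index $0$ cylinder in $W_{31}^R$ limits to a concatenation of an index $0$ cylinder in $W_{32}$ with an index $0$ cylinder in $W_{21}$ (connected through the orbit $\ga_2$ of $\lambda_2$ they share). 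Conversely, a gluing theorem — following the reasoning of \cite[\S 4.3]{HN1}, \cite[\S 11]{wendl-sft}, \cite[\S 11.2]{ADfloer} — shows that every such glued pair arises from a unique (for $R$ large) index $0$ cylinder in $W_{31}^R$. Counting with signs and multiplicities as in \eqref{chaineq}, this yields $\Phi^{31}_{J_{31}^R} = \Phi^{32}_{J_{32}} \circ \Phi^{21}_{J_{21}}$ as chain maps for $R$ large, hence the same identity on homology.

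The main obstacle I expect is the bookkeeping of \emph{multiplicities and weights} when curves in $W_{32}$ and $W_{21}$ are glued along a multiply covered orbit $\ga_2 = \ov{\ga_2}^{\,m}$, together with the associated orientation/sign conventions. Because the weighted count in \eqref{chaineq} involves the factor $\mult(\gp)/\mult(\calc)$, one must verify that under gluing these factors multiply correctly: a degree-$\mult$ cover of a glued configuration must correspond to the fiber product of covers of the two pieces, and the combinatorics of how branched covers of the glued cylinder decompose (governed by monodromy around the neck, as in the Riemann--Hurwitz discussion of Section \ref{quandaries}) must match the product of the two local moduli contributions. Fortunately, under the dynamically separated hypothesis we have $\mult(\gp) = \mult(\ga_2) = \mult(\gm)$ for all cylinders in the building (cf. Remark \ref{coefficients} and Lemma \ref{lem:at}(iii)), which collapses most of this bookkeeping; the remaining point is that the coherent orientations of \cite{BM} are compatible with the concatenation of cobordisms, which is standard. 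Thus the heart of the argument is the compactness-plus-gluing identification of index $0$ moduli spaces in the stretched cobordism, and the invariance statement of Proposition \ref{prophtpy1} then removes any dependence on the particular $(W_{31}, J_{31})$, completing the proof of Theorem \ref{chainthm}.
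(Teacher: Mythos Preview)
Your approach is essentially the same as the paper's: both construct the composite cobordism, stretch the neck, invoke Proposition \ref{prophtpy1} to make the chain maps for different neck lengths chain homotopic, and use the index bounds of Section \ref{quandaries} to see that index $0$ cylinders degenerate to two-level buildings consisting of an index $0$ cylinder in each of $W_{32}$ and $W_{21}$.

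One point to correct: your claim that $\mult(\gp)=\mult(\ga_2)=\mult(\gm)$ for index $0$ cylinders in a cobordism is not supported by Remark \ref{coefficients} or Lemma \ref{lem:at}(iii), both of which concern index $1$ and $2$ cylinders in a symplectization; in a cobordism the two ends are orbits of \emph{different} Reeb flows, and their covering multiplicities need not agree. The paper does not bypass this bookkeeping but computes it directly: gluing a cylinder $u$ (of covering multiplicity $\mult(u)$) to a cylinder $v$ (of covering multiplicity $\mult(v)$) along the intermediate orbit $\ga_2$ produces glued cylinders whose weighted contribution is
\[
\epsilon(u)\epsilon(v)\,\frac{\mult(\ga_3)}{\mathrm{lcm}(\mult(u),\mult(v))}\cdot\frac{\mult(\ga_2)}{\gcd(\mult(u),\mult(v))}
=\epsilon(u)\epsilon(v)\,\frac{\mult(\ga_3)\,\mult(\ga_2)}{\mult(u)\,\mult(v)},
\]
using $\mathrm{lcm}\cdot\gcd=\mbox{product}$; summing over $\ga_2$ then factors as $\langle\Phi^{32}\ga_3,\ga_2\rangle\langle\Phi^{21}\ga_2,\ga_1\rangle$. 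This is exactly the computation you flagged as the main obstacle, and it does not simplify in the way you suggest.
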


\begin{proof}
The proof of this proposition relies on a neck stretching construction.  Explicit details of such constructions can be found in \cite[Appendix 1]{mclean}, \cite[\S 9.4.4]{wendl-sft}.  
After rescaling, suppose without loss of generality that $\lambda_i = e^{f_i}\lambda$ with $f_3>f_2 > f_1$,  Then the cobordism
\[
\overline{W}_{31}:=\{ (r,x) \ | \ f_1(x) \leq r \leq f_3(x) \}
\]
contains a contact-type hypersurface
\[
M_{2}:=\{ (f_1(x),x) \ | \ x \in M \} \subset 	\overline{W}_{31}.
\]
We choose a sequence of compatible almost complex structures $\{J^N_{31}\}_{N\in \N}$ on $W_{31}$ that are fixed outside a neighborhood of $M_2$ but degenerate in this neighborhood as $N \to \infty$.  This is equivalent to replacing a small tubular neighborhood of $M_2$ with increasingly large collars $[-N, N] \times M$ in which $J_{31}^N$ is $\lambda_2$-compatible.  The resulting chain maps
\[
\Phi_{J_{31}^N}^{31}:C_*^{EGH}(C_*^{EGH}(M,\lambda_3, J_3) \to C_*^{EGH}(M, \lambda_1,J_1)
\]
are chain homotopic for all $N$.  As $N \to \infty$, the index 0 cylinders counted by these maps coverge to buildings with two levels.  The top level is an index 0 cylinder which lives in the completed cobordism from $(M, \lambda_2, J_2)$ to $(M, \lambda_3, J_3)$, while the bottom level is an index 0 cylinder which lives in the completed cobordism from $(M, \lambda_1, J_1)$ to $(M, \lambda_2, J_2)$.  That there are no other levels follows from the calculations in \S \ref{obs-chain}-\ref{obs-htpy}.

The composition $\Phi^{32} \circ \Phi^{21}$ counts these broken cylinders and we have
\[
\begin{array}{lcl}
\langle \Phi^{31} \ga_{3}, \ga_1\rangle & =&  \displaystyle \sum_{\substack{\ga_2 \in \mathscr{P}_{\mbox{\tiny good}}(\lambda_2) \\ \czm(\ga_3) = \czm(\ga_2)}} \sum_{\substack{ u \in \widehat{\mathcal{M}}^{J_{32}}(\ga_3,\ga_2) \\ v \in \widehat{\mathcal{M}}^{J_{21}}(\ga_2,\ga_1) }} \left( \epsilon(u)\epsilon(v) \frac{\mult({\ga_3})}{\mbox{lcm}(\mult(u), \mult(v))} \frac{\mult(\ga_2)}{\mbox{gcd}(\mult(u)\mult(v) )}\right), \\
&&\\
 &=& \hspace{.9cm} \displaystyle \sum \epsilon(u)\epsilon(v)  \frac{ \mult(\ga_3)\mult(\ga_2)}{\mult(u)\mult(v)}\\
 &&\\
 &=& \displaystyle \sum_{\substack{\ga_2 \in \mathscr{P}_{\mbox{\tiny good}}(\lambda_2) \\ \czm(\ga_3) = \czm(\ga_2)}} \langle \Phi^{32} \ga_3, \ga_2 \rangle  \langle \Phi^{21} \ga_2, \ga_1 \rangle. \\
\end{array}
\]
Thus, at the level of homology,
\[
\Phi_{32} \circ \Phi_{21} = \Phi_{31}.
\]
As a result, we can conclude that each of the maps $\Phi^{+-}: CH_*(M,\lambda_+, J_+) \to CH_*(M,\lambda_-, J_-) $ is an isomorphism because composing $\Phi^{+-}$ with $\Phi^{-+}$ must give the identity by Proposition \ref{idmap}.  

\end{proof}


\subsection{Invariance of the filtered homology under continuation}\label{filtered-continuation}

Invariance of the filtered cylindrical contact homology groups under continuation is more subtle than in the unfiltered case because the filtered groups are invariant only along paths $(\lambda_\tau, J_\tau) $ for which $L$ is not a period of a Reeb orbit associated to $R_{\lambda_\tau}$.  Because the corresponding path spaces may not be connected, the resulting continuation isomorphism may depend on the homotopy class of the path.  However, in Theorem \ref{invariance-thm1} we fixed the contact form and allowed $J$ to vary, thus the proof follows by repeating the arguments in the construction of the chain map and chain homotopy for unfiltered cylindrical contact homology.  

Next we consider the degree of independence ont he choice of $L$-nondegenerate dynamically separated contact form.  Let $\{ \lambda_\tau, J_\tau \}$ be a smooth homotopy between $L$-nondegenerate dynamically separated contact forms and compatible almost complex structures satisfying
\[
\left\{
\begin{array}{lcl}
( \lambda_\tau, J_\tau ) = (\lambda_0, J_0)  &\mbox{for} & s \leq 0;\\
( \lambda_\tau, J_\tau ) = (\lambda_1, J_1)  & \mbox{for} & s \geq 1.\\
\end{array}
\right.
\]

 The continuation map 
\[
\Phi^L_{\{ \lambda_\tau, J_\tau \}}: CH_*^{EGH,L}(M,\lambda_1, J_1) \to CH_*^{EGH,L}(M,\lambda_0, J_0) 
\]
preserves the subcomplexes on the chain level if $\mathcal{A}(\gamma_1) < \mathcal{A}(\gamma_0)$. If this condition is not satisfied, we still obtain isomorphisms
\[
\Phi^L_{\{ \lambda_\tau, J_\tau \}}(\tau_1,\tau_0): CH_*^{EGH,L}(M,\lambda_1, J_1) \to CH_*^{EGH,L}(M,\lambda_0, J_0) 
\]
for $|\tau_1-\tau_0|$ sufficiently small. To see this, it suffices to replace $(\lambda_\tau, J_\tau)$ by the homotopy
\[
\tau \mapsto \left(\lambda_{\beta(\tau)}, J_{\beta(\tau)}\right) \ \mbox{ where } \ \beta(\tau):=\tau_0 + \rho(\tau)(\tau_1-\tau_0)
\]
and $\rho: \R \to [0,1]$ is a smooth cutoff function satisfying 
\[
\left\{
\begin{array}{lcl}
\rho(\tau) =0 &\mbox{for} & \tau \leq 0;\\
\rho(\tau) = 1 & \mbox{for} & \tau \geq 1.\\
\end{array}
\right.
\]

For a general pair of real numbers $\tau_0, \ \tau_1$, the isomorphism $\Phi^L_{\{ \lambda_\tau, J_\tau \}}(\tau_1,\tau_0)$ can then be defined as a composition of the isomorphisms  $\Phi^L_{\{ \lambda_\tau, J_\tau \}}(\tau_{i+1},\tau_{i})$ for a suitable partition of the interval $[\tau_0,\tau_1]$.  The resulting isomorphism is independent of the choice of partition.

By repeating the arguments in the construction of the chain map and chain homotopy for the unfiltered cylindrical contact homology, we deduce that the continuation isomorphisms on filtered Floer homology have the following properties.  

\begin{theorem}\label{invariance-thm2}
Let $M$ be a closed oriented connected 3-manifold  and $(\lambda_{\tau_i}, J_{\tau_i})$ are $L$-nondegenerate dynamically separated pairs for $i=0,1,2$. 
\begin{description}
\item[Naturality:] If $\{ \lambda_\tau, J_\tau \}$ is a generic smooth path through dynamically separated pairs then $\Phi^L_{\{ \lambda_\tau, J_\tau \}}(\tau_0,\tau_0) = \id$ and
\[
 \Phi^L_{\{ \lambda_\tau, J_\tau \}}(\tau_2,\tau_0) = \Phi^L_{\{ \lambda_\tau, J_\tau \}}(\tau_2,\tau_1) \circ \Phi^L_{\{ \lambda_\tau, J_\tau \}}(\tau_1,\tau_0).
\]

\item[Homtopy:] The isomorphism $\Phi^L_{\{ \lambda_\tau, J_\tau \}}(\tau_1,\tau_0) $ depends only on the homotopy class with fixed endpoints of the path $\{ \lambda_\tau, J_\tau \}$.
\item[Filtration:] If $L < L'$ and $(\lambda,J)$ is an $L$-nondegenerate dynamically separated pair then the continuation maps commute with the homomorphisms in the long exact sequence
\[
\begin{array}{llc c l}
... & \to &CH_*^{EGH, L}(M,\lambda, J) &\to & CH_*^{EGH, L'}(M,\lambda, J) \\ 
& \to& CH_*^{EGH, [L,L']}(M,\lambda, J) &\to& CH_{*-1}^{EGH, L}(M,\lambda, J) \to ... \\
\end{array}
\]
for generic smooth paths through $L'$-nondegenerate dynamically separated pairs.  Here $CH_*^{EGH, [L,L']}(M,\lambda, J)$ denotes the homology of the quotient complex.
\item[Monotonicity:] The continuation homomorphism preserves the subcomplexes $CH_*^{EGH, L}$ and induces a homomorphism
\[
CH_*^{EGH, L}(M,\lambda_{\tau_0}, J_{\tau_0}) \to CH_*^{EGH, L}(M,\lambda_{\tau_1}, J_{\tau_1}) 
\]
for $\tau_0 < \tau_1$.  If, in addition $\lambda_\tau$ is dynamically separated for every $\tau \in [\tau_0,\tau_1]$ then this is an isomorphism and agrees with $\Phi^L_{\{ \lambda_\tau, J_\tau \}}(\tau_1,\tau_0)$.

\end{description}

\end{theorem}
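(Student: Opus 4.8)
The plan is to transport the constructions of Sections~\ref{chainmap-sec}--\ref{chainhtpy-sec} and the index estimates of Section~\ref{quandaries} into the filtered setting, carrying the $\lambda$-action along at every step; the only genuinely new subtlety is the one already recorded above, namely that a continuation chain map respects the truncation to generators of action $<L$ only when the action strictly decreases across the cobordism, so that the general filtered continuation isomorphism must be assembled from short pieces via the cutoff $\rho$ and the resulting composition $\Phi^L_{\{\lambda_\tau,J_\tau\}}(\tau_1,\tau_0)=\Phi^L(\tau_j,\tau_{j-1})\circ\cdots$ over a sufficiently fine partition. Throughout, the standing hypothesis is that $L$ is a Reeb period for none of the forms $\lambda_\tau$ along the path (or homotopy of paths), which is exactly what makes all the relevant path spaces of $L$-nondegenerate dynamically separated pairs nonempty and the isomorphisms well-posed.

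First I would treat \textbf{Naturality}. The identity $\Phi^L_{\{\lambda_\tau,J_\tau\}}(\tau_0,\tau_0)=\id$ is the action-truncated version of Proposition~\ref{idmap}: the trivial cobordism contributes only trivial cylinders, whose asymptotic orbits and action are literally unchanged, so the count descends to the $<L$ subcomplex. The composition law $\Phi^L(\tau_2,\tau_0)=\Phi^L(\tau_2,\tau_1)\circ\Phi^L(\tau_1,\tau_0)$ is the action-truncated version of Proposition~\ref{prophtpy2}: one performs the neck-stretching construction along the contact hypersurface modelled on $\lambda_{\tau_1}$, and since the Hofer energy---hence the $\lambda$-action of the asymptotic limits---is additive under the resulting splitting of index-$0$ cylinders into a two-level building, the degenerating family of chain maps respects the truncation uniformly in $N$; the bounds of Sections~\ref{obs-chain}--\ref{obs-htpy} are what forbid any further, noncylindrical level. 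Independence of $\Phi^L(\tau_1,\tau_0)$ from the chosen partition of $[\tau_0,\tau_1]$ then follows by applying the composition law to a common refinement.

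For \textbf{Homotopy} invariance I would run the chain-homotopy argument of Section~\ref{chainhtpy-sec} over the extra homotopy parameter: a homotopy with fixed endpoints between two admissible paths yields a finite-dimensional family of cobordisms to which the parametric regularity results (Remark~\ref{par-reg}, Theorem~\ref{thmparreg}, Proposition~\ref{cylcoverssep-par}) apply, and Proposition~\ref{prophtpy} together with Lemma~\ref{bldghomotopy} shows (level by level) that the only breaking of the compactified one-dimensional parametric moduli space is a single cylinder in the cobordism plus cylinders in the symplectization ends. The resulting boundary count gives a chain homotopy $S^L$ between the two filtered continuation chain maps, defined on the truncated complexes over short pieces by the same additivity-of-action argument, so the induced maps on $CH^{EGH,L}_*$ coincide. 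The \textbf{Filtration} axiom is then pure homological algebra: on a short piece the continuation map is an honest filtered chain map, hence a morphism of the short exact sequences $0\to C^{EGH,L}_*\to C^{EGH,L'}_*\to C^{EGH,[L,L']}_*\to 0$, and it commutes with the connecting maps by naturality of the associated long exact sequence; the general case follows by composing short pieces and invoking Naturality. Finally, \textbf{Monotonicity} holds because, in the direction along which the exact cobordism decreases $\mathcal{A}$, the continuation chain map genuinely preserves the truncated subcomplexes and so passes to filtered homology without the short-homotopy detour; when $\lambda_\tau$ is dynamically separated for every $\tau\in[\tau_0,\tau_1]$, the unfiltered invariance of Theorem~\ref{chainthm} upgrades this to an isomorphism, which must agree with $\Phi^L_{\{\lambda_\tau,J_\tau\}}(\tau_1,\tau_0)$ by the composition law.

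The hard part is not any single homological-algebra step but the bookkeeping that ensures the action of \emph{every} Reeb orbit occurring as an asymptotic limit of a curve---or of a level of a building---counted after truncation stays below $L$, i.e.\ that no curve retained by the $<L$ truncation can ``escape'' the filtration. This rests on the fact that the cylindrical differential and the cobordism/continuation maps strictly decrease action (\cite[Lem.~2.18]{jo1}), together with additivity of Hofer energy under SFT-type degenerations; combined with the failure of the filtered continuation map to be a chain map at the chain level in general, this is precisely what forces the short-homotopy-and-compose formalism and makes the filtered statement genuinely more delicate than its unfiltered counterpart, Theorem~\ref{chainthm}.
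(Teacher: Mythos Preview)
Your proposal is correct and follows the same approach as the paper: the paper's own proof of this theorem consists of a single sentence before the statement, namely ``By repeating the arguments in the construction of the chain map and chain homotopy for the unfiltered cylindrical contact homology, we deduce that the continuation isomorphisms on filtered Floer homology have the following properties,'' together with the preceding paragraphs in Section~\ref{filtered-continuation} explaining the short-piece-and-compose formalism via the cutoff $\rho$. You have supplied considerably more detail than the paper does---in particular, your explicit treatment of why action truncation is preserved under neck-stretching and SFT breaking, and your observation that the Filtration axiom is a formal consequence of naturality of the long exact sequence once the continuation map is a filtered chain map on short pieces---but the underlying strategy is identical.
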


\begin{remark} \em
Let $M$ be a closed oriented connected 3-manifold  and $\lambda$ be a nondegenerate dynamically separated contact form.  Then 
\[
 CH_*^{EGH}(M, \mbox{ker }  \lambda) = \lim_{L \to \infty} CH_*^{{EGH, L}}(M,  \lambda).
\]
Moreover, in this case the filtered continuation isomorphisms agree with the usual ones.  Hence the filtration property asserts that there is a well-defined homomorphism
\[
\iota^L(\lambda): CH_*^{{EGH, L}}(M,  \lambda) \to  CH_*^{{EGH}}(M,  \lambda), 
\]
which is induced by the inclusion of chain complexes.  The filtration property also shows that every path of nondegenerate dynamically separated contact forms determines a commutative diagram
\[
\begin{tikzcd}
CH_*^{{EGH, L}}(M,  \lambda_0) \arrow[r, "\Phi^L_{\{ \lambda_\tau, J_\tau \}}"] \arrow[d, "\iota^L "]
& CH_*^{{EGH, L}}(M,  \lambda_1) \arrow[d, "\iota^L"] \\
CH_*^{{EGH}}(M,  \lambda_0) \arrow[r, "\Phi^{10}" ]
& CH_*^{{EGH}}(M,  \lambda_1)
\end{tikzcd}
\]
where $\Phi^L_{\{ \lambda_\tau, J_\tau \}}$ is the continuation isomorphism of filtered cylindrical contact homology and $\Phi^{10}$ is the canonical cylindrical continuation isomorphism.

\end{remark}


\section{Grinding through gradings}\label{cz-section}
This section provides the details on the Reeb dynamics of prequantization bundles and the computation of the Conley-Zehnder index of the associated Reeb orbits. Recall that $(V^3, \lambda)$ is a prequantization bundle over an integral closed symplectic surface $(\Sigma, \omega)$ so that $[\omega]$ is primitive and $c_1(\Sigma)=c[\omega]$.  Let $\lambda_\vepsilon=(1+\vepsilon \pi^*H)\lambda$ be perturbed by a Morse-Smale function $H$ on $\Sigma$ satisfying $|H|_{C^2}<1$ and $\ga_p$ be the simple Reeb orbit of $R_\veps$ which projects to $p\in \mbox{Crit}(H)$.

\begin{proposition}\label{czcontractible}
Fix a Morse function $H$ such that $|H|_{C^2}<1$ and a constant $T>0.$ There exists $\vepsilon >0$ such that all Reeb orbits with $\mathcal{A}(\gamma) < T$ are nondegenerate and project to critical points of $H$.   Moreover, when $\mathcal{A}(\gpk)<T$,
\begin{equation}
\label{czprequant}
\czm(\gpk)=\mu_{RS}(\gamma^k) -1+\mbox{\emph{index}}_p(H),
\end{equation}
where $\gamma^k$ is the $k$-th iterate of the simple degenerate Reeb orbit corresponding to the circle fiber of $V \to \Sigma$.
\end{proposition}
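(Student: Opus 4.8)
## Proof proposal

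The plan is to reduce the Conley--Zehnder index computation for $\gamma_p^k$ to two pieces: the contribution of the Reeb dynamics along the fiber direction (encoded by the Robbin--Salamon index of the degenerate orbit $\gamma^k = \pi^{-1}(p)$ of the unperturbed form $\lambda$), and the contribution from the Morse-theoretic perturbation in the directions transverse to the fiber (encoded by $\mathrm{index}_p(H)$). First I would recall from Lemma \ref{lempre} (or rather, re-derive in this setting) that for $\varepsilon$ sufficiently small, every Reeb orbit of $R_\varepsilon$ with action less than $T$ is nondegenerate and projects to a critical point of $H$; this is a standard consequence of the formula \eqref{perturbedreeb1} for $R_\varepsilon$, since the horizontal part of $R_\varepsilon$ is $O(\varepsilon)$ times the Hamiltonian vector field $X_H$, and a closed orbit of action $<T$ must have the base projection be a closed orbit of $\varepsilon \widetilde{X}_H$ of bounded length, hence (for small $\varepsilon$) a constant at a critical point of $H$.

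Next I would set up the linearized return map. Along $\gamma_p^k$, the contact plane field $\xi$ splits (after a suitable choice of trivialization $\Phi$ compatible with the connection and with a choice of complex/symplectic coordinates on $T_p\Sigma$) as the pullback of $T_p\Sigma$; the linearized Reeb flow of $R_\varepsilon$ restricted to $\xi$ is, up to a small perturbation, the composition of the linearized flow of the unperturbed $R$ along the $k$-fold fiber with the time-$(\varepsilon k \cdot 2\pi$-ish$)$ linearized Hamiltonian flow of $H$ at the critical point $p$. The unperturbed piece is a rotation (or a path of symplectic matrices) whose Robbin--Salamon index is $\mu_{RS}(\gamma^k)$ — this is the degenerate orbit, so one uses the Robbin--Salamon index rather than Conley--Zehnder. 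The Hamiltonian piece, being the linearized flow of $X_H$ at a nondegenerate critical point for a short time, is a small perturbation of the identity in a direction determined by $\mathrm{Hess}_p H$; its effect on the Conley--Zehnder index is precisely to resolve the degeneracy, shifting the index by an amount governed by the Morse index. The cleanest way to make this precise is the standard perturbation/concatenation formula for the Robbin--Salamon (and Conley--Zehnder) index: for a path $\Psi$ with $\mathrm{Sp}$-degeneracy at the endpoint, perturbing by a small symplectic shear with signature related to $\mathrm{Hess}_p H$ gives $\mu_{CZ} = \mu_{RS} \pm \tfrac{1}{2}(\dim \ker)$-type corrections that combine into $\mu_{RS}(\gamma^k) - 1 + \mathrm{index}_p(H)$. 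Here the $-1$ is the usual shift converting the $\mathrm{RS}$-index of a rotation by a full period into the $\mathrm{CZ}$-normalization (for the $2\pi$-periodic fiber, $\mu_{RS}$ of each full turn contributes, and the Morse index runs over $\{0,1,2\}$ for the two transverse dimensions).

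I would then assemble the computation using the Robbin--Salamon index axioms: homotopy invariance (rel endpoints when the endpoint degeneracy is preserved), the concatenation/catenation property, and the signature-of-the-crossing-form formula at the degenerate endpoint. The key lemma to invoke or prove is: if $\Psi(t)$ is a path of symplectic matrices with $\Psi(T) = \mathds{1}$ (or more generally with a degenerate fixed subspace equal to the fiber's $\xi$), and we perturb to $\Psi_\varepsilon(t) = \Psi(t)\cdot \Phi^H_{\varepsilon t}$ where $\Phi^H$ is the linearized Hamiltonian flow at $p$, then for $\varepsilon$ small $\mu_{CZ}(\Psi_\varepsilon) = \mu_{RS}(\Psi) + \tfrac{1}{2}\mathrm{sign}(\mathrm{Hess}_p H) + \tfrac{1}{2}\dim\ker$, which after bookkeeping equals $\mu_{RS}(\gamma^k) - 1 + \mathrm{index}_p H$. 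This is essentially the computation appearing in Bourgeois's thesis \cite{B02} and in \cite[\S 6]{annethesis}, adapted to the $3$-dimensional prequantization setting; I would cite those for the analytic details and present the index bookkeeping explicitly, checking the normalization on the model example of the Hopf fibration where $\mu_{RS}(\gamma^k) = 4$ and \eqref{czprequant} must reproduce $\czm(\gamma_p^k) = 4k - 1 + \mathrm{index}_p H$ — wait, one must be careful: \eqref{czprequant} as stated has $\mu_{RS}(\gamma^k)$, not $4k$, so the consistency check is that $\mu_{RS}(\gamma^k)$ for the Hopf fiber equals $4$ for the relevant $k$, which is exactly Proposition \ref{spheremaslov}.

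The main obstacle I anticipate is getting the precise constant ($-1$) and the sign/normalization conventions correct: there are competing conventions for the Robbin--Salamon index of degenerate paths, for the direction of the Hamiltonian vector field ($\omega(X_H,\cdot) = dH$ here), and for whether the Morse index or the Morse coindex appears, and these interact with the choice of trivialization $\Phi$ and the grading shift in \eqref{sftgrading}. I would resolve this by pinning everything down on one worked example (the height function on $S^2$ with the Hopf fibration, cross-referencing \eqref{czprequant1}) and then arguing that the general case follows by the homotopy invariance of the indices, since the local model near any nondegenerate critical point is symplectically standard. The rest — nondegeneracy and projection to $\mathrm{Crit}(H)$ for small $\varepsilon$ — is routine once the formula \eqref{perturbedreeb1} is in hand.
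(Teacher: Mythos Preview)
Your proposal is correct and follows essentially the same approach as the paper: factor the linearized perturbed flow along $\gamma_p^k$ into the unperturbed fiber contribution $\Phi$ (giving $\mu_{RS}(\gamma^k)$) and a short-time linearized Hamiltonian flow $\Psi_\varepsilon$ at the critical point $p$ (giving, via the crossing form at $t=0$, the correction $\mathrm{index}_p H - 1$), then add. The paper carries this out by constructing explicit homotopies \`a la \cite{CFHW} to justify $\mu_{RS}(\Phi_\varepsilon)=\mu_{RS}(\Phi)+\mu_{RS}(\Phi(T_k)\Psi_\varepsilon)$ and then invokes the separately proven Lemma~\ref{RSHam} (which gives $\mu_{RS}(\Psi_\varepsilon)=-\tfrac{1}{2}\,\mathrm{sign}\,\mathrm{Hess}_pH=\mathrm{index}_pH-1$), rather than citing \cite{B02} or \cite{annethesis}; your intermediate formula $\mu_{RS}(\Psi)+\tfrac{1}{2}\,\mathrm{sign}(\mathrm{Hess}_pH)+\tfrac{1}{2}\dim\ker$ has the wrong sign under the convention $\omega(X_H,\cdot)=dH$ used here, but you already flagged this as the place where care is needed and your proposed consistency check against the Hopf model would catch it.
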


In Section \ref{crossingform}, we review the necessary material about the Robbin-Salamon index and compute it for linearized flows relevant to the proof of Proposition \ref{czcontractible}.  In Section \ref{prequantdyn} we review the Reeb dynamics of prequantization bundles and finish the proof of Proposition \ref{czcontractible}.
\subsection{The beloved crossing form of Robbin and Salamon}\label{crossingform}
The Conley-Zehnder index $\mu_{CZ}$, is a Maslov index for arcs of symplectic matrices which  assigns an integer $\mu_{CZ}(\Phi)$ to every path of symplectic matrices $\Phi : [0,T] \to \mbox{Sp}(n)$, with $\Phi(0) = \mathds{1} $.   In order to ensure that the Conley-Zehnder index assigns the same integer to homotopic arcs, one must also stipulate that 1 is not an eigenvalue of the endpoint of this path of matrices, i.e. $\det(\mathds{1} - \Phi(T))\neq 0$.  We define the following set of continuous paths of symplectic matrices that start at the identity and end on a symplectic matrix that does not have 1 as an eigenvalue. 
\[
\Sigma^*(n) = \{ \Phi :[0,T] \to \mbox{Sp}(2n)  \ | \ \Phi \mbox{ is continuous},  \ \Phi(0)=\mathds{1}, \mbox{ and }  \mbox{det}(\mathds{1} - \Phi(T)) \neq 0  \}.
\]

The Conley-Zehnder index is a functor satisfying the following properties, and is uniquely determined by the homotopy, loop, and signature properties.

\begin{theorem}{\em \cite[Theorem 2.3, Remark 5.4]{RS1}}, {\em \cite[Theorem 2, Proposition 8 \& 9]{GuCZ}}\label{CZprop} \\
There exists a unique functor $\mu_{CZ}$ called the {\bf{Conley-Zehnder index}} that assigns the same integer to all homotopic paths $\Psi$ in $\Sigma^*(n)$,
\[
\mu_{CZ}: \Sigma^*(n) \to \Z.
\]
 such that the following hold.
\begin{enumerate}[\em (1)]
\item {\bf{Homotopy}}: The Conley-Zehnder index is constant on the connected components of $\Sigma^*(n)$.
\item {\bf{Naturalization}}: For any paths $\Phi, \Psi: [0,1] \to Sp(2n)$, $\mu_{CZ}(\Phi\Psi\Phi^{-1}) = \mu_{CZ}(\Psi)$.
\item {\bf{Zero}}: If $\Psi(t) \in \Sigma^*(n)$ has no eigenvalues on the unit circle for $t >0$, then $\mu_{CZ}(\Psi) = 0$.
\item {\bf{Product}}: If $n = n' + n''$, identify $Sp(2n') \oplus Sp(2n'')$ with a subgroup of $Sp(2n)$ in the obvious way. For $\Psi' \in \Sigma^*(n')$, $\Psi'' \in \Sigma^*(n'')$, then $\mu_{CZ}(\Psi' \oplus \Psi'') = \mu_{CZ}(\Psi') + \mu_{CZ}(\Psi'')$.
\item {\bf{Loop}}: If $\Phi$ is a loop at $\mathds{1}$, then $\mu_{CZ}(\Phi\Psi) = \mu_{CZ}(\Psi) + 2\mu(\Phi)$ where $\mu$ is the Maslov Index.
\item {\bf{Signature}}: If $S \in M(2n)$ is a symmetric matrix with $||S|| < 2\pi$ and $\Psi(t) = \exp(J_0St)$, then $\mu_{CZ}(\Psi) = \frac{1}{2}\mbox{\em sign}(S)$.
\end{enumerate}
\end{theorem}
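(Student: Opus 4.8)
The statement to prove is Theorem \ref{CZprop}, the existence and uniqueness of the Conley-Zehnder index functor $\mu_{CZ}: \Sigma^*(n) \to \Z$ satisfying the Homotopy, Naturalization, Zero, Product, Loop, and Signature properties.

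\medskip

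The plan is to follow the classical Robbin-Salamon approach, constructing $\mu_{CZ}$ via the Robbin-Salamon index $\mu_{RS}$ of a path of Lagrangian subspaces, then checking the axioms. First I would recall the setup: given $\Psi \in \Sigma^*(n)$, one associates to it the path of Lagrangian subspaces $\Lambda(t) = \mathrm{graph}(\Psi(t)) \subset (\R^{2n} \oplus \R^{2n}, (-\omega_0) \oplus \omega_0)$, and pairs it against the fixed diagonal Lagrangian $\Delta$. The Robbin-Salamon index $\mu_{RS}(\Lambda, \Delta)$ is defined through crossing forms: at each crossing time $t$ (where $\Lambda(t) \cap \Delta \neq 0$) one has a quadratic crossing form on the intersection, and $\mu_{RS}$ is the sum of half-signatures at the endpoints plus full signatures at interior crossings. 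After perturbing to make all crossings regular (which is generic and does not change the index by the homotopy invariance of $\mu_{RS}$), this is a finite sum. One then \emph{defines} $\mu_{CZ}(\Psi) := \mu_{RS}(\mathrm{graph}(\Psi(\cdot)), \Delta)$; note that since $\det(\id - \Psi(T)) \neq 0$, there is no crossing at $t = T$, so the endpoint correction there vanishes, while at $t=0$ we have $\Psi(0) = \id$ so the crossing form is computed from $\dot\Psi(0)$.

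\medskip

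The next step is to verify the six listed properties. Homotopy follows directly from the homotopy invariance of $\mu_{RS}$ under homotopies with endpoints in $\Sigma^*(n)$ — the only subtlety is that $1 \notin \mathrm{spec}(\Psi(T))$ is preserved, so no crossing appears or disappears at the right endpoint. Naturalization follows because conjugation by a fixed path $\Phi$ induces a symplectomorphism of the Lagrangian pair $(\Lambda, \Delta)$ at each time — here one uses that $\Phi(t)$ conjugation is realized by a linear symplectic change of coordinates on the graph, and the crossing forms transform accordingly (this uses the naturality of the crossing form, \cite[Remark 5.4]{RS1}, and the path $\Phi$ need not be a loop, which is why the hypothesis is stated for arbitrary $\Phi, \Psi$). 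Zero follows because if $\Psi(t)$ has no eigenvalue on the unit circle for $t > 0$ then $\mathrm{graph}(\Psi(t)) \cap \Delta = 0$ for $t > 0$, so the only possible crossing is at $t = 0$; one then shows the half-signature contribution at $t=0$ vanishes for such paths by an explicit normal-form / continuity argument (deform to a standard hyperbolic path). Product is immediate from the block-diagonal additivity of crossing forms and signatures. Loop follows from the composition/catenation formula for $\mu_{RS}$ together with the identification of $\mu_{RS}$ of a \emph{loop} of graphs with $2\mu$ of the Maslov index of the loop — this is where the factor of $2$ enters, since the graph construction doubles the dimension. Signature: for $\Psi(t) = \exp(J_0 S t)$ with $\|S\| < 2\pi$, the only crossing is at $t = 0$ (the condition $\|S\| < 2\pi$ prevents eigenvalues from returning to $1$ before time $1$), and the crossing form at $t=0$ is precisely $S$, so the half-signature gives $\tfrac12 \mathrm{sign}(S)$; one must double-check the sign convention matches.

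\medskip

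For uniqueness, I would argue as in \cite[Theorem 2.3]{RS1}: any functor satisfying Homotopy, Loop, and Signature is determined. The idea is that $\Sigma^*(n)$ has two connected components, distinguished by $\mathrm{sign}\det(\id - \Psi(T))$, and by the Loop property the index is determined on each component up to an overall constant once its value is known at a single path; the Signature property pins down that constant by evaluating on the explicit paths $\exp(J_0 S t)$, which can be chosen to land in either component and to realize any required index value together with the Loop moves. More carefully: given two functors $\mu, \mu'$ satisfying these, $\mu - \mu'$ is homotopy-invariant, vanishes on the signature paths, and is additive under concatenation with loops; since every element of $\Sigma^*(n)$ is homotopic to a concatenation of a signature-type path with loops at $\id$ (a connectivity/generation argument for $\mathrm{Sp}(2n)$ and its fundamental group), we get $\mu = \mu'$.

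\medskip

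I expect the main obstacle to be the careful bookkeeping of sign conventions and the half-integer endpoint contributions in $\mu_{RS}$, specifically verifying that the graph-Lagrangian construction reproduces exactly the normalizations in the Loop and Signature axioms (the factor of $2$ in Loop, and the $\tfrac12\mathrm{sign}(S)$ in Signature) rather than off-by-a-sign or off-by-a-factor versions. The conceptual content is entirely standard and contained in \cite{RS1, GuCZ}; since the theorem is quoted directly from those references, in the paper it would suffice to cite them, but the above is the proof structure one would reconstruct. The genuinely load-bearing hypothesis-checks are that $\det(\id - \Psi(T)) \neq 0$ removes the right-endpoint crossing (making everything well-defined and homotopy-stable) and that $\|S\| < 2\pi$ in the Signature axiom confines the relevant crossing to $t=0$.
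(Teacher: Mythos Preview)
The paper does not prove this theorem at all; it is stated with direct attribution to \cite[Theorem 2.3, Remark 5.4]{RS1} and \cite[Theorem 2, Proposition 8 \& 9]{GuCZ} and then used as background input. Your sketch is the standard Robbin--Salamon construction and axiom verification from those references and is correct in outline, so it is entirely consistent with (indeed more detailed than) what the paper provides.
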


As before we will take $\ga$ to be a (nondegenerate) closed Reeb orbit of period $T$.  We fix a symplectic trivialization $\tau$\footnote{Since $\Phi$ is used to denote a matrix in this section, we will use $\tau$ for the choice of trivialization rather than $\Phi$ in this section.} of $\xi$ along $\ga$, as so that the linearized flow 
\[
  d\varphi_t: \xi_{\gamma(0)} \to \xi_{\gamma(t) }
  \]
for $t\in[0,T]$  is given by a path $\Psi(t)$ of symplectic matrices.  Note that $\Psi(0) = \id$ and, when $\ga$ is nondegenerate, $\det(\Psi(T)-\id)\neq 0$.  This permits us to compute the Conley-Zehnder index of $d\varphi_t, \ t\in[0,T],$  
\[
\mu_{CZ}^\tau(\gamma):=\mu_{CZ}\left( \left\{ d\varphi_t \right\}\arrowvert_{t\in[0,T]}\right).
\]

As explained in \S \ref{gradingsec}, this index is dependent on the choice of trivialization $\Phi$ of $\xi$ along $\gamma$ which was used in linearizing the Reeb flow. However, if $c_1(\xi;\Q)=0$ we can use the existence of an (almost) complex volume form on the symplectization to obtain a global means of linearizing the flow of the Reeb vector field. The choice of a complex volume form is parametrized by $H^1(\R \times M;\Z)$, so an absolute integral grading is only determined up to the choice of volume form.

We may alternately realize the Conley-Zehnder index in terms of crossing forms, and that both definitions agree is proven in \cite{RS1}.  Using crossing forms to compute the Conley-Zehnder also allows one to compute the index of arbitrary paths of symplectic matrices,
\[
\Psi(t) \in \Sigma(n):=\{\Psi:[0,T] \to \Sp(n) \ : \ \Psi \mbox{ is continuous, } T>0\mbox{ and } \Psi(0)=\id\}.
\]
In particular, associated to every periodic solution we obtain a half integer $\rs$ which agrees with $\czm$ in the nondegenerate case, i.e. when $\Psi(t) \in \Sigma^*(n).$

This is accomplished by realizing $\Psi(t)$ as a smooth path of Lagrangian subspaces.  To do this, we review the construction of $\rs$ via the index of the Lagrangian path 
\[
\mbox{Graph}(\Psi(t)):=\{ (x,\Psi(t) x)\ | \ x\in \R^n \}
\]
 in $(\R^{2n}\times \R^{2n}, ((-\om_0) \oplus \om_0) )$ relative to the diagonal 
 \[
 \Delta := \{ (X,X)\ | \ X\in \R^{2n} \}. 
 \]
 Here $\om_0$ is the standard symplectic form on $\R^{2n}$.  Assuming $\Psi(a)=\id$ and $\det(\id - \Psi(b))\neq 0$ then the index of this Lagrangian path may be defined as follows, 
\[
\rs(\Psi):=\mu(\mbox{Graph}(\Psi),\Delta).
\]
This index is an integer and satisfies
\[
(-1)^{\mu(\Psi)-n}=\sign\det(\id - \Psi(b)).
\]
The above number is the parity of the Lagrangian frame $(\id, \Psi(b))$ for the graph of $\Psi(b)$.  This index can then be computed via quadratic forms defined at crossing numbers.

A number $t\in [0,T]$ is called a \textbf{crossing} if $\det(\Psi(t)-\id)=0.$  We denote the set of crossings by
\[
E_t:=\ker(\Psi(t)-\id). 
\]
For a crossing $t \in [0,T]$, the crossing form $\g(\Psi,t)$ is the quadratic form on $E_t$ defined by:
\[
\g(\Psi,t)(v):=d\A(v, \dot{\Psi} v) \ \ \ \mbox{ for } v \in E_t. 
\] 

A crossing $t$ is \textbf{regular} whenever the crossing form at $t$ is nonsingular.  Note that regular crossings are necessarily isolated.  Any path $\Psi$ is homotopic with fixed end points to a path having only regular crossings.   Recall that the \textbf{signature} of a nondegenerate quadratic form is the difference between the number of its positive eigenvalues and the number of its negative eigenvalues. 

Robbin and Salamon define the index $\rs(\Psi)$ of the path $\Psi$ having only regular crossings to be
\[
\rs(\Psi):=\frac{1}{2}\sign(\g(\Psi,0)(v)) + \sum_{0<\mbox{\tiny all crossings }t < T}\sign(\g(\Psi,t)(v)) + \frac{1}{2}\sign(\g(\Psi,T)(v)).
\]
In the case that we have taken the linearized flow of a nondegenerate Reeb orbit to obtain our path of symplectic matrices, i.e. $\Psi \in \Sigma^*(1)$, we obtain
\[
\rs(\Psi):=\frac{1}{2}\sign(\g(\Psi,0)(v)) + \sum_{0<\mbox{\tiny all crossings }t \leq T}\sign(\g(\Psi,t)(v)). 
\]
This is because $t=T$ is no longer a crossing as $\det(\Psi(t)-\id) \neq 0.$

If we are working in $(\R^{2n}, \om_0)$, we have the following expression of the crossing form.  Since any path in $\Sp(2n, \R)$ is a solution to a differential equation $\dot{\Psi}(t)=J_0S(t)\Psi(t)$, with $S(t)$ a symmetric matrix we can write the crossing form in $\R^{2n}$ as
\begin{equation}\label{symmcz}
\g_0(\Psi(t), t)(v) = \langle v, S(t)v \rangle
\end{equation}

  

The main features of the Robbin-Salamon index are the following.
\begin{proposition}
The Robbin-Salamon index has the following properties.
\begin{list}{\labelitemi}{\leftmargin=0em }
\item[{\em (i)}] The Robbin-Salamon index satisfies additivity under concatenations of paths, 
\[
\rs \left( \Psi \arrowvert_{[a, b]} \right) + \rs \left( \Psi \arrowvert_{[b, c]} \right) = \rs \left( \Psi \arrowvert_{[a, c]} \right)
\]
\item[{\em (ii)}] The Robbin-Salamon index characterizes paths up to homotopy with fixed end points.
\item[{\em (iii)}] The Robbin-Salamon index satisfies additivity under products,
\[
 \rs(\Psi' \oplus \Psi '') = \rs(\Psi ') + \rs(\Psi '') .
\]
\end{list}
\end{proposition}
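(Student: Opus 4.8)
The plan is to derive all three properties directly from the crossing-form formula for $\rs$, following Robbin and Salamon \cite[\S 2--4]{RS1}. The first thing I would do is record the standard reduction: every $\Psi \in \Sigma(n)$ is homotopic rel endpoints to a path with only regular crossings, and the signature sum defining $\rs$ is invariant under homotopies rel endpoints. I expect this invariance to be the main obstacle, so I would treat it first. Given a homotopy $\{\Psi_s\}$ rel endpoints, I would perturb it to be generic so that the crossing locus $\{(s,t) : \det(\Psi_s(t)-\id)=0\}$ is a $1$-manifold with boundary contained in $\{t=0\}\cup\{t=T\}$; then interior crossings are created and annihilated in pairs of opposite signature (net contribution zero), while a crossing sliding into an endpoint changes the half-signature term there by exactly the amount cancelling the jump in the interior sum. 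This establishes the homotopy-invariance half of (ii).

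Next I would establish the converse half of (ii). Here I would use that two paths $\Psi_0,\Psi_1$ with $\Psi_0(0)=\Psi_1(0)=\id$ and $\Psi_0(T)=\Psi_1(T)$ differ, up to homotopy rel endpoints, by concatenation with a loop $\Phi$ based at $\id$; since $\pi_1(\Sp(2n)) \cong \Z$ is detected by the Maslov index $\mu(\Phi)$, and $\rs(\Phi\Psi_0) = \rs(\Psi_0) + 2\mu(\Phi)$ by the loop property (which holds for $\rs$ by the argument of \cite[Theorem 2.3]{RS1}), the hypothesis $\rs(\Psi_0)=\rs(\Psi_1)$ forces $\mu(\Phi)=0$, so $\Phi$ is nullhomotopic and $\Psi_0 \simeq \Psi_1$ rel endpoints.

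With (ii) in hand, properties (i) and (iii) become bookkeeping. For (i) I would first homotope $\Psi$ rel the three points $a,b,c$ so that $\Psi|_{[a,c]}$ has only regular crossings; then the crossings of $\Psi|_{[a,b]}$ and $\Psi|_{[b,c]}$ are precisely those of $\Psi|_{[a,c]}$ in the respective subintervals, and adding the two defining formulas every interior crossing in $(a,b)\cup(b,c)$ is counted once with full signature, a crossing at $b$ (if any) contributes $\tfrac{1}{2}\sign\g(\Psi,b)$ from each side for a total of $\sign\g(\Psi,b)$ — its weight as an interior crossing of $[a,c]$ — and the endpoints $a,c$ retain their half-signature terms, so the sum equals $\rs(\Psi|_{[a,c]})$. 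For (iii), I would observe that if $\Psi = \Psi'\oplus\Psi''$ is block diagonal then $\ker(\Psi(t)-\id) = \ker(\Psi'(t)-\id)\oplus\ker(\Psi''(t)-\id)$ for every $t$, and since $\dot\Psi$ and the symplectic form both respect this splitting, the crossing form is the orthogonal direct sum $\g(\Psi',t)\oplus\g(\Psi'',t)$; additivity of signatures under direct sums then gives $\rs(\Psi)=\rs(\Psi')+\rs(\Psi'')$ after summing over the union of the crossing sets. Since the statement is classical, the writeup will largely amount to citing \cite{RS1} (and \cite{GuCZ}) for the genericity details in the first step, which is the only genuinely analytic point; the remaining manipulations are formal.
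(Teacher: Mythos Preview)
Your proof sketch is correct and follows the standard argument from \cite{RS1}. However, the paper does not actually prove this proposition: it is stated without proof as a summary of ``the main features of the Robbin-Salamon index,'' with the implicit reference being \cite{RS1}. So there is nothing to compare against in the paper itself --- the author simply cites the result.

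That said, your writeup is more than what is needed here. Since the proposition is classical and the paper treats it as background, a one-line citation to \cite{RS1} (specifically Theorem~2.4 for (i) and (iii), and \S 4 for (ii)) would match the paper's level of detail. If you do include a proof, the only point worth flagging is that your ``converse half'' of (ii) tacitly uses that the loop property $\rs(\Phi\Psi) = \rs(\Psi) + 2\mu(\Phi)$ extends from $\Sigma^*(n)$ to all of $\Sigma(n)$; this is true but requires the homotopy-invariance you establish first, so the logical order you chose is the right one.
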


As a preliminary example, we compute the Robbin-Salamon index for the symplectic path of matrices arising from the flow given by $\varphi_t(z) = e^{i t}z$ on $(\C,\omega_0)$.  If we take $t\in [0, 2\pi n]$ we do not obtain a path of symplectic matrices in $\Sigma^*(1)$ but we may still make use of  crossing forms to compute the Robbin-Salamon index for this path.
\begin{example}
\label{rssphere}
 \em
The linearization of $\varphi_t(z) = e^{i t}z$  is given by $d\varphi_t(z) \cdot v = e^{ i t} v$.  We denote $ \Psi(t)=e^{ i t}$ and obtain crossings for $t = 2\pi n$ for every $n \in \Z_{\geq 0}$. From (\ref{symmcz}) the crossing form may be written as
\[
\g_0(\Psi, t)(v)= \langle v, v \rangle
\]
For $t=2\pi n$ with $n \in \Z_{\geq0}$ we have that $\g_0$ is nondegenerate and 
\[
\g_0(\Psi, t)(v) = v \bar{v} = a^2 + b^2,
\]
where $v=a+ ib$.  This has signature +2, and thus on $[0, 2\pi n]$ with $n \in \Z_{>0}$ we have
\[
\rs(\Psi(t)) = 2n.
\]
If we take $\Psi(t)$ to be defined on the interval $[0, 2\pi n + \veps]$ with $0<\veps < 2\pi$ then this is a path of symplectic matrices in $\Sigma^*(1)$ and we obtain 
\[
\czm(\Psi(t))=\rs(\Psi(t))=2n
\]
\end{example}

Next we compute the Robbin-Salamon index of the linearization $\Psi$ of the time $\vepsilon$ flow near a critical point $p$ of a Morse function $H$ on $(\Sigma^{2n},\omega)$.

\begin{lemma}\label{RSHam}
Let $\Psi:=\{\Psi(t)\}_{t\in[0,\vepsilon)}$ be the path of symplectic matrices associated to the linearization of the Hamiltonian vector field $X_H$ of a Morse function $H$ at a critical point $p$ on $(\Sigma^{2n},\omega)$.  Then $\{\Psi(t)\}_{t\in[0,\vepsilon)}$ has an isolated crossing at 0 and
\[
\rs(\Psi) = \mbox{\em index}_pH - \frac{1}{2}\mbox{\em dim}\Sigma.
\]
\end{lemma}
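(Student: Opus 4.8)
The plan is to compute $\rs(\Psi)$ directly from the crossing form formula using the local normal form of a Morse function near a critical point, so the whole argument reduces to Example \ref{rssphere}-style linear algebra.

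First I would set up coordinates. Near a critical point $p$ of $H$ on $(\Sigma^{2n},\omega)$, Darboux's theorem lets us choose symplectic coordinates $(x_1,\dots,x_n,y_1,\dots,y_n)$ centered at $p$ in which $\omega = \omega_0$ is standard. Since $dH(p)=0$, the Hamiltonian vector field $X_H$ vanishes at $p$, so its linearization at $p$ is a well-defined linear symplectic vector field; concretely, writing the Hessian $S := \Hess_p H$ (a symmetric $2n\times 2n$ matrix), the linearized Hamiltonian flow solves $\dot\Psi(t) = J_0 S\,\Psi(t)$ with $\Psi(0)=\id$, i.e. $\Psi(t) = \exp(J_0 S t)$. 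Here I use the convention $\omega(X_H,\cdot)=dH$ fixed earlier. The point $t=0$ is a crossing since $\Psi(0)=\id$, so $E_0 = \R^{2n}$, and by \eqref{symmcz} the crossing form is $\gamma_0(\Psi,0)(v) = \langle v, S v\rangle$.

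Next I would argue $t=0$ is the only crossing in a short enough interval $[0,\vepsilon)$ and that it is regular. Regularity is immediate when $H$ is Morse: the Hessian $S$ is nondegenerate, so the quadratic form $v \mapsto \langle v, Sv\rangle$ is nonsingular, hence $0$ is a regular (and therefore isolated) crossing; shrinking $\vepsilon$ removes any other crossings. Then the Robbin-Salamon index formula, in the form with a $\tfrac12$-weighted endpoint contribution at $t=0$ and no interior or right-endpoint crossings, gives
\[
\rs(\Psi) = \tfrac12 \sign\big(\gamma_0(\Psi,0)\big) = \tfrac12 \sign(S).
\]
Finally I would convert $\sign(S)$ into Morse-index language: if $\mathrm{index}_p H$ denotes the number of negative eigenvalues of $S$ among its $2n$ eigenvalues, then the number of positive eigenvalues is $2n - \mathrm{index}_p H$, so $\sign(S) = (2n - \mathrm{index}_p H) - \mathrm{index}_p H = 2n - 2\,\mathrm{index}_p H$. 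Hence $\rs(\Psi) = n - \mathrm{index}_p H$. Comparing with $\tfrac12 \dim\Sigma = n$, this is $\mathrm{index}_p H - \tfrac12\dim\Sigma$ only up to an overall sign, so I would double-check the sign conventions for $X_H$, for $J_0$, and for the crossing form; the stated lemma has $\rs(\Psi) = \mathrm{index}_p H - \tfrac12\dim\Sigma$, which corresponds to $\sign(S) = 2\,\mathrm{index}_p H - 2n$, i.e. counting with the opposite orientation of the symplectic form or the opposite sign in $\dot\Psi = J_0 S\Psi$. I would reconcile this by carefully tracing through the convention $\omega(X_H,\cdot)=dH$ versus $dH(\cdot)=\omega(\cdot,X_H)$ used in \cite{RS1}, since that is exactly where a sign flips.

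The main obstacle is precisely this bookkeeping of signs and conventions: the value of $\rs$ here is entirely a $\pm$ of a signature, so the substance of the proof is making the Hessian/Hamiltonian/symplectic-form conventions consistent with those under which the signature property of Theorem \ref{CZprop}(6) was stated. Once the conventions are pinned down, the computation is the two-line application of the signature axiom (or equivalently the crossing-form formula \eqref{symmcz}) exhibited above, and the lemma follows.
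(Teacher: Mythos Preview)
Your approach is exactly the paper's: choose Darboux coordinates at $p$, write $\Psi(t)=\exp(tJ_0S)$ for a symmetric matrix $S$ coming from the Hessian, observe that $t=0$ is the only crossing on a short interval because $H$ is Morse, and read off $\rs(\Psi)=\tfrac12\sign(S)$ from \eqref{symmcz}. The only thing left incomplete is the sign you flagged, and that sign is not a matter of competing literature conventions but is already determined by the paper's convention $\omega(X_H,\cdot)=dH$. With the standard $\omega_0=\sum dx_i\wedge dy_i$ and $J_0\partial_{x_i}=\partial_{y_i}$, this convention gives $X_H=-J_0\nabla H$, so the linearized flow satisfies $\dot\Psi=-J_0\,\Hess_pH\cdot\Psi$; in the notation of \eqref{symmcz} this means $S=-\Hess_pH$, not $S=\Hess_pH$. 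Hence $\rs(\Psi)=\tfrac12\sign(-\Hess_pH)=-\tfrac12\sign\Hess_pH=\indx_pH-n$, which is the stated formula. Once you insert that minus sign in your ODE, your argument is complete and matches the paper's line for line.
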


\begin{proof}
We will use the convention that 
\[
\omega(X_H, \cdot) = dH.
\]
Let $p$ be a critical point of $H$.  After picking a Darboux ball around $p$ we have 
\[
X_H=-J_0\nabla H.
\]
The linearized flow $\Psi$ is a solution of the autonomous ODE 
\[
\dot{\Psi} = -J_0 \nabla^2 H \cdot \Psi.
\]
Thus
\[
\Psi(t) = \mbox{exp}(-J_0 \mbox{Hess}_p(H) t).
\]
Since $H$ is Morse its Hessian is nondegenerate at $p$.  The crossing form is given by
\[
\g_0(\Psi,0)(v) = v^T \mbox{Hess}_p(H) v,
\]
and for sufficiently small $\vepsilon$ the only crossing is at $t=0$.  
By a Morse shift lemma we obtain
\[
\rs(\Psi) = -\frac{1}{2} \sign \ \mbox{Hess}_pH= \indx_pH- \frac{1}{2}{\dim \Sigma}.
\]
\end{proof}

Also needed is the following computation of the Robbin-Salamon index associated to the linearized  Hopf flow.  

\begin{lemma}\label{rssphere}
For a closed Reeb orbit $\ga^k$ associated to the degenerate Reeb flow on $S^3$ generated by the standard contact form $\lo$, we have
\[
\rs(\ga^k)=4k.
\]
\end{lemma}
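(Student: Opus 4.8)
The plan is to write down the linearized Reeb flow of $\lambda_0$ along a Hopf fiber, restrict it to the contact plane $\xi$, read it off in the canonical trivialization as an explicit loop of rotations, and then extract $\rs(\gamma^k)$ from a crossing-form computation identical to the one carried out above for the rotation flow $\varphi_t(z)=e^{it}z$.

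First I would recall the Reeb dynamics. The Reeb vector field of $\lambda_0$ on $S^3\subset\mathbb{C}^2$ generates (a reparametrization of) the Hopf flow $\varphi_t(z)=e^{it}z$, whose linearization on $T\mathbb{C}^2$ is $v\mapsto e^{it}v$. Along the simple fiber $\gamma$ through $e_1=(1,0)$, and hence along every iterate $\gamma^k$, the contact plane $\xi=\ker\lambda_0=\bigl(\mathbb{C}\,\gamma(t)\bigr)^{\perp_{\mathbb{C}}}$ is the fixed complex line $\mathbb{C}e_2$, and the splitting $T\mathbb{C}^2|_\gamma=\mathbb{C}\,\gamma\oplus\xi$ is preserved by $d\varphi_t$. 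In the constant frame $e_2$ of $\xi|_{\gamma^k}$ the linearized Reeb flow on $\xi$ is therefore the loop of rotations $t\mapsto e^{it}$, $t\in[0,2\pi k]$, which by the crossing-form argument used above for $\varphi_t(z)=e^{it}z$ (crossings precisely at $t\in 2\pi\mathbb{Z}$, each with positive definite crossing form) has Robbin-Salamon index $2k$ with respect to this naive trivialization $\Phi_0$.

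Second I would identify the canonical trivialization against which $\rs$ is taken in the statement. Since $c_1(\xi;\mathbb{Z})=0$ and $H^1(S^3;\mathbb{Q})=0$, this is the trivialization $\widetilde\Phi$ induced by the global complex volume form $du\wedge dv$ on $\mathbb{R}\times S^3\cong\mathbb{C}^2\setminus\{0\}$, equivalently the one extending over a capping disk. Using the complex-line decomposition $\Lambda^2_{\mathbb{C}}T(\mathbb{R}\times S^3)|_\gamma=\bigl(\mathbb{C}\,\partial_\tau\bigr)\otimes_{\mathbb{C}}\xi$, with $\partial_\tau$ the radial direction, together with the facts that $\Lambda^2_{\mathbb{C}}T\mathbb{C}^2$ is constantly framed by $e_1\wedge e_2$ and that along the fiber the radial frame $\partial_\tau=e^{it}e_1$ winds once positively in $\mathbb{C}\,\gamma$, one finds that $\widetilde\Phi$ is represented along $\gamma$ by a frame of $\xi$ winding once negatively relative to $e_2$ (this can also be checked directly from the complex-orthogonal-complement frame over an explicit capping disk for $\gamma$ in $S^3$). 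Consequently, in the trivialization $\widetilde\Phi$ the linearized Reeb flow restricted to $\xi$ over $\gamma^k$ is homotopic with fixed endpoints to the loop of rotations $t\mapsto e^{2it}$, $t\in[0,2\pi k]$, sweeping total angle $4\pi k=2\pi(2k)$, so the crossing-form computation gives $\rs(\gamma^k)=2\cdot 2k=4k$. Equivalently, one feeds the relative winding number $1$ between $\widetilde\Phi$ and $\Phi_0$ into the change-of-trivialization formula \eqref{cztrivs}, which holds verbatim for $\mu_{RS}$, to get $2k+2k=4k$.

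The main obstacle is the second step: pinning down the homotopy class of the canonical trivialization along the Hopf fiber, the ``extra full turn'' per simple iterate, and in particular getting its orientation right so that it contributes $+1$ rather than $-1$. This is exactly the well-known relation between the Maslov index of the fiber of a prequantization bundle and the Chern number of the base, here $\langle c_1(TS^2),[S^2]\rangle=2$, and it is the same bookkeeping that underlies \eqref{czprequant1} and Proposition~\ref{spheremaslov}. Everything else, namely identifying the Reeb flow, decomposing $T\mathbb{C}^2$ along the fiber, and evaluating the crossing forms, is routine and parallels Lemma~\ref{RSHam} and the rotation-flow example preceding this lemma.
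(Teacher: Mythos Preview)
Your proposal is correct but packages the computation differently from the paper. Both arguments work in the canonical trivialization induced by the complex volume form $du\wedge dv$, but the paper avoids the explicit trivialization comparison you carry out. Instead it uses the symplectic splitting $T\C^2|_{S^3}\cong\xi\oplus\xi^{\omega_0}$: an explicit Darboux frame $X=-i(u,v)$, $Y=(u,v)$ for the symplectic complement $\xi^{\omega_0}$ is exhibited in which the linearized flow is the identity, so $\rs|_{\xi^{\omega_0}}=0$; then by the product property $\rs|_\xi=\rs|_{\C^2}-\rs|_{\xi^{\omega_0}}=4k-0$, since the ambient linearized flow is the diagonal rotation $e^{it}\oplus e^{it}$ with index $2k+2k=4k$. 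The paper's route sidesteps exactly the step you flag as the main obstacle, namely pinning down the sign of the winding of $\widetilde\Phi$ relative to the naive constant frame, because the answer is read off directly from the manifestly computable $\C^2$ flow. Your route, by contrast, isolates the contribution of $\langle c_1(TS^2),[S^2]\rangle$ explicitly and is closer in spirit to the general prequantization picture of Proposition~\ref{spheremaslov}, at the cost of the orientation bookkeeping you mention.
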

\begin{proof}
The standard contact form on $S^3$ is
\[
\lo=\frac{i}{2}(u d\bar{u} - \bar{u} du + v d\bar{v} - \bar{v} dv)|_{S^3},
\]
and
\begin{equation}
\label{reeb3sphere}
\begin{array}{ccl}
R & =& i \left( u \dfrac{\pa}{\pa u} -  \bar{u} \dfrac{\pa}{\pa \bar{u}} +  v \dfrac{\pa}{\pa v} -  \bar{v} \dfrac{\pa}{\pa \bar{v}}\right)\\
&= & (ix_1-y_1, ix_2-y_2) \\
&=& (iu, iv) \\
\end{array}
\end{equation}
 Recall that 
 \[
 \varphi_t(u,v)=(e^{it}u, e^{it}v).
 \]
gives the flow of the Reeb vector field of (\ref{reeb3sphere}).  It also gives rise to a symplectomorphism of $\C^2\setminus \{\bo \}$, thereby allowing us to obtain a global trivialization which extends the trivialization around the closed orbits  to the closed disks spanned by the orbits.  

There is the following natural splitting of $\C^2$, 
\[
\C^2 \cong \xi_p \oplus \xi_p^{\om}.
\]
Here $\xi_p^{\om}$ is the symplectic complement of $\xi_p$, defined as follows
\[
\xi_p^{\om} =\{ v \in T_pS^3 \ | \ \om(v,w)  = 0 \mbox{ for all } w\in \xi_p  \}.
\]
On $\C^2 \setminus \{ 0\}$ we use the symplectic form $d(e^\tau \lo)$ pulled back under the biholomorphism,
\[
\begin{array}{crcl}
\psi:& \co &\to& \R \times S^3 \\
& z &\mapsto& \left( \frac{1}{2} \ln |z|, \dfrac{z}{|z|} \right) \\
\end{array}
\] 
which we denote by
\[
\om_0= \om_{\co} = \psi^*(d(e^\tau \lo))
\]

We may write $\xi^{\om_0}_p$ as the span of the following vector fields evaluated at $p$:
\begin{equation}\label{XY}
\begin{array}{lcccr}
X&=&-i(u,v) &=& - i \left( u \dfrac{\pa}{\pa u} -  \bar{u} \dfrac{\pa}{\pa \bar{u}} +  v \dfrac{\pa}{\pa v} -  \bar{v} \dfrac{\pa}{\pa \bar{v}}\right),  \\
Y &=& (u,v) &=&  \left( u \dfrac{\pa}{\pa u} -  \bar{u} \dfrac{\pa}{\pa \bar{u}} +  v \dfrac{\pa}{\pa v} -  \bar{v} \dfrac{\pa}{\pa \bar{v}}\right). \\
\end{array}
\end{equation}

The vector fields $X$ and $Y$ defined in in (\ref{XY}) yield a standard symplectic or Darboux basis for the symplectic vector space $\xi_p^{\om_0}$ because
\[
\begin{array}{rcrcr}
\om_0(X,Y) &=& -\om_0(Y,X) &=& 1; \\
\om_0(X,X) &=& \om_0(Y,Y) &=&0. \\
\end{array}
\]
We have that $\om_0$ on $\xi_p^{\om_0}$ is given by
\[
 \left( \begin{array}{cc}
0 & 1 \\
 1 & 0 \\
\end{array} \right).
\]
Thus $\xi^{\om_0}$ is symplectically trivial and $\xi$ is symplectically trivial because
\[
T \C^2 \cong \xi \oplus \xi^{\om}.
\]
The linearized flow acts on $\xi_p^{\om_0}$ by
\[
\begin{array}{lcl}
d\varphi_t(X(p)) &=& X(\varphi_t(p)), \\
d\varphi_t(Y(p)) &=& Y(\varphi_t(p)). \\
\end{array}
\]
A trivialization of $\xi$ over any disc in $M$ followed by the above trivialization of  $\xi^{\om_0}$ gives a trivialization of $T_p(\co)$ which is homotopic to the standard one. 

As a result we may finally conclude that $d\varphi_t$ on  $T_p(\co)$ is given by the ``standard'' differential of $\varphi_t$ on $\C^2$, namely
\[
d\varphi_t = 
 \left( \begin{array}{cc}
e^{it} & 0 \\
 0 & e^{it} \\
\end{array} \right).
\]
We obtain
\[
\Phi_{\C^2}(t) := d\varphi_t \arrowvert_{\C^2}
\]
as the path of symplectic matrices associated to the linearized Reeb flow of $\ga$ extended to $\C^2 \setminus \{ \bo \}$ for $T \in [0,T]$.  Similarly, we denote $\Phi_{\xi^{\om_0}}(t) $
to be the path of symplectic matrices associated to the linearized Reeb flow of $\ga_p$ for  $T \in [0,T]$ restricted on the symplectic complement of $\xi$.  

Then the naturality, homotopy, and product properties of the Conley-Zehnder index yield  
\[
\czm(\ga_p(t)): =  \czm \left(d\varphi(t) \arrowvert_{\xi} \right) = \czm\left(\Phi_{\C^2}(t)\right) -  \czm\left(\Phi_{\xi^{\om_0}}(t)\right).
\]
Since 
\[
\begin{array}{lcr}
X(\varphi_t) & =& -i(e^{it}u, e^{it}v) \\
Y(\varphi_t) &=& (e^{it}u, e^{it}v) \\
\end{array}
\]
and
\[
\begin{array}{lcrcc}
d \varphi_{2k\pi}(X(p)) & =& -i(u, v) &= & X(p)\\
d \varphi_{2k\pi}(Y(p)) &=& (u,v) &=&Y(p) \\
\end{array}
\]
we obtain
\[
\Phi_{\xi^{\om_0}}(2 k\pi) = \id,
\]
Thus $\czm(\Phi_{\xi^{\om_0}}(2 k\pi))=0$.  With the help of Example \ref{rssphere} we obtain
\[
\czm(\ga_p(t)): =  \czm \left(d\varphi(t) \arrowvert_{\xi} \right) = \czm\left(\Phi_{\C^2}(t)\right) = 4 k.
\]
\end{proof}

By a covering trick we obtain Proposition \ref{spheremaslov}.

\begin{corollary}
Let $(V,\lambda)$ be the prequantization bundle over the monotone, simply connected closed symplectic manifold $(S^2, k \omega_0)$ for $k \in \Z_{>0}$.  Then $(V,\xi) = (L(k,1),\xi_{std}) $ and the $k$-fold cover of every simple orbit $\gamma$ is contractible and $\mu_{RS}^\Phi(\gamma^k) = 4$.  

\end{corollary}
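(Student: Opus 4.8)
The plan is to deduce the corollary from Lemma~\ref{rssphere} by means of the $k$-fold covering $q\colon S^3 \to S^3/\Z_k$, where $\Z_k$ acts on $S^3\subset\C^2$ diagonally by $(u,v)\mapsto(\zeta u,\zeta v)$ with $\zeta=e^{2\pi i/k}$. First I would recall that the prequantization bundle over $(S^2,k\omega_0)$ is exactly $S^3/\Z_k$, which is by definition $(L(k,1),\xi_{std})$, with $\xi_{std}$ the pushforward of $\xi=\ker\lambda_0$ under $q$ (this makes sense since $\Z_k$ preserves $\lambda_0$, hence $q$ is a local contactomorphism intertwining the Reeb vector fields and contact distributions). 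Since $\Z_k\subset\mathrm{SU}(2)$, the complex volume form $du\wedge dv$ on $\C^2$ descends to $S^3/\Z_k$, as already observed in Example~\ref{lensex}, and $q^*$ of the descended form is $du\wedge dv$; consequently the trivialization $\Phi$ of $\xi$ over any Reeb orbit of $L(k,1)$ induced by the descended volume form pulls back under $q$ to the $du\wedge dv$-trivialization on $S^3$.

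Next I would analyze the lift of orbits under the degenerate Reeb flow. The Reeb flow on $S^3$ is $\varphi_t(u,v)=(e^{it}u,e^{it}v)$, of period $2\pi$, and it descends to the flow on $L(k,1)$, whose simple orbits $\gamma$ (the fibers of $V\to S^2$) have period $2\pi/k$. Lifting $\gamma$ through $q$ from a point of $q^{-1}(\gamma(0))$, the lift closes up only after $k$ iterations and then produces a closed loop of period $2\pi$ in $S^3$, i.e.\ a simple Hopf fiber $\widetilde\gamma$. Two consequences follow. Since $\pi_1(S^3)=0$ and a loop in $L(k,1)$ lifts to a loop in $S^3$ precisely when it is nullhomotopic, $\gamma^k$ is contractible in $L(k,1)$; moreover every simple orbit is of this form, so this holds for all of them. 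And since $q$ identifies the linearized Reeb flow along $\gamma^k$ restricted to $\xi$ with the one along $\widetilde\gamma$, and the trivializations match as above, the Robbin--Salamon indices coincide: $\mu_{RS}^\Phi(\gamma^k)=\mu_{RS}^\Phi(\widetilde\gamma)$.

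Finally I would invoke Lemma~\ref{rssphere} in the case $k=1$, which gives $\mu_{RS}^\Phi(\widetilde\gamma)=\rs(\widetilde\gamma)=4$, completing the proof. I expect the only genuinely delicate point to be verifying that the lift of $\gamma^k$ is exactly one full simple Hopf fiber and that $q$ intertwines the contact form, Reeb field, and $du\wedge dv$-trivialization cleanly enough that the Robbin--Salamon indices literally agree rather than differing by a correction term; the contractibility statement and the identification $(V,\xi)=(L(k,1),\xi_{std})$ are then routine consequences of the covering picture.
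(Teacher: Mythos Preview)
Your proposal is correct and follows essentially the same approach as the paper: the paper's one-line proof simply says ``the result follows from taking the $k$-fold cover of $V$ which reduces the setup of the proof'' of Lemma~\ref{rssphere}, and you have spelled out precisely that covering argument in detail. Your additional care about the descent of the complex volume form and the matching of trivializations under $q$ is exactly what the paper leaves implicit.
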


\begin{proof}
The result follows from taking the $k$-fold cover of $V$ which reduces the setup of the proof of Proposition \ref{rssphere}.  
\end{proof}

Finally, using a convenient choice of constant trivialization as in \cite[\S 3.1, 4.2]{ggm1}, we compute the Robbin-Salamon index of fibers of prequantization bundles over surfaces $(\Sigma_g, \omega_{0})$  of genus $g\geq1$ wherein $[\omega_0]$ is primitive. 


\begin{lemma}\label{consttrivlem}
Let $(V,\lambda) \overset{\pi}{\rightarrow}(\Sigma_g, \omega_0)$ be a prequantization bundle over a surface of genus $g\geq 1$ with $[\omega_0]$ primitive.  Then for the constant trivialization $\Phi$ along the circle fiber $\gamma_p = \pi^{-1}(p) $, we obtain $\rs^\Phi(\gamma_p)=0$ and $\rs^\Phi(\gamma^k_p) =0$.
\end{lemma}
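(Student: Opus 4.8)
The plan is to compute $\rs^\Phi(\gamma_p^k)$ directly from the crossing form formula, using the fact that for a prequantization bundle the linearized Reeb flow of the (unperturbed) fiber orbit is, in a suitable trivialization, the identity on the contact planes. This is the degenerate case in which every point of $[0,kT]$ is a crossing, so one cannot use the naive crossing-form sum; instead one perturbs slightly (as in the Hopf-flow computation of Lemma \ref{rssphere}) or appeals directly to the homotopy/additivity properties of $\rs$.

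First I would recall the Reeb dynamics: for $(V,\lambda)\to(\Sigma_g,\omega_0)$ the Reeb vector field $R$ is the generator of the $S^1$-action on the fibers, and $d\lambda=\pi^*\omega_0$, so the contact distribution $\xi$ is the horizontal distribution of the connection. Along a fiber $\gamma_p=\pi^{-1}(p)$ of period $T=2\pi$, the linearized Reeb flow on $\xi_{\gamma_p(t)}$ is parallel transport around the fiber with respect to the connection; since the fiber is a closed orbit of the $S^1$-action, this holonomy is trivial in the constant trivialization $\Phi$ coming from a (local) choice of horizontal frame pulled back from $\Sigma$ near $p$. Concretely, the path of symplectic matrices $\Psi(t)$ for $t\in[0,kT]$ is constantly the identity in this trivialization. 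Here the hypothesis $g\geq 1$ (and $[\omega_0]$ primitive) is what makes such a \emph{constant} trivialization available along the fiber and consistent under iteration — there is no obstruction from $c_1(\xi)$ because the relevant restriction to a single fiber is trivial, and the fiber is non-contractible so there is no competing capping-disk trivialization to worry about.

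Next I would feed this into the Robbin--Salamon index. Since $\Psi(t)\equiv\id$, the path is the constant path; by the zero/homotopy properties (or by taking $S(t)\equiv 0$ in the crossing-form expression \eqref{symmcz}, so every crossing form vanishes and contributes signature $0$), we get $\rs^\Phi(\gamma_p)=0$, and identically $\rs^\Phi(\gamma_p^k)=0$ for the $k$-fold iterate since the iterated path is still the constant path. One should double-check consistency with the additivity-under-concatenation property, $\rs(\Psi|_{[0,kT]})=k\,\rs(\Psi|_{[0,T]})=0$, and with the general relation $(-1)^{\mu-n}=\sign\det(\id-\Psi(b))$ — though the latter needs the nondegenerate endpoint, which fails here, so one must be slightly careful and phrase the computation purely in terms of crossing forms for the degenerate path, exactly as was done for the Hopf flow in Lemma \ref{rssphere} before passing to the perturbed orbit.

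The main obstacle, such as it is, is bookkeeping rather than substance: one must be precise about \emph{which} trivialization $\Phi$ is meant and verify that in that trivialization the linearized flow really is the identity (as opposed to some nontrivial loop of symplectic matrices — which would change $\rs$ by $2\mu(\Phi)$ via the loop property). This amounts to checking that the constant trivialization along $\gamma_p$, built from horizontal lifts of a frame at $p\in\Sigma_g$, is preserved by the $S^1$-holonomy; since the connection 1-form $\lambda$ is $S^1$-invariant and the fiber is an orbit of the structure group, this holds. I expect no genuine difficulty beyond citing \cite[\S 3.1, 4.2]{ggm1} for the explicit constant trivialization and invoking the homotopy and concatenation properties of $\rs$ recorded above.
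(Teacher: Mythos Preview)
Your proposal is correct and follows essentially the same approach as the paper: identify the constant trivialization via $\xi_q \cong T_p\Sigma_g$ (the horizontal lift of a frame at $p$), observe that the linearized Reeb flow is the identity in this trivialization, and conclude $\rs^\Phi=0$. The paper's own argument spends proportionally more effort on the \emph{framework} --- defining the index via a reference loop in the free homotopy class $\Gamma$, explaining why the resulting trivialization is independent of the connecting cylinder when $c_1(\xi)$ is atoroidal, and why compatibility under iteration is automatic since the classes $\Gamma^k$ are all distinct and nontrivial for $g\geq 1$ --- and then simply records that the linearized flow is the identity, leaving the conclusion $\rs=0$ implicit. You fill in more of the dynamics and the index computation; the paper fills in more of the trivialization bookkeeping. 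Both cite \cite[\S 3.1, 4.2]{ggm1}.

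One small caution: the ``zero property'' of Theorem~\ref{CZprop} does not literally apply to the constant path at $\id$ (it requires no eigenvalues on the unit circle for $t>0$), and taking $S(t)\equiv 0$ gives a \emph{degenerate} crossing form at every $t$, so you cannot invoke the regular-crossing sum directly. You already flag this, and the clean resolution is the one you gesture at: either perturb with fixed endpoints and see the boundary contributions cancel, or use that the Robbin--Salamon index of a constant Lagrangian path relative to itself is zero by construction.
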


\begin{proof}
Let $\Gamma$ be a free homotopy class of $V$.  In order to define the Robbin-Salamon index of a simple (nondegenerate) Reeb orbit $\gamma_p$ with $[\gamma_p]=\Gamma$ we must fix a trivialization of $\xi|_{\gamma_p}$.  We can fix a trivialization up to homotopy of $\xi$ along a reference loop in the free homotopy class $\Gamma$.  Connecting $\gamma_p$ to the reference loop by a cylinder and extending the trivialization along the cylinder produces, up to homotopy, a well-defined trivialization of $\xi$ along $\gamma_p$.  When $c_1(\xi)$ is atoroidal we are able to guarantee that the resulting trivialization is independent of the cylinder. 

The choice of trivializations needs to be compatible under iteration.  However, for prequantization bundles over $\Sigma_g$ with $g \geq 1$, the classes $\Gamma^k$ for $k \in \Z_{\geq 1}$ are all distinct and nontrivial.     Thus we can fix a trivialization $\Phi$ of $\xi$ along a loop in the class $\Gamma$ then the trivialization for the class $\Gamma^k$ is obtained by taking the $k$-th iterate of $\Phi$.  

We take the reference loop to be the fiber $\gamma = \pi^{1}(p)$ over a point in the class $\Gamma$.   For any point  $q \in \pi^{-1}(p)$ a fixed trivialization of $T_p\Sigma_g$ allows us to trivialize $\xi_q$ as $\xi_q \cong T_p\Sigma$.  This trivialization is invariant under the linearized Reeb flow and can be thought of as a ``constant trivialization" over the orbit $\gamma_p$ because the linearized Reeb flow, with respect to this trivialization, is the identity map.  In regards to iterates, we use the $k$-th iteration of the fiber as the reference loop for $\Gamma^k$ and the reference trivialization associated to $\Gamma^k$ is still the constant trivialization.  

\end{proof}

\begin{remark}\em
The above choice of constant trivialization is compatible the regularity result of Propositions \ref{cylcoverssep} and \ref{cylcoverssep-par}.
\end{remark}

\subsection{Perturbed Reeb vector fields on prequantization bundles}\label{prequantdyn}
This section reviews the Reeb dynamics of prequantization bundles and completes the proof of Proposition \ref{czcontractible}, which gives us the formula for the Conley-Zehnder index of closed Reeb orbits of $R_\veps$ over critical points $p$ of $H$.  We begin with the following result.
\begin{proposition}
The Reeb vector field associated to $\lep=(1+\vepsilon \pi^*H)\lambda$ is given by
\begin{equation}
\label{perturbedreeb9}
R_{\vepsilon}=\frac{R}{1+\vepsilon \pi^*H} + \frac{\veps \widetilde{X}_H}{{(1+\vepsilon \pi^*H)}^{2}}.
\end{equation}
where $X_{ H}$ is a Hamiltonian vector field \footnote{We use the convention $\omega(X_H, \cdot) = dH.$} on $S^2$ and $\widetilde{X}_{H}$ its horizontal lift, i.e.
\[
 dh(q)\widetilde{X}_H(q) = X_{\vepsilon  H}(h(q)) \ \ \mbox{ and } \ \ \lo(\widetilde{X}_H)=0.
\]
\end{proposition}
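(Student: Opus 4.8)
The plan is to verify formula \eqref{perturbedreeb9} by a direct computation: I will show that the vector field on the right-hand side satisfies the two defining conditions of the Reeb vector field for the contact form $\lep = (1+\vepsilon\pi^*H)\lambda$, namely $\lep(R_\vepsilon) = 1$ and $\iota(R_\vepsilon)d\lep = 0$. Recall that $\lambda$ is the connection 1-form with $\lambda(R) = 1$ and $d\lambda = \pi^*\omega$, where $R$ generates the $S^1$-action and is tangent to the fibers; also $\pi^*H$ is basic, so $\iota(R)d(\pi^*H) = \iota(R)\pi^*(dH) = 0$ since $d\pi(R) = 0$.

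First I would check the normalization. Since $\widetilde X_H$ is the horizontal lift of $X_H$, we have $\lambda(\widetilde X_H) = 0$ by definition, and $\lambda(R) = 1$, so
\[
\lep(R_\vepsilon) = (1+\vepsilon\pi^*H)\left( \frac{\lambda(R)}{1+\vepsilon\pi^*H} + \frac{\vepsilon\,\lambda(\widetilde X_H)}{(1+\vepsilon\pi^*H)^2} \right) = 1.
\]
Next I would compute $d\lep = d(1+\vepsilon\pi^*H)\wedge\lambda + (1+\vepsilon\pi^*H)\,d\lambda = \vepsilon\,\pi^*(dH)\wedge\lambda + (1+\vepsilon\pi^*H)\pi^*\omega$, and then evaluate $\iota(R_\vepsilon)d\lep$ term by term. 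For the $\pi^*(dH)\wedge\lambda$ piece I would use $\iota(R)\lambda = 1$, $\iota(R)\pi^*dH = 0$, $\iota(\widetilde X_H)\lambda = 0$, and $\iota(\widetilde X_H)\pi^*dH = dH(X_H) = 0$ (the last because $\omega(X_H,X_H)=0$). For the $\pi^*\omega$ piece I would use $\iota(R)\pi^*\omega = 0$ and $\iota(\widetilde X_H)\pi^*\omega = \pi^*(\iota(X_H)\omega) = \pi^*(dH)$ by the defining relation $\omega(X_H,\cdot) = dH$ together with the horizontality of $\widetilde X_H$. Collecting the surviving terms, the contribution from $R/(1+\vepsilon\pi^*H)$ against $\vepsilon\,\pi^*(dH)\wedge\lambda$ gives $-\vepsilon\,\pi^*(dH)/(1+\vepsilon\pi^*H) \cdot (\text{from the }\lambda\text{ slot})$, which should precisely cancel the contribution from $\vepsilon\widetilde X_H/(1+\vepsilon\pi^*H)^2$ against $(1+\vepsilon\pi^*H)\pi^*\omega$, namely $\vepsilon\,\pi^*(dH)/(1+\vepsilon\pi^*H)$.

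The computation is entirely routine once the basic identities $\lambda(R)=1$, $\lambda(\widetilde X_H)=0$, $d\lambda = \pi^*\omega$, $\iota(R)\pi^*\omega = 0$, $\iota(\widetilde X_H)\pi^*\omega = \pi^*dH$, and $dH(X_H)=0$ are in hand, so I do not anticipate a genuine obstacle; the only place requiring a little care is keeping the sign conventions consistent (the convention $\omega(X_H,\cdot) = dH$ is fixed in the footnote) and correctly handling the wedge $\pi^*(dH)\wedge\lambda$ when contracting against a sum of a vertical and a horizontal vector field. I would also remark, as a consistency check, that setting $H$ constant recovers $R_\vepsilon = R/(1+\vepsilon H)$, a reparametrization of $R$, and that modulo $O(\vepsilon^2)$ one has $R_\vepsilon \approx R + \vepsilon(\widetilde X_H - (\pi^*H)R)$, which exhibits the Morse-theoretic perturbation of the fibered Reeb flow by the horizontal lift of the Hamiltonian flow of $H$ on the base, as expected.
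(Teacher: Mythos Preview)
Your proposal is correct and follows essentially the same approach as the paper: both verify the two defining conditions $\lep(R_\vepsilon)=1$ and $\iota(R_\vepsilon)d\lep=0$ by direct computation using $d\lambda=\pi^*\omega$, $\lambda(\widetilde{X}_H)=0$, $\iota(R)\pi^*\omega=0$, and $\iota(\widetilde{X}_H)\pi^*\omega=\pi^*dH$. The only cosmetic difference is that the paper writes $R_\vepsilon=aR+bY$ and solves for $a,b,Y$, whereas you plug in the claimed formula and check it; the underlying calculation is identical.
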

\begin{proof}
We have the following splitting of $TM$ with respect to the contact form $\lo$,
\[
T_pM = \langle R(p) \rangle \oplus \xi_p.
\]
Thus we know that there exists $a, b \in \R$ and $Y$ where $\lo(Y)=0$ such that $
R_\veps = aR + bY.$ We will show that $a = \dfrac{1}{1+\vepsilon \pi^*H}$, $b= \dfrac{\veps}{{(1+\vepsilon \pi^*H)}^{2}}$ and $Y=\widetilde{X}_H$.  We know that $R_\veps$ is uniquely determined by the equations
\begin{equation}\label{reebyreeb}
\lep(R_\veps)=1,  \ \ \ 
d\lep(R_\veps, \cdot)=0. 
\end{equation}
That $a$ is of the desired form follows immediately from the first line of (\ref{reebyreeb}) as
\[
\lep(R_\veps) = (1+\veps \pi^*H)\lo(aR) + (1+\veps \pi^*H)\lo(bY) = (1+ \veps \pi^*H)\lo(aR) 
\]
We compute to find
\[
d\lep = (1+\veps \pi^*H)d\lep + \veps \pi^*dH \we \lo.
\]
Then
\[
\begin{array}{lcl}
d\lep(R_\veps, \cdot) &= & (1+ \veps \pi^*H)\left(d\lo(aR, \cdot) + d\lo(bY, \cdot) \right) \\
&&  \ \ \ + \  \veps \pi^*dH(aR) \lo(\cdot) - \veps \pi^*dH(\cdot)\lo(aR)  \\
&&  \ \ \ +  \  \veps \pi^*dH(bY) \lo(\cdot) - \veps \pi^*dH(\cdot)\lo(bY) , \\
\end{array}
\]
which reduces to
\begin{equation}\label{reebyreeb2}
\begin{array}{lcl}
d\lep(R_\veps, \cdot) &= & (1+ \veps \pi^*H) d\lo(bY, \cdot) + \veps \pi^*dH(aR)\lo(\cdot) \\
&& \ \ \ - \ \frac{ \veps}{(1+ \veps \pi^*H)} \pi^*dH(\cdot) +\veps \pi^*dH(bY) \lo(\cdot).  \\
\end{array}
\end{equation}
Lest we forget about the symplectic form downstairs, recall $d\lo = \pi^*\om$ and 
$\om(X_H, \cdot) = dH.$ Also we have that 
\[
\pi^*dH(\cdot) = \pi^*\om(X_H, \cdot) = d\lo( \widetilde{X}_H, \cdot), \ \ \ \ \
\pi^*dH(\cdot) \we \lo(\cdot) = d\lo(\widetilde{X}_H, \cdot) \we \lo(\cdot).
\]
Thus (\ref{reebyreeb2}) becomes
\[
\begin{array}{lcl}
d\lep(R_\veps, \cdot)  &= & (1+\veps \pi^*H) d\lo(bY, \cdot) + \veps d\lo(\widetilde{X}_H,aR)\lo(\cdot) \\
&& \ \ \ - \ \frac{ \veps}{(1+ \veps \pi^*H)} d\lo(\widetilde{X}_H,\cdot) +  \veps d\lo(\widetilde{X}_H,bY) \lo(\cdot)  \\
&=& (1+ \veps \pi^*H) d\lo(bY, \cdot)  -  \frac{ \veps}{(1+ \veps \pi^*H)} d\lo(\widetilde{X}_H,\cdot) + \veps d\lo(\widetilde{X}_H,bY) \lo(\cdot). \\
\end{array}
\]
So $d\lep(R_\veps, \cdot) =0$ precisely when $b=\dfrac{\veps}{(1+\pi^*H)^{2}}$ and $Y=\widetilde{X}_H$ as desired.
\end{proof}



\begin{lemma}
Fix a Morse function $H$ such that $|H|_{C^2}<1$.  For each $T>0$, there exists $\vepsilon >0$ such that all Reeb orbits with $\mathcal{A}(\gamma) < T$ are nondegenerate and project to critical points of $H$.  
\end{lemma}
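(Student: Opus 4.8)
The plan is to control the Reeb dynamics of $\lep = (1 + \vepsilon \pi^* H)\lambda$ quantitatively in $\vepsilon$, using the explicit formula \eqref{perturbedreeb9} for $R_\vepsilon$, and then argue that for $\vepsilon$ small enough every orbit of action less than $T$ must be a fiber over a critical point. First I would observe that since $\lep \to \lambda$ in $C^2$ as $\vepsilon \to 0$, and since $R_\lambda$ generates the (Morse--Bott) circle action with all fibers of period $2\pi$, the flow of $R_\vepsilon$ is a small perturbation of the fiberwise rotation. The key structural fact, read off from \eqref{perturbedreeb9}, is that the $\xi$-component of $R_\vepsilon$ is $\frac{\vepsilon \widetilde X_H}{(1+\vepsilon\pi^*H)^2}$, so a closed orbit $\gamma$ of $R_\vepsilon$ projects under $\pi$ to an orbit of a reparametrized Hamiltonian flow of $H$ on $\Sigma$. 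A closed orbit of $R_\vepsilon$ of action $<T$ therefore has bounded period (the period is comparable to $2\pi k$ for some bounded $k$, using $\lep(R_\vepsilon) = 1$ hence $\mathcal A(\gamma) = \int_\gamma \lep = \mathrm{period}$), and its projection must be a closed orbit of $X_H$ of correspondingly bounded period; for $\vepsilon$ small the only such short closed Hamiltonian orbits are the constant ones, i.e. the critical points of $H$.

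The order of steps I would carry out: (1) Normalize so that action equals period and note $\mathcal A(\gamma^k_p) \approx 2\pi k (1 + O(\vepsilon))$, so the action bound $\mathcal A(\gamma) < T$ caps the number of times the fiber is covered by some $k_0 = k_0(T)$ independent of $\vepsilon$. (2) Show that any closed $R_\vepsilon$-orbit of period $\le T$ projects to a closed orbit of the (time-reparametrized) Hamiltonian flow of $H$ of period $\le T/\vepsilon$ in the $\vepsilon \to 0$ rescaled time; equivalently, that its $\Sigma$-projection has $C^0$-diameter $O(\vepsilon)$, shrinking to a point. This uses Gronwall-type estimates applied to \eqref{perturbedreeb9} together with the fact that $\widetilde X_H$ is bounded. (3) Conclude the projection is a critical point $p$ of $H$, and that over $p$ the orbit is exactly a $k$-fold cover of the fiber $\gamma_p$. (4) Establish nondegeneracy: linearize $R_\vepsilon$ along $\gamma_p^k$; the linearized return map on $\xi$ splits (for small $\vepsilon$, using the constant/natural trivialization as in Lemma \ref{consttrivlem} or the $S^3$ computation in Lemma \ref{rssphere}) into a fiberwise-rotation part coming from the Maslov/Robbin--Salamon contribution $\mu_{RS}(\gamma^k)$ of the unperturbed fiber, and a "base" part which is $\exp$ of $\vepsilon$ times the Hessian-type term $\mathrm{Hess}_p H$ composed with $J_0$; since $H$ is Morse this base part has no eigenvalue $1$, and the rotation part contributes no eigenvalue $1$ either for generic/small $\vepsilon$ (one must exclude the finitely many resonant $\vepsilon$, which is fine since we only need existence of some small $\vepsilon$). (5) Finally compute $\czm(\gamma_p^k)$ via the product property of the Conley--Zehnder/Robbin--Salamon index applied to this splitting: the fiber direction gives $\mu_{RS}(\gamma^k)$, the base direction gives $\mathrm{index}_p H - \tfrac12\dim\Sigma$ by Lemma \ref{RSHam}, and one collects the $\tfrac12\dim\Sigma$ shifts together with the dimension bookkeeping to land on $\czm(\gamma_p^k) = \mu_{RS}(\gamma^k) - 1 + \mathrm{index}_p H$ in the three-dimensional case $\dim\Sigma = 2$.

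The main obstacle I expect is Step (2)--(3): making rigorous and uniform-in-$\vepsilon$ the claim that closed orbits of bounded action cannot ``drift'' along the base, i.e. that there is no closed orbit of $R_\vepsilon$ of action $< T$ wrapping around a nonconstant short-period orbit of $X_H$ or spiraling in the fiber while slowly moving in $\Sigma$. This requires either a careful quantitative estimate (the $\Sigma$-displacement over one period is $O(\vepsilon)$ and the return to the fiber is controlled, so after $\le k_0$ periods the total displacement is still $O(\vepsilon)$, forcing the projected orbit into an $O(\vepsilon)$-neighborhood of a critical point, whence by the Morse condition and an implicit function / Newton argument it must \emph{be} a critical point), or an appeal to an averaging/normal-form argument for the near-Morse--Bott situation as in Bourgeois \cite{B02} and Vaugon \cite{annethesis}. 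The index computation in Step (5), by contrast, is essentially bookkeeping once the splitting of the linearized flow is in place, using additivity of $\rs$ under products and Lemmas \ref{RSHam} and \ref{rssphere}; the only subtlety there is ensuring the trivialization used for $\mu_{RS}(\gamma^k)$ is the one induced by the global complex volume form (when $c_1(\xi;\mathbb Q)=0$) or the constant trivialization of Lemma \ref{consttrivlem}, so that the iterate-compatibility holds.
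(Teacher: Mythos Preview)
Your approach is essentially the same as the paper's, but you overcomplicate the key step and overshoot the statement.

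For Steps (2)--(3), the obstacle you anticipate is not there. The projection $\pi$ sends $R_\vepsilon$ \emph{exactly} to a time-reparametrization of $X_H$ on $\Sigma$; there is no approximate or averaged dynamics to control, so no Gronwall estimate, drift analysis, or implicit function argument is needed. A closed $R_\vepsilon$-orbit therefore projects to a genuine closed orbit of $X_H$, which is either a critical point or a nonconstant periodic orbit. Since $X_H$ is autonomous on a compact surface, its nonconstant periodic orbits have period bounded below by some $m_0>0$. The coefficient $\vepsilon/(1+\vepsilon\pi^*H)^2$ in \eqref{perturbedreeb9} is of order $\vepsilon$, so the corresponding $R_\vepsilon$-orbit has period at least $C/\vepsilon$; for $\vepsilon$ small this exceeds $T$ and the orbit is excluded. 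That is the entire content of the paper's argument for the projection claim, and it replaces your proposed $C^0$-diameter estimate entirely.

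Your Steps (4)--(5) go beyond what this lemma asserts: the Conley--Zehnder formula is Proposition~\ref{czcontractible}, proved separately. For the nondegeneracy claim actually in the lemma, the paper simply cites the proof of Theorem~13 in Appendix~A of \cite{ABW} rather than carrying out the linearized-return-map splitting you sketch; your sketch is fine in spirit but is more than the lemma requires.
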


\begin{proof}
We have 
\[
R_{\vepsilon}=\frac{R}{1+\vepsilon \pi^*H} + \frac{\veps \widetilde{X}_H}{{(1+\vepsilon \pi^*H)}^{2}}.
\]
The horizontal lift $\widetilde{X}_{H}$  is determined  by
\[
  dh(q)\widetilde{X}_H(q) = X_{\vepsilon  H}(h(q)) \ \ \mbox{ and } \ \ \lo(\widetilde{X}_H)=0.
\]
Thus those orbits which do not project to $p\in \mbox{Crit}(H)$ must project to $X_H$.  We have 
\[
\frac{\veps}{(1+\veps)^2} <  \frac{\veps}{{(1+\vepsilon \pi^*H)}^{2}} < \frac{\veps}{(1-\veps)^2}
\]
A Taylor series expansion shows that the $m$-periodic orbits of $X_H$ give rise to orbits of $ \frac{\veps \widetilde{X}_H}{{(1+\vepsilon \pi^*H)}^{2}}$ which are $\frac{C}{\veps}$-periodic for some $C$.  We note that $C$ and $m$ must be bounded away from 0 since $X_H$ is time autonomous.  Nondegeneracy of Reeb orbits $\gamma$ such that $\mathcal{A}(\gamma) <T$ follows from the proof of Theorem 13 in Appendix A of \cite{ABW}.

\end{proof}
 

\begin{remark}\label{acrem}\em
The action of a Reeb orbit $\gpk$ of $R_\veps$ over a critical point $p$ of $H$ is proportional to the length of the fiber, namely
\[
\cala(\gpk) = \int_{\gpk} \lep = 2k \pi (1+\vepsilon \pi^*H),
\]
because $\pi^*H$ is constant on critical points $p$ of $H$. 
\end{remark}



With these details in place we can finish the proof of Proposition \ref{czcontractible} in regard to the Conley-Zehnder indices of $\ga_p^k$, which we will prove are given by
\[
\czm(\gpk)=\mu_{RS}(\gamma^k)-1+\mbox{index}_p(H),
\]
where $\gamma$ is a degenerate Reeb orbit corresponding to the circle fiber of $V \to \Sigma$.  To do this we employ an argument similar to the one found in \cite{CFHW}, as follows.


\begin{proof}[Proof of Proposition \ref{czcontractible}]
First, we show that one can use $d\lambda$ instead of $d\left((1+\varepsilon \pi^*H)\lambda\right)$ in computing the Conley-Zehnder indices for closed Reeb orbits over critical points of $H$.  We have
\[
\begin{array}{lcl}
d\left( (1+\varepsilon \pi^*H)\lambda \right) \arrowvert_\xi &= & \left( d(\veps \pi^*H) \we \lambda + \veps \pi^*H d\lambda \right) \arrowvert_\xi \\
&=& \left( \veps \pi^*H d\lambda \right) \arrowvert_\xi. 
\end{array}
\]
This tells us that $\pi^*H$ is constant along Hopf fibers over critical points of $H$, which are precisely the nondegenerate Reeb orbits of interest to us.  

Consider the decomposition 
\[
T_{\widetilde{q}}(\R \times V) = \R \oplus \langle R_\veps(q) \rangle \oplus \xi_q,
\]
where $\widetilde{q}$ is the lift of $q$ under the projection map $pr: \R \times V\to V$.  Since $p=\pi(q)$ is a critical point of $H$ the linearization at $\widetilde{q}$ with respect to this decomposition is given by
\[
d\varphi^\veps_t(\widetilde{q}) = \left( \begin{array}{cc} \left( \begin{array}{cc}
1 &0 \\
0& 1 \\
\end{array} \right) & \\
& d\varphi^\veps_t \arrowvert_{\xi_{q}}  \\ \end{array} \right).
\]
We define
\[
\Phi_\veps(t) = d\varphi^\veps_t \arrowvert_{\xi_{q}},  
\]
to be the linearization of the perturbed flow $R_\veps$ restricted to $\xi_q$.  Note that when $h(q)=p$ is a critical point of $H$ then the Reeb orbits associated to $R_\veps$ are $2 k \pi (1 + \veps H(p))$-periodic.  Define
\[
T_k:=2 k \pi (1 + \veps H(p)).
\]
Let $\Phi(t)$ be the linearized flow of $R$ restricted to $\xi_q$ and
$\Psi_\veps(t)$  be the linearized flow of $\psi_t$ associated to $\widetilde{X}_{1+\vepsilon H}$.


The homotopy
\[
L(s,t)=\Phi_{s\veps}(t)\Psi_{(1-s)\veps}(t)
\]
connects with fixed end points the path $\Phi_\veps(t)$ to $\Phi(t)\Psi_\veps(t)$.  For small $\veps$ we know that these paths have ends in $\Sp^*(2)$, the set of $2 \times 2$ symplectic matrices with eigenvalues not equal to 1.  Using the homotopy
\[
K_0(s,t)= \left\{ \begin{array}{lc}
L(s, \frac{2t}{s+1}) & \mbox{ if } t \leq T_k \cdot \frac{s+1}{2} \\
L(2\frac{t}{T_k}-1,T_k)&   \mbox{ if } t \geq T_k \cdot \frac{s+1}{2} \\
\end{array} \right. 
\]
and the aforementioned properties of the Conley-Zehnder index and Robbin-Salamon index we obtain
\[
\rs(\Phi_\veps) = \rs(\Phi \Psi_\veps).
\]
Another homotopy,
\[
K_1(s,t)= \left\{ \begin{array}{lc}
\Phi(\frac{2t}{s+1})\Psi_\veps(st) & \mbox{ if } t \leq T_k \cdot \frac{s+1}{2} \\
\Phi(T_k)\Psi_\veps((s+2)t - (s+1))&   \mbox{ if } t \geq T_k \cdot \frac{s+1}{2} \\
\end{array} \right.
\]
for $(s,t) \in [0,1] \times [0, T_k]$ yields 
\[
\rs(\Phi \Psi_\veps) = \rs(\Phi) + \rs(\Phi(T_k)\Psi_\veps).
\]
As a result we obtain 
\[
\rs(\Phi_\veps)=\rs(\Phi) + \rs(\Phi(T_k)\Psi_\veps).  
\]

We also have 
\[
\pi_*(T_qV) = \pi_*(\xi_q) = T_p\Sigma.
\]
If we extend the flow $\psi_t$ of $X_H$ to the symplectization then $\ker d\psi_t = \{0 \}$ for $t>0$.  Thus the only contribution from (the lift of) $-X_H$ to the Robbin-Salamon index occurs at $t=0$.  From Lemma \ref{RSHam} we obtain
 \[
\rs(\Phi(T_k)\Psi_\vepsilon) = \mbox{ index}_pH - \frac{1}{2}\mbox{ dim}\Sigma = \mbox{ index}_pH -1,
\]
and the result follows.

\end{proof}


\section{Fun with filtrations} \label{filtration}
 In this section, we prove Theorem \ref{prequantch} which demonstrates that the cylindrical contact homology of  three dimensional prequantization bundles $S^1 \hookrightarrow (V,\lambda) \overset{\pi}{\rightarrow} (\Sigma,\omega)$ consists of infinitely many copies of $H_*(\Sigma; \Q)$. We first establish a correspondence between gradient flows of a Morse-Smale function $H$ on the base $\Sigma$ and pseudoholomorphic cylinders asymptotic to closed Reeb orbits associated to $\lep = (1+\vepsilon \pi^*H)\lo$ which are of the same multiplicity and project to critical points of $H$.  The second step in our proof uses the results of  Proposition \ref{cylcoverssep} as well as Sections \ref{invariance} and \ref{cz-section} to construct a chain complex filtered by the action and index.  Using direct limits and the established proportionality between the action and index with respect to a free homotopy class, we recover the cylindrical contact homology of 3-dimensional prequantization bundles.


\subsection{Correspondence between cylinders and gradient flow lines}

 Recall that the perturbed Reeb vector field associated to $\lep$ is given by
\[
R_{\vepsilon}=\frac{R}{1+\vepsilon h^*H} + \frac{ \vepsilon \widetilde{X}_H}{{(1+\vepsilon h^*H)}^{2}},
\]where $X_{ H}$ is a Hamiltonian vector field\footnote{We use the convention $\omega(X_H, \cdot) = dH.$} on $S^2$ and $\widetilde{X}_{ H}$ its horizontal lift.

From \cite[Theorems 7.3]{SZ} and \cite[Theorem 10.30]{wendl-sft} we have the following result, which is key to establishing the isomorphism between Hamiltonian Floer homology and singular homology \cite{F89}, \cite[Theorem 7.1]{SZ}.  

\begin{theorem}\label{morsetofloer}
Let $(\Sigma, \omega_0)$ be a closed oriented surface of genus $g$.  Suppose $H: \Sigma \to \R$ is a smooth Morse function such that $|H|_{C^2}<1,$ $J$ is an $\omega$-compatible almost complex structure, and the flow of $H$ with respect to $\omega(\cdot, J \cdot)$ is Morse-Smale.  Given $\varepsilon > 0$, let $H^{\vepsilon} : = \vepsilon H$ with Hamiltonian vector field $X_{H^\vepsilon}=\vepsilon X_H$, and consider the contact form   $\lep = (1+\vepsilon \pi^*H)\lo$ on the prequantiation bundle $S^1 \hookrightarrow (V,\lambda) \overset{\pi}{\rightarrow} (\Sigma,\omega)$.  Then for all $\vepsilon  > 0$ sufficiently small, the following hold.
\begin{enumerate}
\item[\em{(i)}] The simple $R_\vepsilon$-orbit $\gamma_p$ arising from any $p \in \mbox{\em Crit}(H)$ is nondegenerate and its Conley-Zehnder index relative to the constant trivialization $\Phi$ is given by
\[
\czm^\Phi(\gamma_p) = -1 + \mbox{\em index}_p(H).
\]
\item[\em{(ii)}] Any trajectory $x : \R \to \Sigma$ satisfying the negative gradient flow equation $\dot{x}=-\nabla H^{\vepsilon}$ gives rise to a Fredholm regular solution  
\[
\begin{array}{lrcl}
v: &\R \times S^1 &\to& \Sigma \\
& (s,t) &\to&  x(s) \\
\end{array}
\]
 of the time independent Floer equation
\begin{equation}\label{eqnfloer}
 \partial_s v + J(v) \left(\partial_t + X_{ H^\vepsilon}(v) \right) =0
\end{equation}
and the virtual dimensions of the spaces of Floer trajectories near $v$ and gradient flow trajectories near $x$ are the same.
\item[\em{(iii)}] Every 1-periodic orbit of $X_{H^\vepsilon}$ in $\Sigma$ is a constant loop at a critical point of $H$.
\item[\em{(iv)}] Every finite energy solution $v: \R \times S^1 \to \Sigma$ of \eqref{eqnfloer} is of the form $v(s,t)=x(s)$ for some negative gradient flow trajectory $x : \R \to \Sigma$.

\end{enumerate}
\end{theorem}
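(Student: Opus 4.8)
\textbf{Proof proposal for Theorem \ref{morsetofloer}.}

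The plan is to reduce all four assertions to the corresponding statements for the symplectization-level Floer theory of the base $(\Sigma,\omega_0)$, exploiting the explicit form of $R_\vepsilon$ derived in the previous section and the $S^1$-invariance of the perturbed contact form. Assertion (i) is essentially already in hand: combining Proposition \ref{czcontractible} with the computation $\rs^\Phi(\gamma_p)=0$ for the constant trivialization $\Phi$ (Lemma \ref{consttrivlem} in the higher-genus case, and the covering trick reducing to Lemma \ref{rssphere} in the sphere case) immediately gives $\czm^\Phi(\gamma_p) = \rs^\Phi(\gamma_p) - 1 + \mbox{index}_p(H) = -1 + \mbox{index}_p(H)$; nondegeneracy for $\vepsilon$ small is the content of the Lemma at the end of Section \ref{prequantdyn}. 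So the real work is in parts (ii)--(iv).

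For (iii), I would argue directly from the structure of $X_{H^\vepsilon} = \vepsilon X_H$ on $\Sigma$: a $1$-periodic orbit of $X_{H^\vepsilon}$ is a closed characteristic of period $1$ for a time-autonomous vector field whose zeros are exactly $\mbox{Crit}(H)$; since $|H|_{C^2}<1$ the nonconstant periodic orbits of $X_H$ have period bounded below by a fixed constant (as in the Taylor-expansion estimate used in Section \ref{prequantdyn}), so for $\vepsilon$ small the only $1$-periodic orbits of $\vepsilon X_H$ are the constant loops at critical points, giving (iii). For (iv), the key point is that a finite-energy solution $v$ of the time-independent Floer equation \eqref{eqnfloer} on the closed surface $\Sigma$ has both asymptotics at constant orbits by (iii), hence has a well-defined (finite) energy; the standard argument — testing the energy identity against the $s$-independence of the equation, or equivalently observing that $E(v) = \int v^*\omega_0$ together with the action difference forces $\partial_s v$ to be controlled — shows that any such $v$ must be $s$-independent up to the extra $t$-dependence, and then a further bubbling/regularity argument rules out genuine $t$-dependence, so $v(s,t)=x(s)$ with $x$ a negative gradient trajectory of $H^\vepsilon$. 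I would cite \cite[Theorem 7.3]{SZ} and \cite[Theorem 10.30]{wendl-sft} for the detailed analytic estimates rather than reproduce them, since this is precisely the classical Floer-to-Morse correspondence on a closed symplectic manifold.

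For (ii), the correspondence direction: given a negative gradient flow line $x$ of $H^\vepsilon$, the cylinder $v(s,t)=x(s)$ solves \eqref{eqnfloer} because $\partial_t v = 0$ and $\partial_s x = -\nabla H^\vepsilon = -J X_{H^\vepsilon}$ (with the sign convention $\omega(X_H,\cdot)=dH$ and the Hopf-fiber identification $\xi_q \cong T_p\Sigma$ used in Lemma \ref{consttrivlem}). The substantive claim is the equality of virtual dimensions — equivalently, Fredholm regularity of $v$ as a Floer cylinder whenever $x$ is a regular gradient trajectory. Here I would invoke the Morse--Bott / time-independent transversality machinery: the linearized Floer operator at an $s$-independent solution splits, after Fourier decomposition in $t$, into the linearized gradient-flow (Morse) operator on the zero mode plus operators on the higher modes that are invertible for $\vepsilon$ small because $|H|_{C^2}<1$ keeps the relevant Hessian eigenvalues away from $2\pi\Z$. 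This is exactly the content of \cite[Theorem 7.3]{SZ}; the only thing to check in our setting is that the constant trivialization $\Phi$ is the one that makes the Conley-Zehnder indices match the Morse indices up to the shift in (i), which is guaranteed by the remark following Lemma \ref{consttrivlem} and by Proposition \ref{czcontractible}.

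The main obstacle I anticipate is not any single estimate but rather bookkeeping: making sure that the trivialization $\Phi$ used for the index formula in (i), the almost complex structure on $\xi$ used in \eqref{eqnfloer} (via the horizontal-lift identification $\xi_q \cong T_p\Sigma$), and the compatibility-under-iteration conventions of Section \ref{cz-section} all agree, so that the "virtual dimensions are the same" statement in (ii) is literally an equality of integers and not merely an equality up to an unaccounted constant. Once the dictionary $\czm^\Phi(\gamma_p) + 1 = \mbox{index}_p(H)$ and $\mbox{ind}_{\mathrm{Floer}}(v) = \mbox{ind}_{\mathrm{Morse}}(x)$ is pinned down with consistent orientations, (ii)--(iv) follow from the cited theorems with only routine modifications for the prequantization-bundle setting, which is why the proof will mostly consist of assembling Proposition \ref{czcontractible}, Lemma \ref{consttrivlem}, Lemma \ref{rssphere}, and \cite[Thm 7.3]{SZ}, \cite[Thm 10.30]{wendl-sft} rather than new analysis.
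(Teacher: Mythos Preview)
Your proposal is correct and aligned with the paper's treatment: the paper does not give its own proof of this theorem but simply attributes it to \cite[Theorems 7.3]{SZ} and \cite[Theorem 10.30]{wendl-sft}, exactly the references you invoke for parts (ii)--(iv), while part (i) is indeed assembled from Proposition \ref{czcontractible} together with the Robbin--Salamon computations (Lemmas \ref{rssphere} and \ref{consttrivlem}) as you outline. Your additional commentary on the Fourier decomposition for regularity and on trivialization bookkeeping goes beyond what the paper records but is consistent with the cited sources.
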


A couple of remarks are in order.

\begin{remark}\em
We have added to Chris Wendl's horror\footnote{In addition to \cite[Remark 10.32]{wendl-sft}, one should read his blog post, as well as the subsequent discussion on \emph{heretical} signs at \url{https://symplecticfieldtheorist.wordpress.com/2015/08/23/signs-or-how-to-annoy-a-symplectic-topologist/}} by using the convention $\omega(X_H, \cdot) = dH.$  As computed in Lemma \ref{RSHam}, this yields  Theorem \ref{morsetofloer}(i)  which has opposite sign from his  \cite[Theorem 10.30(i)]{wendl-sft}.
\end{remark}

\begin{remark}\em
Proofs of Theorem \ref{morsetofloer} (iv) typically impose the additional assumption that $|\mbox{index}_p(H)-\mbox{index}_q(H)| \leq 1$ for all pairs $p, q \in \mbox{Crit}(H)$ to avoid a discussion of gluing.  This assumption is sufficient for our needs in defining the filtered complex as we have independently shown for cylindrical contact homology that $\left(\partial_\pm^{EGH}\right)^2=0$.
\end{remark}

A fairly standard argument \cite[\S 7]{Sief2} yields the following correspondence between cylinders $u \in \M^{J_\vepsilon}(\ga_p^k, \ga_q^k)$ and gradient flow lines between the critical points $p$ and $q$.  Theorem \ref{morsetofloer} then implies that these cylinders correspond to  Floer trajectories  between the 1-periodic orbits of $H^\vepsilon$, which are the constant loops at the critical points $p$ and $q$.

 In our setting, we consider prequantization bundles over closed symplectic manifolds in lieu of trivial $S^1$-bundles.  However, as explained in \cite[\S 6.1]{moreno}, one can parametrize these $J_\vepsilon$-holomorphic cylinders as follows.  Let  $(\theta, y) \mapsto \theta^*y$ be the global $S^1$-action on the $S^1$ principal bundle $Y$ and define 
  \[
\begin{array}{llcl}
u_x^\vepsilon: & \R \times S^1 & \to & \R \times Y \\
&(s,t) & \mapsto & (a(s), t^*\widetilde{x}(s)) \\
\end{array}
\]
where $\dot{a}(s) = (1 + \vepsilon \pi^*H({x}(s)))$, $\widetilde{x}$ solves $\dot{\widetilde{x}} = \dfrac{ \widetilde{\nabla {H}^\vepsilon} (\widetilde{x}(s))}{1 + \vepsilon \pi^*H({x}(s))}$, where $\widetilde{\nabla {H}}$ is the lift of $\nabla H$ to $\xi$. This construction yields the following proposition.

\begin{proposition}\label{cyltofloer}
Assume the same hypotheses in Theorem \ref{morsetofloer} and that $p, q \in \mbox{\em Crit}(H)$ with $|\mbox{\em index}_p(H)-\mbox{\em index}_q(H)| \leq 1$.   Given $\vepsilon >0$ sufficiently small and any $\lep$-compatible almost complex structures $J_\vepsilon$, every negative gradient flow equation $\dot{x} = -\nabla H^\vepsilon(x)$ from $p$ to $q$ gives rise to a smooth Fredholm regular $J_\vepsilon$-holomorphic cylinder $u^\vepsilon_x \in {\M}^{J_\vepsilon}(\ga_p^k, \ga_q^k)$ in $\R \times V$ and the virtual dimensions of the spaces of pseudoholomorphic cylinders near $u_\vepsilon^x$ and gradient flow trajectories near $x$ are the same. 
\end{proposition}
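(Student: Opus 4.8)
\textbf{Proof proposal for Proposition \ref{cyltofloer}.}

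The plan is to bootstrap the statement from the already-established correspondence between negative gradient trajectories of $H^\vepsilon$ on $\Sigma$ and Floer trajectories in $\Sigma$ (Theorem \ref{morsetofloer}), together with the explicit lift formula $(s,t)\mapsto u_x^\vepsilon(s,t) = (a(s), t^*\widetilde{x}(s))$ recorded just before the statement. First I would verify that $u_x^\vepsilon$ is genuinely $J_\vepsilon$-holomorphic: one substitutes the ansatz into the Cauchy-Riemann equation $\bar\pa_{J_\vepsilon} u = du + J_\vepsilon\circ du\circ j_0 = 0$ using the splitting $T(\R\times V) = \R\pa_a \oplus \langle R_\vepsilon\rangle \oplus \xi$ from Section \ref{prequantdyn}, the compatibility $J_\vepsilon(\pa_a) = R_\vepsilon$, and $J_\vepsilon(\xi)=\xi$. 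The $\R\oplus\langle R_\vepsilon\rangle$-component of the equation reduces to the ODE $\dot a(s) = 1+\vepsilon\pi^*H(x(s))$ for the $\R$-coordinate (matching $\lep(R_\vepsilon)=1$ after rescaling time along the fiber), while the $\xi$-component reduces, under the identification $\pi_*\xi \cong T\Sigma$, precisely to the time-independent Floer equation \eqref{eqnfloer} for $v(s,t)=x(s)$; here one uses the Reeb formula \eqref{perturbedreeb9} to see that the horizontal lift $\widetilde{X}_H$ contributes exactly the Hamiltonian term and that $\widetilde{x}$ is the horizontal lift of the rescaled gradient flow. The asymptotics are immediate from Remark \ref{acrem}: as $s\to\pm\infty$, $x(s)\to p$ (resp.\ $q$) and $u_x^\vepsilon$ converges to the trivial cylinder over $\gamma_p^k$ (resp.\ $\gamma_q^k$), after arranging the $S^1$-reparametrization so that the limit is an honest parametrization of the iterated fiber.

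Next I would establish that $u_x^\vepsilon$ is Fredholm regular and that the virtual dimensions agree. The cleanest route is to compare deformation operators directly: the linearized operator $D_{u_x^\vepsilon}$ splits, along the decomposition $u_x^{\vepsilon*}T(\R\times V) = (\R\oplus\langle R_\vepsilon\rangle) \oplus u_x^{\vepsilon*}\xi$, into an invertible piece on the first summand (automatic, since that summand carries no genuine deformations modulo the $\R$-action and rotation) and, on the $\xi$-summand, an operator conjugate (via $\pi_*$ and the constant/canonical trivialization of Lemma \ref{consttrivlem} or Proposition \ref{spheremaslov}) to the linearized Floer operator at $v$. Theorem \ref{morsetofloer}(ii) gives that the latter is surjective with kernel of dimension equal to that of the linearized gradient flow operator near $x$, which by Morse--Smale transversality is $\mbox{index}_p(H) - \mbox{index}_q(H)$. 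The index bookkeeping then matches the Fredholm index formula \eqref{indexformula}: using the Conley--Zehnder values $\czm^\Phi(\gamma_p^k) = \mu_{RS}(\gamma^k) - 1 + \mbox{index}_p(H)$ from Proposition \ref{czcontractible} (noting $\mu_{RS}(\gamma^k)$ is the same for all critical points since the fiber is independent of $p$), one gets $\ind(u_x^\vepsilon) = \czm^\Phi(\gamma_p^k) - \czm^\Phi(\gamma_q^k) = \mbox{index}_p(H) - \mbox{index}_q(H)$, equal to the virtual dimension of the gradient-flow moduli space, as claimed.

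Finally, the hypothesis $|\mbox{index}_p(H) - \mbox{index}_q(H)|\le 1$ enters exactly as in Theorem \ref{morsetofloer}(iv): it ensures that no gluing/breaking analysis is needed to identify the relevant low-dimensional moduli spaces, since only cylinders of index $0$ and $1$ matter and the correspondence $x\mapsto u_x^\vepsilon$ is then a bijection onto the nontrivial cylinders in $\M^{J_\vepsilon}(\gamma_p^k,\gamma_q^k)$ (this last surjectivity is the content of \cite[\S 7]{Sief2} / \cite[\S 6.1]{moreno}, combined with Theorem \ref{morsetofloer}(iv) pushed back up to $\R\times V$, using that every finite-energy cylinder between iterated fibers projects to a finite-energy Floer solution downstairs). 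I expect the main obstacle to be the regularity comparison in the second paragraph: one must check carefully that the splitting of $D_{u_x^\vepsilon}$ is genuine (i.e.\ that the off-diagonal terms coupling the fiber and base directions vanish or are lower order), which relies on the precise form of $J_\vepsilon$ respecting the connection and on the constant trivialization being flat along the orbit; the rest is index arithmetic and transcription of the Floer-theoretic statement. An alternative to the direct operator-splitting argument, should the coupling terms prove annoying, is to invoke automatic transversality for these index $\le 1$ cylinders via Proposition \ref{aut-agree} and Proposition \ref{cylcoverssep}, since the asymptotic orbits $\gamma_p^k$ over critical points of even index are elliptic with small rotation (hence $h_+ = 0$), which sidesteps the need to analyze $D_{u_x^\vepsilon}$ by hand.
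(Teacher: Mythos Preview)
Your proposal is broadly sound and ends up covering the paper's argument, but your \emph{primary} route differs from what the paper actually does, and there is one small slip in your fallback.

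The paper does not attempt to split $D_{u_x^\vepsilon}$ and reduce to the downstairs Floer operator. Instead it outsources the construction entirely to \cite[\S 7]{Sief2} and \cite[\S 6.1]{moreno}, matches virtual dimensions via the Conley--Zehnder formula of Proposition~\ref{czcontractible} (exactly as you do in your index arithmetic), and then obtains Fredholm regularity directly from Propositions~\ref{aut-agree}, \ref{cylcoverssep}, and \ref{cylcoverssep-par}. In other words, the paper's regularity argument \emph{is} your ``alternative'' at the end, not a fallback. This buys independence from the particular shape of $J_\vepsilon$: the statement is for \emph{any} $\lambda_\vepsilon$-compatible $J_\vepsilon$, and automatic transversality needs no genericity or bundle-compatibility hypotheses on $J_\vepsilon$, whereas your operator-splitting argument requires $J_\vepsilon$ to be the specific lift so that the off-diagonal coupling vanishes (a point you correctly flag as the main obstacle). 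Your approach, if the splitting can be made clean, has the virtue of explaining \emph{why} regularity upstairs is inherited from Theorem~\ref{morsetofloer}(ii), and would extend to higher-dimensional bases where automatic transversality is unavailable; the paper's approach is shorter but genuinely three-dimensional.

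One correction to your fallback: it is not true that $h_+ = 0$ for the relevant cylinders. Orbits over index-$1$ critical points are \emph{positive hyperbolic} (the linearized return map is $\exp(-2\pi k\vepsilon J_0\,\mathrm{Hess}_pH)$ with indefinite Hessian, hence real positive eigenvalues), while orbits over index-$0$ and index-$2$ critical points are elliptic. So for the index-$1$ cylinders coming from rigid gradient trajectories (Morse index difference~$1$), exactly one end is positive hyperbolic and $h_+ = 1$; the automatic transversality inequality \eqref{eqn:atcondition} still holds since $1 - 0 + 2 - 0 - 1 = 2 > 0$. The only case where $h_+ = 2$ would be an index-$0$ cylinder between two saddle orbits, but Morse--Smale rules out index-$0$ gradient trajectories between distinct index-$1$ critical points, so Proposition~\ref{cylcoverssep} is not actually needed here (the paper cites it for completeness and for later use).
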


\begin{proof}
When $H$ is small but not identically zero, the projection of the curve to $\Sigma$ is no longer holomorphic as in the proof of \cite[Theorem 3.1]{Sief2}.  However, one can appeal to the asymptotic behavior of holomorphic curves, along with intersection theory and the relationship between the Conley-Zehnder indices and extremal winding numbers as in the proof of \cite[Prop. 4.11, Thm. C.10]{moreno}

This is because of the $S^1$-invariance of the contact structure $\xi$ induced by $\lambda$ gives a natural trivialization $\Phi$ of $\xi$ along the Reeb orbits associated to $\lep$ which project to critical points of $H$.  That one can use the trivialization induced by $\lambda$ for $\lep$ is explained in the proof of Proposition \ref{czcontractible}. Finally, the formulas for the Conley-Zehnder indices established in Section \ref{cz-section} show that the virtual dimension of the moduli space of cylinders agrees with the virtual dimension of the space of gradient flow lines.  Regularity follows from Propositions \ref{aut-agree}, \ref{cylcoverssep}, and \ref{cylcoverssep-par}.
\end{proof}

Next, we prove uniqueness via the following result which we have adapted from \cite[Theorem 10.33]{wendl-sft}.  The original result is stated in terms of certain stable Hamiltonian structures on $\Sigma \times S^1$ rather for prequantization bundles over $\Sigma$, but it is still applicable.  

\begin{theorem}\label{noescape}
Assume the same hypotheses in Theorem \ref{morsetofloer} and that $p, q \in \mbox{\em Crit}(H)$ with $|\mbox{\em index}_p(H)-\mbox{\em index}_q(H)| \leq 1$.    Let $\vepsilon >0$ be sufficiently small and take any smooth family $J_\delta$ of $\lep$-compatible almost complex structures matching $J_\vepsilon$ at $\delta=0$. Then there exists $\delta_0 >0 $ such that every $J_\delta$-holomorphic cylinder for $\delta \in [0, \delta_0]$ with a positive end asymptotic to $\ga_p^k$ and a negative end asymptotic to $\ga_q^k$, both up to reparametrization, is in the same equivalence class of cylinders as $u_\delta^x$ in Proposition \ref{cyltofloer}.

\end{theorem}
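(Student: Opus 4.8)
\textbf{Proof proposal for Theorem \ref{noescape}.}

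The plan is to argue by contradiction using SFT compactness together with the transversality and index bounds already established. Suppose the conclusion fails: then there is a sequence $\delta_n \to 0$ and $J_{\delta_n}$-holomorphic cylinders $u_n$ with a positive end at $\ga_p^k$ and a negative end at $\ga_q^k$, none of which lies in the equivalence class of the model cylinder $u_{\delta_n}^x$ of Proposition \ref{cyltofloer}. First I would record that, since $|\mbox{index}_p(H) - \mbox{index}_q(H)| \le 1$, the Conley-Zehnder formula of Proposition \ref{czcontractible} (or Theorem \ref{morsetofloer}(i)) forces $\czm(\ga_p^k) - \czm(\ga_q^k) \in \{-1,0,1\}$; because $\partial_\pm^{EGH}$ decreases action and the base is Morse-Smale, only the case $\czm(\ga_p^k)-\czm(\ga_q^k) \in \{0,1\}$ yields nonempty moduli spaces, and we may as well assume $\mbox{ind}(u_n) \le 1$. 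I would then apply SFT compactness \cite{BEHWZ}, noting that at $\delta=0$ the contact form $\lep$ is $L_\vepsilon$-dynamically separated (by Proposition \ref{spheremaslov} and the discussion following Lemma \ref{lempre}), so the compactness and exclusion-of-bad-breaking apparatus of Section \ref{invariance} applies to the limiting building $\calb$.

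Next I would invoke the building-exclusion lemmata: by Lemma \ref{bldgchain} (for index $1$ in a fixed cobordism, here the symplectization viewed as the trivial cobordism with the family $J_\delta$) and Proposition \ref{0compact}/Proposition \ref{0compacthtpy} (for index $0$), the only possible limit $\calb$ of the $u_n$ is an unbroken cylinder $u_\infty \in \M^{J_0}(\ga_p^k,\ga_q^k) = \M^{J_\vepsilon}(\ga_p^k,\ga_q^k)$, possibly a multiple cover, but the dynamically separated condition together with Lemma \ref{index-cyl-pos} and Proposition \ref{cylcoverssep} rules out covers of negative index and forces the cover to have the same index as its underlying somewhere injective cylinder. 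Here I would use that the ends are at orbits over critical points with $|\mbox{index}_p - \mbox{index}_q| \le 1$ to see that no genuine multiple cover of lower index can occur in the limit, so $u_\infty$ is (the class of) a regular cylinder. By Proposition \ref{cyltofloer} combined with Theorem \ref{morsetofloer}(iv), every such $J_\vepsilon$-holomorphic cylinder is $u_\vepsilon^x$ for a unique negative gradient flow line $x$ from $p$ to $q$ (for $\vepsilon$ small), and the moduli space $\M^{J_\vepsilon}(\ga_p^k,\ga_q^k)$ is a finite set of such, each cut out transversely.

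Now I would run the standard implicit-function-theorem/gluing argument: because each $u_\vepsilon^x$ is Fredholm regular (Proposition \ref{cyltofloer}, using Propositions \ref{aut-agree}, \ref{cylcoverssep}, \ref{cylcoverssep-par} for regularity of its covers), for all sufficiently small $\delta$ there is a unique $J_\delta$-holomorphic cylinder near $u_\vepsilon^x$, which must be $u_\delta^x$ by the construction of the model family; moreover the count of cylinders in $\M^{J_\delta}(\ga_p^k,\ga_q^k)$ equals the count in $\M^{J_\vepsilon}(\ga_p^k,\ga_q^k)$ for $\delta$ small. Combined with the compactness conclusion that every $J_{\delta_n}$-cylinder $u_n$ converges to some $u_\infty = u_\vepsilon^{x}$, this means that for $n$ large each $u_n$ lies in the IFT-neighborhood of the corresponding $u_\vepsilon^x$ and hence equals $u_{\delta_n}^x$, contradicting our assumption. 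Therefore the desired $\delta_0$ exists. The main obstacle I anticipate is verifying that no multiply covered or broken configuration can sneak into the limit $\calb$ when $\czm(\ga_p^k) = \czm(\ga_q^k)$ and the indices of $p,q$ differ by one: this is precisely where the dynamically separated hypothesis and the ECH-index/agreeability machinery of Section \ref{reg-sec} (Propositions \ref{cylcoverssep} and \ref{awesomesauce}) do the real work, ensuring that covers of the relevant low-index cylinders keep their index and remain regular, and that Proposition \ref{0compact} applies to rule out extra index-$0$ cylindrical levels.
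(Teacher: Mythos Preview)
Your proposal is correct and follows essentially the same approach as the paper: take a sequence $\delta_n\to 0$, apply SFT compactness to obtain a $J_\vepsilon$-holomorphic building, use the index/building-exclusion lemmata (together with the absence of contractible orbits when $g(\Sigma)\ge 1$) to force the building to be a single cylinder, identify that cylinder with some $u_\vepsilon^x$ via the Floer/gradient correspondence, and finish with Fredholm regularity plus the implicit function theorem. The only cosmetic discrepancy is that the limit building lives in the symplectization $(\R\times V,J_\vepsilon)$, so the relevant exclusion results are the symplectization versions (Lemmata~\ref{lemma0} and~\ref{lemma3}) rather than the cobordism Lemma~\ref{bldgchain} and Proposition~\ref{0compacthtpy} you cite; this does not affect the argument.
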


\begin{proof}


If $\delta_n \to 0$ and $u_n$ is a sequence of $J_{\delta_n}$-holomorphic cylinders then first by the uniqueness of the asymptotic orbits, we can extract a subsequence for which all $u_n$ are asymptotic at both ends to closed Reeb orbits, up to reparametrization, of the form ${\ga_p^k}$ for $p \in \mbox{Crit}(H)$ as $n\to \infty$.  Remark \ref{acrem} shows that we have a uniform bound on the energies of our curves.

 Since all $R_\vepsilon$-orbits in a fixed homotopy class are nondegenerate, one can now conclude that $u_n$ has a subsequence convergent to a finite-energy $J_\vepsilon$-holomorphic building $\mathbf{u}_\infty$ consisting only of cylinders.  When $g(\Sigma) \geq1$ there are no contractible orbits and when $\Sigma=S^2$, the exclusion of noncylindrical levels follows from the lemmata of \S \ref{obs-chain}.
 
 The levels of the building are asymptotic to orbits of the form ${\ga_p^k}$ for $p \in \mbox{Crit}(H)$, projecting to solutions of the $H^\vepsilon$-Floer equation.  Since $\nabla H$ is Morse-Smale and indices of critical points can only differ by at most 1, the building $\mathbf{u}_\infty$ can have at most one nontrivial level $u_\infty$, which is in the same equivalence class as $u_\vepsilon^x$ by Proposition \ref{cyltofloer}.  That there are no other levels follows from the calculations in \S \ref{obs-chain}. This implies $u_n \to u_\infty$ and because $u_\infty$ is Fredholm regular and the implicit function theorem gets us the rest of the way.  



\end{proof}

\subsection{The filtered chain complex}\label{filteredchsec}

In this section we use the proportionality between the action and Conley-Zehnder index of the Reeb orbits with respect to a  fixed free homotopy class to compute the filtered cylindrical contact homology for prequantization bundles over closed oriented surfaces.  These Morse-Bott methods are in a similar spirit as those found in the realm of symplectic homology, see  \cite{CFHW, gutt2}, \cite[\S 4]{BOduke}, and \cite[Appendix B]{vkk}.

With respect to a fixed free homotopy class $\Gamma$, we first filter cylindrical contact homology by action and investigate a further filtration by the Conley-Zehnder index. We can filter the complex by action because the cylindrical contact homology differential is action decreasing \cite[Lemma 2.18]{jo1}.  We will further assume that the critical values of the Morse-Smale function $H$ on $\Sigma$ are negative and close to $0$.  

We fix a particular choice of coherent orientations on the (filtered) complex by requiring that cylinders which correspond to the Morse flow lines of the $C^2$ small Morse function $H$ on the base be counted in the same way as the version of Morse homology that is isomorphic to singular homology.  Further details on the choice of such coherent orientations may be found in \cite[Appendix B.0.2]{vkk}.  In particular the local coefficient system is trivial by \cite[Lemma B.7, Remark B.8]{vkk}

From Remark \ref{acrem} we saw that action of a Reeb orbit $\gpk$ of  $R_\veps$ over a critical point $p$ of $H$ was proportional to the length of the fiber, namely
\[
\cala(\gpk) = 2k \pi (1+\vepsilon \pi^*H).
\]
As a result, we can introduce a filtration on the whole complex.  We use    bold face $\mathbf{p}$ and $\mathbf{q}$ in place of the conventions $p$ and $q$ for the bigrading in \cite[\S 5.4]{weibel} to avoid confusion with critical points of $H$.  With respect to a free homotopy class $\Gamma$, the filtration is then,
\[
F_{\mathbf{p}}C_{\mathbf{q}}(V, \lep,\Gamma) = \left\{\gamma \in C_{\mathbf{q}}(M, \lep, \Gamma) \ \bigg \arrowvert \ \mathcal{A}(\gamma)< 2\mathbf{p} \pi \right\} 
\]
Implicit in this filtration is the choice of positive $\vepsilon$, which must be chosen to be sufficiently small such that all orbits $\gamma$ with $\mathcal{A}(\gamma) \leq 2\mathbf{p} \pi$ project to a critical point of $H$.  
Since we have restricted ourselves to a fixed homotopy class this filtration exhausts the complex in finitely many steps.  

Let $L_\vepsilon$ be the minimal value of $2\mathbf{p} \pi$ such that
$F_{\mathbf{p}}C_{\mathbf{q}}(M, \lep,\Gamma) = C_{\mathbf{q}}(M, \lep,\Gamma)$  for all $\mathbf{p}  > \frac{L_\vepsilon}{2\pi}$. The $E^0$-page of this spectral sequence is given by
\[
E_{\mathbf{pq}}^0 = F_{\mathbf{p}}C_{\mathbf{p+q}}(V, \lep,\Gamma)/ F_{\mathbf{p}-1}C_{\mathbf{p+q}}(V, \lep,\Gamma).
\]

\begin{lemma}\label{page0diff}
For $\vepsilon$ chosen sufficiently small so that the assumptions of Theorem \ref{morsetofloer} also apply to 
\[
F_{\mathbf{p}}C_{\mathbf{q}}(V, \lep,\Gamma) = C_{\mathbf{q}}(V, \lep,\Gamma)
\]
 for all $\mathbf{p}  > \frac{L_\vepsilon}{2\pi}$ hold. Then the differential for $\mathbf{p}>0$ on the the $E^0$-page 
 \[
E_{\mathbf{pq}}^0 = F_{\mathbf{p}}C_{\mathbf{p+q}}(V, \lep,\Gamma)/ F_{\mathbf{p}-1}C_{\mathbf{p+q}}(V, \lep,\Gamma).
\]
agrees with the Morse differential on the base $\Sigma$.
\end{lemma}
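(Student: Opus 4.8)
The plan is to identify, for each admissible iteration number $\mathbf{p}>0$ and each free homotopy class $\Gamma$, the associated graded complex $\left(E^0_{\mathbf{p},\bullet}(V,\lep,\Gamma),\,d^0\right)$ with the Morse complex of $(\Sigma,H;\Q)$, up to an overall grading shift that depends only on $\mathbf{p}$ and $\Gamma$.

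First I would pin down the generators of $E^0_{\mathbf{p},\bullet}$. By Remark \ref{acrem} the action of $\gpk$ equals $2k\pi(1+\vepsilon H(p))$, which, since the critical values of $H$ are negative and $\vepsilon$ is small, lies in $[2(k-1)\pi,\,2k\pi)$; hence $\gpk\in F_kC\setminus F_{k-1}C$, so $E^0_{\mathbf{p},\bullet}(V,\lep,\Gamma)$ is spanned by the orbits $\ga_p^{\mathbf{p}}$ with $p\in\mbox{Crit}(H)$ when $\mathbf{p}$ is an admissible iterate of $\Gamma$, and vanishes otherwise. By the Conley-Zehnder formula of Lemma \ref{lempre} (see also Theorem \ref{morsetofloer}(i) for the simple orbits), $\czm^\Phi(\ga_p^{\mathbf{p}})$ is the sum of $\mbox{index}_p(H)$ and a term $\mu_{RS}^\Phi(\gamma^{\mathbf{p}})$ independent of $p$. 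Consequently the SFT gradings of the generators of $E^0_{\mathbf{p},\bullet}$ differ from their Morse indices by a constant depending only on $\mathbf{p}$ and $\Gamma$, which gives the claimed graded identification of $\Q$-modules.

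Next I would compute $d^0$. The induced differential on the associated graded counts exactly those rigid cylinders that do not strictly lower the filtration level; since $\partial_\pm^{EGH}$ strictly decreases the action \cite[Lemma 2.18]{jo1} and both $\ga_p^{\mathbf{p}}$ and $\ga_q^{\mathbf{p}}$ lie in $F_{\mathbf{p}}C\setminus F_{\mathbf{p}-1}C$, any cylinder with ends at distinct iteration numbers lands in $F_{\mathbf{p}-1}C$ and dies in the quotient, so $d^0$ is computed by the index-one cylinders in $\widehat{\mathcal{M}}^{J_\vepsilon}_1(\ga_p^{\mathbf{p}},\ga_q^{\mathbf{p}})/\R$. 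Because the $\mu_{RS}$ contributions cancel in $\ind(u)=|\ga_p^{\mathbf{p}}|-|\ga_q^{\mathbf{p}}|$, the index-one condition is precisely $\mbox{index}_p(H)-\mbox{index}_q(H)=1$. For such a pair $(p,q)$, Proposition \ref{cyltofloer} attaches to every negative gradient trajectory of $H$ from $p$ to $q$ a Fredholm-regular $J_\vepsilon$-holomorphic cylinder $u_x^\vepsilon\in\widehat{\mathcal{M}}^{J_\vepsilon}_1(\ga_p^{\mathbf{p}},\ga_q^{\mathbf{p}})$, which is a $\mathbf{p}$-fold cover and hence of multiplicity $\mathbf{p}$, while Theorem \ref{noescape} applied at $\delta=0$ shows that every element of this moduli space is equivalent to some $u_x^\vepsilon$. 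Thus $x\mapsto u_x^\vepsilon$ is a bijection, and since $\mult(u_x^\vepsilon)=\mult(\ga_p^{\mathbf{p}})=\mathbf{p}$ the weight $\mult(\ga_p^{\mathbf{p}})/\mult(u_x^\vepsilon)$ appearing in $\partial_+^{EGH}$ equals $1$, so each cylinder contributes $\pm1$. Finally, the coherent orientations fixed in the paragraph preceding the lemma were chosen so that the cylinders $u_x^\vepsilon$ are counted as in the version of Morse homology isomorphic to singular homology; hence $\epsilon(u_x^\vepsilon)$ equals the sign of $x$ in $\partial_H^{\mbox{\tiny Morse}}$, and $d^0=\partial_H^{\mbox{\tiny Morse}}$ on each copy.

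The steps I expect to demand the most care are the bookkeeping of multiplicities and coherent-orientation signs in the last paragraph — one must verify that the weighted signed count defining $\partial_\pm^{EGH}$ reproduces $\partial_H^{\mbox{\tiny Morse}}$ on the nose rather than up to a global sign ambiguity — and the choice of $\vepsilon$ small relative to $L_\vepsilon$: one needs the hypotheses of Theorem \ref{morsetofloer}, Proposition \ref{cyltofloer}, and Theorem \ref{noescape} to hold simultaneously for every Reeb orbit of action below $L_\vepsilon$, which is exactly where the assumption that the critical values of $H$ be negative and near $0$, together with the proportionality of action and index within a fixed free homotopy class, enters.
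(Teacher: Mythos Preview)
Your proposal is correct and follows essentially the same approach as the paper: both arguments identify the $E^0$-generators via the action formula of Remark~\ref{acrem}, invoke the Conley--Zehnder computations of Section~\ref{cz-section} to match gradings, and then appeal to Proposition~\ref{cyltofloer} and Theorem~\ref{noescape} to identify $d^0$ with $\partial_H^{\mbox{\tiny Morse}}$. Your write-up is in fact more explicit than the paper's on two points the paper leaves implicit --- the verification that the weight $\mult(\ga_p^{\mathbf{p}})/\mult(u_x^\vepsilon)$ equals $1$, and the role of the orientation convention fixed just before the lemma --- so nothing is missing.
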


\begin{proof}
The differential on the $E^0$-page only counts cylinders that decrease the action level less then $2\pi$.  Moreover, this differential is well-defined and squares to 0 because it agrees with the usual cylindrical contact homology differential, so the results of \cite[Theorems 1.10, 1.12, Remark 1.13]{jo1} apply. 

The results of \S \ref{cz-section}, in particular Proposition \ref{czcontractible} and Lemmas \ref{rssphere}, \ref{consttrivlem}, show that with respect to a fixed homotopy class $\Gamma$, there is also a simultaneous filtration by index.  When the genus of $\Sigma$ is greater than $0$, the Puppe sequence shows there are no contractible Reeb orbits, and the action of any Reeb orbit with action less than $L_\vepsilon$ determines its covering multiplicity and hence its free homotopy class.   Proposition \ref{cyltofloer} and Theorem \ref{noescape} imply that  for $\mathbf{p}>0$, these cylinders are counted by the Morse differential on the base $\Sigma$.
\end{proof}

With respect to a fixed free homotopy class $\Gamma$, Lemma \ref{page0diff} permits us to conclude that the $E^1$-page is given by copies of the Morse homology of $\Sigma$ with appropriate degree shifts corresponding to the SFT-grading of $\gamma,$ $\ |\gamma| = \czm(\ga) -1.$
  The proportionality between the action and the SFT-grading  allows us to simultaneously filter by the SFT-grading.  In particular, for an appropriate choice of $\vepsilon$, 
\[
\begin{array}{lcl}
F_{\mathbf{p}}C_{\mathbf{q}}(V, \lep,\Gamma) &=&  \left\{\gamma \in C_{\mathbf{q}}(V, \lep,\Gamma) \ \bigg \arrowvert \ \mathcal{A}(\gamma) \leq 2\mathbf{p} \pi \right\} \\
& =& \left\{\gamma \in C_{\mathbf{q}}(V, \lep,\Gamma) \ \bigg \arrowvert \ |\ga|:= \czm(\gamma) -1 \leq 4\mathbf{p} +1 \right\} 
\end{array}
\]
As a result, the SFT-grading filtration with repsect to a free homotopy class excludes differentials other than the one coming from the action filtration.  Hence, the spectral sequence degenerates at the $E^2$-page.  Such an argument is also used to obtain a spectral sequence for positive $S^1$-equivariant symplectic homology in the proof of Proposition 3.7 Part (II) of \cite{BOSHCH}.

To take direct limits of the filtered groups, we need the following lemma regarding continuation maps.  

\begin{lemma}\label{continuationlem}
For $0< \vepsilon' < \vepsilon$ sufficiently small and under the assumptions of Theorem \ref{page0diff}, the continuation map $c_{\vepsilon \vepsilon'}$ is well-defined and induces a morphism of spectral sequences associated to a fixed free homotopy class
\[
c_{\vepsilon \vepsilon'}^r: E_{\mathbf{pq}}^r(\lep) \to E_{\mathbf{pq}}^r(\lambda_{\vepsilon'})
\]
for $r=0,1$.
\end{lemma}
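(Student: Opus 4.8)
The plan is to verify that the standard machinery for continuation maps in filtered Floer-type theories applies in this Morse-Bott prequantization setting, by reusing the invariance arguments of Section \ref{invariance} together with the index and action computations of Section \ref{cz-section}. First I would define the continuation map $c_{\vepsilon\vepsilon'}$ at the chain level. For $0<\vepsilon'<\vepsilon$ sufficiently small, both $\lep$ and $\lambda_{\vepsilon'}$ are $L$-nondegenerate dynamically separated up to large action (with the same small $\vepsilon$-threshold guaranteed by Lemma \ref{czcontractible} and Theorem \ref{morsetofloer}), so one can build a generic exact symplectic cobordism $(W,J)$ interpolating between them and count index $0$ cylinders as in \eqref{chaineq}. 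The key point is that $\mathcal{A}(\gamma_p^k;\lambda_{\vepsilon'}) = 2k\pi(1+\vepsilon'\pi^*H) < 2k\pi(1+\vepsilon\pi^*H) = \mathcal{A}(\gamma_p^k;\lep)$ since the critical values of $H$ are negative and $\vepsilon'<\vepsilon$; hence the cobordism can be arranged to be action-decreasing, so $c_{\vepsilon\vepsilon'}$ preserves the action filtration $F_{\mathbf{p}}C_{\mathbf{q}}$ on the chain level. Well-definedness (finiteness of the counts, the chain map property) follows verbatim from Corollary \ref{0manchain} and the chain map theorem of Section \ref{chainmap-sec}, all of which rest on the regularity results Propositions \ref{cylcoverssep}, \ref{cylcoverssep-par} and \ref{0compact} and the building-exclusion Lemmata of \S\ref{obs-chain}--\ref{obs-htpy}, which apply here because our cobordism is between dynamically separated (up to large action) contact forms.

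Next I would observe that a filtration-preserving chain map between filtered complexes automatically induces maps on every page of the associated spectral sequences; this is a purely homological-algebra fact (cf. \cite[\S5.4]{weibel}), so $c_{\vepsilon\vepsilon'}^r: E^r_{\mathbf{pq}}(\lep)\to E^r_{\mathbf{pq}}(\lambda_{\vepsilon'})$ is defined and natural for all $r\geq 0$, in particular for $r=0,1$ as claimed. The only thing requiring the hypotheses of Theorem \ref{page0diff} is the identification of what these pages \emph{are}: by Lemma \ref{page0diff} the $E^0$ differential agrees with the Morse differential on $\Sigma$, and hence $E^1$ consists of degree-shifted copies of the Morse homology $H_*^{\mbox{\tiny Morse}}(\Sigma,H;\Q)$. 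On $E^0$, the induced map $c^0_{\vepsilon\vepsilon'}$ is the identity on generators (the generators of $F_{\mathbf{p}}C/F_{\mathbf{p}-1}C$ for $\lep$ and $\lambda_{\vepsilon'}$ are in canonical bijection, both being the $\gamma_p^k$ over $\mathrm{Crit}(H)$ of the appropriate action/multiplicity, up to the identification of SFT-gradings from Section \ref{cz-section}); to see this one argues, as in Proposition \ref{idmap}, that index $0$ cylinders in the interpolating cobordism which drop the action level by less than $2\pi$ must project to $H$-gradient flow lines — this is exactly the content of the correspondence in Proposition \ref{cyltofloer} and the uniqueness in Theorem \ref{noescape} — so on the associated graded they count the Morse differential trajectories, matching on both sides. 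Therefore $c^0_{\vepsilon\vepsilon'}$ is a chain isomorphism of $E^0$-complexes, and passing to homology $c^1_{\vepsilon\vepsilon'}$ is an isomorphism on $E^1$.

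I expect the main obstacle to be bookkeeping rather than analysis: one must check that the small-parameter thresholds can be chosen uniformly so that a \emph{single} $\vepsilon$ (resp. $\vepsilon'$) works simultaneously for all action levels $\mathbf{p}\leq L_\vepsilon/2\pi$ relevant to the truncated complex, and that the interpolating family $\{\lambda_\tau\}$ between $\lep$ and $\lambda_{\vepsilon'}$ stays dynamically separated up to the required action along the whole path (so that the cobordism compactness and exclusion-of-noncylindrical-levels arguments of \S\ref{obs-chain} apply to it). This is handled by taking the straight-line homotopy $\lambda_\tau = (1+((1-\tau)\vepsilon + \tau\vepsilon')\pi^*H)\lo$, noting that the Conley-Zehnder and action formulas of Section \ref{cz-section} are continuous and monotone in the perturbation parameter, so the dynamically separated inequalities (I.i), (I.ii), (II) — being open and closed conditions on integers and a strict action bound — persist for all $\tau\in[0,1]$ once $\vepsilon$ is small enough. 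A secondary technical point, already flagged in the excerpt, is that one needs $|\mbox{index}_p(H)-\mbox{index}_q(H)|\leq 1$ for Proposition \ref{cyltofloer} and Theorem \ref{noescape}; but since we have independently established $(\partial_\pm^{EGH})^2=0$, as remarked after Theorem \ref{morsetofloer}, this restriction on $H$ is harmless and can be imposed without loss of generality, or removed by the standard gluing argument. Modulo these verifications, the lemma follows by assembling the pieces already in place.
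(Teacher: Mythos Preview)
Your overall strategy is sound and, for the ``induces a morphism of spectral sequences'' part, arguably cleaner than the paper's: once one knows that $c_{\vepsilon\vepsilon'}$ preserves the action filtration, the existence of the induced maps $c^r_{\vepsilon\vepsilon'}$ for all $r\ge 0$ is indeed pure homological algebra, whereas the paper argues more explicitly via the boundedness of the spectral sequence and a ``gap'' observation (the last nonzero column of $E^0(\lep)$ is separated from the first genuinely new column of $E^0(\lambda_{\vepsilon'})$ by more than the maximal filtration degree $L_\vepsilon/2\pi$, so there are no nontrivial differentials mixing old and new columns). Your additional claim that $c^0_{\vepsilon\vepsilon'}$ is an isomorphism on the associated graded is correct and useful for the direct limit, though it goes beyond what the lemma asserts.

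There is, however, a concrete error in your filtration-preservation step. With $H(p)<0$ and $0<\vepsilon'<\vepsilon$ one has $\vepsilon' H(p)>\vepsilon H(p)$, hence
\[
\mathcal{A}(\gamma_p^k;\lambda_{\vepsilon'}) = 2k\pi\bigl(1+\vepsilon' H(p)\bigr) \;>\; 2k\pi\bigl(1+\vepsilon H(p)\bigr) = \mathcal{A}(\gamma_p^k;\lep),
\]
the reverse of the inequality you wrote. So your stated reason for action-decrease (and hence for the natural cobordism having $\lep$ at the positive end) is wrong. The conclusion you need is nevertheless true, for a different reason: since the critical values of $H$ are negative and close to $0$, for \emph{both} contact forms the action of $\gamma_p^k$ lies in the open interval $\bigl(2(k-1)\pi,\,2k\pi\bigr)$, so $\gamma_p^k$ sits in $F_kC\setminus F_{k-1}C$ regardless of which of $\vepsilon,\vepsilon'$ is used. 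Combined with the fact that the index-$0$ cylinders counted by $c_{\vepsilon\vepsilon'}$ must connect orbits of the same Conley--Zehnder index (hence the same multiplicity $k$ by the formulas of Section~\ref{cz-section}), this shows the continuation map respects the filtration. Alternatively, one can appeal directly to the Monotonicity clause of Theorem~\ref{invariance-thm2} and the small-parameter discussion preceding it. Once you replace your action inequality with either of these arguments, your proof goes through.
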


\begin{proof}
That the map $c_{\vepsilon \vepsilon'}^r$ is well-defined follows from the same reasoning as to why the chain map is well-defined; see Proposition \ref{0compact} and \S \ref{chainmap-sec}.  To see that it induces a morphism between spectral sequences we note that the spectral sequence $E_{pq}^r(\lep)$ is bounded with maximal filtration degree $\frac{L_\vepsilon}{2\pi}$, so it converges not any later than on that page.  

By construction, the gap between the last non-zero column of $E^0_{\mathbf{pq}}(\lep)$ and the first new column of $E^0_{\mathbf{pq}}(\lambda_{\vepsilon'})$ is {larger than $\frac{L_\vepsilon}{2\pi}$.}   This is because  for $\mathbf{p} > \frac{L_{\vepsilon_j}}{2\pi} $   we can choose  $\vepsilon_i < \vepsilon_j$ such that $F_\mathbf{p}C_*^{EGH}(V,\lambda_{\vepsilon_i}) = C_*^{EGH}(V,\lambda_{\vepsilon_j})$ for $\mathbf{p} = \frac{L_{\vepsilon_j}}{2\pi} ,  \frac{L_{\vepsilon_j}}{2\pi}+1.$ Thus, the first new non-zero column, if any, of $E^0_{\mathbf{p}*}(\lambda_{\vepsilon_i}, J_{\vepsilon_i})$ appears for filtration degree $\mathbf{p}$ at least $\frac{L_{\vepsilon_j}}{2\pi}+2$.

For $\mathbf{p} \leq \frac{L_\vepsilon}{2\pi}$  we see that $c_{\vepsilon \vepsilon'}^r$ commutes with the differential because of Proposition \ref{cyltofloer} and Theorem \ref{noescape} as well as \S \ref{chainmap-sec}.  For $\mathbf{p} > \frac{L_\vepsilon}{2\pi}$ the continuation map $c_{\vepsilon \vepsilon'}^r$ vanishes, as does the differential $d^{r,\vepsilon}$.   
\end{proof}

Since  $CH^{EGH,L}_*(V,\lambda_\vepsilon, J, \Gamma)$ is the homology of the chain complex generated by copies of  $H_*^{\mbox{\tiny Morse}}(\Sigma, H; \Q)$ with $\partial_\pm = \partial_H^{\mbox{\tiny Morse}}$ on each copy, after taking the direct limit,
\[
CH^{EGH}(V,\ker \lambda,\Gamma) := \varinjlim_\vepsilon CH^{EGH,L_\vepsilon}(V, \lep, J_\vepsilon,\Gamma). 
\]
 we obtain
\[
CH^{EGH}(V,\ker \lambda, \Gamma)=E^1_{\mathbf{pq}}(CH^{EGH}(V,\ker \lambda, \Gamma)) = \left\{ 
\begin{array}{lr}
\displaystyle \bigoplus_{{\gamma \mbox{ \tiny good, } [\gamma]=\Gamma }} H_{\mathbf{p}+\mathbf{q}-|\gamma|}(\Sigma;\Q) & \mathbf{p}>0 \\
0 & \mathbf{p}\leq 0.\\
\end{array} \right.
\]
This yields the proof of Theorem \ref{prequantch}.  






\addcontentsline{toc}{section}{References}

\noindent \textsc{Jo Nelson \\  Rice University}\\
{\em email: }\texttt{jo.nelson@rice.edu}\\




\end{document}